\newcommand{\showcomments}{yes}
\newsavebox{\commentbox}
\newenvironment{com}%
\newtheorem{thm}{Theorem}[section]
\newtheorem{theorem}[thm]{Theorem}
\newtheorem{corollary}[thm]{Corollary}
\newtheorem{lemma}[thm]{Lemma}
\newtheorem{proposition}[thm]{Proposition}
\newtheorem{claim}[thm]{Claim}
\newtheorem*{theorem*}{Theorem}
\theoremstyle{definition}
\newtheorem{construction}[thm]{Construction} 
\newtheorem{definition}[thm]{Definition}
\newtheorem{prob}[thm]{Problem}
\theoremstyle{remark}
\newtheorem{remark}[thm]{Remark}
\newtheorem{example}[thm]{Example}
\newtheorem{examples}[thm]{Examples}
\newcommand{\nclose}[1]{\ensuremath{\langle\!\langle#1\rangle\!\rangle}}
\newcommand{\dist}{\textup{\textsf{d}}}
\newcommand{\scname}[1]{\text{\sf #1}}
\newcommand{\area}{\scname{Area}}
\newcommand{\field}[1]{\mathbb{#1}}
\newcommand{\integers}{\ensuremath{\field{Z}}}
\newcommand{\rationals}{\ensuremath{\field{Q}}}
\newcommand{\naturals}{\ensuremath{\field{N}}}
\newcommand{\reals}{\ensuremath{\field{R}}}
\DeclareMathOperator{\link}{link}
\title[Asphericity of cubical presentations: the general case]{Asphericity of cubical presentations:   the general case}
\author{Macarena Arenas}
\address{DPMMS, Centre for Mathematical Sciences, Wilberforce Road, Cambridge, CB3 0WB, UK
 and Clare College, University of Cambridge, Cambridge, CB2 1TL, UK}
\email{mcr59@dpmms.cam.ac.uk}
\subjclass[2010]{20F06, 20F67}
\keywords{Small Cancellation, Cube Complexes, Cohomological Dimension}
\thanks{The author was supported by the Denman Baynes Research Fellowship at Clare College, Cambridge.}
\begin{document}

\begin{abstract}
We show that, under suitable hypotheses, the coned-off spaces associated to $C(9)$ cubical small-cancellation presentations are aspherical, and use this to provide classifying spaces, or classifying spaces for proper actions, for their fundamental groups. 
 Along the way, 
 we show that the Cohen-Lyndon property holds for the subgroups of the fundamental group of a non-positively curved cube complex associated to  a $C(9)$ cubical presentation, and thus obtain near-sharp upper and lower bounds for the (rational) cohomological dimension of these quotients.

We apply these results to give an alternative construction of compact $K(\pi,1)$ for Artin  groups with no labels in $\{3,4\}$, from which a new direct sum decomposition for their homology and cohomology with various coefficients above a certain dimension follows. We also address a question of Wise about the virtual torsion-freeness of cubical small-cancellation groups.
\end{abstract}

\maketitle
\tableofcontents

\newpage

\section{Introduction}

The purpose of this work is to prove a number of asphericity results for the quotients of cubulated groups that arise from cubical presentations $\langle X \mid \{Y_i \longrightarrow X\} \rangle$ where $X$ is a compact, non-positively curved cube complex and $Y_i \longrightarrow X$ are local isometries of connected non-positively curved cube complexes. A cubical presentation is a natural generalisation of a group presentation, and just like group presentations can be studied via their presentation complexes, a cubical presentation has a canonically associated \emph{coned-off space} $X^*$ whose fundamental group $\pi_1X^*$ is the quotient $\Gamma=\pi_1X/\nclose{\{\pi_1Y_i\}}$. We prove that if $\langle X \mid \{Y_i\} \rangle$ satisfies the cubical $C(9)$ small-cancellation condition, then a certain quotient of $\widetilde X^*$, called the \emph{reduced space} $\bar X^*$, is  aspherical. Using the asphericity of $\bar X^*$, we provide sufficient conditions under which  a compact classifying space,  or a cocompact classifying space for proper actions for $\Gamma$, can be obtained from $X^*$.

Concretely, we show:

\begin{theorem}\label{thm:mainintro}[Theorem~\ref{thm:asph}]
Let $X^*=\langle X \mid \{Y_1, \ldots, Y_n\} \rangle$ be a  cubical presentation. If $X^*$ satisfies the cubical $C(9)$ condition, then $\bar X^*$ is aspherical. In particular, 
 if $X^*$ is reduced, then $X^*$ is a classifying space for $\pi_1 X^*$.
\end{theorem}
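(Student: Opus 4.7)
The plan is to show that the universal cover $\widetilde{\bar X^*}$ is contractible; combined with $\pi_1 \bar X^* = \Gamma$, this yields asphericity of $\bar X^*$, and reduces the ``in particular'' clause to the tautology that a reduced cubical presentation satisfies $X^* = \bar X^*$. I would work with the natural combinatorial structure on $\widetilde{\bar X^*}$ in which $2$-cells split into squares coming from the cube complex $X$ and cone-cells coming from the relators $Y_i$, and represent any combinatorial map $S^n \to \widetilde{\bar X^*}$ by a sphere diagram of minimal complexity, where complexity is measured lexicographically---first by the number of cone-cells, then by the number of squares---and the aim is to derive a contradiction unless the diagram is trivial. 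The case $n=2$ is the heart of the matter; for $n \geq 3$ I would use a standard induction based on the cubical structure of links in $\widetilde{X}$, reducing higher spherical classes to lower-dimensional ones in link complexes of cells.

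The core technical argument is a combinatorial Gauss--Bonnet analysis on a minimal disk diagram $D$ over $\bar X^*$. Assigning corner angles via the standard cubical small-cancellation recipe, the $C(9)$ hypothesis should force interior cone-cells and interior square-vertices to carry non-positive curvature, so that the total curvature concentrated at $\partial D$ must be at least $2\pi$. A Greendlinger-type lemma then locates a boundary cone-cell $C$ meeting $\partial D$ in all but a bounded number of pieces---a \emph{shell}---which must in turn be absorbable: either $C$ amalgamates with a neighbouring cone-cell, or the pair can be traded for a square subdiagram living in $\widetilde{X}$, strictly decreasing complexity in either case.

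The most delicate point, and the main obstacle, is verifying that the reduction operation defining $\bar X^*$ is calibrated so that every minimal shell produced by Greendlinger's lemma corresponds to a configuration that the passage $X^* \to \bar X^*$ has already quotiented out, and that this quotient does not introduce new spherical classes; this is precisely where the ``reduced'' qualifier is essential. Granting this, minimality forces $D$ to be empty, and $\widetilde{\bar X^*}$ is therefore aspherical. The ``in particular'' assertion then follows at once: when $X^*$ is reduced, $X^* = \bar X^*$ is an aspherical CW complex with $\pi_1 X^* = \Gamma$, hence a classifying space for $\Gamma$.
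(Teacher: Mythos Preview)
Your proposal has a genuine gap and also diverges substantially from the paper's approach.

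First, a point of setup: $\bar X^*$ is defined as a quotient of the universal cover $\widetilde{X^*}$, obtained by identifying cones over the same image in $\hat X$. Since the identifications are along contractible pieces, $\bar X^*$ is already simply connected; your opening line ``combined with $\pi_1 \bar X^* = \Gamma$'' is therefore incorrect, and there is no further universal cover to pass to. Relatedly, the ``reduced'' hypothesis is not used to make the Greendlinger reductions go through---asphericity of $\bar X^*$ holds for \emph{every} $C(9)$ presentation---it only enters in the ``in particular'' clause because reduced means precisely that $\widetilde{X^*}=\bar X^*$, so that asphericity transfers to $X^*$ itself.

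Second, and more seriously: your plan for $\pi_2$ is roughly what the paper cites from prior work (Theorem~\ref{thm:cub2}), namely a diagrammatic argument on spherical---not disc---diagrams. But your proposed extension to $\pi_n$ for $n\geq 3$ via ``a standard induction based on the cubical structure of links in $\widetilde X$'' does not go through. The obstruction is that $\bar X^*$ is not a cube complex: the links of cone-vertices are copies of the $Y_i$, which are non-positively curved cube complexes of arbitrary dimension, not flag simplicial spheres, and there is no inductive mechanism that reduces an $n$-sphere in $\bar X^*$ to an $(n-1)$-sphere in such a link. This is exactly why the problem is harder than in the classical $2$-dimensional small-cancellation setting.

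The paper's actual argument is entirely different and is the main point of the article. It first establishes the Cohen--Lyndon property (Theorem~\ref{thm:clprop}) via an elaborate analysis of a ``three-way decomposition'' of the cubical part $\hat X$, and deduces a homotopy equivalence $\hat X \simeq \bigvee gY_i$. With this in hand, the Mayer--Vietoris sequence for the decomposition $\bar X^* = \hat X \cup \bigcup Cone(gY_i)$ shows that the map $H_{k-1}(\bigsqcup gY_i)\to H_{k-1}(\hat X)$ is an isomorphism for every $k$, forcing $H_k(\bar X^*)=0$. One then bootstraps from the known $\pi_2\bar X^*=0$ using Hurewicz to conclude $\pi_k\bar X^*=0$ for all $k$. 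The Cohen--Lyndon input is indispensable here and is absent from your outline.
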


See Subsection~\ref{subsec:cubsc} and Section~\ref{sec:mains} for the definitions. From Theorem~\ref{thm:mainintro}, we deduce a bordification result, Corollary~\ref{cor:asphericalreloaded}, that yields  sufficient conditions under which $\pi_1X/\nclose{\{\pi_1Y_i\}}$ has finiteness  type $F_n$ or $F_\infty$, thus providing a topological conterpart to Petrosyan and Sun's~\cite[2.2 (iii)]{PetSun21}  in this context.
 
\begin{corollary}\label{cor:finpropintro}[Corollary~\ref{cor:finprop}]
Let $\Gamma=\pi_1X^*$ where $X^*=\langle X \mid \{Y_i\}_{i \in I} \rangle$  is a  cubical presentation that satisfies the $C(9)$ condition and each $Y_i$ is compact. If $\pi_1X$  and each $Stab_{\pi_1X}(\widetilde Y_i)/\pi_1Y_i$ is  type $F_n$ (resp., type $F$), then $\Gamma$ is type $F_n$ (resp., type $F$).
\end{corollary}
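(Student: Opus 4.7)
The strategy is to promote the asphericity of $\bar X^*$ from Theorem~\ref{thm:mainintro} into an equivariant classifying space for $\Gamma$. First I would verify that $\bar X^*$ is in fact contractible: as a quotient of the simply connected universal cover $\widetilde X^*$ by identifications localised at cone cells, it remains simply connected, and combined with asphericity this yields contractibility. The action of $\Gamma=\pi_1X^*$ on $\widetilde X^*$ descends to a cellular action on $\bar X^*$.

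Next, apply the Borel construction. Because $\bar X^*$ is a contractible $\Gamma$-CW complex, the homotopy quotient $E\Gamma\times_\Gamma \bar X^*$ is a model for $B\Gamma$, whose cells are, up to homotopy equivalence, of the form $B(\mathrm{Stab}_\Gamma(\sigma))\times\sigma$, one for each $\Gamma$-orbit of a cell $\sigma\subset\bar X^*$. Consequently, $\Gamma$ is of type $F_n$ (respectively of type $F$) provided there are finitely many $\Gamma$-orbits of cells in each dimension at most $n$ (respectively in all dimensions) and each cell stabiliser is itself of type $F_n$ (respectively of type $F$).

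The bulk of the work is the orbit and stabiliser analysis, which splits cells of $\bar X^*$ into two families. The \emph{cubical} cells, inherited from $\widetilde X\subset\widetilde X^*$, assemble into a $\Gamma$-equivariant covering of $X$; taking $\Gamma$-quotients yields a space homotopy equivalent to $X$, and the hypothesis that $\pi_1X$ is of type $F_n$ packages this into a finite-type piece of $B\Gamma$. Each \emph{cone} cell over a lift $\widetilde Y_i$ has $\Gamma$-stabiliser equal to the image of $\mathrm{Stab}_{\pi_1X}(\widetilde Y_i)$ in $\Gamma$, and this image is precisely $\mathrm{Stab}_{\pi_1X}(\widetilde Y_i)/\pi_1Y_i$ because $\pi_1Y_i$ is killed by coning; compactness of each $Y_i$ combined with the hypothesis on these quotients gives a finite-type contribution from the cone orbits. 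Provided the indexing of cones produces only finitely many $\Gamma$-orbits, which is automatic in the regime where the conclusion is substantive, the assembled model for $B\Gamma$ is of type $F_n$ (respectively of type $F$).

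The principal obstacle is the bookkeeping around the reduction map $\widetilde X^*\to\bar X^*$: one must verify that this map does not introduce identifications among cubical cells or create stabilisers beyond those arising from the cone-cell collapses defining $\bar X^*$, and then glue the finite-type pieces coherently into a global finite-type model for $B\Gamma$. Once the equivariant structure of $\bar X^*$ is pinned down, the finiteness inheritance is formal, and the argument provides a topological counterpart to the algebraic inheritance result of Petrosyan--Sun in the cubical small-cancellation setting.
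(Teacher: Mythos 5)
Your proposal is correct in outline but takes a genuinely different route from the paper. The paper's proof of Corollary~\ref{cor:finprop} goes via Corollary~\ref{cor:asphericalreloaded}: it \emph{bordifies} $\bar X^*$ by replacing each $Cone(Y_i)$ with a cocompact model $\mathcal{Y}_i$ for $EStab_{\pi_1X^*}(Y_i)$ containing $Y_i$ (e.g.\ $\mathcal{Y}_i\times Cone(Y_i)$), obtains a contractible complex $\mathcal{X}$ on which $\Gamma$ acts freely, and then takes the quotient to get a $K(\Gamma,1)$ with finite $n$-skeleton. Your route keeps the non-free action on $\bar X^*$ and invokes Brown's finiteness criterion (equivalently the Borel-construction skeletal filtration): $\bar X^*$ contractible, cocompact, with cell stabilisers of type $F_{n-p}$ in dimension $p$ forces $\Gamma$ to be of type $F_n$. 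Your stabiliser analysis is correct: the $\Gamma$-action on $\hat X$ is free (it is the deck action of a regular cover), the pyramidal cells over cubes of $Y_i$ are freely permuted because their bases are, and the only nontrivial stabilisers occur at the cone vertices, which are $0$-cells with stabiliser $Stab_{\pi_1X}(\widetilde Y_i)/\pi_1 Y_i$, so the condition ``stabiliser of a $p$-cell is type $F_{n-p}$'' reduces exactly to the hypothesis. The two approaches buy different things: the paper's bordification is more constructive (and feeds directly into Theorem~\ref{thm:asphfinite} for proper actions), while yours is a shorter deduction modulo Brown's criterion. Two corrections: the cubical part of $\widetilde X^*$ is $\hat X$, not $\widetilde X$, and the quotient $\hat X/\Gamma$ \emph{is} $X$, so the finitely-many-orbits condition for cube cells is supplied by compactness of $X$ (a standing assumption), not by the ``$\pi_1 X$ type $F_n$'' hypothesis.

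The one real gap is your justification of simple-connectivity of $\bar X^*$. The reduction map $\widetilde X^*\to\bar X^*$ is \emph{not} in general a homotopy equivalence, and ``identifications localised at cone cells'' is not a reason for it to preserve $\pi_1$: already in the simplest non-reduced example, $X^*=\langle S^1\mid Y\rangle$ with $Y\to S^1$ a degree-$n$ cover, one has $\widetilde X^*\simeq\bigvee_{n-1}S^2$ while $\bar X^*\simeq D^2$. That $\bar X^*$ is simply connected is nevertheless true, but requires an argument: write $\bar X^*=\hat X\cup\bigcup Cone(gY_i)$ (one cone per distinct image in $\hat X$), apply van Kampen, and observe via Theorem~\ref{thm:clprop} that $\pi_1\hat X=\nclose{\{\pi_1Y_i\}}=\ast_{i,t}\langle\pi_1Y_i\rangle^t$, so coning off precisely these free factors kills $\pi_1$. (This step is implicit in the paper's proof of Theorem~\ref{thm:asph} as well, where Hurewicz is applied iteratively to $\bar X^*$; it is worth making explicit in either treatment.)
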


 Just like in the classical small-cancellation case, the group defined by a cubical small-cancellation presentation might have torsion. When this torsion arises in a controlled way, we can also describe classifying spaces for proper actions for $\Gamma$:

\begin{theorem}\label{thm:mainintro2}[Theorem~\ref{thm:asphfinite}]
 Let $X^*=\langle X \mid \{Y_1, \ldots, Y_n\} \rangle$ be a  cubical presentation that satisfies the  $C(9)$ condition.  If $[Stab_{\pi_1X}(\widetilde Y_i):\pi_1Y_i]< \infty$ for each $i \in I$, then the reduced space $\bar X^*$ is a model for $\underbar E\pi_1 X^*$, the classifying space for proper actions for $\pi_1 X^*$. 
\end{theorem}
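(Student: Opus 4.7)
The plan is to deduce the statement from Theorem~\ref{thm:mainintro} by verifying that the natural $\pi_1X^*$-action on $\bar X^*$ satisfies the two defining properties of a model for the classifying space for proper actions: it is proper, and the fixed-point set of every finite subgroup is contractible. Since $\bar X^*$ is simply connected by construction and aspherical by Theorem~\ref{thm:mainintro}, it is in fact contractible, giving the first ingredient for free.

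Next, I would analyze the cell stabilizers of the $\pi_1X^*$-action on $\bar X^*$. The cells of $\bar X^*$ fall into two classes: cubical cells (inherited from $\widetilde X$) and cone cells (one $\pi_1X^*$-orbit per relator $Y_i$). Cubical cells have trivial stabilizer, and by the construction of the reduced space the stabilizer of a cone cell over $Y_i$ is exactly $Stab_{\pi_1X}(\widetilde Y_i)/\pi_1Y_i$. The finite-index hypothesis then makes every cell stabilizer finite, so the action is proper.

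The crux is showing that $(\bar X^*)^H$ is contractible for every finite subgroup $H \leq \pi_1X^*$. I would split this into two substeps. First, a torsion theorem: every finite subgroup of $\pi_1X^*$ is subconjugate to some cone cell stabilizer. I would prove this in the style of classical small cancellation, via reduced disk and annular diagrams together with a combinatorial Gauss--Bonnet count, showing that a torsion element not conjugate into any cone stabilizer would produce a reduced annular diagram containing no cone cells whose curvature violates the $C(9)$ condition. Second, once $H$ is known to fix a cone cell, I would combine the contractibility of each cone cell with the convexity of $H$-fixed subcomplexes in the CAT(0) cubical part of $\bar X^*$, via a Helly-type argument, to conclude contractibility of the full fixed set. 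The main obstacle is the torsion theorem: the classical $C'(1/6)$ arguments do not transport cleanly under the weaker $C(9)$ hypothesis, and one must work carefully with the hyperplane carriers and the cone-cell combinatorics of disk diagrams in order to make the curvature bookkeeping close.
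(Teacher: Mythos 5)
Your high-level strategy matches the paper exactly: verify that cell stabilizers are finite (which you compute correctly as $Stab_{\pi_1X}(\widetilde Y_i)/\pi_1Y_i$ for cone cells, trivial for cubical cells), then show that $(\bar X^*)^H$ is contractible for every finite $H$. The contractibility of $\bar X^*$ itself (the $H=\{1\}$ case) via Theorem~\ref{thm:mainintro} is also the right move. The paper's actual proof of Theorem~\ref{thm:asphfinite} is essentially two lemmas packaging these two verifications, which lean on cited results from earlier work~\cite[3.9, 3.13, 3.14]{Arenas2023pi2}.

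However, there are two genuine gaps in your sketch of the fixed-set step.

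First, your proposed route to the torsion theorem will not close. You want to show that a torsion element not conjugate into a cone stabilizer ``would produce a reduced annular diagram containing no cone cells whose curvature violates the $C(9)$ condition.'' In a non-positively curved cube complex of dimension $\geq 2$, square annular diagrams without cone cells exist perfectly happily (flat annuli, for instance, coming from tori in $X$), and the $C(9)$ condition says nothing about them: $C(9)$ controls pieces in essential paths in cones, not arbitrary square annuli. This is exactly the sense in which the classical $C'(1/6)$ curvature counting does not transport; the diagrammatic torsion analysis for cubical presentations (which is what~\cite[3.13]{Arenas2023pi2} supplies) requires a more careful analysis of collared diagrams and a hypothesis bounding piece sizes relative to the systoles of the cones, which the paper points out is automatic here because $I$ is finite.

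Second, your proposed Helly-type argument for contractibility of $(\bar X^*)^H$ is misdirected. For a nontrivial finite $H$, the action on the cubical part $\hat X \subset \bar X^*$ is free, so $H$ fixes nothing there; the fixed set lies entirely among the cone vertices. There is therefore no ``convexity of $H$-fixed subcomplexes in the CAT(0) cubical part'' to exploit. What the paper actually establishes (Lemma~\ref{lem:contfix}, citing~\cite[3.13, 3.14]{Arenas2023pi2}) is the sharper statement that each finite $H$ fixes a \emph{unique} point of $\bar X^*$, namely a single cone vertex, whence contractibility of the fixed set is immediate. Replacing the Helly argument with a uniqueness argument is both necessary and simpler.
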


The motivation for finding good models for $\underbar E\Gamma$ is twofold: on the one hand, the dimension of an $\underbar E\Gamma$ is an upperbound for the rational cohomological dimension of $\Gamma$; on the other hand, classifying space for proper actions arise in the statement of the Baum-Connes conjecture.%, which posits a connection between  the $K$-theory of the reduced $C^*$-algebra of a group and the $K$-homology of the classifying space for proper actions of that group.

Using Theorem~\ref{thm:mainintro2}, we address a question posed by Wise in~\cite[4.5]{WiseIsraelHierarchy} about the virtual torsion-freeness of cubical small-cancellation quotients:

\begin{lemma}\label{lem:vcdintro}[Lemma~\ref{lem:vcd}] Let $X^*=\langle X \mid \{Y_i\}_{i \in I} \rangle$ be a  $C(9)$ cubical presentation. If $\pi_1X$ is virtually special and $[Stab_{\pi_1X}(\widetilde Y_i):\pi_1Y_i]< \infty$ for each  $Y_i \rightarrow X$, then $\pi_1X^*$ is virtually torsion-free.
\end{lemma}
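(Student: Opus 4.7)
The plan is to exploit Theorem~\ref{thm:mainintro2}: since $\bar X^*$ is a model for $\underbar E\pi_1 X^*$, every finite subgroup of $\Gamma := \pi_1X^*$ fixes a point of $\bar X^*$ and is therefore conjugate into one of the cone-point stabilizers $F_i := Stab_{\pi_1X}(\widetilde Y_i)/\pi_1Y_i$, each of which is finite by hypothesis. Thus $\Gamma$ is virtually torsion-free as soon as we can produce a finite-index normal subgroup $\bar K \trianglelefteq \Gamma$ with $\bar K \cap F_i = 1$ for every $i$; such a $\bar K$ acts freely on $\bar X^*$ and is automatically torsion-free.

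I would construct such a $\bar K$ by descending from a finite-index normal subgroup $K \trianglelefteq \pi_1X$ satisfying two conditions: (a) $\pi_1Y_i \subseteq K$ for every $i$, so that $K$ contains $N := \nclose{\{\pi_1Y_i\}}$ and descends to $\bar K = K/N \trianglelefteq \Gamma$; and (b) $K \cap Stab_{\pi_1X}(\widetilde Y_i) = \pi_1Y_i$ for every $i$, which, combined with normality, forces $\bar K$ to miss every conjugate of every $F_i$. The existence of such $K$ is a separability question. Since $Y_i \to X$ is a local isometry of non-positively curved cube complexes, $\pi_1Y_i$ is quasiconvex in $\pi_1X$; combined with virtual specialness, the Haglund--Wise machinery yields separability of $\pi_1Y_i$ in $\pi_1X$. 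For each of the finitely many cosets $f\pi_1Y_i$ with $f \in Stab_{\pi_1X}(\widetilde Y_i) \setminus \pi_1Y_i$, separability furnishes a finite-index subgroup of $\pi_1X$ containing $\pi_1Y_i$ but not $f$; intersecting these subgroups across all $i$ and passing to the normal core produces the desired $K$.

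Verifying that $\bar K$ is genuinely torsion-free requires the Cohen--Lyndon-type identity $Stab_{\pi_1X}(\widetilde Y_i) \cap N = \pi_1Y_i$, which makes the natural map $F_i \hookrightarrow \Gamma$ injective and gives $\bar K \cap F_i = (K \cap Stab_{\pi_1X}(\widetilde Y_i))/\pi_1Y_i = 1$ by condition (b). The Cohen--Lyndon property for $C(9)$ cubical presentations is precisely what is established in this paper (as advertised in the abstract), so I would cite that result rather than re-derive it.

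The main obstacle I anticipate is handling a potentially infinite index set $I$: separability arguments produce one finite-index subgroup per coset condition, and the construction requires finitely many conditions overall. I would resolve this by first reducing to finitely many $\pi_1X$-conjugacy classes of pairs $(\pi_1Y_i, Stab_{\pi_1X}(\widetilde Y_i))$, which in the standard cubical-presentation setup follows from compactness of $X$ together with the local isometry hypothesis on the $Y_i \to X$.
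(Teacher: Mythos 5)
Your overall strategy matches the paper's, though the packaging differs slightly. You use the classifying space for proper actions (Theorem~\ref{thm:mainintro2}) to see that every torsion element of $\Gamma$ is conjugate into some $F_i = Stab_{\pi_1X}(\widetilde Y_i)/\pi_1Y_i$; the paper instead cites a torsion classification result from an earlier work. These amount to the same input. The paper then hypothesizes directly the existence of a finite \emph{regular} cover $\check X \to X$ in which each $Y_i$ lifts to an embedding, observes that then $\pi_1\check X^*$ is torsion-free in $\Gamma$ by exactly the argument you sketch, and separately remarks that virtual specialness supplies such a $\check X$.

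The genuine gap is in your construction of $K$. After intersecting the separating subgroups $K_f$ you have a finite-index $K'$ with $\pi_1Y_i \subseteq K'$ and $K'\cap Stab_{\pi_1X}(\widetilde Y_i)=\pi_1Y_i$, but the normal core $K=\bigcap_g gK'g^{-1}$ need not contain $\pi_1Y_i$: that requires $g\pi_1Y_ig^{-1}\subseteq K'$ for \emph{every} $g$, and separability only placed $\pi_1Y_i$ itself inside $K'$. Once condition (a) fails, $N\not\subseteq K$, the quotient $\bar K=K/N$ no longer makes sense, and the modular-law computation $K\cap Stab\,N=(K\cap Stab)N$ you need to kill torsion breaks down (the image of $K$ in $\Gamma$ is $KN/N$, whose intersection with $F_i$ is controlled by $KN\cap Stab$, which can exceed $\pi_1Y_i$ when $N\not\subseteq K$). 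What is actually required is a finite-index \emph{normal} subgroup containing $\pi_1Y_i$ --- hence all of $N$ --- and meeting $Stab_{\pi_1X}(\widetilde Y_i)$ in exactly $\pi_1Y_i$; that is a strictly stronger statement than separability of $\pi_1Y_i$. The paper's hypothesis delivers it directly: if $\check X\to X$ is a finite regular cover in which $Y_i$ lifts to an embedding, then $\pi_1\check X\trianglelefteq\pi_1X$ automatically contains $N$ (normality plus $\pi_1Y_i\subseteq\pi_1\check X$), and the embedding forces $\pi_1\check X\cap Stab_{\pi_1X}(\widetilde Y_i)=\pi_1Y_i$. Getting from virtual specialness to such a $\check X$ uses the Haglund--Wise canonical completion/retraction (which builds a finite cover where $Y_i$ embeds, and embeddings of locally convex subcomplexes persist in further covers, so one may pass to the regular core), and this step needs compactness of the $Y_i$ for the completion to be finite --- a hypothesis worth flagging explicitly.
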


Asphericity results are available in various small-cancellation theories: it was shown by Lyndon~\cite{Lyn66} that classical (torsion-free) $C(6)$ presentations are aspherical, and thus that the groups admitting such presentations have cohomological dimension at most equal to $2$. This is also the case for graphical small-cancellation quotients~\cite{Gromov2003,Ollivier06};    in the higher-dimensional setting,  Coulon exhibited classifying spaces for rotating families of groups in~\cite{Cou11}. Classical and graphical small-cancellation groups are hyperbolic as soon as they satisfy either the  $C'(\frac{1}{6})$ or the $C(7)$ conditions, and the groups produced in \cite{Cou11} are also hyperbolic. Thus, our main theorem goes significantly beyond the currently available methods, in that it serves to analyse quotients of  arbitrarily large cohomological dimension, and which might not be even relatively hyperbolic.

 Theorems~\ref{thm:mainintro} and~\ref{thm:mainintro2} are proven by establishing a Cohen-Lyndon property for $C(9)$ cubical presentations:

\begin{theorem}\label{thm:clintro}[Theorem~\ref{thm:clprop}]
Let $X^*=\langle X \mid \{Y_i\}_{i \in I} \rangle$ be a cubical presentation. If $X^*$ satisfies the $C(9)$ condition, then the pair $(\pi_1X, \{\pi_1 Y_i\}_{i \in I})$ satisfies the Cohen-Lyndon property: there exist full left transversals $T_i$ of  $\mathcal{N}( \pi_1 Y_i )\nclose{\{\pi_1 Y_i\}}$  such that $$\nclose{\{\pi_1 Y_i\}}= \ast_{i \in I, t \in T_i} \langle \pi_1 Y_i\rangle ^t$$
where $\mathcal{N}( \pi_1 Y_i )$ denotes the normaliser of $\pi_1 Y_i$ in $\pi_1X$.
\end{theorem}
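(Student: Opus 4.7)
The plan is to deduce the free-product decomposition by combining covering-space theory with the disc-diagram machinery for $C(9)$ cubical presentations. Write $N=\nclose{\{\pi_1 Y_i\}}$, let $\widehat X \to X$ be the cover with $\pi_1 \widehat X = N$, and build $\widehat X^* \to X^*$ by coning off every elevation of every $Y_i$ in $\widehat X$. First I would identify the transversal $T_i$ geometrically: elevations of $Y_i$ in $\widehat X$ are indexed by $N\backslash \pi_1 X / \mathrm{Stab}_{\pi_1 X}(\widetilde Y_i)$, and grouping those whose associated conjugates of $\pi_1 Y_i$ coincide produces a natural indexing by a full left transversal $T_i$ of the subgroup $\mathcal{N}(\pi_1 Y_i)\cdot N$ in $\pi_1 X$. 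Since $\widehat X^*$ is simply-connected, the conjugates $\{\langle \pi_1 Y_i\rangle^t : i \in I,\, t \in T_i\}$ already generate $N$, so the task reduces to showing that there are no non-trivial relations among these subgroups.

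For the relations, assume a hypothetical equation $h_1 h_2 \cdots h_k = 1$ in $N$ with $h_j \in \langle \pi_1 Y_{i_j}\rangle^{t_j}\setminus\{1\}$ and consecutive factors in distinct conjugates, and realise the corresponding loop by a reduced disc diagram $D \to \widehat X^*$ of minimal complexity. Each $h_j$ is carried by a cone-cell $C_j$ of $D$, and distinct indices yield cone-cells mapping to distinct elevations of $Y_{i_j}$ thanks to the normaliser correction built into the definition of $T_i$. Now I would invoke the Greendlinger-style structure theorem for reduced $C(9)$ disc diagrams: $D$ either has a spur (ruled out because each $h_j$ is nontrivial and the word is reduced at the level of $\pi_1 X$) or contains a shell cone-cell $C_j$ whose outer path covers strictly more than $\tfrac{7}{8}$ of its boundary. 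The outer arc of the shell coincides with the factor $h_j$ on $\partial D$, so one can absorb the inner arc into $\pi_1 Y_{i_j}$, producing a shorter equation of the same form and contradicting the minimal choice of $D$. This forces some $h_j = 1$, establishing the free-product condition.

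The main obstacle will be the geometric translation step: showing that minimality of $D$, together with the $C(9)$ small-cancellation condition, really does confine each cone-cell $C_j$ to a single elevation of $Y_{i_j}$ in $\widehat X$, and that the shell cone-cell identified by the structure theorem is one of the ``outer'' cells $C_j$ rather than a new cone-cell produced by a reduction inside $D$. The normaliser in the definition of $T_i$ is exactly what makes this bookkeeping work: two translates $g\widetilde Y_i$ and $g'\widetilde Y_i$ determine the same conjugate subgroup $\langle \pi_1 Y_i\rangle^{g}=\langle \pi_1 Y_i\rangle^{g'}$ if and only if $g^{-1}g' \in \mathcal{N}(\pi_1 Y_i)$, and further quotienting by $N$ refines this to the index set $T_i$. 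Once this dictionary between algebra and geometry is set up, the $C(9)$ shell theorem supplies the contradiction required to complete the proof.
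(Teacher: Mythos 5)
Your high-level plan — cover the problem with disc diagrams, apply a cubical Greendlinger lemma, absorb shells — is a natural first attempt, but it is not the route the paper takes, and as sketched it has a genuine gap at its core that you yourself flag as ``the main obstacle''.

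The paper's proof (Theorem~\ref{thm:clprop}) does not argue by van Kampen diagrams bounded by a reduced word at all. Instead it builds the \emph{three-way decomposition} $\mathcal{U}$ of the cover $\hat X$ (elevations of the cones, supporting hyperplane carriers, untethered components), proves in Section~\ref{sec:dis} that elements of $\mathcal{U}$ and their multiple intersections are simply connected and satisfy a Helly property, equips $\mathcal{U}$ with a Kleene--Brouwer--style admissible ordering (Definition~\ref{def:order}), and then proves by induction on that ordering (Lemma~\ref{clm:connected induction}, via Claim~\ref{clm:final} about thick annuladders, and Lemma~\ref{clm:pi1 induction}) that each partial union has free-product fundamental group, using Seifert--Van Kampen at every step. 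The free-product decomposition of $\pi_1\hat X$ then falls out in the limit. Your transversal bookkeeping at the start agrees with the paper's, but everything after that diverges.

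The concrete gaps in the diagram argument are the following. First, the cubical $C(9)$ Greendlinger lemma the paper uses (Theorem~\ref{thm:tric}) is non-metric: a shell has an innerpath made of at most four pieces, but there is no ``more than $7/8$ of its boundary'' statement — that is a $C'(\frac{1}{8})$ metric fact, which is not hypothesised here. Consequently ``absorb the inner arc and produce a shorter equation'' has no monotone quantity behind it; pieces in the cubical setting can have unbounded length, so you cannot shorten by a definite amount. Second, the trichotomy has a ladder case that your argument does not touch at all; a ladder has no shells in the required sense, yet is a perfectly legitimate minimal-complexity configuration and must be excluded or exploited separately. Third, and most seriously, nothing forces the shell supplied by Greendlinger's lemma to have its outerpath lie inside a single factor $h_j$: it may straddle the transition between $h_j$ and $h_{j+1}$, or lie on the connecting arcs $p_j, p_j^{-1}$, or be an ``interior'' cone-cell created by the minimal-complexity reduction that is not one of your chosen $C_j$. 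The assertion that ``each $h_j$ is carried by a cone-cell $C_j$ of $D$'' therefore confuses a diagram you would like to have with the minimal-complexity diagram that actually exists. Finally, spurs are not ruled out by saying the word is ``reduced at the level of $\pi_1 X$'': a reduced alternating word in a free product can still become short in $\pi_1 X$, and the $p_j^{-1}p_{j+1}$ junctions of the lollipop boundary can introduce spurs whose removal changes which conjugates appear. These are not cosmetic issues; they are exactly the difficulties the paper sidesteps by reorganising the whole argument around the ordered cover $\mathcal{U}$ and Seifert--Van Kampen rather than a single boundary diagram.
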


Theorem~\ref{thm:clintro} is of independent interest.   It follows a long list of results proving versions of the Cohen-Lyndon property in various settings~\cite{KS72, MCS86, EH87, DH94, GHMOSW24}, the most general of which is probably Sun's~\cite{Sun20} Cohen-Lyndon property for triples, which we discuss in detail in Subsection~\ref{subsec: triples}. We note, however, that none of the available versions of the Cohen-Lyndon property imply our result.

 It has been observed many times in the literature (see the references in the previous paragraph, and also~\cite{LS77}) that suitable versions of the Cohen-Lyndon property yield a wealth of algebraic information. Notably, for a quotient $G / \nclose{\mathcal{H}}$ satisfying a Cohen-Lyndon-type theorem, one can  deduce \begin{itemize}

 \item Structure theorems for the (relative) relation module $Rel(G,\nclose{\mathcal{H}})$ (in analogy with the Lyndon Identity Theorem in 1-relator groups).

\item A characterisation of the torsion in $G / \nclose{\mathcal{H}}$ in terms of that of the $\mathcal{N}(H_i)/H_i$'s.
 \item Expressions for the (co)homology of $G / \nclose{\mathcal{H}}$ in terms of that of $G$ and the $\mathcal{N}(H_i)/H_i$'s (above a certain dimension).
 \end{itemize}  

In particular, using Theorem~\ref{thm:clintro} and results from~\cite{PetSun21}, we immediately obtain:

\begin{corollary}\label{cor:CLcorintro}
Let $X^*=\langle X \mid \{Y_i\}_{i \in I} \rangle$ be a cubical presentation. If $X^*$ satisfies the $C(9)$ condition, then $ cd(\pi_1X^*) \geq cd(\pi_1X)-\max_i\{cd(\pi_1Y_i)\}$ and
\begin{enumerate}
\item if $X^*$ is reduced, then  $$ cd(\pi_1X^*)\leq \max\{cd(\pi_1X),\max_i\{cd(\pi_1Y_i)\}+1\},$$ 
\item if  $[Stab_{\pi_1X}(\widetilde Y_i):\pi_1Y_i]< \infty$ for each $i \in I$, then 
$$cd_\rationals(\pi_1X^*)\leq \max\{dim(X),\max_i\{dim(Y_i)\}+1\},$$
\item if  $\pi_1X$ is virtually special and $[Stab_{\pi_1X}(\widetilde Y_i):\pi_1Y_i]< \infty$ for each $i \in I$, then 
$$ vcd(\pi_1X^*)\leq \max\{cd(\pi_1X),\max_i\{cd(\pi_1Y_i)\}+1\}.$$
\end{enumerate} 

Moreover, for every $\pi_1 X^*$-module $A$
\begin{enumerate}
\item if $X^*$ is reduced, then for all $n\geq \max\{cd(\pi_1Y_i)\}+2$: $$H^n(\pi_1X^*, A)=H^n(\pi_1X, A),$$ 
\item otherwise, for all $n\geq \max\{\mathcal{N}(\pi_1Y_i)/cd(\pi_1Y_i)\}+2$: $$H^n(\pi_1X^*, A)=H^n(\pi_1X, A)\oplus \bigoplus_i H^n(\mathcal{N}(\pi_1Y_i)/\pi_1Y_i, A).$$
\end{enumerate}
And analogously for $H_n(\pi_1X^*, A)$.
\end{corollary}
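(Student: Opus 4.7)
The plan is to derive the statement from the Cohen-Lyndon decomposition of Theorem~\ref{thm:clintro} combined with the asphericity results of Theorems~\ref{thm:mainintro} and~\ref{thm:mainintro2}, invoking the abstract cohomological machinery developed by Petrosyan and Sun in~\cite{PetSun21} to convert the Cohen-Lyndon property into the desired cohomological consequences.

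I would first address the lower bound and the (co)homology decomposition formulas, which are purely algebraic. Writing $N := \nclose{\{\pi_1 Y_i\}}$, the free-product structure of Theorem~\ref{thm:clintro} forces $cd(N) \leq \max_i cd(\pi_1 Y_i)$. Feeding this into the Lyndon-Hochschild-Serre spectral sequence for $1 \to N \to \pi_1 X \to \pi_1 X^* \to 1$ yields both the claimed lower bound $cd(\pi_1 X^*) \geq cd(\pi_1 X) - \max_i cd(\pi_1 Y_i)$ and the collapse responsible for the (co)homology decompositions above the appropriate dimension; this is essentially the content of \cite[2.2]{PetSun21}. The splitting then follows by a short diagram chase once all but the bottom row of the $E_2$ page vanishes, with the reduced versus non-reduced dichotomy controlled by whether the normaliser quotient $\mathcal{N}(\pi_1 Y_i)/\pi_1 Y_i$ contributes non-trivially.

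For upper bound (1), Theorem~\ref{thm:mainintro} asserts that $X^*$ is itself a $K(\pi_1 X^*, 1)$, giving $cd(\pi_1 X^*) \leq \dim X^* = \max\{\dim X, \max_i \dim Y_i + 1\}$; to upgrade this to the sharper bound in terms of cohomological dimensions I would replace $X$ and each $Y_i$ by minimal-dimensional classifying spaces and reattach cones along the same Cohen-Lyndon pattern, verifying asphericity of the resulting model via the free-product decomposition in a homotopy pushout. For (2), Theorem~\ref{thm:mainintro2} provides $\bar X^*$ as a model for $\underbar E \pi_1 X^*$, and since the dimension of any such model bounds $cd_\rationals$ from above, the rational dimension bound follows immediately. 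For (3), Lemma~\ref{lem:vcdintro} supplies a torsion-free finite-index subgroup $H \leq \pi_1 X^*$; lifting $X^*$ along the corresponding finite regular cover $\widehat X \to X$ produces a reduced $C(9)$ cubical presentation to which (1) applies, yielding $vcd(\pi_1 X^*) = cd(H)$ within the claimed bound. I expect the main obstacles to be verifying that replacing cube complexes with abstract classifying spaces preserves asphericity in (1), and that passing to the finite cover preserves the $C(9)$ condition together with reducedness in (3); both should follow from standard covering-space and homotopy-pushout arguments applied to the Cohen-Lyndon setup.
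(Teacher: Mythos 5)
Your overall strategy matches the paper's: Theorem~\ref{thm:clintro} plus the Petrosyan--Sun machinery for the algebraic statements, and Theorems~\ref{thm:mainintro},~\ref{thm:mainintro2} for the dimension bounds via classifying spaces. The paper does not give a self-contained proof of this introductory corollary, but the pieces appear as Corollary~\ref{cor:dimensions} (lower bound, upper bound (1), and the reduced (co)homology formula) and Lemma~\ref{lem:vcd} (upper bound (3)), with (2) following from Theorem~\ref{thm:asphfinite} and the standard fact that $\dim\underbar{E}\Gamma$ bounds $cd_\rationals(\Gamma)$.

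The one genuine divergence is in your treatment of upper bound (1). You first obtain the weaker bound $cd(\pi_1 X^*)\leq\dim X^*$ from Theorem~\ref{thm:mainintro} and then propose to upgrade it to the sharper bound by swapping $X$ and the $Y_i$ for minimal-dimensional classifying spaces and re-running a bordification-style asphericity argument. This \emph{can} be made to work (it is in spirit a version of the paper's Theorem~\ref{thm:asphericalreloaded}), but it is considerably more delicate than needed: one would have to re-verify the $\pi_2$-vanishing and the Cohen--Lyndon combinatorics in the new, non-cubical model. The paper instead obtains the sharp bound directly and purely algebraically from \cite[Corollary 2.2(ii)]{PetSun21} applied to the triple $(\pi_1X,\{\mathcal{N}(\pi_1Y_i)\},\{\pi_1 Y_i\})$, using that reducedness forces $\mathcal{N}(\pi_1 Y_i)=\pi_1 Y_i$. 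You would save yourself real work by taking that route, reserving the topological bordification for the finiteness-property statement (Corollary~\ref{cor:finprop}).

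For (3), you assert without argument that lifting $X^*$ along the finite regular cover $\check X\to X$ yields a \emph{reduced} presentation. This is true, but it is worth recording why: by Lemma~\ref{lem:vcd}'s first paragraph the group $\pi_1\check X^*$ is torsion-free, and by~\cite[3.9]{Arenas2023pi2} the stabiliser of a cone-vertex in $\bar{\check X}^*$ is exactly the normaliser quotient $\mathcal{N}_{\pi_1\check X}(\pi_1\check Y_i)/\pi_1\check Y_i$; since this group is finite by the index hypothesis and embeds in the torsion-free $\pi_1\check X^*$, it must be trivial, giving reducedness. Without this observation, a direct application of Petrosyan--Sun to the lifted presentation would stall, because a nontrivial finite normaliser quotient has infinite cohomological dimension.

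The remaining components -- the lower bound from the short exact sequence (you use the LHS spectral sequence; the paper cites \cite[VIII~2.4]{BrownBook82and94}, which is the same bound), the rational cd bound in (2), and the (co)homology decomposition from Proposition~\ref{prop:petsun1} -- are correct and essentially identical to the paper's.
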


\subsection{Applications} The results in this work can be applied  to a host of examples coming from the hyperbolic and relatively hyperbolic settings. In particular, Theorem~\ref{thm:mainintro} provides an alternative proof of the cohomological dimension bounds obtained by the author in~\cite{Arenas2023} for certain examples of exotic hyperbolic groups. Theorems~\ref{thm:mainintro},~\ref{thm:mainintro2}, and~\ref{thm:clintro}, together with their corollaries, can also be applied to the cubical presentations obtained in~\cite{WiseIsraelHierarchy} for high-power cyclic quotients of cubulated hyperbolic groups and in~\cite{FuterWise16} for random (in a suitable sense) quotients of hyperbolic cubulated groups.

Our results also have applications in the absence of any strong form of hyperbolicity: 
using CAT(0) geometry, Charney proved in~\cite{Charney00} that the $K(\pi,1)$ conjecture -- which remains open in general -- holds for all locally reducible Artin groups. This class of Artin groups contains in particular all Artin groups with  no labels in $\{3,4\}$. In Subsection~\ref{subsecartin}, we give cubical small-cancellation presentations to which Theorem~\ref{thm:mainintro} can be applied for all Artin groups with no $\{3,4\}$ labels. This provides a new, purely topological description of finite models for their classifying spaces.

 Assuming the $K(\pi,1)$ conjecture, Davis and Leary computed the $\ell^2$-cohomology of Artin groups in~\cite{DavisLeary03}. For right-angled Artin groups, their cohomology with integer coefficients  can be easily computed from their respective Salvetti complexes; outside of the right-angled case, even if one assumes the $K(\pi,1)$ 
conjecture, there isn't even a calculation of the rational cohomology or homology of an arbitrary Artin group. Via Theorem~\ref{thm:clintro}, we obtain in Corollary~\ref{cor:artins}  explicit  formulas for many of the homology and cohomology groups with arbitrary coefficients of Artin groups with no $\{3,4\}$ labels.

We also describe cubical small-cancellation presentations for some families of Dyer groups, Shephard groups, and  other quotients of right-angled Artin groups,  and use these to derive similar corollaries as in the Artin group case. To the best of our knowledge, these results  constitute the first explicit cohomological  calculations for such classes. Dyer and Shephard groups have recently received a considerable amount of interest (see~\cite{Goldman23, Soergel24}, and the references therein).

\subsection{Theorem~\ref{thm:clintro}  and the Cohen-Lyndon property for triples}\label{subsec: triples}

In the paper~\cite{Sun20}, Sun obtains the following version of the Cohen-Lyndon property:

\begin{theorem}\label{thm:sun intro}
Let  $\Gamma$ be a group and $\mathcal{H}=\{H_1, \ldots, H_n\}$ be a family of hyperbolically embedded subgroups of $\Gamma$. Then the triple $(\Gamma, \mathcal{H}, \mathcal{K})$  where $\mathcal{K}=\{K_1 \triangleleft H_1, \ldots, K_n \triangleleft H_n\}$ has
the Cohen-Lyndon property  for all sufficiently deep $K_i \subset H_i$ in the following sense: there exist full left transversals $T_i$ of  $H_i\nclose{\mathcal{K}}$  such that $$\nclose{\mathcal{K}}= \ast_{i \in I, t \in T_i} \langle K_i\rangle ^t.$$
\end{theorem}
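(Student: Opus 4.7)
The plan is to realize the Cohen-Lyndon decomposition geometrically through the theory of \emph{rotating families} on hyperbolic spaces, which is the natural analogue of the classical $C(7)$ small-cancellation argument in the hyperbolically embedded setting. Since $\mathcal{H}$ is hyperbolically embedded in $\Gamma$, one has an associated hyperbolic graph $X$ (for instance, the cone-off of a suitable relative Cayley graph) on which $\Gamma$ acts with a $\Gamma$-invariant collection of apices $\{g\cdot a_i\}$ satisfying $\mathrm{Stab}(a_i) = H_i$. Each $K_i^g$ then fixes the apex $g \cdot a_i$ and acts by ``rotations'' about it.

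The first step is to invoke the Dahmani--Guirardel--Osin structure theorem for rotating families: when each $K_i$ is sufficiently deep --- meaning that the shortest nontrivial element of $K_i$ has large length with respect to the relative metric coming from the hyperbolic embedding --- the family $\{K_i^g\}_{i,g}$ is a \emph{very rotating family}, and one recovers simultaneously the classical injectivity $H_i \cap \nclose{\mathcal{K}} = K_i$ (group-theoretic Dehn filling) and the freeness of the action of $N := \nclose{\mathcal{K}}$ on a modified hyperbolic space $X'$ obtained by coning down the apex orbits. The Cohen-Lyndon decomposition then corresponds to a Bass--Serre-style analysis of this action: the $N$-orbits of apices of type $i$ are parameterised by the cosets $\Gamma/(H_i N)$, so a choice of full left transversal $T_i$ of $H_iN$ in $\Gamma$ labels them, and each orbit contributes a single elementary rotation factor $\langle K_i\rangle^t$, yielding
$$N \;=\; \ast_{i \in I,\, t \in T_i} \langle K_i \rangle^t.$$

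The main obstacle is extracting the \emph{free product} structure, not just the injectivity statement. Concretely, one must show that any reduced word $w = k_1 \cdots k_m$ with $k_j \in \langle K_{i_j}\rangle^{t_j} \setminus \{1\}$ and with consecutive factors having distinct apices represents a nontrivial element of $N$. The argument is a ping-pong on $X'$: the sufficient-depth hypothesis guarantees that each rotation $k_j$ displaces points away from its fixed apex by a definite amount, so the partial products $k_1 \cdots k_j$ drive a chosen basepoint along a quasi-geodesic whose length grows linearly in $m$, hence cannot close up. Equivalently, one can argue by contradiction through a minimal disc diagram, using the large-rotation hypothesis to preclude any diagram whose boundary word is a non-trivial relation among the proposed free factors. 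The technical heart is calibrating the ``sufficiently deep'' threshold to the hyperbolicity constant of $X$ and to the injectivity/properness data of the hyperbolic embedding so that the ping-pong constants close up.
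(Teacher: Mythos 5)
This statement is quoted as a result of Sun~\cite{Sun20}; the paper does not prove it (see the sentence introducing Theorem~\ref{thm:sun intro} in Subsection~\ref{subsec: triples}). So there is no proof of the paper's to compare against, and the only thing to assess is whether your sketch is a plausible route to Sun's theorem.

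At a high level the sketch is sound and is indeed the route that Sun's argument follows: hyperbolically embedded families give rise (after passing to a suitable coned-off space) to very rotating families in the sense of Dahmani--Guirardel--Osin, once the $K_i\triangleleft H_i$ are deep enough, and the DGO machinery simultaneously supplies the Dehn-filling injectivity $H_i\cap\nclose{\mathcal{K}}=K_i$ and a free product decomposition of $\nclose{\mathcal{K}}$ into apex stabilisers. Your double-coset bookkeeping is also correct: since $N:=\nclose{\mathcal{K}}$ is normal, the $N$-orbits of type-$i$ apices are indexed by $\Gamma/H_iN$, so a full left transversal $T_i$ of $H_iN$ labels them, each orbit contributing a single free factor $K_i^t$. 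The normality $K_i\triangleleft H_i$ is what makes the family of conjugates of $K_i$ fixing a given apex a single group rather than an $H_i$-indexed union, and this is worth making explicit since it is a hypothesis of the theorem.

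Where the sketch hands-waves is in the third paragraph. You propose to re-derive the free product structure by a bespoke ping-pong, or alternatively ``by contradiction through a minimal disc diagram,'' but in the rotating-family setting the free product decomposition is precisely the content of the DGO Greendlinger-type (windmill) theorem, and one should simply invoke it rather than reprove it; the ``minimal disc diagram'' phrase in particular imports a small-cancellation technique that has no obvious counterpart here without a substantial construction. The genuine technical content that must be supplied, and that Sun actually supplies, is different from what you emphasise: (i) producing, from an abstract hyperbolic embedding (not assuming $\Gamma$ acts on a fixed hyperbolic space with apices a priori), a uniform very rotating family so that ``sufficiently deep'' depends only on the embedding data; and (ii) upgrading the abstract DGO free product indexing to the precise claim that $T_i$ is a \emph{full} left transversal of $H_iN$, i.e.\ that every coset appears exactly once. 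Neither is a ping-pong estimate, so as written the sketch spends effort on a step DGO already gives for free and underweights the steps that require care.
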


We will not define hyperbolically embedded subgroups here, but note that the hypotheses of the theorem are satisfied if $\Gamma$ is hyperbolic relative to $\mathcal{H}$. 

The main difference between Theorem~\ref{thm:clintro} and Sun's result is that in the former, we are able to prescribe the collection $\mathcal{K}$, and in the later, one can prescribe the $\mathcal{H}$, but the $K_i$ depend on the $H_i$. 
Thus, Theorem~\ref{thm:sun intro} cannot be used to deduce asphericity for $X^*$. Indeed, if  $\mathcal{K}$ is a fixed collection of subgroups of $\Gamma$, and we wish to understand the quotient $\Gamma/\nclose{\mathcal{K}}$, then having  control over a quotient $\Gamma/\nclose{\mathcal{K}'}$ where the  $K_i' \in  \mathcal{K}'$ are subgroups of the $K_i \in \mathcal{K}$  gives us no information about the structure of the original quotient. This is already evident for  a ``classical" group presentation $\mathcal{P}=\langle s_1, \ldots\, s_k \mid r_i, \ldots, r_\ell \rangle$,  where one can modify the $r_i$'s  by raising them to large enough proper powers to obtain a small-cancellation presentation $\mathcal{P'}$, but the group defined by $\mathcal{P'}$, generically, has no relation whatsoever with the group defined by $\mathcal{P}$. Sun's Theorem is a powerful tool for producing examples, but does not allow us to work directly with a prescribed quotient $\Gamma/\langle\langle H \rangle \rangle$.

In the cubical setting, an instance in which it is clear that direct control over $\pi_1 X^*=\pi_1X/\nclose{\pi_1Y_i}$ is essential -- rather than control over some deeper quotient of $\pi_1 X$ --  is when considering examples of well-known classes of groups that arise naturally as  fundamental groups of cubical presentations, and that are ``rigid" in the sense that almost any modification to the subgroups being quotiented yields a group that is not within the desired class. In this paper, the main examples of this are given by Artin groups and their generalisations (Dyer and Shephard groups). See Subsection~\ref{subsecartin} for these constructions.

\subsection{Strategy and structure}
In Section~\ref{sec:back} we give the necessary background on cubical presentations and diagrams. We also define the Cohen-Lyndon property and briefly survey some instances in which it is known to hold.
In Section~\ref{sec:dis} we define a certain topological cover (the ``three-way decomposition'') of the cubical part $\hat X\subset \widetilde X^*$, which should be thought of as a cubical Cayley graph, and analyse the connectivity properties of intersection of elements in this cover. 
In Section~\ref{sec:order} we use these results to ``rebuild'' the space $\hat X$ in such a way that we retain control over its homotopy type after each step in the reconstruction.   
In Section~\ref{sec:mains} we prove Theorem~\ref{thm:clintro}. The proof  is topological, and relies heavily on the  tools of cubical small-cancellation theory and on the strategy of the proof of the main theorem in~\cite{arenas2023cohenlyndon}.
In Section~\ref{sec:mains} we explain how to derive asphericity results from Theorem~\ref{thm:clintro}. In Section~\ref{sec:applications}, we outline some applications, and in Section~\ref{sec:comments} we state some questions and problems.

\subsection*{Acknowledgements} I would like to thank Henry Wilton, Daniel Groves, and Mark Hagen for many suggestions and conversations, Ruth Charney, Ian Leary, and Stefan Witzel  for pointing out an inaccuracy in a preliminary version of the description of the classifying spaces in Subsection~\ref{subsecartin}, Kevin Schreve for pointing out a mistake in the statement of Corollary~\ref{cor:artins}, and Yago Antol\'in, Rachael Boyd, Martin Blufstein, Ruth Charney, Ian Leary, Nicolas Vaskou, and especially Alexandre Martin for explanations about various aspects of Artin groups.

\section{Background}\label{sec:back}

We assume familiarity with the basic background on cube complexes. Particularly with the definitions of \emph{non-positively curved cube complex}, \emph{local isometry}, \emph{midcube}, \emph{hyperplane}, and \emph{hyperplane carrier}. See for instance~\cite{Sageev95, GGTbook14, Arenas2023thesis}.

\subsection{Cubical small-cancellation notions} 
\label{subsec:cubsc}
Unless noted otherwise, all definitions and results concerning cubical small-cancellation theory recounted in this section originate in~\cite{WiseIsraelHierarchy}.

A \textit{cubical presentation} $\langle X \mid \{Y_i\}  \rangle$ consists of a connected non-positively curved cube complex $X$ together with a collection of local isometries of connected non-positively curved cube complexes $Y_i \overset{\varphi_i} \longrightarrow X$. Local isometries of non-positively curved cube complexes are $\pi_1$-injective, so it makes sense to define the \emph{fundamental group of a cubical presentation} as $\pi_1 X/\nclose{\{\pi_1 Y_i\}}$. This group is isomorphic to the fundamental group of the \emph{coned-off space} $X^*$ obtained  from the mapping cylinder $$(X \cup \{Y_i \times [0,1]\})/\{(y_i, 1)\sim\varphi_i(y_i)\}$$ by collapsing each $Y_i\times \{0\}$ to a point. 

We identify $X^*$  with the corresponding cubical presentation $\langle X \mid\{Y_i\}  \rangle$  and write $X^*= \langle X \mid\{Y_i\}  \rangle$ throughout.

\begin{examples} Readily available examples of cubical presentations include:
\begin{enumerate}

\item A group presentation $\langle a_1,  \ldots, a_s \mid r_1, \ldots, r_m \rangle$  can be interpreted cubically  by letting $X$ be a bouquet of \textit{s} circles and letting each $Y_i$ map to the path determined by $r_i$.
\item A graphical presentation,  in the sense of~\cite{RipsSegev87} and~\cite{Gromov2003}, where $X$ is a bouquet of circles, the $Y_i$ are finite graphs and the maps $Y_i \rightarrow X$ are graph immersions.
\item   For every non-positively curved cube complex $X$ there is a ``free" cubical presentation $X^*=\langle X \mid \ \rangle$ with fundamental group $\pi_1X=\pi_1X^*$.
\item For every non-positively curved cube complex $X$, if $\hat X \rightarrow X$ is a covering, then $X^*=\langle X \mid \hat X \rightarrow X \ \rangle$ is a cubical presentation. In particular, if $\hat X \rightarrow X$ is a finite-degree covering, then $\pi_1 X^*$ is finite.
\end{enumerate}
\end{examples}

We will discuss more sophisticated cubical presentations in Section~\ref{sec:applications}.

\begin{definition}[Elevations] Let $Y \rightarrow X$ be a map and $\hat X \rightarrow X$ a covering map. An \emph{elevation} $\hat Y \rightarrow \hat X$ is a map satisfying
\begin{enumerate}
\item $\hat Y$ is connected,
\item the composition $\hat{Y} \rightarrow Y \rightarrow X$ equals $\hat{Y} \rightarrow \hat X \rightarrow X$, and
\item assuming all maps involved are basepoint preserving, $\pi_1 \hat{Y}$ equals the preimage of $\pi_1 \hat{X}$ in $\pi_1 Y$.
\end{enumerate}
\end{definition}

 In the context that concerns us, elevations will always be either:  
\begin{enumerate}
\item Elevations of a map $Y \rightarrow X$ to the universal cover $\widetilde X \rightarrow X$, which are denoted $\widetilde Y \rightarrow X$, since $\widetilde Y$ is indeed a copy of the universal cover of $Y$ in $\widetilde X$. At times, we will distinguish various elevations using the action of $\pi_1X$ on $\widetilde X$. That is,  we choose a base elevation $\widetilde Y$ and  tag a translate $g\widetilde Y$ by the corresponding element $g \in \pi_1X$.
\item Elevations of $Y \rightarrow X$ to a covering space $\hat X \rightarrow X$ that arises as a subspace of $\widetilde{X^*}$, which are denoted $ Y \rightarrow X$. The choice of notation is motivated by Theorem~\ref{thm:embeds}, which implies  that under appropriate small-cancellation conditions (which are a standing assumption for the entirety of this text) elevations $Y \rightarrow X$ to $\hat X \rightarrow X$ are embeddings. As in the case of elevations to $\widetilde X$, we might distinguish various elevations to $\hat X$ using the action of $\pi_1X^*$ on $\widetilde{X^*}$. 
\end{enumerate}

In this paper, a path $\sigma \rightarrow X$ is always assumed to be a combinatorial path mapping to the 1-skeleton of $X$.

\begin{definition}[Pieces]\label{def:pieces}
Let $\langle X \mid \{Y_i\} \rangle$ be a cubical presentation.
An \emph{abstract contiguous cone-piece} of $Y_j$ in $Y_i$ is an intersection $\widetilde{Y}_j \cap \widetilde{Y}_i$ where either $i \neq j$ or where $i = j$
but $\widetilde{Y}_j \neq \widetilde{Y}_i$. A \emph{cone-piece} of $Y_j$ in $Y_i$ is a path $p \rightarrow P$ in an abstract contiguous cone-piece of $Y_j$ in $Y_i$.
An \emph{abstract contiguous wall-piece} of $Y_i$ is an intersection $N(H) \cap \widetilde{Y}_i$ where $N(H)$ is
the carrier of a hyperplane $H$ that is disjoint from $\widetilde{Y}_i$. To avoid having to deal with empty pieces, we shall assume that $H$ is dual to an edge with an endpoint on $\widetilde{Y}_i$. A \emph{wall-piece} of $Y_i$ is a path $p \rightarrow P$ in an abstract contiguous wall-piece of $Y_i$.

A \emph{piece} is either a cone-piece or a wall-piece.
\end{definition}

 In Definition~\ref{def:pieces}, two lifts of a cone $Y$ are considered identical if they differ by an element of $Stab_{\pi_1X}(\widetilde Y)$. This is analogous to the conventions of classical small cancellation theory, where overlaps between a relator and any of its cyclic permutations are not regarded as pieces.

\begin{definition}
Let $Y \rightarrow X$ be a local isometry. $Aut_X(Y)$ is the group of combinatorial automorphisms $\psi: Y \rightarrow Y$ such that the diagram below is commutative:
\[\begin{tikzcd}
Y \arrow[r, "\psi"] \arrow[rd] & Y \arrow[d] \\
                               & X          
\end{tikzcd}\]
If $Y$ is simply connected, then $Aut_X(Y)$ is equal to $Stab_{\pi_1X}(Y)$. In general, $Aut_X(Y)\cong (\mathcal{N}_{Aut_X(\widetilde Y)}\pi_1Y)/\pi_1Y$, where $\mathcal{N}_G(H)$ is the normaliser of $H$ in $G$. 

See~\cite{ArHag2021} for a detailed discussion.
\end{definition}

We assume in all that follows that $\pi_1 Y$ is non-trivial, and additionally, as is standard to assume in this framework~\cite[3.3]{WiseIsraelHierarchy}, that $\pi_1Y$ is normal in $Stab_{\pi_1X}(\widetilde Y)$. 
This is equivalent to requiring that each element of $Stab_{\pi_1X}(\widetilde Y)$ projects to an element of $Aut_X(Y)$. We note that the small-cancellation conditions assumed throughout this paper, together with the assumption that $\pi_1 Y$ is non-trivial, imply that $\pi_1Y$ is normal in $Stab_{\pi_1X}(\widetilde Y)$. Thus, for the results proven herein, there is no loss of generality in making this assumption. 

The $C(p)$  condition is now defined as in the classical case. Namely:

\begin{definition}
A cubical presentation $X^*$ satisfies the \textit{$C(p)$ small cancellation condition} if no essential closed path $\sigma \rightarrow X^*$ is the concatenation of fewer than $p$ pieces.
\end{definition}

\subsection{Diagrams in 2-complexes and cubical presentations}\label{subsec: diagrams}

We will concern ourselves with three (topological) types of diagrams: \begin{enumerate}
\item A \emph{disc diagram} $D$ is a compact contractible combinatorial 2-complex, together with an embedding $D \hookrightarrow S^2$. The \emph{boundary path} $\partial D$ is the attaching map of the 2-cell ``at infinity'', i.e., the unique 2-cell in $S^2-Im(Int(D))$.
\item An \emph{annular diagram} $A$ is a compact combinatorial 2-complex homotopy equivalent to $S^1$, together with an embedding $A \hookrightarrow S^2$, which induces a cellular structure on $S^2$. The \emph{boundary paths} $\partial_{in}A$ and $\partial_{out}A$ of $A$ are the attaching maps of the two 2-cells in this cellulation of $S^2$ that do not correspond to cells of $A$. 
\item A \emph{Möbius diagram} $M$ is a compact combinatorial 2-complex homotopy equivalent to $S^1$, together with an embedding $M \hookrightarrow \reals P^2$. As for disc and annular diagrams, the embeddings induces a cellular structure on $\reals P^2$, and the \emph{boundary path} $\partial M$ is the attaching map of the 2-cell  that does not correspond to a cell of $M$. 
\end{enumerate} 

 A disc diagram $D$ is \emph{collared} by an annular diagram $A $ if $\partial D=\partial_{in} A$; similarly, $D$ is collared by a Möbius diagram $M$ if $\partial D=\partial M$. We assume moreover that the disc diagram $D$  does not contain any $2$-cell of $A$ or $M$, respectively. The ``moreover'' is a slight deviation from the  notion of a collared digram considered elsewhere in the literature, but will be more convenient for our applications.

A \emph{disc (resp. annular, resp. Möbius) diagram in a complex X} is a combinatorial map $D \rightarrow X$  (resp. $A \rightarrow X$, resp. $M \rightarrow X$).
Finally, a \emph{square disc (resp. annular, resp. Möbius) diagram} is a disc (resp. annular, resp. Möbius) diagram that is also a cube complex.

\begin{definition}[Diagrammatical objects in $X^*$]
The coned-off space $X^*$ consists of $X$ with a cone on $Y_i$ attached to $X$ for each $i$. The vertices of the cones on $Y_i$'s are the \emph{cone-vertices} of $X^*$. The cellular structure of $X^*$ consists of all the original cubes of $X$, and the pyramids over cubes in $Y_i$ with a cone-vertex for the apex. Let $D \rightarrow X^*$ be a disc diagram in a cubical presentation. The vertices in $D$ which are mapped to the cone-vertices of $X^*$ are the \emph{cone-vertices} of $D$. 
Triangles in $D$ are naturally grouped into cyclic families meeting around a cone-vertex.
Each such family forms a subspace of $D$ that is a cone on its bounding cycle. A  \emph{cone-cell} of $D$ is a cone that arises in this way.
When analysing diagrams in a cubical presentation we ``forget'' the subdivided cell-structure of a cone-cell $C$ and regard it simply as a single $2$-cell.
\end{definition}

All disc diagrams in this paper are either square disc diagrams,  or  disc diagrams in the coned-off space associated to a cubical presentation. 

\begin{definition}[Square-disc behaviours]\label{def:squarebee}
 A \emph{dual curve} in a square disc diagram is a path that is a concatenation of midcubes. The 1-cells crossed by a dual curve are \emph{dual} to it.
 A \emph{bigon} is a pair of dual curves that cross at their first and last midcubes. A \emph{monogon} is a single dual curve that crosses itself at its first and last midcubes. A \emph{nonogon} is a single dual curve of length $\geq1$ that starts and ends on the same dual 1-cell, thus it corresponds to an immersed cycle of midcubes. A \emph{spur} is a vertex of degree $1$ on $\partial D$.

A \emph{corner} in a diagram $D$ is a vertex $v$ that is an endpoint of consecutive edges $a,b$ on $\partial D$ lying in a square $s$. We will abuse notation and use the term \emph{corner} also to refer to $s$.
 A \emph{cornsquare} -- short for ``generalised corner of a square''-- consists of a square $s$ and dual curves $p, q$ emanating from consecutive edges $a, b$ of $s$ that terminate on consecutive edges $a',b'$ of $\partial D$. The \emph{outerpath} of the cornsquare is the path $a'b'$ on $\partial D$. Note that, in particular, a corner is a cornsquare.
 A \emph{cancellable pair} in $D$ is a pair of $2$-cells $R_1, R_2$ meeting along a path $e$ such that the following diagram commutes:
 
 \[\begin{tikzcd}
                         & e \arrow[ld] \arrow[rd] &                          \\
\partial R_1 \arrow[rd] \arrow[rr]  &                         & \partial R_2 \arrow[ld] \\
                         & X                       &                         
\end{tikzcd}\]

Given a disc diagram $D$, a cancellable pair  in $D$ leads to a smaller area disc diagram via the following procedure: cut out $e\cup Int(R_1)\cup Int(R_2)$ and then glue together the paths $\partial R_1-e$ and $\partial R_2-e$ to obtain a diagram $D'$ with $\area(D')=\area(D)-2$ and $\partial D'=\partial D$.
\end{definition}

\begin{lemma}\label{lem:discpatho}\cite[2.3+2.4]{WiseIsraelHierarchy}	Let $D \rightarrow X$ be a disc diagram in a non-positively curved cube complex. If $D$ contains a bigon or a nonogon, then there is a new diagram $D'$ having the same boundary path as $D$, so $\partial D' \rightarrow X$ equals $\partial D \rightarrow X$, and such that $Area(D')\leq Area(D)-2$.
Moreover,  no disc diagram in $X$ contains a monogon, and if $D$ has minimal area among all diagrams with boundary path $\partial D$, then $D$ cannot contain a bigon nor a nonogon.
\end{lemma}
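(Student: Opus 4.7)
The plan is to exploit the fact that each dual curve in a square disc diagram $D \rightarrow X$ tracks a single hyperplane of $X$, together with the rigidity of transverse hyperplane intersections in a non-positively curved cube complex (two transverse hyperplanes of $X$ meet in a unique square, and distinct midcubes of one square lie in distinct hyperplanes). I will treat the three pathologies --- monogon, bigon, nonogon --- in turn, and then read off the minimality statement.

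The monogon case is immediate and requires no minimality hypothesis. If a single dual curve in $D$ begins and ends by crossing its own first midcube inside a single square $s$, then its first and last midcubes are two perpendicular midcubes of $s$, hence belong to two distinct, transverse hyperplanes of $X$. But they also lie on the same dual curve, so they belong to the same hyperplane --- a contradiction. Hence no disc diagram in $X$ contains a monogon.

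For bigons, I pick an innermost one, bounded by dual curves $\alpha$ and $\beta$ crossing in squares $s_1$ and $s_2$ of $D$. Writing $H_\alpha, H_\beta$ for the hyperplanes of $X$ carried by $\alpha,\beta$, both $s_1$ and $s_2$ map into the unique square $s$ of $X$ where $H_\alpha$ and $H_\beta$ cross. Inside the innermost bigon, I slide $s_1$ along $\alpha$ towards $s_2$ by a sequence of hexagon (square-shuffling) moves; each such move preserves $\partial D$ and $\area(D)$, and terminates once $s_1$ and $s_2$ are adjacent along an edge $e$, at which point they form a cancellable pair in the sense of Definition~\ref{def:squarebee}. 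Cancelling this pair produces $D'$ with $\partial D' = \partial D$ and $\area(D') = \area(D) - 2$. The nonogon case is analogous: an innermost nonogon encloses an immersed cycle of midcubes, all mapping to a single hyperplane $H$ of $X$; the two terminal squares of the dual curve map to a common square of $N(H)$, and hexagon moves inside the enclosed subdiagram again yield a cancellable pair.

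The main obstacle is controlling the hexagon-sliding step: one must guarantee that, inside the innermost bigon or nonogon, no other dual curve obstructs the move that drags $s_1$ toward $s_2$. The innermost assumption ensures that any other dual curve meeting the interior enters and exits through $\alpha \cup \beta$ (respectively through the nonogon's dual curve), so by induction on the area of the enclosed subdiagram one can assume such dual curves have already been removed. With these three cases in hand, the ``moreover'' statement is immediate: if $D$ had minimal area among diagrams with boundary path $\partial D$, then any bigon or nonogon in $D$ would produce a strictly smaller such diagram, contradicting minimality.
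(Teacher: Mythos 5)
Your monogon argument has a real gap: you claim that the two perpendicular midcubes of a square of $X$ necessarily lie in two distinct hyperplanes of $X$, but this is a feature of \emph{special} cube complexes, not of merely non-positively curved ones. NPC cube complexes can have self-intersecting (immersed) hyperplanes --- ruling these out is precisely one of the clauses in Haglund and Wise's definition of specialness --- so a single dual curve of $D$ really can map through both perpendicular midcubes of a square of $X$ without any contradiction arising. The absence of monogons is a combinatorial fact about the square disc diagram $D$ itself, and does not follow from the way hyperplanes sit in the target.

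The bigon and nonogon reductions have related problems, even though the overall strategy (locate a cancellable pair, cancel, repeat) is the right one. In general there is no ``unique square $s$ of $X$ where $H_\alpha$ and $H_\beta$ cross'': two hyperplanes already cross in two opposite faces of a $3$-cube, and since $X$ is only NPC you could even have $H_\alpha = H_\beta$ for a self-crossing hyperplane, so $s_1$ and $s_2$ need not map to a common square of $X$ for the reason you give. What actually forces them to do so, once you have arranged that they share two consecutive edges meeting at a vertex $v$, is the Gromov link condition at the image $\bar v$ in $X$: the link of $\bar v$ is a (flag) \emph{simplicial} complex, hence admits at most one edge joining a given pair of vertices, so the two corners at $\bar v$ must span the same square. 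Finally, a hexagon move is available only when the three relevant squares of $D$ bound a $3$-cube of $X$, which never happens when $\dim X = 2$; thus ``slide $s_1$ toward $s_2$ by hexagon moves'' is not a step one can take in general, and the mechanism for bringing the two crossing squares together must be argued differently (e.g.\ by passing to a bigon of minimal area rather than by shuffling).
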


\begin{definition}A pair of cone cells $C,C'$ in $D$ is \emph{combinable} if they  map to the same cone $Y$ of $X^*$ and $\partial C$ and $\partial C'$ both pass through a vertex $v$ of $D$, and  map to closed paths
at the same point of $Y$ when regarding $v$ as their basepoint.

As the name suggests, such a pair can be combined to simplify the diagram by replacing the pair with a single cone-cell mapping to $Y$ and whose boundary is the concatenation $\partial C \partial C'$.
\end{definition}

\begin{definition}\label{def:reduced} A \emph{disc diagram} $D\rightarrow X^*$ is \emph{reduced} if the following conditions hold:
\begin{enumerate}
\item \label{it:r1} There is no bigon in a square subdiagram of $D$.
\item \label{it:r2} There is no cornsquare whose outerpath lies on a cone-cell of $D$. 
\item \label{it:r3} There does not exist a cancellable pair of squares. 
\item \label{it:r4} There is no square $s$ in $D$ with an edge on a cone-cell $C$ mapping to the
cone $Y$, such that $(C\cup s)\rightarrow X$ factors as $(C\cup s) \rightarrow Y \rightarrow X$.
\item \label{it:r5} For each internal cone-cell $C$ of $D$ mapping to a cone $Y$, the path $\partial C$ is essential in $Y$.
\item \label{it:r6} There does not exist a pair of combinable cone-cells in $D$.
\end{enumerate}
\end{definition}

\begin{definition}\label{def:mincomp}
The \emph{complexity} $Comp(D)$ of a disc diagram $D\rightarrow X^*$ is the ordered pair $(\#\text{Cone-cells}, \#\text{Squares})$.
We order the pairs lexicographically: namely $(\#C,\#S) < (\#C',\#S')$ whenever $\#C<\#C'$ or $\#C=\#C'$ and $\#S < \#S'$.
A disc diagram $D\rightarrow X^*$ has \emph{minimal complexity} if no disc diagram $D'\rightarrow X^*$ having $\partial D =\partial D'$ has $Comp(D')< Comp(D)$.
\end{definition}

Definitions~\ref{def:reduced} and~\ref{def:mincomp} also make sense for annular diagrams and Möbius diagrams. Note also that if a disc (resp. annular, resp. Möbius) diagram has minimal complexity, then it is also reduced, as any of the pathologies in Definition~\ref{def:reduced} would indicate a possible complexity reduction (see~\cite[3.e]{WiseIsraelHierarchy}). The converse is not necessarily the case.

\begin{definition}[Shell]
A \emph{shell} of $D$ is a 2-cell $C \rightarrow D$
whose boundary path $\partial C \rightarrow D$ is a concatenation $QP_1 \cdots P_k$ for some $k \leq 4$
where $Q$ is a boundary arc in $D$ and $P_1, \ldots , P_k$ are non-trivial pieces in the interior of $D$.
The arc $Q$ is the \emph{outerpath} of $C$ and the concatenation $S:=P_1 \cdots P_k$ is the \emph{innerpath} of $C$.  
\end{definition}

\begin{remark}
Note that if a cubical presentation $X^*$ satisfies the $C(p)$ condition and $D$ is a minimal complexity diagram, then the outerpath of a shell in $D$ is the concatenation of $\geq p-4$ pieces.
\end{remark}

A \emph{pseudo-grid} between $\nu$ and $\mu$ is a square disc diagram $E$ where $\partial  E=\nu \rho \mu^{-1}\varrho^{-1}$ such that each dual curve starting on $\nu$ ends on $\mu$, and vice versa, and where no dual curves starting on $\nu$ cross each other. A pseudo-grid is a \emph{grid} if there are no cornsquares in $E$ with outerpath on $\rho$ nor $ \varrho$.

\begin{definition}[Ladder]
A \emph{ladder} is a disc diagram $L$ admitting an ordering of $n > 2$ cone-cells
and/or vertices $C_1, C_2, \ldots , C_n$ and (possibly trivial) pseudo-grids joining them, where all the cone-cells in $L$ are ordered, and such that, for every $1 < i < n$, the closure of $C_i$ separates $C_{i-1}$ from $C_{i+1}$,  so that:
\begin{enumerate}
\item $\partial L$ is a concatenation $P_1 P_2$ where the initial and terminal points of $P_1$
lie on $C_1$ and $C_n$ , respectively.
\item $P_1 = \alpha_1 \rho_1 \alpha_2 \rho_2 \cdots \alpha_n$ and $P_2 = \beta_1 \varrho_1 \beta_2 \varrho_2  \cdots \beta_n$. 
\item $\partial C_i = \mu_i \alpha_i \nu_i\beta_i$ for each $i$, where $\mu_1$ and $\nu_n$ are trivial paths.
\item $\partial E_i = \nu_i \rho_i \mu^{-1}_{i+1} \varrho^{-1}_i$  for $1 \leq i < n$ and $E_i$ is a pseudo-grid from $\nu_i$ to $\mu_{i+1}$.
\end{enumerate}
Note that we allow the case of a square ladder, where $L$ is necessarily a single pseudogrid.

An \emph{annuladder} $A$ (resp. \emph{Möbiusladder} $M$) is an annular (resp. Möbius) diagram which also has the structure of a ladder, with the slight difference that we require that there be a cyclic ordering on $C_1, C_2, \ldots , C_n$, rather than a linear one.
In other words, the diagram $A$ (resp., $M$)  can be obtained from a ladder $L$ by gluing  $ C_1$ to  $ C_n$ along  (possibly trivial) paths $\sigma_1 \rightarrow \partial C_1, \sigma_n \rightarrow \partial C_n$.
\end{definition}

The main results of classical small-cancellation theory, such as Greendlinger's Lemma and the Ladder Theorem, have suitable analogues in the cubical setting: the cubical $C'(\frac{1}{14})$ versions were proven in~\cite{WiseIsraelHierarchy}; the versions we use are from~\cite{JankiewiczSmallCancellation}.

\begin{theorem}[Diagram Trichotomy/the Cubical Greendlinger Lemma]\label{thm:tric}
Let $X^*= \langle X\mid Y_1, \dots, Y_s\rangle$ be a cubical presentation satisfying the $C(9)$ condition, and let $D\to X^*$ be a disc diagram. Then one of the following holds:
\begin{itemize}
\item $D$ consists of a single cone-cell,
\item $D$ is a ladder, or
\item $D$ has at least $3$ shells and/or cornsquares and/or spurs. Moreover, if $D$ has no shells or spurs, then it has at least $4$ cornsquares.
\end{itemize}
\end{theorem}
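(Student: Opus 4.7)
The plan is to take a minimal-complexity representative of the diagram (hence reduced by Definition~\ref{def:reduced}) with the same boundary path as $D$, and then run a combinatorial Gauss--Bonnet argument with angles tailored to the $C(9)$ hypothesis. I would first dispose of the case where $D$ has no cone-cells: then $D$ is a reduced square disc diagram, and the absence of bigons, monogons, and nonogons (Lemma~\ref{lem:discpatho}) together with the standard dual-curve analysis in CAT(0) cube complexes forces $D$ to be a single vertex, a single pseudo-grid (hence a square ladder), or else to carry at least three spurs and/or corners --- four corners when there are no spurs.

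For $D$ with at least one cone-cell, the heart of the argument is an angle assignment. At each corner of a square I assign angle $\pi/2$, and at each corner of a cone-cell $C$ I assign angle $(n_C-2)\pi/n_C$, where $n_C$ is the number of pieces along a minimal decomposition of $\partial C$. The $C(9)$ hypothesis together with the reducedness clause~(\ref{it:r5}) (which makes $\partial C$ essential for each interior $C$) gives $n_C \geq 9$ for every cone-cell except possibly a cone-cell equal to all of $D$. Combinatorial Gauss--Bonnet then reads
$$\sum_{v\in D^{(0)}} \kappa(v) \;=\; 2\pi\,\chi(D) \;=\; 2\pi,$$
and the remaining clauses~(\ref{it:r1})--(\ref{it:r4}), (\ref{it:r6}) of Definition~\ref{def:reduced} are exactly what is needed to ensure that every interior vertex $v$ has $\kappa(v)\leq 0$: each clause rules out a local configuration that would otherwise create a positive interior angle defect, and interior cone-cells with all $n_C\geq 9$ pieces absorbed by interior arcs likewise contribute nonpositively.

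All of the mass $2\pi$ therefore sits on $\partial D$, and I would quantify its sources: a spur contributes $\pi$; a cornsquare contributes $\pi/2$ at its outer corner (routing the two dual curves of the cornsquare out to $\partial D$ produces a single corner-style defect, whether the outer edges sit on a square or on a cone-cell); and a shell cone-cell contributes at most $4\pi/n_C \leq 4\pi/9 < \pi/2$, because by definition its innerpath has at most $4$ pieces, so its outerpath has at least $n_C-4\geq 5$ pieces and the corresponding angular defect is strictly controlled. Combining these bounds with the Gauss--Bonnet identity forces at least three boundary features in total; and when there are no spurs and no shells every positive contribution is $\pi/2$, so at least four cornsquares are needed.

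The remaining issue, and what I expect to be the main obstacle, is ruling out the edge case of exactly two positive features when $D$ is neither a single cone-cell nor a ladder. Here I would argue that exactly two features on $\partial D$ combined with the curvature identity force the interior curvature to vanish everywhere, which in turn forces each interior cone-cell $C_i$ to have exactly two chains of pieces exiting it towards the rest of $D$ and to separate those two chains. The squares lying between consecutive cone-cells then assemble into pseudo-grids, because a cornsquare whose outerpath lay in the interior would either produce a third boundary feature after being pushed out to $\partial D$, or contradict clause~(\ref{it:r2}). This forces the cone-cells to line up with the separation property demanded by the definition of a ladder, completing the trichotomy.
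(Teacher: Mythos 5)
The paper does not prove Theorem~\ref{thm:tric}; it is imported from~\cite{JankiewiczSmallCancellation} (with the earlier $C'(\tfrac{1}{14})$ version due to~\cite{WiseIsraelHierarchy}), so there is no internal proof to compare your attempt against. Note also that the statement as printed omits the hypothesis that $D$ is reduced or of minimal complexity; without this the trichotomy is plainly false, and you correctly sense as much, but replacing $D$ by a minimal-complexity diagram with the same boundary path changes $D$, so it would be cleaner to record reducedness as a hypothesis than to smuggle it in as a first step.

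Evaluating the proposal on its own terms, the combinatorial Gauss--Bonnet framework is the right strategy and matches the cited sources, but the angle assignment is broken. Assigning the uniform angle $(n_C-2)\pi/n_C$ at every corner of a cone-cell $C$ produces positive curvature at interior vertices: if $v$ lies in the interior of a piece shared by two adjacent cone-cells $C,C'$ --- a configuration that no clause of Definition~\ref{def:reduced} forbids --- then the total angle at $v$ is $(n_C-2)\pi/n_C + (n_{C'}-2)\pi/n_{C'} < 2\pi$, so $\kappa(v)>0$, and your claim that clauses~(\ref{it:r1})--(\ref{it:r6}) force nonpositive interior curvature fails. The remedy used in the cited proofs is to assign angle $\pi$ to corners of $C$ at vertices lying in the interior of a single piece (so $\partial C$ is flat there) and concentrate the deficit at piece transitions, equivalently to push it into a face-curvature term for $C$ that the $C(9)$ hypothesis then controls. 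Your estimate that a shell contributes at most $4\pi/n_C < \pi/2$ to the boundary curvature is also unsubstantiated: with your angles, each interior vertex of the outerpath of a shell contributes $2\pi/n_C$ on its own, and an outerpath can have arbitrarily many interior vertices, so the total is not bounded by $4\pi/n_C$; some different bookkeeping (counting transitions, not vertices) is needed. Finally, the closing paragraph, which must show that a diagram with exactly two positive boundary features and at least one cone-cell is a ladder, is a sketch rather than an argument: producing the pseudo-grids and establishing the separation property of the cone-cells in the ladder definition is the technical heart of the result and cannot be asserted away.
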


\begin{theorem}[Ladder Theorem]\label{thm:ladder}
Let $X^*= \langle X\mid Y_1, \dots, Y_s\rangle$ be a cubical presentation satisfying the $C(9)$ condition, and let $D\to X^*$ be a reduced disc diagram in $X^*$. If $D$ has exactly two shells, then $D$ is a ladder.
\end{theorem}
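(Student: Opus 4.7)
My plan is to proceed by induction on the complexity $Comp(D)$, with the Diagram Trichotomy (Theorem~\ref{thm:tric}) providing the main structural dichotomy. The first step is to apply the trichotomy to $D$. Since $D$ has exactly two shells, the single-cone-cell case is excluded; if $D$ is a ladder we are done, so we assume $D$ falls into the third case with at least three boundary features among shells, cornsquares, and spurs. The exactly-two-shells assumption then forces the presence of at least one cornsquare or spur, and the strategy is to peel off this extraneous feature to produce a smaller reduced diagram with the same two shells and (essentially) the same boundary, to which the inductive hypothesis applies.

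Next I would handle the two subcases. A spur is a degree-one vertex on $\partial D$; removing it together with its incident edge produces a reduced diagram $D'$ of strictly smaller complexity, with the same two shells and with $\partial D'$ obtained from $\partial D$ by collapsing a trivial backtrack. The inductive hypothesis then yields that $D'$ is a ladder, into which the spur is reinserted via a degenerate pseudo-grid. For a cornsquare $(s;p,q)$ with outerpath $a'b'$, reducedness condition~\ref{it:r2} of Definition~\ref{def:reduced} guarantees that $a'b'$ does not lie on a cone-cell. I would then shuffle the dual curves $p$ and $q$ via square-diagram modifications to push $s$ toward $\partial D$ until $(s;p,q)$ becomes a genuine boundary corner at $a'b'$, and cancel this corner using Lemma~\ref{lem:discpatho}. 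This produces a reduced diagram $D''$ of strictly smaller complexity with the same two shells and the same boundary path up to homotopy. Induction then yields that $D''$ is a ladder, whence the ladder structure of $D$ follows by reinserting $s$ into the appropriate pseudo-grid.

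The hard part will be the cornsquare reduction. Performing the shuffling moves in the presence of interior cone-cells requires care: cone-cells may obstruct the sliding, and one must verify that each move preserves all six reducedness conditions of Definition~\ref{def:reduced}, in particular that no new cornsquares with outerpath on cone-cells (condition~\ref{it:r2}) or combinable cone-cell pairs (condition~\ref{it:r6}) are introduced. One also has to check that the final cancellation does not merge or expose shells, so that $D''$ retains exactly two shells and the induction actually closes. This bookkeeping, which can be viewed as a refined version of the cornsquare absorption arguments in~\cite{WiseIsraelHierarchy, JankiewiczSmallCancellation}, is the technical heart of the theorem, and the place where the $C(9)$ condition (via the shell piece count in the remark following the Shell definition) is genuinely used to prevent new obstructions from being created.
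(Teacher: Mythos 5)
The Ladder Theorem is quoted in this paper from~\cite{JankiewiczSmallCancellation} (and the $C'(\tfrac{1}{14})$ version from~\cite{WiseIsraelHierarchy}); the paper itself gives no proof, so there is no in-paper argument to compare against, and the proposal has to be judged on its own terms.

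The broad plan---invoke the Diagram Trichotomy, observe that the single-cone-cell case is excluded and the ladder case is the goal, and peel off an extraneous cornsquare or spur before applying an inductive hypothesis---is a reasonable first guess, and the spur case does go through essentially as you describe. However, I think there is a genuine gap in the cornsquare case, and it is not merely ``bookkeeping.'' When you push out a corner square $s$, the new boundary path of $D''$ passes through the two interior edges of $s$. Those edges may lie on the boundary of an internal cone-cell $C$ of $D$ (this is not forbidden by reducedness; condition~\ref{it:r4} only rules out the absorbable configuration). Exposing those edges lengthens $C$'s boundary arc and shortens its innerpath, which can drop the innerpath from $5$ pieces to $4$ or fewer. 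At that point $C$ becomes a third shell of $D''$, the hypothesis ``exactly two shells'' fails for $D''$, and your induction does not close. You flag the need to check that ``the final cancellation does not merge or expose shells,'' but you offer no argument for why this holds, and I do not believe it holds in general without either a more refined choice of which feature to remove or a strengthened inductive statement (e.g.\ one phrased in terms of positively curved boundary cells rather than shells). A symmetric difficulty appears at the reinsertion step: even when $D''$ is a ladder, gluing $s$ back along two consecutive edges of $\partial D''$ that happen to lie on a cone-cell $C_j$ of the ladder produces precisely the configuration this paper isolates under the name ``thick ladder'' (a square sitting at a transition), which is not a ladder. So ``$D''$ is a ladder $\Rightarrow$ $D$ is a ladder'' also needs an argument you have not supplied.

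In short: your reduction step does not visibly preserve the induction hypothesis, and your reconstruction step does not visibly preserve the conclusion, and both failures are structural rather than cosmetic. The known proofs of ladder theorems in this family (McCammond--Wise for classical $C(6)$, Wise and Jankiewicz for the cubical case) instead run a combinatorial Gauss--Bonnet/curvature-count argument, tallying the positive curvature contributed by shells, cornsquares, and spurs along $\partial D$ against the nonpositive interior curvature; two shells then force all remaining positive curvature to be organized into the pseudo-grids of a ladder. If you want to salvage an inductive argument, you would need to strengthen the statement to something that is actually monotone under the reduction you perform, and you would need to handle the reinsertion explicitly, which in practice leads you back toward the curvature bookkeeping anyway.
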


The Greendlinger Lemma implies elevations of cones are embeddings; we will use this fact extensively in what follows.

\begin{theorem}\label{thm:embeds}
Let $X^*=\langle X\mid \{Y_i\}\rangle$ be a cubical presentation satisfying the $C(9)$ condition. Then each elevation of $Y_i \rightarrow X$ to  $\hat X \rightarrow X$ is an embedding.
\end{theorem}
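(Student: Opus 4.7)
The plan is to argue by contradiction using the Cubical Greendlinger Lemma (Theorem~\ref{thm:tric}). Suppose some elevation $Y\to\hat X$ of $Y=Y_i$ fails to be injective. Then there exist distinct points $y_1\neq y_2\in Y$ with the same image in $\hat X$, joined by some combinatorial path $\alpha_Y$ in $Y$. Projecting $\alpha_Y$ to $X$ yields a combinatorial loop $\alpha$ representing an element of $\ker(\pi_1X\to\pi_1X^*)=\nclose{\{\pi_1Y_j\}}$, hence $\alpha$ is null-homotopic in $X^*$ and bounds some disc diagram $D\to X^*$. I choose the witness $(y_1,y_2,\alpha_Y,D)$ to minimize, lexicographically, first the length of $\alpha_Y$ (so it is a combinatorial geodesic in $Y$) and then the complexity of $D$.

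Applying Theorem~\ref{thm:tric} to $D$, I exploit throughout that $Y\to X$ is a local isometry of NPC cube complexes and hence combinatorial paths in $X$ have at most one lift to $Y$ from each chosen starting vertex. If $D$ is a single cone-cell mapping to some cone $Y_j$, then $\alpha=\partial D$ lifts to a closed loop in $Y_j$. When $Y_j=Y$ and the cone-cell's basepoint matches $y_1$, uniqueness of path-lifting forces this loop to agree with $\alpha_Y$, so $\alpha_Y$ is itself a loop and $y_1=y_2$, a contradiction; the remaining basepoint misalignments are handled by the minimality of $\alpha_Y$. When $Y_j\neq Y$, the loop $\alpha$ lies in an abstract cone-piece $\widetilde Y\cap\widetilde Y_j$, so $\partial D$ is a single piece, and combined with $\partial D$ being essential in $Y_j$ (by reducedness of $D$), this contradicts $C(9)$. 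A ladder $D$ with cone-cells at its ends reduces to the shell case below, and a square ladder to the cornsquare case below.

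A spur on $\partial D$ would give a backtrack in $\alpha$ that lifts to a backtrack in $\alpha_Y$ (by unique path-lifting), contradicting that $\alpha_Y$ is a geodesic. A cornsquare with outerpath on $\partial D$ yields a shortening of $\alpha$ that lifts to $Y$ across the square, producing a strictly shorter combinatorial path between $y_1$ and $y_2$, again contradicting minimality. Finally, for a shell $C\to Y_j$ with outerpath $Q\subseteq\partial D$, the arc $Q$ lifts to $Y$ (since $Q\subseteq\alpha$) and also lies in $Y_j$ (since $Q\subseteq\partial C$), so it is contained in an abstract cone-piece $\widetilde Y\cap\widetilde Y_j$, hence is itself a single piece. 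Together with the innerpath, which has at most $4$ pieces by the definition of shell, the essential boundary $\partial C\to Y_j$ is a concatenation of at most $5$ pieces, contradicting $C(9)$. The main obstacle is the careful management of basepoints in the single cone-cell case and the verification, in each shortening move, that minimality of the witness $(y_1,y_2,\alpha_Y,D)$ is genuinely violated rather than shifted to a different non-embedding pair; tracking the elevation-theoretic content of each reduction is the delicate point.
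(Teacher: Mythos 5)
Your overall strategy---apply the Cubical Greendlinger Lemma (Theorem~\ref{thm:tric}) to a minimal disc diagram bounded by the $X$-image of a geodesic witness in $Y$, and rule out each diagrammatic feature one by one---is essentially the standard approach, and it is what the paper's remark before the theorem (``The Greendlinger Lemma implies elevations of cones are embeddings'') is alluding to. Your treatment of spurs and cornsquares is correct: a spur would force a backtrack in $\alpha_Y$ by local injectivity of $Y \to X$, and a corner lifts into $Y$ by local convexity.

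However, there is a genuine gap in both the shell case and the single-cone-cell case, and it is the same gap in both. You assert that the outerpath $Q$ of a shell (resp. the boundary $\alpha$ of a single cone-cell) lies in an abstract cone-piece $\widetilde Y \cap \widetilde Y_j$ and is therefore a piece. But by Definition~\ref{def:pieces}, $\widetilde Y \cap \widetilde Y_j$ is only a cone-piece when either $i \neq j$ or the two translates are distinct (and translates are considered identical if they differ by $Stab_{\pi_1X}(\widetilde Y)$). Lifting the diagram to $\widetilde X^*$ and then further to $\widetilde X$, the shell's cone is carried to some translate $g'\widetilde Y_j$, and nothing in your setup prevents $g'\widetilde Y_j = \widetilde Y$, in which case $Q$ is \emph{not} a piece and the $C(9)$ argument does not apply. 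Concretely: if the failure of injectivity arises from some $n \in Stab_{\pi_1X}(\widetilde Y_i) \cap \ker(\pi_1X \to \pi_1X^*)$ with $n \notin \pi_1 Y_i$, then the geodesic $\widetilde\alpha$ from $\widetilde y_1$ to $n\widetilde y_1$ stays inside $\widetilde Y_i$, and a single cone-cell $D$ mapping to $Cone(Y_i)$ via the same translate is a perfectly possible minimal diagram; your argument derives no contradiction from it. This is precisely the ``basepoint misalignment'' you gesture at, and invoking ``minimality of $\alpha_Y$'' does not close it. Closing it is equivalent to proving the nontrivial fact that $Stab_{\pi_1X}(\widetilde Y_i) \cap \ker(\pi_1X\to\pi_1X^*) = \pi_1Y_i$---i.e.\ that $Aut_X(Y_i)$ injects into $\pi_1X^*$---which is the cubical analogue of the classical torsion theorem for small-cancellation presentations and requires its own Greendlinger-based argument; it cannot be deduced from minimality of the witness alone.

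A secondary issue is your minimization order. Even when the same-translate shell $C$ can be removed (because its outerpath lift agrees with the lift along $\alpha_Y$), the replacement of $Q$ by the innerpath $S$ produces a diagram with fewer cone-cells but a boundary path that may be \emph{longer} than $\alpha$. Since you minimize length first and complexity second, this replacement does not contradict your choice of witness. The order should be reversed: minimize complexity of $D$ first and the length of $\alpha_Y$ second (or use area/complexity with length as a tiebreak only for the spur case). Note that the paper states Theorem~\ref{thm:embeds} without proof, citing Wise's work, so there is no in-paper proof to compare against; but as written your argument does not establish the theorem in the case where the offending element of $\ker(\pi_1X\to\pi_1X^*)$ stabilises $\widetilde Y_i$.
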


\subsection{Cohomology and the Cohen-Lyndon property}\label{subsec: CL}

The Cohen-Lyndon property was already introduced in the statement of Theorem~\ref{thm:clintro}. For the sake of exposition, we now state it in general: 

\begin{definition}[The Cohen-Lyndon property for pairs]
Let $G$ be a group, and $\{H_i\}_{i \in I}$ a family of subgroups of $G$. Let $\mathcal{N}_G(H_i)$ denote the normaliser of $H_i$ in $G$.
 The pair $(G,\{H_i\})$ satisfies the \emph{Cohen-Lyndon property} if for each $i \in I$ there exists a left transversal $T_i$ of $\mathcal{N}_G(H_i)\langle \langle \cup_{i \in I} H_i \rangle \rangle$ in $G$ such that $\langle \langle \cup_{i \in I} H_i \rangle \rangle$ is the free product of the subgroups $H_i^t$ for $t \in T_i$. In symbols, $$\langle \langle \cup_{i \in I} H_i \rangle \rangle = \ast_{i \in I, t \in T_i}H_i^t.$$
\end{definition}

Informally, the Cohen-Lyndon property encodes when a collection of subgroups $\mathcal{H}$ and their conjugates are ``as independent as possible''. The obvious way to phrase this is to ask for the normal closure $\langle \langle \mathcal{H}\rangle \rangle$ of $\mathcal{H}$ to be a free product of conjugates of the $H_i \in \mathcal{H}$. In~\cite{KS72},  the Cohen-Lyndon property is called the ``fpmmc property'' which stands for \emph{free product of maximally many conjugates}. This is perhaps more illuminating terminology. 

The Cohen-Lyndon property was first studied in~\cite{CL63}, where  it was defined in the setting of group presentations (i.e., quotients of free groups by normal closures of collections of cyclic subgroups), and shown to hold for one-relator presentations and $C'(\frac{1}{6})$ classical small-cancellation presentations. In the paper~\cite{KS72}, Karrass and Solitar extended the definition to arbitrary quotients, and proved a combination theorem for free products with amalgamation and HNN extensions; this was later elaborated on in~\cite{MCS86}. 
Subsequently, Edjevet and Howie proved in~\cite{EH87} that the Cohen-Lyndon property holds for one-relator quotients of free products of locally indicable groups.  In~\cite{DH94}, the results of Edjvet and Howie were extended to the setting of one-relator products of arbitrary groups, but with extra assumptions on the relator. A rather significant generalisation was obtained by Sun in~\cite{Sun20}, where it was proven that \emph{triples} $(G,\{H_i\},\{N_i\})$ have the Cohen-Lyndon property when the $H_i$ are ``hyperbolically embedded" subgroups of $G$ and the $N_i$ avoids a finite set of ``bad" elements depending only on the $H_i$. In this form, the  $N_i$ are required to be normal subgroups of the corresponding $H_i$, and the $H_i$ replace the normalisers. In~\cite{arenas2023cohenlyndon}, the author extends Cohen and Lyndon's original result for metric $C'(\frac{1}{6})$ classical small-cancellation presentations to non-metric $C(6)$ small-cancellation presentations. Finally,  in~\cite{GHMOSW24}, versions of the Cohen-Lyndon property are proven in the context of drillings of hyperbolic groups.

The Cohen-Lyndon property is useful for many reasons. In the context of this paper, it will play a key role in the proof of Theorem~\ref{thm:asph}. Via the work of Petrosyan--Sun, it will also allow us to understand quite precisely the cohomology of   cubical $C(9)$ small-cancellation quotients. In particular, we use the following:

\begin{proposition}\label{prop:petsun1}\cite[4.6]{PetSun21}
Let $(G, \mathcal{H})$ have the Cohen-Lyndon property, where $\mathcal{H}=\{H_i\}$ let $\Gamma=G/ \nclose{\mathcal{H}}$ and let $A$ be a $\Gamma$-module.
\begin{enumerate}
\item Suppose that for some $p \in \naturals$, $\bigoplus_i H_p(\mathcal{N}(H_i);A)=0$ and that the natural map $H_{p-1}(\mathcal{N}(H_i);A) \rightarrow H_{p-1}(G;A)$ is injective, then 
$$H_p(\Gamma, A)=H_p(G, A)\oplus \bigoplus_i H_p(\mathcal{N}(H_i)/H_i, A).$$ 
\item Suppose that for some $p \in \naturals$, $\bigoplus_i H^p(\mathcal{N}(H_i);A)=0$ and that the natural map $H^{p-1}(G;A) \rightarrow H^{p-1}(\mathcal{N}(H_i);A)$ is surjective, then
$$H^p(\Gamma, A)=H^p(G, A)\oplus \bigoplus_i H^p(\mathcal{N}(H_i)/H_i, A).$$
\end{enumerate}
\end{proposition}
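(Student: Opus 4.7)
The plan is to apply the Lyndon-Hochschild-Serre (LHS) spectral sequence to the short exact sequence $1 \to N \to G \to \Gamma \to 1$, where $N := \nclose{\bigcup_i H_i}$, and to feed the Cohen-Lyndon decomposition of $N$ into it. In homological form,
$$E^2_{p,q} = H_p\bigl(\Gamma;\, H_q(N;A)\bigr) \;\Longrightarrow\; H_{p+q}(G;A),$$
so the task reduces to computing $H_q(N;A)$ as a $\Gamma$-module and to tracking what the two hypotheses impose on the $p$-th total-degree diagonal of this spectral sequence.

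First, the plan is to exploit the Cohen-Lyndon decomposition $N = \ast_{i,\,t\in T_i} H_i^t$. The standard Mayer-Vietoris argument for a free product (encoded by its Bass-Serre tree) gives, for $q \geq 1$,
$$H_q(N;A) \;\cong\; \bigoplus_{i,\,t} H_q(H_i^t;A).$$
The $G$-conjugation action on $N$ permutes the free factors, and one checks (as a standard consequence of CL) that $\mathcal{N}(H_i) \cap N = H_i$, so the $\Gamma$-stabilizer of the factor $H_i$ identifies with $\mathcal{N}(H_i)/H_i \hookrightarrow \Gamma$. Hence, as $\Gamma$-modules,
$$H_q(N;A) \;\cong\; \bigoplus_i \mathrm{Ind}_{\mathcal{N}(H_i)/H_i}^{\Gamma} H_q(H_i;A),$$
and Shapiro's lemma identifies the $E^2$ page, for $q \geq 1$, with $\bigoplus_i H_p\bigl(\mathcal{N}(H_i)/H_i;\, H_q(H_i;A)\bigr)$, while $E^2_{p,0} = H_p(\Gamma;A)$.

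Crucially, for each $i$ these groups are exactly the $E^2$ entries of the LHS spectral sequence for $1 \to H_i \to \mathcal{N}(H_i) \to \mathcal{N}(H_i)/H_i \to 1$, whose abutment is $H_\bullet(\mathcal{N}(H_i);A)$. The vanishing hypothesis $\bigoplus_i H_p(\mathcal{N}(H_i);A)=0$ then forces every filtration piece of the big $E^\infty$ page in total degree $p$ arising from $q \geq 1$ rows to vanish, and the injectivity hypothesis on $H_{p-1}(\mathcal{N}(H_i);A) \to H_{p-1}(G;A)$ is precisely what kills the last family of differentials that could obstruct a splitting of the bottom row. Combining these, the LHS spectral sequence collapses in the relevant range into the split short exact sequence
$$0 \to H_p(G;A) \to H_p(\Gamma;A) \to \bigoplus_i H_p(\mathcal{N}(H_i)/H_i;A) \to 0,$$
yielding the claimed decomposition. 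The cohomological statement is proved dually using the cohomological LHS spectral sequence, with the injectivity hypothesis replaced by the surjectivity hypothesis and Ind replaced by Coind. The main obstacle will be the careful spectral-sequence bookkeeping needed to verify that the two hypotheses simultaneously eliminate every obstruction on the $p$-diagonal; once that setup is in place, the conclusion is formal.
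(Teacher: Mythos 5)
The paper does not prove this proposition: it is quoted verbatim from Petrosyan--Sun, so there is no ``paper's own proof'' to compare against. Judged on its own terms, your setup is sound and captures the real content of the Cohen--Lyndon input: the Bass--Serre/Mayer--Vietoris computation $H_q(N;A)\cong\bigoplus_{i,t}H_q(H_i^t;A)$ for $q\geq 1$, the identification $\mathcal{N}(H_i)\cap N=H_i$ (correct; the normaliser of a nontrivial free factor in a free product is the factor itself), the resulting description of $H_q(N;A)$ as a sum of induced $\Gamma$-modules, and the Shapiro identification of the $E^2$ page for $q\geq 1$ with the $E^2$ pages of the small extensions $1\to H_i\to \mathcal{N}(H_i)\to \mathcal{N}(H_i)/H_i\to 1$.

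Where the argument has a genuine gap is exactly the step you flag yourself as ``bookkeeping.'' The LHS spectral sequence for $1\to N\to G\to\Gamma\to 1$ abuts to $H_*(G;A)$, so what you get from it is $H_p(G;A)\twoheadrightarrow E^\infty_{p,0}\hookrightarrow E^2_{p,0}=H_p(\Gamma;A)$; the short exact sequence $0\to H_p(G;A)\to H_p(\Gamma;A)\to\bigoplus_i H_p(\mathcal{N}(H_i)/H_i;A)\to 0$ you write at the end is \emph{not} a direct output of that spectral sequence. Moreover, the comparison morphism $\bigoplus_i E^r(i)\to E^r$ is only an isomorphism on $E^2$ for $q\geq 1$; from $E^3$ onward the row $q=1$ differs, because the $d^2$ image coming from the bottom row $E^2_{p,0}=H_p(\Gamma;A)$ can be strictly larger than the one coming from $\bigoplus_i H_p(\mathcal{N}(H_i)/H_i;A)$. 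So ``the vanishing hypothesis forces every $q\geq 1$ filtration piece of the big $E^\infty$ to vanish'' and ``the injectivity hypothesis kills the last differentials'' both need a real argument, and it is not visible from the sketch how the second hypothesis (which lives one degree down) enters the spectral-sequence picture at all.

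The route that makes both hypotheses transparent---and the one I would expect matches Petrosyan--Sun---is to first use Cohen--Lyndon to prove that the homotopy pushout
\[
Z \;=\; BG\;\cup_{\bigsqcup_i B\mathcal{N}(H_i)}\;\bigsqcup_i B\bigl(\mathcal{N}(H_i)/H_i\bigr)
\]
is aspherical (this is essentially the same deformation retraction $\hat X\simeq\bigvee Y_i$ argument the paper runs in Theorem~\ref{thm:asph}), so $Z$ is a $K(\Gamma,1)$. The Mayer--Vietoris sequence of that pushout with local coefficients $A$ reads
\[
\cdots\to\bigoplus_i H_p(\mathcal{N}(H_i);A)\xrightarrow{\ \alpha_p\ } H_p(G;A)\oplus\bigoplus_i H_p(\mathcal{N}(H_i)/H_i;A)\xrightarrow{\ \beta_p\ } H_p(\Gamma;A)\xrightarrow{\ \partial_p\ }\bigoplus_i H_{p-1}(\mathcal{N}(H_i);A)\xrightarrow{\ \alpha_{p-1}\ }\cdots
\]
and now the two hypotheses do exactly one job each: hypothesis~(1) makes the domain of $\alpha_p$ zero, so $\beta_p$ is injective; hypothesis~(2) makes the first component of $\alpha_{p-1}$, hence $\alpha_{p-1}$ itself, injective, so $\partial_p=0$ and $\beta_p$ is surjective. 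If you want to stay in the spectral-sequence language, you can recover this same long exact sequence from the comparison morphism by the usual ``braid''/interlocking-sequences trick, but you will in effect be rederiving Mayer--Vietoris; I'd recommend going to the pushout directly.
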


\section{Disassembling the cubical part of $\widetilde{X^*}$}\label{sec:dis}

We need a few definitions and preliminary observations.

A subcomplex $Y$ of a non-positively curved cube complex $X$ is \emph{locally convex} if for each $n$-cube $c$ with $n \geq 2$ in $X$, whenever a corner of $c$
lies in $Y$, then  $c$ also lies in $Y$. When $X$ is CAT(0), local convexity coincides with convexity in the combinatorial metric. However since we will be working with a non-positively curved cube complex that is generally not simply connected, it is necessary to distinguish between the two notions.

We are interested in local convexity because a locally convex subcomplex is, by definition, non-positively curved, and thus aspherical (asphericity is a well-known consequence~\cite{BH99} of the CAT(0) metric on $\widetilde X$, and can also be deduced  combinatorially as in~\cite{Arenas2023thesis}). This fact plays an important role in the rest of this section and in the proof of Theorem~\ref{thm:clprop}.

Let $X^*= \langle X \mid \{Y_i\}_{i \in I} \rangle$ be a cubical presentation and consider the universal cover $\widetilde{X^*}$ of the coned-off space. 
Note that $X$ is a subspace of $X^*$ and thus the preimage $\hat X$ of $X$ in $\widetilde{X^*}$ is a covering space of $X$, namely the regular cover corresponding to $ker(\pi_1X \rightarrow \pi_1X^*)$.  The universal cover of $X^*$ then decomposes as 
\begin{equation*}\label{eq:decomp}
\widetilde{X^*}=\hat X \cup \bigcup_{gY_i} Cone(gY_i)
\end{equation*}
where the $gY_i$ are elevations of $Y_i$'s to $\widetilde{X^*}$, varying over all $i \in I$ and over all left cosets of $Stab_{\pi_1X}(\widetilde Y_i)\nclose{\pi_1 Y_i}$ in $\pi_1X$. We call $\hat X$ the \emph{cubical part} of $\widetilde{X^*}$. 

The reader should think of  $ X^*$ as a ``generalised presentation complex'' for the cubical presentation $\langle X \mid \{Y_i\}_{i \in I} \rangle$, and of $\widetilde X^*$ and $\hat X$ as a generalised Cayley complex and generalised Cayley graph, respectively.

\begin{definition}[Untethered hull]
The \emph{untethered hull} of $\hat X$ is the subcomplex $\mathcal{F} \subset \hat X$ containing all $n$-cubes with $n \geq 1$ that do not lie in any rectangle $R$ intersecting a piece of $\widetilde X^*$ non-trivially. An \emph{untethered component} $\mathcal{F}_\iota \subset \mathcal{F}$ is a connected component of the untethered hull.
\end{definition}

A quick remark is in order.

\begin{proposition}\label{prop: convex free hull}
Each untethered component of $\hat X$ is locally convex.
\end{proposition}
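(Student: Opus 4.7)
The plan is to reduce local convexity of an untethered component to the observation that the defining property of $\mathcal{F}$ is monotone under passing to sub-cubes. Let $c$ be an $n$-cube in $\hat X$ with $n\geq 2$, and assume that some corner of $c$ at a vertex $v$ --- that is, the edges $e_1,\dots,e_n$ of $c$ incident to $v$ (together with all cubes of $c$ spanned by subsets of them) --- lies entirely in a fixed untethered component $\mathcal{F}_\iota$. I need to show that the whole cube $c$ belongs to $\mathcal{F}_\iota$.

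First, I would show $c \in \mathcal{F}$ by contradiction. Suppose, to the contrary, that $c$ lies in some rectangle $R\subset\widetilde{X^*}$ whose intersection with a piece of $\widetilde{X^*}$ is non-trivial. Then every sub-cube of $c$ is also contained in $R$; in particular, each edge $e_i$ of the prescribed corner lies in such a ``bad'' rectangle, contradicting the fact that $e_i\in\mathcal{F}$. Hence no rectangle meeting a piece non-trivially contains $c$, and $c$ belongs to the untethered hull $\mathcal{F}$.

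Second, to upgrade $c\in\mathcal{F}$ to $c\in\mathcal{F}_\iota$, I would simply note that the corner edges $e_1,\dots,e_n$ lie in $\mathcal{F}_\iota$ by hypothesis and are faces of $c$, so $c$ is combinatorially adjacent in $\mathcal{F}$ to $\mathcal{F}_\iota$. Since $\mathcal{F}_\iota$ is a connected component of $\mathcal{F}$, this forces $c\subset\mathcal{F}_\iota$, which is exactly local convexity.

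I do not expect any genuine obstacle: the proposition is essentially an unpacking of the definition of the untethered hull, leveraging the elementary fact that every face of a cube contained in a rectangle is itself contained in that rectangle. The only point requiring mild care is the interpretation of a ``corner'', but any of the usual conventions (the link of a vertex in $c$, or the set of edges of $c$ at a vertex) is compatible with the sub-cube argument in Step~1.
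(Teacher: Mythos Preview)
Your proposal is correct and follows essentially the same argument as the paper: both proofs hinge on the observation that if a cube $c$ lies in a ``bad'' rectangle $R$, then so do the edges of its corner, contradicting their membership in $\mathcal{F}$. The paper's version is slightly terser in that it leaves the passage from $c\in\mathcal{F}$ to $c\in\mathcal{F}_\iota$ implicit, whereas you spell out the connectivity step explicitly; this extra care is harmless and arguably clearer.
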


\begin{proof}
Let $\mathcal{F}_\iota\subset \mathcal{F}$ be an untethered component. Let $k$ be the corner of an $n$-cube $c$. If $k$ lies in  $\mathcal{F}_\iota$ but $c$ does not, then $c$ must be contained in a rectangle $R$ which intersects a piece of $\widetilde X^*$ non-trivially. But then the edges of $k$ also lie in $R$, contradicting the definition.
\end{proof}

\begin{definition} A \emph{supporting hyperplane} is a hyperplane $H \rightarrow \widetilde X^*$ whose image is neither contained in a $gY_i$ nor in the untethered hull of $\hat{X}$. A \emph{supporting hyperplane carrier} is a carrier $N(H)$ where $H \rightarrow \widetilde X^*$ is  a supporting hyperplane.
\end{definition}

Consider the following collections of subcomplexes of $\hat X$ \begin{enumerate}
\item \label{it: threeways1} The  translates $\{gY_i\}$ of the relators $Y_1, \ldots, Y_k$ in $\hat X$. 
\item \label{it: threeways2} The supporting hyperplane carriers $\{N(H) \mid H \rightarrow \widetilde X^* \text{ is a supporting hyperplane}\}$.
\item \label{it: threeways3} The untethered components of $\hat X$.
\end{enumerate}  

 The \emph{three-way decomposition} of  $\hat X$ is the set  $\mathcal{U}$ consisting of all subcomplexes of $\hat X$ of type~\eqref{it: threeways1},~\eqref{it: threeways2}, or~\eqref{it: threeways3}.
 In the next subsection, we analyse the connectedness properties of intersections of elements in $\mathcal{U}$. 

\subsection{Intersections, intersections, intersections}\label{sec:inter}

We start by noting that local convexity passes to intersections:

\begin{lemma}\label{lem:loc convex intersections}
If the intersection of locally convex cube complexes is non-empty, then it is a locally convex cube complex.
\end{lemma}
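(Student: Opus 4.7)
The plan is to reduce the statement to a direct application of the definition of local convexity recorded at the start of this section, since both ``cube complex'' and ``locally convex'' are defined in a pointwise/cellwise manner that is preserved by intersection.

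First I would check that the intersection $Z := \bigcap_\iota Y_\iota$ is itself a cube complex. Assuming the $Y_\iota$ are locally convex subcomplexes of a common ambient non-positively curved cube complex $X$ (the setting in which local convexity has been defined above), the intersection of subcomplexes is a subcomplex of $X$: a cube $c$ of $X$ lies in $Z$ precisely when it lies in every $Y_\iota$, and all face maps restrict accordingly. Under the non-emptiness hypothesis, $Z$ is then a non-empty cube subcomplex of $X$.

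Next I would verify local convexity of $Z$ directly from the definition. Suppose $c$ is an $n$-cube of $X$ with $n \geq 2$, and that some corner $k$ of $c$ lies in $Z$. Then $k$ lies in each $Y_\iota$, and since each $Y_\iota$ is locally convex, the whole cube $c$ lies in each $Y_\iota$. Consequently $c \subseteq \bigcap_\iota Y_\iota = Z$, which is exactly the local convexity condition for $Z$.

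This is really the entire argument; there is no substantive obstacle, since both the property of being a subcomplex and the ``corner implies cube'' property pass through arbitrary intersections trivially. The only thing worth flagging is that one does not get any uniform bound on connectedness or nice diameter bounds for the intersection; in particular $Z$ may have many components, but each component is locally convex by the same reasoning applied componentwise, so $Z$ is a locally convex cube complex as required.
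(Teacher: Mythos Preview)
Your proof is correct and is essentially the same argument as the paper's: both verify directly that if a corner of a cube lies in every $Y_\iota$, then by local convexity of each $Y_\iota$ the whole cube does, hence the cube lies in the intersection. The paper phrases this via links of vertices rather than directly via the corner--cube definition, but the content is identical.
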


\begin{proof}
Let $Y_1, \ldots, Y_n$ be locally convex subcomplexes of $X$ with $\bigcap_i Y_i \neq \emptyset$
and let $v$ be a vertex in $\bigcap_i Y_i$. If $\link|_{\bigcap_i Y_i}(v)$ contains the boundary of a simplex (i.e., a corner of a cube), then so do $\link|_{Y_1}(v), \ldots,\link|_{Y_n}(v)$. By local convexity, every  simplex  in $\link(v) \subset X$ with that boundary is also contained in $\link|_{Y_1}(v), \ldots,\link|_{Y_n}(v)$, thus all such simplices, and thus the corresponding cubes, also lie in the total intersection. 
\end{proof}

\begin{remark}
In this paper, a  space is said to be \emph{connected} (resp., \emph{path-connected}) if it has \textbf{exactly} one component (resp., path-component). In particular, the empty set in not connected.
\end{remark}

The next order of business is to check that the intersections between elements in $\mathcal{U}$ are simply-connected (and therefore contractible by Lemma~\ref{lem:loc convex intersections}). Most of the proofs follow the same scheme, and rely heavily on Greendlinger's Lemma (Theorem~\ref{thm:tric}).

\begin{proposition}\label{prop: contractible free hull}
Each untethered component of $\hat X$ is simply connected.
\end{proposition}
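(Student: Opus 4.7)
The plan is to show that every edge-loop $\gamma$ in $\mathcal{F}_\iota$ is null-homotopic in $\mathcal{F}_\iota$, by constructing a disc diagram with boundary $\gamma$ whose image lies in $\mathcal{F}_\iota$. Since $\widetilde{X^*}$ is simply connected, $\gamma$ bounds some disc diagram $D \to \widetilde{X^*}$; we take $D$ of minimal complexity. The argument then splits into two claims: (i) $D$ has no cone-cells, so $D$ factors through $\hat X$ as a square disc diagram, and (ii) every cube of $D$ lies in $\mathcal{F}_\iota$.

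For (i), suppose $D$ contains a cone-cell. The Cubical Greendlinger Lemma (Theorem~\ref{thm:tric}) forces $D$ to be either a single cone-cell, a ladder, or to have at least three shells, cornsquares, or spurs on $\partial D$; spurs are ruled out by assuming $\gamma$ is reduced. In the shell case, by $C(9)$-minimality the outerpath $Q \subset \gamma$ of the shell is a concatenation of at least five pieces, so every edge of $Q$ lies in an abstract contiguous piece of some cone. Such an edge is in particular contained in a rectangle intersecting a piece non-trivially, hence cannot lie in $\mathcal{F}$, contradicting $\gamma \subset \mathcal{F}_\iota$. The single cone-cell and ladder cases are excluded by the same mechanism, since in each the boundary of $D$ traverses subpaths that are pieces. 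Cornsquares are disposed of either by a standard shuffling argument that replaces them with corners on $\partial D$ (which are absent on a reduced loop in the locally convex $\mathcal{F}_\iota$), or by tracking their dual curves to $\partial D$ and showing that the resulting boundary edges must lie in the carrier of a non-crossing hyperplane, again contradicting $\gamma \subset \mathcal{F}_\iota$.

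For (ii), with $D$ now a square disc diagram in $\hat X$, we propagate the untethered condition from $\partial D$ into the interior. Suppose some square $s$ of $D$ maps into a rectangle $R$ that meets a piece non-trivially. The pair of hyperplanes of $\hat X$ dual to $s$ can be extended through $D$ along parallel dual curves, producing a subrectangle of $D$ bordered by $R$ and terminating on $\partial D$. Combined with local convexity of $\mathcal{F}_\iota$ (Proposition~\ref{prop: convex free hull}) and of the relevant intersections (Lemma~\ref{lem:loc convex intersections}), this transport exhibits a boundary edge of $\gamma$ lying inside a rectangle meeting a piece non-trivially, contradicting $\gamma \subset \mathcal{F}_\iota$. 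Therefore every cube of $D$ lies in $\mathcal{F}$, and since $D$ is connected and $\partial D \subset \mathcal{F}_\iota$, the entire diagram lies in $\mathcal{F}_\iota$, so $\gamma$ is null-homotopic there. The main technical obstacle is precisely the dual-curve propagation in (ii): one must verify that the rectangle-meets-piece pathology migrates faithfully from an interior square out to an edge of $\gamma$, which is where the geometry of $C(9)$ cubical presentations and Lemma~\ref{lem:loc convex intersections} are essential.
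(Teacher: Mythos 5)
Your overall plan is more ambitious than the paper's and, in the place where it differs, has a genuine gap. The paper does not attempt to show that the disc diagram $D$ bounded by $\sigma$ is a square diagram lying inside $\mathcal{F}_\iota$; instead it chooses $\sigma$ shortest in its free homotopy class in $\mathcal{F}_\iota$ and $D$ of minimal complexity, invokes the Cubical Greendlinger Lemma, rules out shells (outerpath would consist of pieces, contradicting the definition of the untethered hull) and spurs (contradicting shortest length), and is then left with cornsquares. It then observes that a corner square $s$ must itself lie in $\mathcal{F}_\iota$ by local convexity (Proposition~\ref{prop: convex free hull}), so pushing $\sigma$ across $s$ produces a loop $\sigma'$ of the same length, still in $\mathcal{F}_\iota$, bounding a strictly smaller diagram — contradicting minimality. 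Hence $D$ is a point and $\sigma$ is trivial.

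The concrete problem in your argument is the parenthetical claim that corners ``are absent on a reduced loop in the locally convex $\mathcal{F}_\iota$.'' This is false: a loop that is shortest in its homotopy class can perfectly well pass through two consecutive edges of a square (pushing across a corner does not change length), so being reduced or length-minimizing does not eliminate corners. That claim is the load-bearing step in your item~(i), so without it you cannot conclude that $D$ is a square diagram, and Greendlinger's Lemma alone does not exclude internal cone-cells in the cornsquare case. Your alternative route (``tracking their dual curves to $\partial D$'') and the dual-curve propagation in your item~(ii) are also not spelled out to the point where one can check they exclude the relevant pathologies: the carrier of a hyperplane dual to an interior square is generally not the rectangle $R$ that witnessed $s\notin\mathcal{F}$, so there is no a priori reason that the boundary edge of $D$ dual to that hyperplane lies in a rectangle meeting a piece. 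The correct and simpler move, which the paper makes, is to keep the corner square and use local convexity of $\mathcal{F}_\iota$ together with minimality of the pair $(\sigma,D)$ to derive the contradiction directly, with no need to show the whole diagram maps into $\mathcal{F}_\iota$.
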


\begin{proof}
Let $\mathcal{F}_\iota\subset \mathcal{F}$ be an untethered component. 
To check that $\mathcal{F}_\iota$ must be simply connected, let $\sigma \rightarrow \mathcal{F}_\iota$ be a closed path, and assume moreover that $\sigma$ is the shortest path in its homotopy class. Then $\sigma$ bounds a disc diagram $D$ in  $\widetilde X^*$, which can be assumed to have minimal complexity amongst all diagrams with boundary path $\sigma$. By Greendlinger's Lemma, $D$ is either a single vertex or cone cell, a ladder, or has at least $3$ shells, corners, or spurs. Note that $D$ cannot have any shells, since for a shell $C$ the intersection $C \cap \partial D \subset \mathcal{F}_\iota$ contradicts the definition of $\mathcal{F}_\iota$. Since $\sigma$ is the shortest path in its homotopy class, then $D$ has no spurs.

Thus, $D$ must have at least $2$ corners. Each corner corresponds to a square $s$ that must lie in $\mathcal{F}_\iota$ because of local convexity, so $\sigma$ can be pushed across $s$ to obtain a new path $\sigma'$ that is homotopic to $\sigma$ in $\mathcal{F}_\iota$ and bounds a smaller area disc diagram, contradicting our initial choice. Thus $D$, and therefore $\sigma$, must be a single vertex.
\end{proof}

The next result is essentially proven in~\cite[5.7]{WiseIsraelHierarchy}, albeit under  more restrictive hypotheses. We include a proof for the sake of completeness.

\begin{proposition}\label{prop: conecell ints}
Let $gY_i\neq g'Y_{i'}$, then $gY_i\cap g'Y_{i'}$ is either empty or simply connected.
\end{proposition}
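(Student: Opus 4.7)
I would follow the scheme of Proposition~\ref{prop: contractible free hull}, combining the Diagram Trichotomy (Theorem~\ref{thm:tric}) with local convexity and the $C(9)$ hypothesis. Set $P := gY_i \cap g'Y_{i'}$ and suppose $P \neq \emptyset$. Since each $Y_j \to X$ is a local isometry and elevations to $\hat X$ are embeddings (Theorem~\ref{thm:embeds}), both $gY_i$ and $g'Y_{i'}$ are locally convex subcomplexes of $\hat X$, so by Lemma~\ref{lem:loc convex intersections} $P$ is locally convex as well. Moreover $gY_i \cong \widetilde Y_i$ and $g'Y_{i'} \cong \widetilde Y_{i'}$ are simply connected.

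\textbf{Simple connectedness (assuming connectedness).} Let $\sigma \to P$ be a closed path of minimal length in its free homotopy class in $P$, so that $\sigma$ is spur-free. Since $gY_i$ is simply connected, $\sigma$ bounds a minimal-area square disc diagram $D \to gY_i$, and I argue by induction on $\area(D)$. If $\area(D) = 0$, then $\sigma$ is trivial. Otherwise, Theorem~\ref{thm:tric} applied to $D$ as a cone-free diagram (hence without shells) supplies a cornsquare on $\partial D$; after the standard local modifications of $D$ preserving $\partial D$, we may assume it is a genuine corner, giving two adjacent edges $a, b$ of $\sigma$ that lie in a square $s \subset D$. Since $a, b \subset \sigma \subset g'Y_{i'}$, local convexity of $g'Y_{i'}$ forces $s \subset g'Y_{i'}$, and $s \subset gY_i$ automatically from $D \to gY_i$, whence $s \subset P$. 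Pushing $\sigma$ across $s$ provides a $P$-homotopy to a path bounding a diagram of smaller area, and the induction closes.

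\textbf{Connectedness.} Given $x, y \in P$, choose geodesics $\alpha \subset gY_i$ and $\beta \subset g'Y_{i'}$ from $x$ to $y$ so that $\gamma := \alpha\beta^{-1}$ has no spurs, and fill $\gamma$ with a minimal-complexity disc diagram $D \to \widetilde{X^*}$. If $D$ is a vertex, $\alpha = \beta \subset P$. If $D$ is a single cone-cell mapping to a cone $hY''$, then $\gamma \subset hY''$: either $hY'' \in \{gY_i, g'Y_{i'}\}$ (placing $\beta$ or $\alpha$ directly in $P$), or else, using $gY_i \neq g'Y_{i'}$, $\partial D = \alpha\beta^{-1}$ is a concatenation of only two pieces (one cone-piece from $gY_i \cap hY''$, one from $g'Y_{i'} \cap hY''$), contradicting $C(9)$ applied to $\partial D$, which is essential in $hY''$ by the minimal complexity of $D$. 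If $D$ is a ladder, then minimal complexity together with the simple connectedness of each cone rules out any cone-cell of the ladder mapping to $gY_i$ or $g'Y_{i'}$ (otherwise the cone-cell could be replaced by a square diagram in that cone, reducing complexity); consequently each $\partial C_k = \mu_k\alpha_k\nu_k\beta_k$ would be a concatenation of at most four pieces (two cone-pieces and two wall-pieces, with one fewer at the endpoints), again contradicting $C(9)$. Hence the ladder has no cone-cells and collapses to a single pseudo-grid whose vertical dual curves pair $\alpha$ and $\beta$ edge-by-edge, from which a complexity-reduction argument mirroring Step~2 extracts the desired path in $P$. The three-feature case is handled analogously, via iterated shell and cornsquare absorption.

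\textbf{Main obstacle.} The most delicate step is the pseudo-grid case in the connectedness argument: one must argue that the ``flat strip'' between $\alpha$ and $\beta$ is actually contained in $P$, not merely in $\hat X$. I expect this to reduce to an induction along the columns of the grid, in which each column's square is forced into both $gY_i$ and $g'Y_{i'}$ by combining local convexity of both cones with the matching of hyperplanes prescribed by the pseudo-grid; this appears to require a careful interleaving with the cornsquare argument from Step~2, rather than being an immediate corollary of it.
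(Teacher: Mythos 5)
Your framework is the right one — minimal-complexity disc diagrams, the Diagram Trichotomy (Theorem~\ref{thm:tric}), local convexity, and the $C(9)$ count — but the execution diverges from the paper's proof in two places, one harmless and one that leaves a genuine gap.

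\textbf{Simple connectedness.} The paper's argument is a single sentence: a closed path $\sigma \to gY_i \cap g'Y_{i'}$ is, by Definition~\ref{def:pieces}, itself a cone-piece; if $\sigma$ were essential it would be an essential closed path that is a concatenation of \emph{one} piece, which contradicts $C(9)$ outright. Your induction-on-area argument, pushing $\sigma$ across corners of a square filling in $gY_i$ and using local convexity of $g'Y_{i'}$, is correct as far as it goes, but it is a detour — and, as written, it never invokes the small-cancellation hypothesis at all, which is a warning sign for a statement whose sharpness at $C(9)$ the paper makes a point of emphasising in Section~\ref{sec:comments}.

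\textbf{Connectedness.} Here the difference matters. The paper argues by contradiction: suppose $x, y$ lie in \emph{distinct} components of $P := gY_i \cap g'Y_{i'}$, take $x, y$ realising the minimum of $\dist(x,y)$ over all such pairs, take geodesics $\sigma \to gY_i$ and $\gamma \to g'Y_{i'}$ joining them, and a minimal-complexity filling $D$ of $\sigma\gamma^{-1}$. Shells are excluded because their outerpaths decompose into at most two pieces (one on each of $\sigma, \gamma$); spurs are excluded by the two minimisations (an interior spur violates geodesicity, a spur at $x$ or $y$ violates $\dist$-minimality); hence the trichotomy gives $\geq 4$ cornsquares, at most two of which straddle $x$ or $y$; an unstraddling cornsquare, after shuffling, is a corner whose two edges lie on $\sigma$ or on $\gamma$, hence whose square lies in $gY_i$ or in $g'Y_{i'}$ by local convexity and can be absorbed, producing new geodesics and a filling of strictly smaller complexity, contradicting the choice. \emph{Nothing} in this argument asserts, or needs, that $D$ is contained in $P$.

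Your proposal instead sets out to \emph{produce} a path in $P$ from $x$ to $y$, and the ``main obstacle'' you identify — showing the flat strip of a pseudo-grid is contained in $P$ — is not a step in the paper's proof and does not follow from the tools you have. A square whose corner sits at $x$ has one corner edge on $\sigma \subset gY_i$ and the other on $\gamma \subset g'Y_{i'}$; local convexity of $gY_i$ requires \emph{both} corner edges to lie in $gY_i$ before it forces the square in, and symmetrically for $g'Y_{i'}$, so neither cone is forced to contain that square, and the column-by-column induction you sketch stalls precisely at the endpoints. You also do not set up the $\dist(x,y)$-minimisation, which is what turns the cornsquare absorption into a contradiction rather than merely a different diagram. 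The cone-cell and ladder-with-cone-cells cases you work out are fine (and agree in substance with the paper's shell exclusion), but the pseudo-grid gap is real. The remedy is not to strengthen the ``strip in $P$'' claim but to discard it: adopt the by-contradiction, $\dist$-minimal framing, after which the absorbable cornsquares yield the contradiction without your ever needing to know where the non-absorbable ones live.
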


\begin{proof}
To show that $gY_i\cap g'Y_{i'}$ is connected, assume towards a contradiction that there exist vertices $x,y$ in distinct connected components of $gY_i\cap g'Y_{i'}$. Amongst all such pairs of vertices, choose $x,y$ so that $\dist(x,y)$ is minimised. Let $\sigma \rightarrow gY_i$ and $\gamma \rightarrow g'Y_{i'}$ be geodesic paths connecting $x$ and $y$. The concatenation $\sigma\gamma^{-1}$ bounds a disc diagram $D$ in $\widetilde X^*$, which can be assumed to have minimal complexity amongst all possible choices of diagrams with boundary path $\sigma\gamma^{-1}$. By Greendlinger's Lemma, $D$ is either a single vertex or cone-cell, a ladder, or has at least $3$ shells, cornsquares, or spurs. Now, the outerpath $\mathcal{O}$ of a shell $S$ in $D$ would either be entirely contained in $\sigma$ or $\gamma$, or would contain $x$ or $y$ and intersect both $\sigma$ and $\gamma$. In either case, $\mathcal{O}$ is the concatenation of at most $2$ pieces, contradicting the $C(9)$ condition. Thus, $D$ cannot have any shells. Moreover, a spur on $\partial D$ would either contradict that $\sigma$ and $\gamma$ are geodesics, or that $x$ and $y$ where chosen to minimise $\dist(x,y)$. Hence $D$ must have at least $4$ cornsquares: one of these cornsquares might contain $x$ and another one might contain $y$, but at least $2$ must have their outerpath lying entirely  in either. After shuffling, such cornsquares yield corners on  $\sigma$ and/or $\gamma$, which can be absorbed into the corresponding cone  $gY_i$ or $g'Y_{i'}$ to obtain paths $\sigma' $ and $  \gamma'$ and a new diagram $D'$ with $\partial D'=\sigma'(\gamma')^{-1}$ whose complexity is strictly less than that of $D$, contradicting our initial hypothesis. Thus $D$ cannot have any shells, cornsquares, or spurs, and must therefore be a single point, i.e., $x=y$.

To show that $gY_i\cap g'Y_{i'}$ is simply connected, consider a closed path $\sigma \rightarrow gY_i\cap g'Y_{i'}$. Then, by definition, $\sigma$ is a piece, and therefore must be nullhomotopic, as otherwise $\sigma$ would contradict the $C(9)$ condition.
\end{proof}

\begin{figure}%[h!]
\centerline{\includegraphics[scale=0.6]{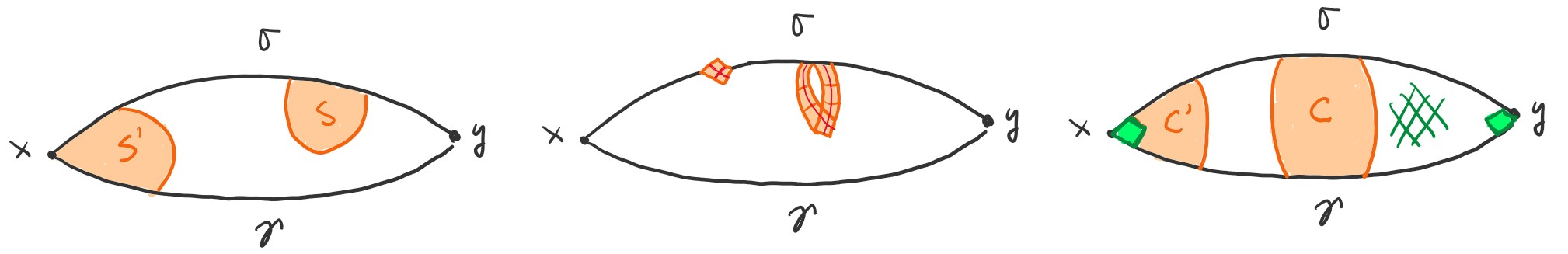}}
\caption{Impossible diagrammatic features in the proof of connectedness in Proposition~\ref{prop: conecell ints}.}
\label{fig:connected}
\end{figure} 

\begin{proposition}\label{prop: diff ints 1}
For each untethered component $\mathcal{F}_\iota\subset \mathcal{F}$, and for each $gY_i$, the intersection $\mathcal{F}_\iota \cap gY_i $ is either empty or simply connected.
\end{proposition}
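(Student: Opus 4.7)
The plan is to adapt the disc-diagram strategy of Proposition~\ref{prop: conecell ints}, substituting the untethered component $\mathcal{F}_\iota$ for the second cone elevation. Since $\mathcal{F}_\iota$ is locally convex (Proposition~\ref{prop: convex free hull}) and $gY_i$ is locally convex (as the image of a local isometry), Lemma~\ref{lem:loc convex intersections} yields that $\mathcal{F}_\iota \cap gY_i$ is locally convex in $\mathcal{F}_\iota$; together with Proposition~\ref{prop: contractible free hull} this reduces the proposition to showing that $\mathcal{F}_\iota \cap gY_i$ is connected, since $\mathcal{F}_\iota$ is CAT(0) (as a non-positively curved, simply connected cube complex) and connected, non-empty, locally convex subcomplexes of CAT(0) cube complexes are convex and hence contractible.

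For connectedness I would argue by contradiction: suppose $\mathcal{F}_\iota \cap gY_i$ has at least two components, and pick vertices $x, y$ in distinct components minimising $\dist(x, y)$. Choose geodesics $\sigma \to \mathcal{F}_\iota$ and $\gamma \to gY_i$ from $x$ to $y$ (available by Proposition~\ref{prop: contractible free hull} and the simple connectedness of $gY_i$), and bound $\sigma\gamma^{-1}$ by a minimal-complexity disc diagram $D \to \widetilde{X^*}$. Applying the Cubical Greendlinger Lemma (Theorem~\ref{thm:tric}), a single vertex forces $x = y$; a single cone-cell lifting to the elevation $gY_i$ can be replaced by a square subdiagram in the simply connected $gY_i$, strictly reducing $Comp(D)$ and contradicting minimality; and a single cone-cell lifting to any other elevation $g'Y'$ is excluded because the essential boundary $\sigma\gamma^{-1}$ in $g'Y'$ would decompose into at most one cone-piece ($\gamma \subset g'Y' \cap gY_i$), while $\sigma$ contributes no pieces since edges of $\mathcal{F}_\iota$ lie in no rectangle meeting a piece, violating $C(9)$.

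Spurs contradict either the geodesicity of $\sigma, \gamma$ or the minimality of $\dist(x, y)$. Shells are ruled out by the $C(9)$ piece bound: an outerpath on $\gamma$ lies in a single cone-piece (or, if the shell's cone coincides with $gY_i$, can be absorbed into the simply connected ambient $gY_i$); an outerpath on $\sigma$ would require a decomposition into $\geq 5$ pieces along edges of $\mathcal{F}_\iota$, contradicting the untethered hypothesis; and straddling outerpaths combine these two cases. Hence $D$ must contain at least four cornsquares, at least two of which shuffle to corners on $\sigma$ or $\gamma$; each is absorbable into $\mathcal{F}_\iota$ (by Proposition~\ref{prop: convex free hull}) or into $gY_i$ (by the local convexity of cone elevations), strictly reducing $Comp(D)$ and yielding the required contradiction.

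The main technical obstacle will be the precise bookkeeping for shells and single cone-cells whose cone image coincides with $gY_i$, because the reducedness conditions on combinable cone-cells only constrain configurations internal to $D$. In each such case the elimination must proceed by absorbing the offending cone-cell into the simply connected ambient cone $gY_i$, trading a cone-cell for a square subdiagram in order to strictly decrease $Comp(D)$; this asymmetry between $\mathcal{F}_\iota$ and $gY_i$ as subcomplexes of $\hat X$ is the principal point of divergence from the argument of Proposition~\ref{prop: conecell ints}.
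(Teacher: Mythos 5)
Your overall strategy—reduce to connectedness via local convexity and CAT(0) geometry, then run a Greendlinger-style disc-diagram argument—is close in spirit to the paper's, but there are two substantive problems and one missed shortcut.

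First, you repeatedly assert that $gY_i$ is simply connected, and you use this both to justify the existence of geodesics in $gY_i$ and, more seriously, to handle cone-cells whose cone coincides with $gY_i$ by ``absorbing them into the simply connected ambient $gY_i$.'' This is false: $gY_i$ here denotes an elevation of $Y_i \to X$ to $\hat X$, and $\pi_1(\hat X) = \ker(\pi_1 X \to \pi_1 X^*) = \nclose{\{\pi_1 Y_i\}}$ contains $\pi_1 Y_i$, so the elevation has $\pi_1(gY_i) = \pi_1 Y_i$, which the paper assumes to be non-trivial. (The elevations that are simply connected are the $\widetilde Y_i \to \widetilde X$, not the $gY_i \to \hat X$.) Consequently, a cone-cell with boundary essential in $Y_i$ cannot be traded for a square subdiagram inside $gY_i$, and the two places where you invoke this absorption (single cone-cells lifting to $gY_i$, shells whose cone is $gY_i$) are gaps.

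Second, you do not treat the ladder case in the trichotomy. After eliminating spurs, shells, and single cone-cells, the Greendlinger Lemma still allows $D$ to be a ladder, and your ``hence $D$ must contain at least four cornsquares'' only applies in the non-ladder branch. Ruling out shells does rule out ladders with boundary cone-cells, but a square ladder (a pseudogrid) survives your analysis and may have only the two corners at $x$ and $y$, which you cannot push out. The paper closes this case via the Ladder Theorem: after absorbing all corners with outerpath on a single side, $D$ has at most two corners, hence is a square ladder; a square ladder lies in a hyperplane carrier, and this carrier together with $gY_i$ produces a wall-piece meeting $\sigma \subset \mathcal{F}_\iota$, contradicting the definition of the untethered hull. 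Your proof needs some such argument.

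Finally, you miss the observation the paper opens with: by the very definition of the untethered hull, $\mathcal{F}_\iota \cap gY_i$ contains no cubes of dimension $\geq 1$, so the proposition reduces immediately to showing the intersection is a single vertex. This makes your CAT(0)/local-convexity reduction to connectedness unnecessary and would have made the ``absorb into $gY_i$'' issues moot (a single cone-cell with non-trivial $\sigma$ on its boundary would directly put an edge of $\mathcal{F}_\iota$ in a piece).
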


\begin{proof}
By definition, $\mathcal{F}_\iota \cap gY_i $ cannot contain any edges, so it suffices to show that $\mathcal{F}_\iota \cap gY_i $ is a single vertex.

Let $x,y \in \mathcal{F}_\iota \cap gY_i $ and consider paths $\sigma \rightarrow \mathcal{F}_\iota$ and $\gamma \rightarrow gY_i$ with endpoints $x,y$.
The concatenation $\sigma\gamma^{-1}$ bounds a disc diagram in  $\widetilde X^*$. Choose $\sigma, \gamma, D$ so that $D$ has minimal possible complexity.
By Greendlinger's Lemma, $D$ is either a single vertex or cone cell, a ladder, or has at least $3$ shells, cornsquares, or spurs. By the minimality of our choices, $D$ can have no spurs.
Suppose there is a shell $S$  and let $\mathcal{O}$ be its outerpath. Note that $\mathcal{O}$ must lie on $\gamma$, as the outerpath of a shell lying on $\sigma$ would imply that $\mathcal{F}_\iota$ contains an edge lying in a piece.  Since $\mathcal{O}$ is a subpath of $\gamma$, then $\mathcal{O}$ lies in the intersection of $2$ cone cells -- namely $S$ and a cone-cell mapping to $gY_i$, and is therefore a piece, contradicting the $C(9)$ condition.
Hence, $D$ must have corners. By local convexity of $\mathcal{F}_\iota$ and $gY_i$, a corner on $\gamma$ could be pushed into $\mathcal{F}_\iota$, and a corner on $\gamma$ could be pushed into $gY_i$. Thus,  $D$ can have at most $2$ corners, each containing an edge of $\sigma$ and an edge of $\gamma$. Hence  $D$ is a square ladder by the Ladder Theorem (Theorem~\ref{thm:ladder}). But a square ladder lies in a hyperplane carrier, and therefore yields a piece between $gY_i$ and $\mathcal{F}_\iota$, contradicting the definition of $\mathcal{F}_\iota$.

We conclude that $D$ is a single vertex, so $x=y$. 
\end{proof}

\begin{proposition}\label{prop:different ints  3} 
For each untethered component $\mathcal{F}_\iota\subset \mathcal{F}$, and for each supporting hyperplane carrier $N(H)$, the intersection $N(H)\cap \mathcal{F}_\iota$
is either empty or simply connected.
\end{proposition}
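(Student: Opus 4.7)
My plan is to mirror the strategies of Propositions~\ref{prop: conecell ints} and~\ref{prop: diff ints 1}, exploiting that the carrier $N(H)$ plays a role analogous to a cone $gY_i$ thanks to $H$ being \emph{supporting}. First I would observe that $\mathcal{F}_\iota$ is locally convex (Proposition~\ref{prop: convex free hull}) and $N(H)$ is locally convex (hyperplane carriers always are), so by Lemma~\ref{lem:loc convex intersections} the intersection is locally convex whenever non-empty. The proof then splits into two steps.

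\textbf{Step 1.} I would show that $\mathcal{F}_\iota \cap N(H)$ contains no edges. The key input is that, since $H$ is supporting, $H$ is contained neither in any cone $gY_i$ nor in the untethered hull; this forces $N(H)$ to meet some cone $gY_i$ non-trivially, and the intersection $N(H) \cap gY_i$ is then a wall-piece (because $H \not\subset gY_i$). Hence $N(H)$ is a rectangle intersecting a piece non-trivially, and by the very definition of $\mathcal{F}$ no edge of $N(H)$ can lie in $\mathcal{F}_\iota$. Consequently, $\mathcal{F}_\iota \cap N(H)$ is a (locally convex) set of vertices.

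\textbf{Step 2.} To show that $\mathcal{F}_\iota \cap N(H)$ is a single vertex, I pick $x,y$ in the intersection, paths $\sigma \to \mathcal{F}_\iota$ and $\gamma \to N(H)$ from $x$ to $y$, and form a minimal complexity disc diagram $D$ with $\partial D = \sigma\gamma^{-1}$. Applying Greendlinger's Lemma (Theorem~\ref{thm:tric}) I would analyze each possibility. The single vertex case gives $x = y$; the single cone-cell case forces $\sigma$ into $\mathcal{F}_\iota \cap gY$, which has no edges by Proposition~\ref{prop: diff ints 1}, so $\sigma$ is trivial. For shells, an outerpath on $\sigma$ would put edges of $\mathcal{F}_\iota$ on a cone, contradicting Proposition~\ref{prop: diff ints 1}; an outerpath on $\gamma$ lies inside the wall-piece $N(H) \cap gY$, so is a single piece, making the shell's boundary a concatenation of at most $5$ pieces and contradicting $C(9)$. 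Spurs are removed by minimality, and cornsquares are pushed across $\sigma$ or $\gamma$ using the local convexity of $\mathcal{F}_\iota$ and $N(H)$, reducing complexity.

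The ladder case splits further. A cone-cell ladder reduces immediately to the shell analysis applied to the end cone-cells $C_1, C_n$, whose interface paths $\nu_1, \nu_n$ are pieces. The main obstacle will be the square-ladder subcase: such a $D$ lies in some hyperplane carrier $N(H')$, giving $\sigma \subset N(H') \cap \mathcal{F}_\iota$ and $\gamma \subset N(H') \cap N(H)$. If $H' = H$ then $\sigma \subset N(H) \cap \mathcal{F}_\iota$, which has no edges by Step 1, so $\sigma$ is a vertex and $x = y$. If $H' \ne H$, the strategy of the square-ladder analysis in Proposition~\ref{prop: diff ints 1} applies: the presence of the ladder, together with $\gamma$'s edges lying in $N(H)$ (which itself meets cones in wall-pieces by Step 1), forces $N(H')$ to be a rectangle meeting a piece, so that edges of $\sigma$ — contained in $N(H')$ — cannot lie in $\mathcal{F}_\iota$, contradicting $\sigma \subset \mathcal{F}_\iota$ unless $\sigma$ is a single vertex. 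The most delicate part of the proof is making this last inheritance-of-pieces argument rigorous: tracking how the rectangle $N(H')$ acquires a piece from its contact with $N(H)$ through $\gamma$ is where the supporting hypothesis on $H$ and the structure of the untethered hull interact most subtly.
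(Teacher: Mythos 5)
Your overall strategy (mimic Propositions~\ref{prop: conecell ints} and~\ref{prop: diff ints 1} via Greendlinger's Lemma) is the paper's strategy, but your execution diverges in a way that both adds unnecessary structure and introduces genuine gaps.

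The main problem is your Step~1. First, it is superfluous: the paper goes directly for ``$\mathcal{F}_\iota\cap N(H)$ is a single vertex,'' exactly as in Proposition~\ref{prop: diff ints 1}, and this automatically excludes edges, so no separate ``no edges'' lemma is needed. Second, the reasoning in Step~1 does not hold up. You assert that $H$ being supporting ``forces $N(H)$ to meet some cone $gY_i$ non-trivially,'' but the definitions don't give this: $H$ fails to lie in the untethered hull as soon as some cube in it lies in a rectangle intersecting \emph{any} piece, and that piece might be a wall-piece of a different carrier $N(H')$, with no cone involved at all. Next, you claim that $N(H)\cap gY_i$ ``is then a wall-piece (because $H\not\subset gY_i$),'' but by Definition~\ref{def:pieces} a wall-piece requires $H$ to be \emph{disjoint} from $\widetilde Y_i$, which is much stronger than not being contained in it; if $H$ crosses $gY_i$ the intersection of carriers with the cone need not be a piece. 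Finally, you treat $N(H)$ itself as ``a rectangle intersecting a piece,'' but a hyperplane carrier is only a rectangle when $H$ is one-dimensional; in general it is not the kind of object the definition of $\mathcal{F}$ excludes.

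Your Step~2 is closer to the paper, and the cases you analyze are reasonable, but the paper handles this more economically: after choosing $x,y$ at minimal distance and $D$ of minimal complexity, it rules out all shells at once (a shell on $\sigma$ or $\gamma$ violates $C(9)$; a shell straddling both contradicts the definition of $\mathcal{F}_\iota$), concludes from Greendlinger that either $D$ is a single vertex or has $\geq 4$ cornsquares with at least two entirely on $\sigma$ or $\gamma$, and absorbs those cornsquares to contradict minimality. Your detour through the ladder and square-ladder subcases, particularly the $H'\neq H$ branch, is precisely where you acknowledge you lack a rigorous argument — and since you motivated that branch by the flawed Step~1, the gap is real. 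If you drop Step~1, adopt the ``at minimal distance'' hypothesis on $x,y$, and close the cornsquare case by absorption rather than a ladder dichotomy, you will have the paper's proof.
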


\begin{proof}
The proof is similar to that of Proposition~\ref{prop: diff ints 1}, so we repeat the argument in a little less detail: to prove the assertion, it suffices to show that if $\mathcal{F}_\iota \cap N(H)$ is non-empty, then it is a single vertex.

Let $x,y \in \mathcal{F}_\iota \cap N(H) $ and consider paths $\sigma \rightarrow \mathcal{F}_\iota$ and $\gamma \rightarrow N(H)$ with endpoints $x,y$.
The concatenation $\sigma\gamma^{-1}$ bounds a disc diagram in  $\widetilde X^*$, and $\sigma, \gamma$ and $ D$ can be chosen so that $\dist (x,y)$ is minimal amongst all pairs of vertices in $\mathcal{F}_\iota \cap N(H)$ and so that $D$ has minimal possible complexity.
We see immediately that $D$ cannot have any shells, as a shell on either  $\sigma$ or $\gamma$ would contradict the $C(9)$ condition, and a shell intersecting both $\sigma$ and $\gamma$ would contradict the definition of $\mathcal{F}_\iota$. Thus,  Greendlinger's Lemma and the fact that  $\dist (x,y)$ is minimal, imply that $D$ is either a single vertex  or has at least $4$ cornsquares, at least $2$ of which lie entirely in $\gamma$ or $\sigma$. But such consquares can be absorbed into $\mathcal{F}_\iota$   or $N(H)$, contradicting the minimal complexity of $D$. Thus, $D$ is a single vertex and $x=y$.
\end{proof}

Similarly:

\begin{proposition}\label{prop:different ints 2} 
For each  $gY_i$ and for each supporting hyperplane carrier $N(H)$, the intersection 
$N(H)\cap gY_i$ 
is  either empty or simply connected.
\end{proposition}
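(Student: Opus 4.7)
The argument would follow the templates of Propositions~\ref{prop: diff ints 1} and~\ref{prop:different ints 3}, adapted to the situation where both subcomplexes in the intersection may legitimately carry pieces. First I would note that $N(H)$ and $gY_i$ are both locally convex in $\widetilde{X^*}$---the former by standard properties of hyperplane carriers in non-positively curved cube complexes, and the latter because $Y_i\to X$ is a local isometry. By Lemma~\ref{lem:loc convex intersections}, the intersection, when non-empty, is locally convex and hence non-positively curved, so simple-connectedness would follow from path-connectedness together with triviality of $\pi_1$.

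For path-connectedness I would argue by contradiction: suppose $x,y\in N(H)\cap gY_i$ lie in distinct path-components with $\dist(x,y)$ minimised, join them by geodesics $\sigma\to gY_i$ and $\gamma\to N(H)$, and fill $\sigma\gamma^{-1}$ by a minimal-complexity disc diagram $D\to\widetilde{X^*}$. Applying Greendlinger's Lemma, spurs are excluded by minimality of the geodesics, and shells are ruled out by the $C(9)$ condition: the outerpath of a shell mapping to a cone $g''Y_{i''}$ would be a cone-piece between $g''Y_{i''}$ and $gY_i$ if it lies on $\sigma$, a wall-piece between $g''Y_{i''}$ and $H$ if it lies on $\gamma$ (using that $H$ is a supporting hyperplane and is therefore not contained in $g''Y_{i''}$), and a concatenation of at most two pieces if it spans both. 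Hence $D$ is a single vertex (forcing $x=y$), a ladder, or contains at least four cornsquares. In the cornsquare case, at least two cornsquares have outerpath entirely within $\sigma$ or $\gamma$; by local convexity of $gY_i$ and of $N(H)$, these can be absorbed into the corresponding subcomplex, contradicting minimal complexity. A square ladder lies inside a single hyperplane carrier $N(H')$; a short case analysis on whether $H'=H$ either contradicts the minimality of $\dist(x,y)$ (interior vertices of $\sigma$ would then be within $N(H)$) or shows that $\sigma$ itself lies in $N(H)\cap gY_i$, in either case contradicting that $x$ and $y$ lie in distinct components.

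For triviality of $\pi_1$, an analogous disc diagram argument applied to a shortest essential closed path in the intersection would work: the boundary is simultaneously a (possibly empty-type) wall-piece and a subpath of $gY_i$, so any minimal-complexity filling, once analysed by Greendlinger's Lemma and $C(9)$, forces nullhomotopy.

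The main obstacle I anticipate is the ladder case. Unlike in Proposition~\ref{prop: diff ints 1}, where the definition of the untethered hull precludes wall-pieces outright and immediately contradicts a square ladder, here both $gY_i$ and $N(H)$ can legitimately intersect a common hyperplane carrier, so the required contradiction must come from the minimality of $\dist(x,y)$ rather than from the definition of the ambient subcomplexes. A secondary delicate point is ensuring the shell argument goes through when $H$ is not disjoint from the shell's cone: one must verify that the supporting-hyperplane hypothesis still allows the outerpath to be counted as a single piece in the $C(9)$ sense.
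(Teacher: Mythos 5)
Your approach is essentially the paper's: for connectivity you take $x,y$ in distinct components minimising $\dist(x,y)$, join them by geodesics $\sigma, \gamma$ into the two subcomplexes, fill by a minimal-complexity disc diagram, and run Greendlinger's Lemma, ruling out spurs by geodesicity and shells because their outerpaths would be (concatenations of at most two) pieces. That is exactly the paper's argument, and your observation that an outerpath spanning both $\sigma$ and $\gamma$ is two pieces rather than one is a useful refinement of the paper's phrasing.

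Two places where your proposal diverges from the paper's treatment. First, the ladder case: the paper passes from ``no shells or spurs'' directly to ``at least $4$ cornsquares,'' which, as stated, presumes $D$ is not a ladder -- it does not address this case explicitly, whereas in Proposition~\ref{prop: diff ints 1} (the analogous statement with an untethered component in place of a cone) the paper does handle the square-ladder possibility directly. You are therefore correct to flag this, and your proposed resolution (a square ladder lies in a carrier $N(H')$, then compare $H'$ to $H$ and use minimality of $\dist(x,y)$) is in the same spirit as what the paper does in Proposition~\ref{prop: diff ints 1}, though it would need to be written out with care about pseudogrid width. Second, the simple connectivity step: you propose a disc-diagram/Greendlinger argument on a shortest essential closed loop, but the paper has a one-line argument here -- any closed path in $N(H)\cap gY_i$ is by definition a (wall-)piece, so if it were essential it would violate the $C(9)$ condition as a concatenation of a single piece. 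Your route would work but is considerably longer; note also that the appeal to Lemma~\ref{lem:loc convex intersections} is not actually needed for simple connectivity (it would be relevant for contractibility or asphericity, but simply connected is just path-connected plus trivial $\pi_1$).

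Your secondary concern -- whether the outerpath of a shell on $\sigma$ genuinely counts as a wall-piece when $H$ is not disjoint from the shell's cone -- is a legitimate subtlety that the paper's proof also glosses over. A supporting hyperplane is only required not to be \emph{contained} in any $gY_i$; it may still cross the shell's cone, in which case $N(H)\cap g''Y_{i''}$ is not literally a wall-piece in the sense of Definition~\ref{def:pieces}. Resolving this requires either observing that such a configuration would force a reduction of the diagram (a square with an edge on the shell factoring through the cone, violating item (4) of reducedness), or noting that the relevant intersection is small for independent CAT(0) reasons. Since you flag rather than resolve it, this is a gap in your write-up, though it is an inherited one.
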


\begin{proof}
First we show that $N(H)\cap gY_i $ is connected. Let $x,y \in N(H)\cap gY_i $, and assume that $x,y$ lie in distinct connected components. Assume moreover that $\dist(x,y)$ is the least possible amongst all such $x,y$. Let $\sigma \rightarrow N(H)$ and $\gamma \rightarrow gY_i$ be geodesic paths with endpoints $x,y$. Then $\sigma\gamma^{-1}$ bounds a disc diagram $D$. Choose $\sigma, \gamma$, and $D$  as above to minimise the complexity of $D$. By Greendlinger's Lemma, $D$ is either a single vertex or cone-cell, a ladder, or has at least $3$ cornsquares, shells, or spurs. Note that $D$ cannot have  spurs or shells on neither $\sigma$ nor $\gamma$. Indeed, spurs contradict that $\sigma$ and $\gamma$ are geodesic paths, and   a shell would contradict the $C(9)$ condition, as its outerpath would be a single piece. For the same reason, $D$ cannot be a single cone-cell.

Since $D$ has no shells or spurs, it must have at least $4$ cornsquares. But any cornsquare on either $\sigma$ or $\gamma$ could be absorbed into either $N(H)$ or  $gY_i $ to reduce the complexity of $D$, contradicting the minimality of the choice of $D$. So $D$ can have at most $2$ cornsquares, again leading to a contradiction.
 Hence $D$ is a single vertex, so $x=y$, contradicting our initial hypothesis.

Now it is easy to see that $N(H)\cap gY_i$ is simply connected. Indeed, a path $\sigma \rightarrow N(H)\cap gY_i$ is a piece by definition, and thus,  by the $C(9)$ condition, it cannot be essential.
\end{proof}

The next result shows that supporting hyperplanes in $C(9)$ cubical presentations behave very much like hyperplanes in CAT(0) cube complexes, inasmuch as their embedding properties are concerned.

\begin{lemma}\label{lem:special like behaviours}
In $\widetilde X^*$:
\begin{enumerate}
\item \label{it:0} each supporting hyperplane carrier is simply connected,
\item  \label{it:1} each  supporting hyperplane carrier is embedded, and
\item  \label{it:2} the intersection between two supporting hyperplane carriers is either empty or simply connected.
\end{enumerate}
\end{lemma}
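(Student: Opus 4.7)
The plan is to handle all three parts with the same disc-diagram machinery used throughout Subsection~\ref{sec:inter}: bound a candidate path in the relevant subcomplex by a minimal-complexity disc diagram in $\widetilde{X^*}$ and apply Theorem~\ref{thm:tric}, using $C(9)$ to eliminate shells (whose outerpaths lie in hyperplane carriers and would be single wall-pieces, whereas $C(9)$ forces the outerpath of a shell in a minimal-complexity diagram to be a concatenation of $\geq 5$ pieces) and the standard local convexity of hyperplane carriers in non-positively curved cube complexes (together with Lemma~\ref{lem:loc convex intersections} for intersections) to absorb cornsquares back into the target subcomplex, reducing complexity.

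For (\ref{it:0}), I would take a closed combinatorial path $\sigma$ in $N(H)$ of minimal length in its free homotopy class in $N(H)$, and a minimal-complexity disc $D\to\widetilde{X^*}$ with $\partial D = \sigma$. Shells are ruled out as above, spurs by length minimality, and cornsquares by local convexity together with complexity minimality. A single cone-cell $C$ gives $\partial C = \sigma \subset N(H)\cap gY_i$, a wall-piece; it is either inessential, in which case $\sigma$ bounds inside $N(H)\cap gY_i$, which is simply connected by Proposition~\ref{prop:different ints 2}, so $\sigma$ is null-homotopic in $N(H)$, or it is essential, yielding a direct $C(9)$ contradiction. The ladder case reduces to one of these via end cone-cells being shells or via absorption of pseudo-grid corners. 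Part (\ref{it:1}) follows by the same argument applied to a shortest path $\gamma$ in the abstract carrier of $H$ joining two points that map to the same point of $\widetilde{X^*}$: the closed image of $\gamma$ bounds a minimal-complexity disc that must reduce to a single vertex, forcing the two points to coincide.

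For (\ref{it:2}), I would first establish connectedness via the scheme of Propositions~\ref{prop: conecell ints}--\ref{prop:different ints 2}: take $x, y$ in distinct components of $N(H_1)\cap N(H_2)$ minimising $\dist(x,y)$, choose geodesics $\sigma\subset N(H_1)$ and $\gamma\subset N(H_2)$ between them, and let $D$ be a minimal-complexity disc bounded by $\sigma\gamma^{-1}$. A shell whose outerpath lies in $\sigma$ or $\gamma$ is a single wall-piece; a shell whose outerpath straddles $x$ or $y$ decomposes into at most two wall-pieces; either way $C(9)$ is violated. Spurs contradict geodesicity, cornsquares away from $\{x,y\}$ are absorbed into $N(H_1)$ or $N(H_2)$ by local convexity, a single cone-cell gives a two-wall-piece contradiction, and the ladder case reduces as before, forcing $D$ to be a single vertex and hence $x=y$. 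For simple connectedness I would take a loop $\sigma$ in $N(H_1)\cap N(H_2)$ of minimal length within its free homotopy class in the intersection, bound it by a minimal-complexity $D$, and observe that cornsquare absorption stays inside the intersection by Lemma~\ref{lem:loc convex intersections}; the same elimination of shells, spurs, single cone-cells, and ladders forces $D$ to be a single vertex. The main technical point common to all three parts is ensuring that the corner-absorption moves keep the modified boundary inside the relevant subcomplex, and this is exactly what local convexity of hyperplane carriers (and of their intersections via Lemma~\ref{lem:loc convex intersections}) provides.
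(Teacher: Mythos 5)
Your proposal follows the same strategy as the paper's proof: bound the relevant path by a minimal-complexity disc diagram in $\widetilde{X^*}$, apply the Diagram Trichotomy, kill shells via the $C(9)$ condition (outerpaths would be single wall-pieces), and absorb cornsquares using local convexity of carriers and of their intersections. The only noticeable divergence is in the simple-connectedness half of part~(\ref{it:2}), where the paper first uses part~(\ref{it:0}) to replace the diagram by a \emph{square} disc diagram lying entirely inside $N(H)$ (so no cone-cells need be considered, and every cornsquare is automatically also a cornsquare in $N(H')$), whereas you stay in $\widetilde{X^*}$ and re-eliminate cone-cells; both routes work. The paper is also slightly more explicit than your sketch about two points: in part~(\ref{it:1}) it notes that exactly one cornsquare may contain the self-osculation basepoint $p$ and therefore pushes out a cornsquare away from $p$, and in part~(\ref{it:2}) it gives the explicit piece-count ($\leq 6$) that rules out a cone-cell in the residual ladder.
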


\begin{proof}
For~\eqref{it:0}, suppose $N(H) \rightarrow \widetilde X^*$ is not simply connected. A non nullhomotopic loop $\sigma \rightarrow H$ gives rise to a disc diagram $D$ in $\widetilde X^*$ whose boundary path is a closed non-nullhomotopic path in $N(H)$. Assume that $D$ has minimal possible complexity amongst all  choices of $\sigma$ and  $D$. 
By Greendlinger's Lemma, $D$ is either a single cone cell or 0-cell, a ladder, or has at least 3 shells, cornsquares, or spurs. Any shell in $D$ would contradict the $C(9)$ condition, and since hyperplanes in CAT(0) cube complexes are simply connected, $D$ cannot be a square disc diagram. 
Thus, $D$ is either a 0-cell or has at least 2 cornsquares (and some internal cone-cells). After shuffling, a cornsquare on $\partial D$ corresponds to a square that can be pushed out of $D$ to obtain a new diagram $D'$ with $\area D' < \area D$ and such that $\partial D$ is still a path in the carrier of $N(H)$, thus contradicting our initial choice. We conclude that $D$ is a single 0-cell, which again leads to a contradiction, since we had assumed that $\sigma \rightarrow H$ was not null-homotopic in $N(H)$.

For~\eqref{it:1}, let $N(H) \rightarrow \widetilde X^*$ be a non-embedded hyperplane carrier, so $N(H)$ self-osculates at a vertex $p$ (note that $H$ itself might not self-intersect). Let $\sigma \rightarrow \hat{X}$ be a closed path in the image of $N(H)$ and based at $p$. Then $\sigma$ bounds a disc diagram $D$ in $\widetilde X^*$. Amongst all possible choices of $\sigma$ as above and of $D$ with $\partial D =\sigma$, choose $(\sigma, D)$ so $\sigma$ is the shortest possible and $D$ has minimal complexity. 
By Greendlinger's Lemma, $D$ is either a single cone cell or 0-cell, a ladder, or has at least 3 shells, cornsquares, or spurs. Any shell in $D$ would contradict the $C(9)$ condition since its outerpath would be a single piece, and since hyperplanes in CAT(0) cube complexes are embedded, $D$ cannot be a square disc diagram, so $D$ is either a 0-cell or has at least 2 cornsquares (and again, contains some internal cone-cells). One of these cornsquares might contain the basepoint $p$, but the other one does not, so we  can now conclude as in the proof of~\eqref{it:0} by first shuffling the diagram to find a corner on $\partial D$ and then pushing out a square at the corner not containing $p$.

For~\eqref{it:2}, let $N(H) \rightarrow \widetilde X^*$ and let $N(H') \rightarrow \widetilde X^*$ be a pair of hyperplane carriers whose intersection is non-empty. 
Let $x,y \in N(H) \cap N(H')$. Consider paths $\sigma \rightarrow N(H),\gamma \rightarrow N(H')$ with endpoints $x,y$. 
The concatenation $\sigma \gamma^{-1}$ bounds a disc diagram $D$ in $\widetilde X^*$. 
Choose $\sigma, \gamma, D$ to minimise complexity of $D$. As before, applying Greendlinger's Lemma and using the $C(9)$ condition, we see that $D$ has no shells and thus must have spurs and/or cornsquares or be a single 0-cell. If $D$ has $\geq 3$ cornsquares or spurs, then at least one of these lies on either $N(H)$ or $N(H')$, and can thus be pushed out or deleted to obtain a diagram of lesser complexity than $D$, as in previous cases. 
Therefore, $D$ must be a ladder with a cornsquares or spur at either end. Suppose $\area D > 0$. If $D$ is a square ladder, then it must be a degenerate ladder (a segment) because  hyperplane carriers in CAT(0) cube complexes have connected intersection. Hence, $D$ has a cone-cell $S$, but then the boundary path of $S$ is a concatenation of at most 6 pieces: one corresponding to each intersection $\partial S \cap N(H), \partial S \cap N(H')$ and at most four (two at either end) corresponding to intersections between $\partial S$ and the previous and subsequent cells in the ordering of the ladder, contradicting the $C(9)$ condition.

 We conclude that $\area (D)=0$ and thus that $\sigma=\gamma^{-1}$, so $x,y$ lie in the same connected component of $N(H) \cap N(H')$. Since this is the case for every $x,y \in N(H) \cap N(H')$, the intersection $N(H) \cap N(H')$ is connected. 
 
 Finally, we show that $N(H) \cap N(H')$ is simply connected. Consider a  loop $\sigma \rightarrow N(H) \cap N(H')$ and assume moreover that $\sigma$ is not nullhomotopic, and is a shortest representative of its free homotopy class. The map  $\sigma \rightarrow N(H) \cap N(H')$ factors through both $N(H)$ and $N(H')$, and both maps $\sigma \rightarrow N(H)$ and $\sigma \rightarrow N(H')$ have to be nullhomotopic by~\eqref{it:0}. In particular, $\sigma$ bounds a minimal area square disc diagram $D$ in $N(H)$. In this case, Greendlinger's Lemma implies that $D$ must have at least 3 cornsquares (there are no spurs because $\partial D =\sigma$ is a shortest path in its homotopy class), which are also cornsquares in $N(H')$. Since $N(H) $ and $ N(H')$ are locally convex, this implies,  after shuffling $D$ to replace a cornsquare with a square corner, that every cube $c$ at such a corner $k$ must also lie in $N(H) \cap N(H')$, and this therefore that the path $\sigma$ can be homotoped across $c$ to reduce the area of $D$, contradicting the minimality of the choice of $D$.
\end{proof}

\begin{figure}%[h!]
\centerline{\includegraphics[scale=0.65]{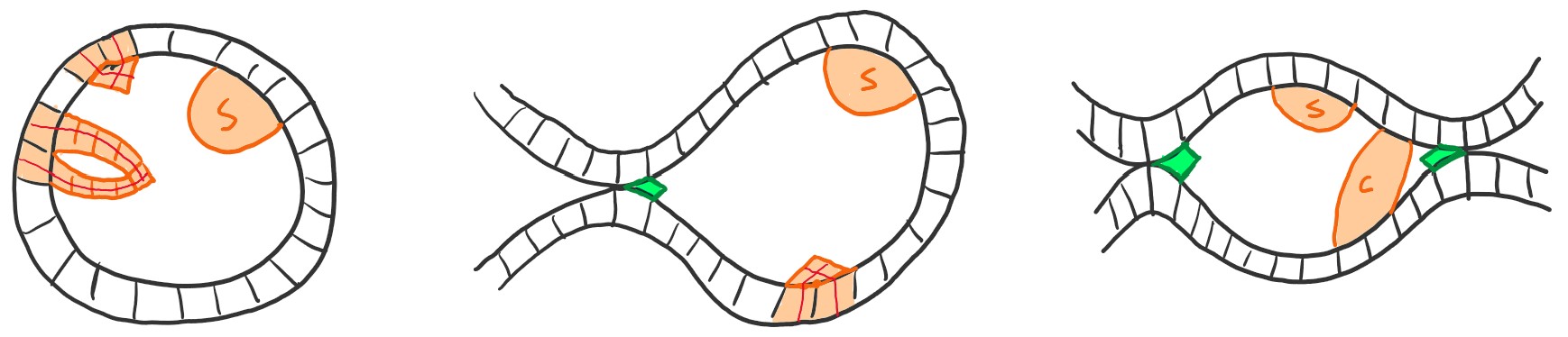}}
\caption{Heuristics of the relevant disc diagrams in the proofs of the various items in Lemma~\ref{lem:special like behaviours}. From left to right: hyperplane carriers are simply-connected, embedded, and their pairwise non-empty intersections are simply-connected. Impossible cornsquares and shells  are indicated in orange; possible cornsquares are marked in green.}
\label{fig:hyps}
\end{figure} 

The previous results will be used extensively in the base cases of various inductive arguments. For ease of reference, we collect them into a single statement:

\begin{theorem}\label{thm:combointersections}
Let $X,X' \in \mathcal{U}$. Then the intersection $X \cap X'$ is either empty or simply connected.
\end{theorem}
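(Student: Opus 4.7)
The plan is a straightforward case analysis on the types of $X$ and $X'$ in the three-way decomposition $\mathcal{U}$, since essentially every pairwise combination has been handled in the preceding propositions, and the three ``diagonal'' cases $X = X'$ are also already covered. I would begin by observing that every element of $\mathcal{U}$ is itself simply connected, which disposes of the case $X = X'$: an untethered component is simply connected by Proposition~\ref{prop: contractible free hull}; a translate $gY_i$ is simply connected because by Theorem~\ref{thm:embeds} it is an embedded copy of the universal cover $\widetilde Y_i$; and a supporting hyperplane carrier $N(H)$ is simply connected by Lemma~\ref{lem:special like behaviours}\eqref{it:0}.

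For the off-diagonal cases, I would split into the six ordered-pair combinations (or rather the six unordered ones), each already proven:
\begin{itemize}
\item Two distinct cones $gY_i \neq g'Y_{i'}$: Proposition~\ref{prop: conecell ints}.
\item An untethered component and a cone, $\mathcal{F}_\iota \cap gY_i$: Proposition~\ref{prop: diff ints 1}.
\item An untethered component and a supporting hyperplane carrier, $\mathcal{F}_\iota \cap N(H)$: Proposition~\ref{prop:different ints  3}.
\item A cone and a supporting hyperplane carrier, $gY_i \cap N(H)$: Proposition~\ref{prop:different ints 2}.
\item Two distinct supporting hyperplane carriers, $N(H) \cap N(H')$: Lemma~\ref{lem:special like behaviours}\eqref{it:2}.
\item Two distinct untethered components $\mathcal{F}_\iota \neq \mathcal{F}_{\iota'}$: these are distinct connected components of $\mathcal{F}$, so by definition their intersection is empty.
\end{itemize}

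I would present the argument compactly as an enumerated case analysis, invoking the corresponding result in each line rather than repeating proofs. There is no real obstacle; the theorem is genuinely a packaging result, and the only subtlety worth flagging is the diagonal case for cones, where one must recall that by Theorem~\ref{thm:embeds} each $gY_i \subset \hat X$ is embedded and equals a lift of the universal cover of $Y_i$, hence is simply connected. The value of stating the theorem here is to give later inductive arguments a single reference to cite.
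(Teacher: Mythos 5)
Your case analysis for distinct $X \neq X'$ is correct and matches the paper's intent: the theorem carries no proof of its own and simply packages Propositions~\ref{prop: contractible free hull}, \ref{prop: conecell ints}, \ref{prop: diff ints 1}, \ref{prop:different ints  3}, \ref{prop:different ints 2}, and Lemma~\ref{lem:special like behaviours} into one citable statement, as the sentence preceding it in the text announces. Your list of six unordered pairs is exactly this collection, together with the (correct) observation that two distinct untethered components are disjoint by definition.

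Where your proposal goes wrong is the diagonal case for cones. You claim that $gY_i \subset \hat X$ is ``an embedded copy of the universal cover $\widetilde Y_i$,'' citing Theorem~\ref{thm:embeds}. That theorem only says the elevation is an \emph{embedding}; the elevation of $Y_i \to X$ to $\hat X$ is in fact a copy of $Y_i$ itself, not of $\widetilde Y_i$. By the definition of elevation, $\pi_1$ of the elevation equals the preimage of $\pi_1\hat X = \ker(\pi_1 X \to \pi_1 X^*) = \nclose{\{\pi_1 Y_j\}}$ under the injective map $\pi_1 Y_i \hookrightarrow \pi_1 X$, and since $\pi_1 Y_i \subset \nclose{\{\pi_1 Y_j\}}$ that preimage is all of $\pi_1 Y_i$, so $gY_i \cong Y_i$ with $\pi_1(gY_i)\cong \pi_1 Y_i$. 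The paper depends on this in an essential way: the proof of Theorem~\ref{thm:clprop} uses the homotopy equivalence $\hat X \simeq \bigvee gY_i$ precisely to conclude $\pi_1 \hat X \cong \ast_{i,t}\langle\pi_1 Y_i\rangle^t$, which would be vacuous if each $gY_i$ were simply connected. So $gY_i$ is generically not simply connected, the theorem is implicitly a statement about distinct $X \neq X'$ (as in all of its uses), and the diagonal case should simply be omitted rather than ``disposed of'' with a claim that would make the statement false.
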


To finish this section, we check that a version of the Helly Property  holds for intersections between cones, hyperplane carriers, and untethered components in $\mathcal{U}$. We then strengthen the conclusion to deduce simple-connectivity of non-empty intersections between these objects. These results will be fundamental for the proof of Claim~\ref{clm:final}, which is in turn an essential supporting statement in the proof of Theorem~\ref{thm:clprop}.

\begin{proposition}\label{prop:helly}
Let $X_1, \ldots, X_n \in  \mathcal{U}$. If each pairwise intersection $X_i \cap X_j$ is non-empty, then the total intersection $\bigcap_{i\in \{1, \ldots, n\}} X_i$ is non-empty.
\end{proposition}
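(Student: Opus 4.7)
The plan is to induct on $n$, with the case $n=2$ being the hypothesis. The decisive case is $n=3$, handled by a disc-diagram argument in the style of Propositions~\ref{prop: conecell ints}--\ref{prop:different ints 2}; the inductive step then reduces to it.

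For $n=3$, I would pick vertices $p_{12}\in X_1\cap X_2$, $p_{13}\in X_1\cap X_3$, $p_{23}\in X_2\cap X_3$, and use the simple connectedness of each $X_k$ to join the two chosen points lying in $X_k$ by a combinatorial geodesic $\alpha_k\subset X_k$. The loop $\alpha_1\alpha_3\alpha_2^{-1}$ bounds a disc diagram $D$ in $\widetilde{X^*}$, and I choose $D$ and the $\alpha_i$'s to minimise first $Comp(D)$ and then $\sum|\alpha_i|$. Greendlinger's Lemma (Theorem~\ref{thm:tric}) leaves four possibilities. If $D$ is a single $0$-cell, then $p_{12}=p_{13}=p_{23}$ is the required common point. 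A shell would have outerpath lying on at most two consecutive $\alpha_i$'s, hence be a concatenation of at most two pieces (as in Propositions~\ref{prop: conecell ints}--\ref{prop:different ints 2}), violating $C(9)$; spurs contradict geodesicity of the $\alpha_i$'s. Any cornsquare, interior to a single $\alpha_i$ or straddling a vertex $p_{ij}$, can be absorbed into the adjacent $X_i$ (after shuffling) by local convexity, strictly lowering the complexity. The ladder and single-cone-cell cases yield the same piece-count contradictions with $C(9)$, except for the degenerate subcase in which $D$ is a cone-cell in a cone $gY$ coinciding with some $X_k$; there any $p_{ij}$ is already a point of $X_1\cap X_2\cap X_3$.

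For $n\geq 4$, I would apply the inductive hypothesis to $\{X_1,\dots,X_{n-1}\}$ and to $\{X_2,\dots,X_n\}$, obtaining non-empty intersections $V_n\subset \bigcap_{i<n} X_i$ and $V_1\subset \bigcap_{i>1} X_i$, and then combine this with $X_1\cap X_n\neq\emptyset$ to reduce to three sets $X_1$, $X_n$, and $W=\bigcap_{i=2}^{n-1}X_i$ with pairwise non-empty intersections. Since $W$ is not itself an element of $\mathcal{U}$, I would strengthen the induction to carry along the connectedness and local convexity of such multi-intersections (which Lemma~\ref{lem:loc convex intersections} and Theorem~\ref{thm:combointersections} readily give), and rerun the $n=3$ disc-diagram recipe with $W$ as one of the three sides.

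The principal obstacle I anticipate is precisely this last verification: the absorption moves in the $n=3$ argument invoke different local-convexity inputs depending on the type of element of $\mathcal{U}$ at hand---Proposition~\ref{prop: convex free hull} for untethered components, Lemma~\ref{lem:special like behaviours} for supporting hyperplane carriers, and the definition for cones---and confirming that these extend uniformly to intersections $W$ requires a careful, though routine, case analysis. A cleaner alternative would be to bypass the bootstrap entirely and run a single $n$-gon disc-diagram argument directly, exploiting that any shell still has outerpath on at most two consecutive sides and so violates $C(9)$; the more delicate point there is the single-cone-cell case, where one would need a pigeonhole step to find two consecutive sides lying in a common $X_i$ that can be merged to shorten the polygon.
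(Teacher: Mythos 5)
Your proposal follows essentially the same route as the paper: induction on $n$, with the $n=3$ case handled by applying Greendlinger's Lemma to a minimal-complexity triangular disc diagram, and the inductive step reduced back to a three-sided diagram. Two small points in your $n=3$ analysis need fixing. You say a shell's outerpath lies on at most two consecutive $\alpha_i$'s and so is a concatenation of at most two pieces; in fact an arc of $\alpha_1\alpha_3\alpha_2^{-1}$ can span parts of all three sides (tail of $\alpha_1$, all of $\alpha_3$, head of $\alpha_2^{-1}$), so it is a concatenation of at most three pieces. Either count still contradicts $C(9)$. More seriously, your claim that a cornsquare straddling a vertex $p_{ij}$ can be absorbed into the adjacent $X_i$ by local convexity does not go through: such a square has one boundary edge in $X_i$ and the other in $X_j$, and local convexity of $X_i$ requires both edges of the corner to lie in $X_i$ before it hands you the square. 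The paper sidesteps this by using the refined form of Greendlinger's Lemma (Theorem~\ref{thm:tric}): a shell-free, spur-free diagram has at least four cornsquares, and with only three corners $p_{ij}$ to straddle, pigeonhole produces a cornsquare whose outerpath lies entirely on a single $\alpha_i$, where absorption genuinely works.

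On the inductive step, your decomposition into $X_1$, $X_n$, and $W=\bigcap_{i=2}^{n-1}X_i$ is an asymmetric variant of the paper's, which uses the three $(N{-}1)$-fold intersections $Z_j=\bigcap_{i\neq j}X_i$ with corners chosen in the $(N{-}2)$-fold intersections $Z_{jj'}$. Both reductions have the same technical cost: the tripod argument must be rerun with sides lying in multi-intersections of elements of $\mathcal{U}$, which requires connectivity and local convexity of those intersections. You correctly flag this as the main remaining verification; in the paper it is supplied by Lemma~\ref{lem:loc convex intersections} and the companion Proposition~\ref{prop:stronghelly} (whose proof is genuinely intertwined with this one), though the write-up condenses it to "the same considerations as in the case $N=3$." Finally, the paper disposes of the square-disc-diagram subcase of $n=3$ by citing the Helly property for CAT(0) cube complexes~\cite[2.10]{WiseIsraelHierarchy}, whereas you handle squares uniformly through the cornsquare count; both work once the straddling issue above is repaired.
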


\begin{proof}
The proof is by induction on $n$. For the base case, 
we check that $X_1 \cap X_2 \cap X_3$ is non-empty whenever each intersection $X_i \cap X_j$ with $i,j \in \{1,2,3\}$ is non-empty. To this end, let $i <j$,   consider vertices $p_{ij} \subset X_i \cap X_j$ and let $\rho_1, \rho_2, \rho_3$ be geodesics connecting the $p_{ij}$'s.  Let $D$ be a disc diagram with boundary path $\rho_1\rho_2\rho_3$, and assume that $D$ has minimal complexity.  If $D$ is a square disc diagram, then by the Helly property for CAT(0) cube complexes~\cite[2.10]{WiseIsraelHierarchy}, $D$ must be a \emph{tripod} -- i.e., a degenerate disc diagram with a central vertex $v$ and three branches coming off of it. In that case, $v \subset X_1 \cap X_2 \cap X_3$ and the proof is complete. So we may assume that $D$ is not a square disc diagram, and by Greendlinger's lemma, is either a ladder, or has at least 3 shells, cornsquares, or spurs. Any shell would contradict the C(9) condition, since its outerpath would be a concatenation of at most 3 pieces, and any spur in the interior of one of $\rho_1,\rho_2,\rho_3$  would contradict that these are geodesics. Thus, $D$ must have at least $4$  cornsquares on $\partial D$, and at least one of these must lie on a $\rho_i$. But, after shuffling $D$, such a cornsquare can be pushed out of the diagram to produce a diagram of less complexity, contradicting our initial choice of $D$. Thus, $D$ must be a tripod  with a central vertex $v $ that lies on all three paths $\rho_1,\rho_2,\rho_3$. Therefore, $v \subset X_1\cap X_2 \cap X_3$. This finishes the proof of the base case.

Now assume that the total intersection $\bigcap_{i\in \{1, \ldots, n\}} X_i$ is non-empty whenever $n<N$. Let $X_1, \ldots, X_N $ be elements of  $\mathcal{U} $ that pairwise intersect. We can prove that $\bigcap_{i\in \{1, \ldots, N\}} X_i$ is non-empty in the same way as the base case. Define, for each $j,j' \subset \{1, \ldots, N\}$, the following subcomplexes: $$Z_j=\bigcap_{i\in \{1, \ldots, N\}-\{j\}} X_i$$ and  $$Z_{jj'}=\bigcap_{i\in \{1, \ldots, N\}-\{j,j'\}} X_i.$$

Consider geodesic paths $\mu_1 \rightarrow Z_1, \mu_2 \rightarrow Z_2$ and $ \mu_3 \rightarrow Z_3$ with endpoints $m_{12}\subset Z_{12},m_{23} \subset Z_{23}, m_{31} \subset Z_{31}$. 
Let $D$ be a minimal complexity disc diagram bounded by $\mu_1\mu_2\mu_3$. 
Applying Greendlinger's Lemma, the same considerations as in the case $N=3$ imply that $D$ must be a tripod, and the central vertex $v$ lies in   $Z_1, Z_2$ and $Z_3$, and therefore in $\bigcap_{i\in \{1, \ldots, N\}} X_i$.
\end{proof}

\begin{figure}%[h!]
\centerline{\includegraphics[scale=0.65]{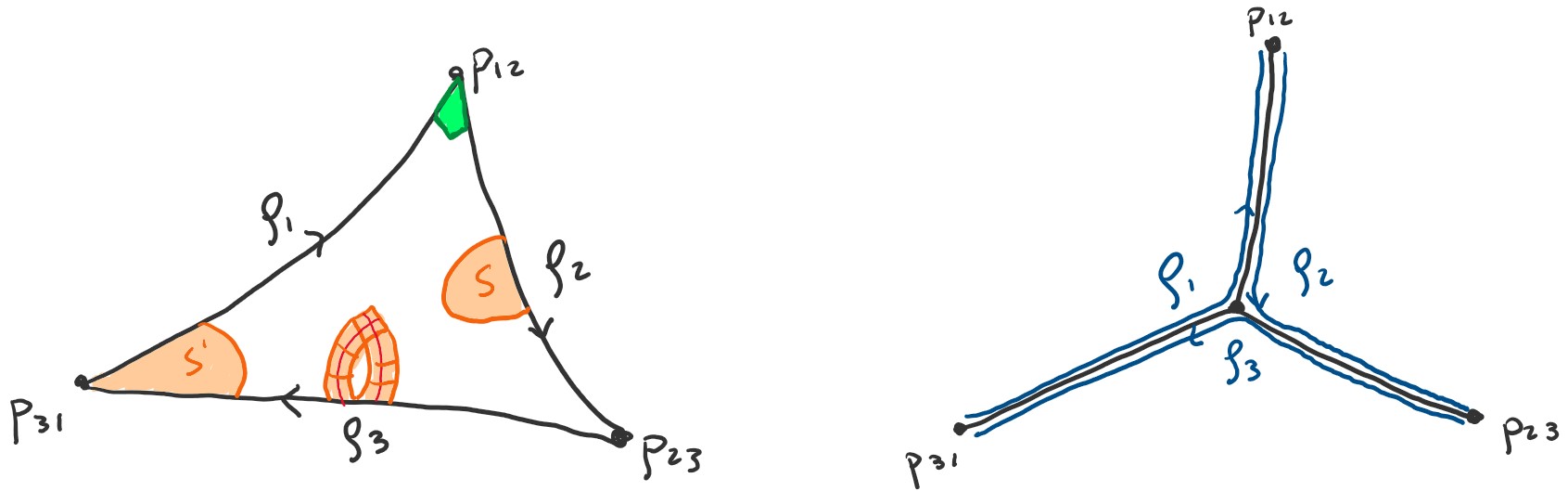}}
\caption{The objects considered in the proof of Proposition~\ref{prop:helly}.}
\label{fig:helly}
\end{figure} 

\begin{proposition}\label{prop:stronghelly}
Let $X_1, \ldots, X_n \in  \mathcal{U}$. If  the total intersection $\bigcap_{i\in \{1, \ldots, n\}} X_i$ is non-empty, then it is simply-connected.
\end{proposition}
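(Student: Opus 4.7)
The plan is to argue by induction on $n$, with Theorem~\ref{thm:combointersections} providing the base case $n=2$. For the inductive step, set $W=\bigcap_{i=1}^{n-1}X_i$ and $Z=W\cap X_n=\bigcap_{i=1}^{n}X_i$; since $Z\neq\emptyset$ we have $W\neq\emptyset$, and the inductive hypothesis gives that $W$ is simply connected. By Lemma~\ref{lem:loc convex intersections} $W$ is locally convex in $\hat X$, and so inherits the structure of a non-positively curved cube complex; together with simple-connectedness this upgrades $W$ to a CAT(0) cube complex. The same lemma gives that $Z$ is locally convex in $\hat X$ and hence in $W$, and since $W$ is CAT(0) local convexity is equivalent to (geodesic) convexity, so $Z$ is a convex subcomplex of $W$; in particular it is connected.

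It remains to show $\pi_{1}(Z)=1$. Given any loop $\sigma\to Z$, the simple-connectedness of the cube complex $W$ produces a square disc diagram $D\to W$ with $\partial D=\sigma$, and I would take $D$ of minimal area. The key step is to show $D\subset Z$, which then exhibits $\sigma$ as nullhomotopic in $Z$. This is a standard consequence of $Z$ being a convex subcomplex of the CAT(0) cube complex $W$: every dual curve of $D$ starts and ends on edges of $\partial D\subset Z$, so the hyperplane of $W$ that it traces is dual to edges of $Z$, and convexity of $Z$ forces every cube of $W$ crossed by that dual curve to lie in $Z$; hence every square of $D$ lies in $Z$.

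The hardest part will be pinning down that last step cleanly, and the plan is to do it by a Greendlinger-style reduction entirely inside $W$, which matches the diagrammatic style used elsewhere in the paper. Concretely: by Lemma~\ref{lem:discpatho} and minimality $D$ has no bigons, monogons, nonogons or spurs, and a combinatorial Gauss--Bonnet count for square disc diagrams guarantees that if $D$ is not a single vertex it carries a cornsquare on $\partial D$. After shuffling, this cornsquare becomes a corner square $s$ whose two boundary edges lie on $\sigma\subset Z$; by local convexity of each $X_{i}$ the square $s$ then lies in every $X_{i}$, and hence in $Z$. Pushing $s$ out of $D$ is a homotopy in $Z$ which replaces $\sigma$ by a loop $\sigma'\subset Z$ bounded by a disc diagram of strictly smaller area. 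Iterating drives the area to zero, at which stage the residual diagram is a tree all of whose edges lie on its boundary path and therefore in $Z$; the terminal loop is then visibly nullhomotopic in $Z$, and composing the homotopies yields $\sigma\simeq\ast$ in $Z$, completing the induction.
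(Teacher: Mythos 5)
Your overall route is a genuinely different one from the paper's: you pull the inductive step into CAT(0) cube complex geometry (the sub-intersection $W=\bigcap_{i<n}X_i$ is CAT(0) by the inductive hypothesis plus Lemma~\ref{lem:loc convex intersections}, and you then work with disc diagrams entirely inside $W$), whereas the paper repeats the Greendlinger argument from Propositions~\ref{prop: conecell ints} and~\ref{prop:different ints 2} on diagrams in $\widetilde X^*$. Your version has the appeal of isolating the small-cancellation input in the inductive hypothesis rather than re-invoking it. The part of your argument showing each component of $Z$ is simply connected is sound: minimality of $D\to W$, Gauss--Bonnet, shuffling to a boundary corner $s$ whose two boundary edges lie on $\sigma\subset Z$, local convexity of each $X_i$ forcing $s\subset Z$, and pushing across $s$.

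However, there is a real gap in the connectivity step. You assert that ``since $W$ is CAT(0) local convexity is equivalent to (geodesic) convexity, so $Z$ is a convex subcomplex of $W$; in particular it is connected.'' This is false as stated: the paper's notion of local convexity says nothing about connectivity, and a locally convex subcomplex of a CAT(0) cube complex can be disconnected. (Take $W$ to be a length-two segment and $Z$ its two endpoints; $Z$ is locally convex since there are no squares, but is not connected and not convex.) Connectivity is exactly the content that a locally convex subcomplex of a CAT(0) cube complex must be shown to have before one can call it convex. Recall also the paper's convention that ``connected'' means \emph{exactly} one component, so the proposition genuinely requires proving $Z$ has one component. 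Your disc-diagram argument never produces a path between two given vertices in possibly different components of $Z$; it only contracts a loop that already lies in a single component. You would need to add an argument in the spirit of the paper's: take $x,y\in Z$ at minimal distance in distinct components, take a geodesic $\sigma\to W$ and a geodesic $\gamma\to X_n$ from $x$ to $y$, and apply Greendlinger's Lemma (Theorem~\ref{thm:tric}) to a minimal-complexity disc diagram bounded by $\sigma\gamma^{-1}$, showing it collapses; this is where the $C(9)$ condition enters at level $n$, and it cannot be replaced by CAT(0) convexity alone.
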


\begin{proof} The proof is by induction on $n \in \naturals$. The base case is $n=2$ and follows from Theorem~\ref{thm:combointersections}. 

Assuming that the statement holds whenever $n <N$, consider a collection $X_1, \ldots, X_N \in  \mathcal{U}$. 
By the induction hypothesis, each intersection $\bigcap_{i \in I} X_i$ where $|I|\leq N-1$ is simply-connected.

Connectivity of $\bigcap_{i \in \{1, \ldots, N\}} X_i$  now follows as in the proofs of connectivity in Proposition~\ref{prop: conecell ints} and Proposition~\ref{prop:different ints 2} above, by considering
  vertices $x,y$ in distinct connected components of $\bigcap_{i \in \{1, \ldots, N\}} X_i$ satisfying that $\dist(x,y)$ is minimal possible amongst all such choices,   considering  geodesic paths $\sigma \rightarrow \bigcap_{i \in \{1, \ldots, N-1\}} X_i$ and $ \gamma \rightarrow X_n$  with endpoints $x,y$, and applying Greendlinger's Lemma to a minimal complexity disc diagram bounded by $\sigma\gamma^{-1}$. 
  
  Similarly, simple-connectivity follows as in the proofs of simple-connectivity in Proposition~\ref{prop: conecell ints} and Proposition~\ref{prop:different ints 2}, by considering a closed path $\sigma \rightarrow \bigcap_{i \in \{1, \ldots, N\}} X_i$ that is shortest in its homotopy class. If at least one of the $X_i$'s is a cone,  then $\sigma$ is a piece and therefore cannot be essential. If for all the  $i \in \{1, \ldots, N\}$, the corresponding $X_i$ is a supporting hyperplane carrier or an untethered component, then we arrive at a contradiction by applying Greendlinger's Lemma to a minimal complexity disc diagram bounded by $\sigma$. 
\end{proof}

\section{Reassembling the cubical part of $\widetilde{X^*}$ (carefully)}\label{sec:order}

\begin{definition}[The structure graph]\label{def:structure}
Define the \emph{structure graph} $\Lambda$ of the cubical presentation $X^*=\langle X \mid \{Y_i\} \rangle$ as follows. The vertex set $V(\Lambda):=V$ of $\Lambda$ has three types of vertices: \begin{enumerate}
\item The first type corresponds to the translates $\{gY_i\}$ of the relators $Y_1, \ldots, Y_k$ in $\hat X$. The set of vertices of this type will be denoted $V_C$. 
\item The second type, denoted by $V_H$, corresponds to the supporting hyperplane carriers in $\widetilde X^*$.
\item The third type of vertices, denoted $V_{\mathcal{F}_\iota}$, corresponds to the untethered components of $\hat X$.
\end{enumerate}  

Let $X_v$ denote the subcomplex of $\hat X$ corresponding to $v \in V$. We may write the three-way decomposition of $\widetilde X^*$ as $$\mathcal{U}=\{X_v : v \in V\}.$$

The edges of $\Lambda$   correspond to non-empty intersections of one of the following types:
\begin{enumerate}

\item   $gY_i \cap g'Y_j$, 
\item  $gY_i \cap \mathcal{F}_\iota$,
\item $gY_i \cap N(H)$, 
\item $N(H) \cap N(H')$, 
\item $N(H) \cap  \mathcal{F}_\iota$.
\end{enumerate}
Where  the $gY_i, g'Y_j$ range
over all $i, j \in I$ and over all left cosets of $Stab_{\pi_1X}(\widetilde Y_i)\nclose{\pi_1 Y_i}$ and  $Stab_{\pi_1X}(\widetilde Y_j)\nclose{\pi_1 Y_j}$ in $\pi_1X$, $\iota $ ranges over the connected components of $\mathcal{F}$, and $H, H'$ are supporting hyperplanes. 

Note that each intersection $gY_i \cap g'Y_j$ is a piece $p$ or a vertex, so an edge of  $\Lambda$ having an endpoint in $V_C$ corresponds to a piece in $\hat X$, to a vertex, or to an intersection between a $gY_i$ and an untethered component of $\mathcal{F}$.
\end{definition}

\begin{lemma}\label{lem:simplicial structure graph}
Let $\Lambda$ be the structure graph of a $C(9)$ cubical presentation $X^*$, then $\Lambda$ is a simplicial graph.
\end{lemma}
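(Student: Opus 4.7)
The plan is to verify the two defining properties of a simplicial graph: that $\Lambda$ has no loops (edges from a vertex to itself) and no multiple edges between any pair of distinct vertices. Both assertions will follow essentially immediately from Definition~\ref{def:structure} combined with the intersection results established in Section~\ref{sec:inter}.

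For the absence of multiple edges, I would observe that every edge of $\Lambda$ is determined by a non-empty intersection $X_v\cap X_w$ of two subcomplexes in the three-way decomposition $\mathcal{U}$. Since each vertex $v \in V(\Lambda)$ labels a unique subcomplex $X_v \subset \hat X$, the unordered pair $\{v,w\}$ determines $X_v\cap X_w$ uniquely, so at most one edge is associated to each such pair. Theorem~\ref{thm:combointersections} additionally guarantees that each non-empty intersection is simply connected, hence connected, so there is no ambiguity arising from multiple connected components either.

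To rule out loops, I would argue by cases on the type of vertex. If $v\in V_C$ corresponds to $gY_i$, a loop would arise from an intersection of the form $gY_i\cap g'Y_j$ in which $gY_i$ and $g'Y_j$ represent the same subcomplex of $\hat X$; but by the convention recorded after Definition~\ref{def:pieces}, two translates agreeing as subcomplexes are identified with a single vertex of $V_C$, so no loop can occur. If $v\in V_H$ corresponds to a supporting hyperplane carrier $N(H)$, a loop would require $N(H)$ to self-intersect or self-osculate in $\widetilde{X^*}$, which is precisely ruled out by Lemma~\ref{lem:special like behaviours}\eqref{it:1}. Finally, if $v\in V_{\mathcal{F}_\iota}$, the edge types listed in Definition~\ref{def:structure} that involve untethered components only pair them with cones or with supporting hyperplane carriers, and in any case distinct untethered components of $\mathcal{F}$ are disjoint by their very definition as connected components, so no loop is possible.

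There is no real obstacle here: all the genuine work has already been carried out in Section~\ref{sec:inter}, and once Lemma~\ref{lem:special like behaviours}\eqref{it:1} is in hand, the remainder is a careful reading of the vertex conventions. The present lemma functions as a bookkeeping step consolidating those intersection results into a combinatorial statement about $\Lambda$.
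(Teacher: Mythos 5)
Your proof is correct and follows essentially the same approach as the paper: loops are ruled out by the embedding properties of the elements of $\mathcal{U}$ (the paper invokes Theorem~\ref{thm:embeds} for cones, Lemma~\ref{lem:special like behaviours} for supporting hyperplane carriers, and the definition for untethered components), and multi-edges are ruled out via the connectedness of intersections from Theorem~\ref{thm:combointersections}. Your treatment of the cone-vertex case leans on the identification convention after Definition~\ref{def:pieces} rather than citing Theorem~\ref{thm:embeds} directly, but this is a minor presentational variation and the substance of the argument is the same.
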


\begin{proof}
By Theorem~\ref{thm:embeds}, each $Y_i \rightarrow X$ embeds in $\hat{X}$, by Lemma~\ref{lem:special like behaviours}  each hyperplane carrier  $N(H)$ embeds in  $\hat{X}$, and by definition, each component of $\mathcal{F}$ also embeds in $\hat{X}$. Thus $\Lambda$ contains no loops.
By Theorem~\ref{thm:combointersections}, each non-empty intersection $X_v \cap X_u$ is  connected, so  $\Lambda$ has no bigons.
\end{proof}

\begin{lemma}\label{lem:iscover}
 $\mathcal{U}$ is a topological cover of $\hat X$.
\end{lemma}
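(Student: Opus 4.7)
The plan is to verify cube-by-cube that $\hat X=\bigcup_{U\in\mathcal U}U$; since every point of $\hat X$ lies in a unique open cube, it is enough to show that every cube $c\subseteq\hat X$ is contained in some element of $\mathcal U$.

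Assume first that $\dim c\geq 1$ and pick any hyperplane $H$ of $\hat X$ dual to an edge of $c$. By the definition of a supporting hyperplane, at least one of the following holds: (a) $H$ is supporting, (b) the image of $H$ is contained in some translate $gY_i$, or (c) the image of $H$ is contained in the untethered hull $\mathcal F$. In case (a) we get $c\subseteq N(H)\in\mathcal U$ and are done. For (b) and (c) I would invoke the following auxiliary observation: if a subcomplex $Z\subseteq\hat X$ contains the image of $H$, then $c\subseteq Z$. Indeed, the midcube of $H$ inside $c$ is a codimension-one subset lying in the open interior of $c$; by hypothesis it lies in $Z$, and since $Z$ is a union of closed cubes of $\hat X$, any closed cube of $Z$ whose closure meets this midcube must contain $c$ as a face, so $c\subseteq Z$. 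Applying this with $Z=gY_i$ or $Z=\mathcal F$ gives, respectively, $c\subseteq gY_i\in\mathcal U$ or $c\subseteq\mathcal F_\iota\in\mathcal U$, where $\mathcal F_\iota$ is the untethered component containing $c$. If instead $c$ is a $0$-cube $v$, pick any edge $e$ of $\hat X$ incident to $v$ and apply the previous step to $e$ to get $v\in e\subseteq U$ for some $U\in\mathcal U$.

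The only non-trivial input here is the midcube observation used in cases (b) and (c): it rests on the transversality of hyperplanes with the cubical structure, namely that a midcube sits in the open interior of a single cube of $\hat X$, so that subcomplex-membership of the midcube propagates up to subcomplex-membership of the ambient cube. All remaining steps are immediate bookkeeping from the trichotomy (supporting/cone/untethered) forced by the definition of supporting hyperplane, together with the standing assumption that $\hat X$ has no isolated vertices so that the reduction to the $\dim c\geq 1$ case goes through.
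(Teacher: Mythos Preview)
Your proof is correct and takes essentially the same cube-by-cube approach as the paper's one-line proof, which simply observes that any cube $c$ either lies in some $gY_i$ or in the carrier of a hyperplane dual to one of its edges. Your version is more detailed: you make explicit the midcube observation needed to pass from ``image of $H$ contained in the subcomplex $Z$'' to ``$c\subseteq Z$'' in the non-supporting cases (b) and (c), and you handle $0$-cubes separately --- both points the paper leaves implicit.
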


\begin{proof}
Let $c$ be a cube in $\hat X$. Then either $c$ lies in some $gY_i$, or (and this is not an exclusive ``or") $c$ lies  in the hyperplane carriers corresponding to the hyperplanes dual to edges of $c$.
\end{proof}

\begin{definition}[The nerve of $\mathcal{U}$]
The \emph{nerve complex} $\mathbf{N}(\mathcal{U})$ of a topological covering $\mathcal{U}$ is the abstract simplicial complex $$\mathbf{N}(\mathcal{U})=\{V'  \subset V  : \bigcap_{v \in V'} X_v \neq \emptyset, |V'|< \infty \}.$$
\end{definition}

 We  order  the vertices of $\mathbf{N}(\mathcal{U})$ using the ordering introduced in~\cite{arenas2023cohenlyndon} in the setting of classical small cancellation theory. The ordering is a refinement of the \emph{Lusin–Sierpi\'nski} or \emph{Kleene–Brouwer} order, and plays an essential role in the proof of Theorem~\ref{thm:clprop}.

\begin{definition}[Orderings on $\mathcal{U}$]\label{def:order}
Choose $v_0 \in V(\Lambda)$ corresponding to a $gY_i$. 

We define a total ordering $\leq$ on $\mathcal{U}$, that is, an injective function $\varphi: V \rightarrow \naturals$. 
To do this, we first define an ordering on the simplices of $\mathbf{N}(\mathcal{U})$ inductively as follows.

Start by setting $\varphi(v_0)=0$, and  define 
$$A_0=\{u \in V\ : \{u,v_0\} \in \mathbf{N}(\mathcal{U})\}\cup\{v_0\}.$$ 
Choose $u_1 \in A_1$, let $\varphi(u_1)=1$, and let 
$$A_{01}=\{u \in V : \{v_0, u_1, u\} \in \mathbf{N}(\mathcal{U})\}\cup\{v_0, u_1\}.$$ Inductively, assume that $\varphi$ has been defined for a subset of cardinality $k$, so $\varphi(v_0)=0, \ldots, \varphi(v_k)=k$. 
For each non-empty simplex $\{v_i, \ldots, v_\ell \}$ of $\mathbf{N}(\mathcal{U})$ where $\varphi(v_i), \ldots, \varphi(v_\ell)$ are already defined, let $$A_{v_i\ldots v_\ell}=\{u \in V : \{v_i, \ldots, v_\ell\}\cup \{u\} \in \mathbf{N}(\mathcal{U})\}\cup \{v_i\ldots v_\ell\}.$$ 
We view each simplex $\{v_i, \ldots, v_\ell \}$ as an ordered tuple $(v_i, \ldots, v_\ell )$ where $\varphi(v_i) < \varphi(v_{i+1}) < \ldots < \varphi(v_\ell)$, and 
order the simplices using the Lusin–Sierpi\'nski order, which is defined as follows. For a pair of simplices, set $\{v_i, \ldots, v_\ell \}< \{w_{i'}, \ldots, w_{\ell'} \}$ if either 
\begin{enumerate}
\item  there exists $j \leq \min\{\ell,\ell'\}$ with  $v_\iota=w_\iota$ for all $ \iota < j$, and $v_{j}< w_{j}$, or
\item $\ell > \ell'$ and $j=j'$ for all $j'\leq \ell'$.
\end{enumerate}

Now consider a least simplex $\{v_i\ldots v_\ell\}$  such that there exists  $u \in A_{v_i\ldots v_\ell}$ whose image is not yet defined 
 and choose such a vertex $u$ arbitrarily. 
Set $\varphi(u)=k+1$. 
\end{definition}

In Definition~\ref{def:order}, a number of choices have to be made. Thus we obtain not just one, but in general infinitely many orderings. Moreover, we note that from the definition, it is not immediate that  a $\varphi$ as inductively constructed above is defined on all of $V$. This motivates the following:

\begin{definition}
Let $\Phi$ be the set of all injective functions $\varphi:V' \rightarrow \naturals$ such that $V' \subset V$, and that satisfy the conditions in Definition~\ref{def:order}. An  element of $\Phi$ is an \emph{admissible ordering} if $V'=V$.
\end{definition}

For vertices $v,v' \in V$, define $\dist(v,v')$ as the least number of edges in a path connecting $v$ and $v'$. The following Lemma was proven in~\cite[Lem. 3.11]{arenas2023cohenlyndon}. While the underlying space and the structure graph are different therein, the proof depends only on Definition~\ref{def:order}, thus it still holds in our setting.

\begin{lemma}\label{lem:well-defined} 
Let $v, v' \in V$ and let $\varphi \in \Phi$. If $\varphi(v)<\varphi(v')$, then $\dist(v, v_0) \leq \dist(v', v_0)$. In particular, the function $\varphi: V \rightarrow \naturals$ is well-defined, so every  $ \varphi \in \Phi$ is admissible.
\end{lemma}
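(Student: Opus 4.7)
The plan is to prove both assertions simultaneously by induction on the step $k$ of the construction, with the inductive invariant asserting that the partial ordering built so far is BFS-like with respect to the graph $\Lambda$. Specifically, I will show that whenever $\varphi$ has been defined on $\{v_0,\ldots,v_k\}$, there exists $d\geq 0$ with
\[
 B_\Lambda(v_0, d-1)\ \subseteq\ \{v_0,\ldots,v_k\}\ \subseteq\ B_\Lambda(v_0, d),
\]
where $B_\Lambda(v_0, r)$ is the closed metric ball of radius $r$ in $\Lambda$. The graph $\Lambda$ is connected: since $\mathcal{U}$ covers the connected complex $\hat X$ (Lemma~\ref{lem:iscover}), any two members of $\mathcal{U}$ can be joined by a chain of pairwise-intersecting members, which is exactly a path in $\Lambda$. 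Once the BFS invariant is established, the first assertion is immediate, since a smaller $\varphi$-value forces a smaller (or equal) distance to $v_0$. The second follows because $V$ is countable and the invariant, together with connectedness of $\Lambda$, forces every ball $B_\Lambda(v_0, d)$ to be exhausted in finitely many further steps, so $\varphi$ is eventually defined on all of $V$.

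The base case $k=0$ is immediate with $d=0$. For the inductive step, I introduce the \emph{frontier radius}
\[
 r\ =\ \min\bigl\{\dist(w,v_0) : w\in\{v_0,\ldots,v_k\},\ w \text{ has an undefined neighbor in } \Lambda\bigr\},
\]
which is well defined as long as $\{v_0,\ldots,v_k\}\neq V$ (by connectedness of $\Lambda$). A direct check using the BFS invariant shows $r\in\{d-1,d\}$: indeed, $r\leq d$ since frontier vertices are defined, and $r\geq d-1$ because any undefined neighbor of a distance-$r$ vertex lies outside $B_\Lambda(v_0,d-1)\subseteq\{v_0,\ldots,v_k\}$, forcing $r+1\geq d$. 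I then claim that the Kleene--Brouwer-least candidate simplex $\sigma\subseteq\{v_0,\ldots,v_k\}$ has as its smallest-$\varphi$ entry the smallest-$\varphi$ frontier vertex $v_m$, which by distance monotonicity (itself part of the inductive invariant) sits at distance exactly $r$. Consequently, the next vertex $v_{k+1}\in A_\sigma$ is adjacent to some vertex at distance $r$, hence is at distance at most $r+1$; combined with $\dist(v_{k+1},v_0)\geq r+1$ (since $v_{k+1}$ is undefined, all its candidate neighbors in the frontier are at distance $\geq r$), we obtain $\dist(v_{k+1},v_0)=r+1\in\{d,d+1\}$, which preserves the BFS invariant with either the same $d$ or with $d+1$.

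The main obstacle is the claim that the K--B-least candidate simplex begins with the smallest-$\varphi$ frontier vertex. Clause (1) of the K--B order in Definition~\ref{def:order} rules out candidates whose smallest entry is $\varphi$-larger than $v_m$, while clause (2) rules out candidates that are prefixes of longer candidates; together, these two clauses force the minimum to have $v_m$ as its smallest entry. The possibility of extending such a candidate further is controlled by the fact that non-empty multi-intersections of members of $\mathcal{U}$ are simply connected, hence in particular non-empty (Theorem~\ref{thm:combointersections} and Proposition~\ref{prop:stronghelly}), so a maximal K--B-minimal extension exists. This is exactly the argument of~\cite[Lem.~3.11]{arenas2023cohenlyndon}, which, as the paper remarks, depends only on Definition~\ref{def:order} and therefore transfers verbatim to the present setting.
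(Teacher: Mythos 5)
Your argument correctly identifies that the order of Definition~\ref{def:order} is a breadth-first search on $\Lambda$, and the ball-invariant together with the frontier-radius analysis is a clean way to obtain distance monotonicity: one does check, essentially as you do, that the Kleene--Brouwer least candidate simplex has the $\varphi$-smallest (hence nearest) frontier vertex as its first entry, so the new vertex lands at distance $r+1\in\{d,d+1\}$ and the invariant is preserved with a non-decreasing parameter $d$; the first assertion of the lemma then follows. The paper itself offers no proof, deferring instead to~\cite[Lem.~3.11]{arenas2023cohenlyndon}, so a line-by-line comparison is not possible, but your reconstruction of this half is sound.

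The admissibility conclusion, however, does not follow from what you write. You assert that the invariant, together with connectedness, ``forces every ball $B_\Lambda(v_0,d)$ to be exhausted in finitely many further steps''; this requires each ball $B_\Lambda(v_0,d)$ to be \emph{finite}, that is, $\Lambda$ to be locally finite, which you never verify. In the present setting this is not automatic: the cones $Y_i$ need not be compact (in Subsection~\ref{subsecartin} they are non-compact convex cores), and a non-compact translate $gY_i$ can meet infinitely many supporting hyperplane carriers, so a vertex of $\Lambda$ can have infinite degree. If $v_0$ has infinitely many neighbors then, as long as one of them is undefined, the K--B least candidate simplex has first entry $v_0$, so every step adds a neighbor of $v_0$ and the process never reaches a vertex at distance $2$; the resulting $\varphi\in\Phi$ has proper domain and is not admissible. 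Your invocation of Theorem~\ref{thm:combointersections} and Proposition~\ref{prop:stronghelly} at the ``maximal K--B-minimal extension'' step is also not to the point: at any finite step the K--B minimum exists simply because there are only finitely many simplices inside $\{v_0,\dots,v_k\}$, and the real obstruction is not the existence of the minimum but the exhaustion of possibly infinite balls. To close the gap you would need to establish local finiteness of $\Lambda$, or argue admissibility by some other means; since the paper only cites the earlier reference and claims the proof ``depends only on Definition~\ref{def:order}'', this point deserves scrutiny there as well.
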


\section{The Cohen-Lyndon property for cubical C(9) presentations}\label{sec:CL prop}

\begin{lemma}\label{clm:connected induction} Let $ X^*$ be  a $C(9)$ cubical presentation, let $V=\{v_j\}_{j \in \naturals}$ be the vertices of its structure graph $\Lambda$, and for each $v_j \in V$, let $X_{v_j}$ be the corresponding element of $\mathcal{U}$. For each $k \in \naturals$, the  intersection 
$$\bigcup_{j < k} X_{v_j} \cap X_{v_k}$$
is connected.
\end{lemma}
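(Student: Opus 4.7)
The plan is to reformulate the topological connectedness of $\bigcup_{j < k}(X_{v_j} \cap X_{v_k})$ as the combinatorial connectedness of a graph $G_k$ on the vertex set $L_k := \{v_j : j < k, \, X_{v_j} \cap X_{v_k} \neq \emptyset\}$, with an edge $\{v_{j_1}, v_{j_2}\}$ whenever $X_{v_{j_1}} \cap X_{v_{j_2}} \cap X_{v_k} \neq \emptyset$. Each piece $X_{v_j} \cap X_{v_k}$ is simply connected (hence connected) by Theorem~\ref{thm:combointersections}, so the union splits into components according to the components of $G_k$. The equivalence uses Proposition~\ref{prop:helly}: an edge in $G_k$ exists precisely when all three pairwise intersections among $X_{v_{j_1}}, X_{v_{j_2}}, X_{v_k}$ are non-empty, which for $v_{j_1}, v_{j_2} \in L_k$ reduces to $\{v_{j_1}, v_{j_2}\} \in \mathbf{N}(\mathcal{U})$.

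The construction of $\varphi$ in Definition~\ref{def:order} singles out a clique $\sigma_k \subset L_k$: the minimum (in the Lusin--Sierpi\'nski order) simplex such that $\sigma_k \cup \{v_k\} \in \mathbf{N}(\mathcal{U})$, which was extended by $v_k$ at step $k$. I would first show that its first vertex $v_{i_1}$ is the $\varphi$-minimum element of $L_k$. For any $v_j \in L_k$, the singleton $\{v_j\}$ is a candidate at step $k$ (with undefined extension $v_k$), so $\sigma_k \leq \{v_j\}$ forces either $v_j \in \sigma_k$ or $\varphi(v_{i_1}) < \varphi(v_j)$.

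The core of the proof is to show that every $v_j \in L_k$ lies in the same $G_k$-component as $v_{i_1}$, by induction on $\varphi(v_j)$. For the inductive step, I examine the simplex $\sigma_j$ that $v_j$ extended at step $j$. A comparison analogous to the one above (using that $\{v_{i_1}\}$ is a candidate at step $j$, via its undefined neighbor $v_k$) forces the first vertex $w$ of $\sigma_j$ to satisfy $\varphi(w) \leq \varphi(v_{i_1})$. If some vertex $w' \in \sigma_j$ lies in $L_k$, then $\{w', v_j\} \in \mathbf{N}(\mathcal{U})$, and Proposition~\ref{prop:helly} promotes this to $\{w', v_j, v_k\} \in \mathbf{N}(\mathcal{U})$, yielding an edge of $G_k$; since $\varphi(w') < j$, the induction hypothesis places $w'$ in the component of $v_{i_1}$, closing the step.

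The main obstacle is the complementary case $\sigma_j \cap L_k = \emptyset$, which in particular requires $\varphi(w) < \varphi(v_{i_1})$ and $w \notin L_k$. Here I would exploit the ordering further: $\{w\}$ cannot be a candidate at step $k$, for otherwise $\sigma_k \leq \{w\}$ would be incompatible with $\varphi(w) < \varphi(v_{i_1})$, so every neighbor of $w$ in $\mathbf{N}(\mathcal{U})$ is already indexed by step $k$; the same comparison applied to $\sigma_j$ shows that $\sigma_j$ admits no undefined extension at step $k$, so in particular $v_k$ shares no common neighbor with all of $\sigma_j$. The plan is to close this case by iterating the argument along $\sigma_j$---using that $v_j \in L_k$ and Proposition~\ref{prop:stronghelly} to ensure that some earlier-indexed member of $L_k$ must in fact be adjacent to $v_j$ in $\mathbf{N}(\mathcal{U})$---thereby contradicting $\sigma_j \cap L_k = \emptyset$. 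This is the delicate step, and I expect it to require careful bookkeeping of the candidates competing with $\sigma_j$ and $\sigma_k$ in the Lusin--Sierpi\'nski order.
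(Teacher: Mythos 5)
The proposal's reduction to showing that the graph $G_k$ on $L_k = \{v_j : j < k, X_{v_j} \cap X_{v_k} \neq \emptyset\}$ is connected is sound (since the pieces $X_{v_j}\cap X_{v_k}$ are connected subcomplexes, so disjointness of their union is controlled by $G_k$), and the identification of $v_{i_1}$ as the $\varphi$-minimum of $L_k$ and the handling of the case $\sigma_j \cap L_k \neq \emptyset$ are fine. But the case you flag as "delicate" -- $\sigma_j \cap L_k = \emptyset$ -- is not a bookkeeping issue: the toolbox you have assembled (the Lusin--Sierpi\'nski ordering, Proposition~\ref{prop:helly}, Proposition~\ref{prop:stronghelly}, Theorem~\ref{thm:combointersections}) is \emph{provably insufficient} to force $G_k$ to be connected. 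Consider the abstract configuration where $\Lambda$ (equivalently $\mathbf{N}(\mathcal{U})$, with no higher simplices) is a $5$-cycle $0 - 1 - 2 - 4 - 3 - 0$, with each non-empty pairwise intersection a single vertex. The Helly property and its strengthening hold vacuously (no triple is pairwise adjacent), and pairwise intersections are simply connected. Starting from $v_0 = 0$, Definition~\ref{def:order} yields a valid $\varphi$ with $\varphi(0)=0,\ \varphi(1)=1,\ \varphi(3)=2,\ \varphi(2)=3,\ \varphi(4)=4$ (at step $4$ one takes $\sigma_4 = \{3\}$). Then $L_4 = \{3,2\}$, but $X_3 \cap X_2 = \emptyset$, so $G_4$ is two isolated vertices and $(\bigcup_{j<4}X_{v_j}) \cap X_{v_4} = (X_3 \cap X_4) \sqcup (X_2 \cap X_4)$ is disconnected. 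Every hypothesis your argument invokes is satisfied, yet the conclusion fails.

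What rules this out in the actual setting is not the nerve combinatorics but the disc-diagram geometry of the $C(9)$ condition, which is exactly what the paper deploys: given $x, y$ in distinct putative components of $(\bigcup_{j<k}X_{v_j}) \cap X_{v_k}$, one joins them by a path $\tau$ in $\bigcup_{j<k}X_{v_j}$, assembles a thick annuladder $\bigcirc_\tau$ collaring a minimal-complexity disc diagram, and applies Greendlinger's Lemma together with Claim~\ref{clm:final} to push shells and cornsquares and reduce the area -- ultimately showing $\tau$ was already in $X_{v_k}$. It is the $C(9)$ control on cone-cell boundaries and cornsquares (not Helly, not the ordering) that excludes the configuration above. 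To close your gap you would have to import this machinery wholesale, at which point you have the paper's argument rather than an alternative one. If you want to preserve the nice combinatorial skeleton of your proposal, the way to do so is to phrase the "$\sigma_j \cap L_k = \emptyset$" case as the input to the thick-annuladder construction and let Claim~\ref{clm:final} do the work there.
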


The main technical ingredient in the proof of Lemma~\ref{clm:connected induction} is Claim~\ref{clm:final} below. 
The proof is a little tedious, but the basic idea  is  to combine an inductive argument on the area of the disc diagram alluded to in the claim with a  case-by-case analysis of how the ordering on the elements of $\mathcal{U}$ interacts with Greendlinger's Lemma.

A \emph{thick annuladder} $\mathbf{A}$ is an annular diagram $A$ which has the structure of an annuladder together with a finite number of squares attached to the transitions between distinct cone-cells and between cone-cells and pseudogrids in the underlying annuladder for $\mathbf{A}$. Such squares are called  \emph{essential corners}. A \emph{thick Möbiusladder} $\mathbf{M}$ is defined similarly.

\begin{figure}[h!]
\centerline{\includegraphics[scale=0.55]{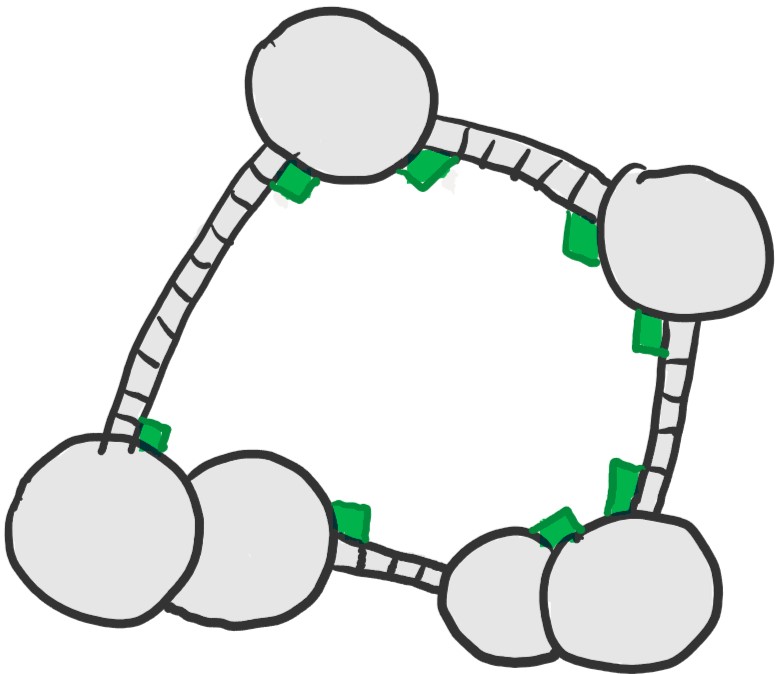}}
\caption{A thick annuladder.}
\label{fig:thick}
\end{figure} 

\begin{claim}\label{clm:final} Let  $ X^*$ be  a $C(9)$ cubical presentation, let $V=\{v_j\}_{j \in \naturals}$ be the vertices of its structure graph $\Lambda$, and for each $v_j \in V$, let $X_{v_j}$ be the corresponding element of $\mathcal{U}$.  Let $\bigcirc_\beta$ be either a  thick annuladder or a thick Möbiusladder in $\widetilde X^*$ collaring a minimal complexity disc diagram $D_\beta$ so that $\partial D_\beta= D_\beta\cap \bigcirc_\beta=\beta$, and finally, let $X_{v_{k_0}}$ be the maximal element in the ordering of Definition~\ref{def:order} intersecting $\bigcirc_\beta$ in a cone-cell or a pseudogrid, and  that does not correspond to an untethered component. Then $K < X_{v_{k_0}}$  for every $K$ satisfying that $K \cap D_\beta$  is a shell or a hyperplane containing a cornsquare  in $D_\beta$. 
\end{claim}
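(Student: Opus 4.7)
The strategy is proof by contradiction, combining the Lusin--Sierpi\'nski (depth-first) structure of the ordering in Definition~\ref{def:order} with the Helly property of Proposition~\ref{prop:helly} and Greendlinger's Lemma applied to $D_\beta$. I would suppose there exists a $K$ of the prescribed form with $\varphi(K)\geq k_0 := \varphi(X_{v_{k_0}})$ and derive a contradiction with the DFS behaviour of the ordering.

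The first step is to pin down the intersection pattern between $K$ and the collar $\bigcirc_\beta$. Because $K \cap D_\beta$ is a shell (resp.\ a hyperplane containing a cornsquare), the outerpath of the shell (resp.\ cornsquare) is a non-trivial subpath of $\partial D_\beta = \beta$, which traces the inner boundary of the thick annuladder or thick Möbiusladder. This outerpath therefore traverses a (cyclic) sequence of cone-cells and pseudogrids of $\bigcirc_\beta$, and each of them corresponds to an element $X_{v_j}\in\mathcal{U}$ meeting $K$ non-trivially. Let $W\subseteq V$ collect these vertices $v_j$. Consecutive cells of $\bigcirc_\beta$ share transitions (possibly reinforced by essential corners in the thick structure), so vertices of $W$ pairwise intersect; together with the fact that $K$ meets each element of $W$, Proposition~\ref{prop:helly} then shows that $\{K\}\cup W$ is a simplex of the nerve $\mathbf{N}(\mathcal{U})$. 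By the maximality of $X_{v_{k_0}}$, every $w\in W$ satisfies $\varphi(w)\leq k_0$.

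Next, I would exploit the DFS character of the Lusin--Sierpi\'nski order at step $k_0$. At that step, the algorithm extends an LS-least simplex $\sigma\subseteq\{v_0,\dots,v_{k_0-1}\}$ with $X_{v_{k_0}}\in A_\sigma$. The plan is to produce a strictly LS-smaller (i.e.\ deeper) simplex $\sigma'\subseteq\{v_0,\dots,v_{k_0-1}\}$ with $K\in A_{\sigma'}$. The natural candidate is $\sigma' := \sigma\cup(W\setminus\{X_{v_{k_0}}\})$: every vertex of $W\setminus\{X_{v_{k_0}}\}$ has $\varphi$-value $<k_0$ and is hence already ordered, and $\sigma'\supsetneq\sigma$ whenever $W\setminus\{X_{v_{k_0}}\}\not\subseteq\sigma$. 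To see that $\sigma'\cup\{K\}$ is a simplex of $\mathbf{N}(\mathcal{U})$, I would verify the pairwise non-emptiness of intersections via the three kinds of checks: inside $W$ and between $W$ and $K$, this has already been established; inside $\sigma$, this follows because $\sigma$ is a simplex of the nerve; and the cross-intersections between $\sigma$ and $W$ follow by tracking how $\sigma$'s total intersection (non-empty by Proposition~\ref{prop:helly}) meets $X_{v_{k_0}}\subseteq\bigcirc_\beta$ and hence the cells of $\bigcirc_\beta$ corresponding to $W$. Proposition~\ref{prop:helly} then upgrades these pairwise intersections to a genuine simplex of the nerve. Since $\sigma'<\sigma$ in the LS order, the algorithm would have been forced to extend $\sigma'$ (by some unordered vertex, and in particular it could have chosen $K$) before returning to $\sigma$, contradicting $\varphi(K)\geq k_0$.

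The main obstacle I anticipate is the verification of the cross-intersections between $\sigma$ and $W$, which is not automatic from the hypotheses. I expect to handle this via a secondary induction on the area of $D_\beta$: Greendlinger's Lemma (Theorem~\ref{thm:tric}) together with the Ladder Theorem (Theorem~\ref{thm:ladder}) allows the removal of shells and cornsquares from $D_\beta$, producing smaller disc diagrams collared by suitably modified thick (möbius)annuladders, to which the inductive hypothesis applies; the minimal complexity of $D_\beta$ ensures that no unwanted simplifications interfere. The Möbiusladder case requires a small orientability adjustment in setting up the pairwise intersections along the collar but otherwise parallels the annuladder case.
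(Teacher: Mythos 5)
Your proposal shares the paper's high-level ingredients (induction on area, the Lusin--Sierpi\'nski ordering, the Helly property, Greendlinger/Ladder), but there is a genuine gap at the central step. You claim that, since consecutive cells of $\bigcirc_\beta$ share transitions and $K$ meets every cell of $W$, the set $\{K\}\cup W$ has pairwise non-empty intersections and hence, by Proposition~\ref{prop:helly}, forms a simplex of $\mathbf{N}(\mathcal{U})$. This does not follow. The outerpath of a shell $K\cap D_\beta$ is a concatenation of at least $p-4=5$ pieces (by $C(9)$), so $W$ contains at least five cells of the collar traversed consecutively. You correctly observe that \emph{consecutive} cells intersect and that $K$ meets each of them, but non-consecutive cells of $W$ (e.g.\ the first and the fourth) need not intersect at all. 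Since Proposition~\ref{prop:helly} requires \emph{all} pairwise intersections to be non-empty, you cannot invoke it to conclude $\{K\}\cup W$ is a simplex, and the candidate $\sigma'=\sigma\cup(W\setminus\{v_{k_0}\})$ is not known to lie in the nerve. The LS/DFS contradiction you want to run ($\sigma'<\sigma$ would force the algorithm to extend $\sigma'$ before $\sigma$) therefore never gets off the ground.

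This is exactly the difficulty the paper's proof is structured around: rather than assert that all cells of the collar pairwise intersect, the paper splits the base case into (1) all boundary $2$-cells of $A_\beta$ lie in the simplex $\sigma$ versus (2) some do not. In case (1) the pairwise intersections are guaranteed because $\sigma$ is already a simplex, and the contradiction comes not from a DFS argument but from comparing $D$ with a minimal-complexity diagram $E$ whose cubical part factors through $\bigcup_{v\in\sigma}X_v$, combined with a count showing that a cone-cell of $A_\beta$ would then have boundary path decomposing into at most $8$ pieces, violating $C(9)$. In case (2) a chain argument builds an auxiliary simplex $\theta^+$ and applies Propositions~\ref{prop:helly} and~\ref{prop:stronghelly} to the union $\sigma\cup\theta^+$, again reducing to a $C(9)$ violation. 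You also flag the cross-intersections between $\sigma$ and $W$ as ``not automatic'' and propose to handle them by a secondary induction on area, but the inductive step (removing shells/cornsquares, as in the paper) does not repair the base-case Helly issue; it only reduces to the base case. As written, the proposal's base case does not close.
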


To avoid saying ``let $\bigcirc_\beta$ be a thick annuladder or thick Möbiusladder'' repeatedly throughout the proof,  we prove the claim under the assumption  that $\bigcirc_\beta$ is a thick annuladder $A_\beta$, but note that nothing is being used about $A_\beta$ that does not also hold in the Möbiusladder case.

\begin{proof}
 The proof is by induction on the area of $D$. The base case is $\area(D)=1$, so $D$ is either a single square, or a single cone-cell. If $D$ is a single cone-cell, then $D$ is a shell, and if $D$ is a single square then by definition it is also a cornsquare. 
 Let $K \in \mathcal{U}$ be the element of the three-way-decomposition satisfying $K \cap D=D$. 
 Suppose that $K > X_{v_{k_0}}$, and consider the union $\bigcup_{j < k_0} X_{v_j}$.  
 Since $X_{v_{k_0}}$ is the next element in the ordering, then  there is a  simplex $\sigma$ containing $X_{v_{k_0}}$, and  a subsimplex $\sigma' \subset \sigma$ which is least in the Lusin--Sierpi\'nski ordering, and such that  $v_{k_0} \subset \sigma -\sigma'$ is not in $\bigcup_{j < k_0} X_{v_j}$. 
 Let $X_{v_{j_1}}< \ldots < X_{v_{j_m}} < X_{v_{k_0}}$ be the boundaries of the 2-cells in $A_\beta$.  We now consider two cases: 
 \begin{enumerate}
 \item \label{clmit:1} Either all of the vertices $v_{j_1}, \ldots, v_{j_m}$ lie in $\sigma$, or
 \item \label{clmit:2} some vertex in  $\{v_{j_1}, \ldots, v_{j_m}\}$ does not lie in $\sigma$.
 \end{enumerate}
In the first case,  the  Helly Property proven in Lemma~\ref{prop:helly} together with Lemma~\ref{prop:stronghelly} imply that a boundary component of $A_\beta$ bounds a reduced disc diagram $E$ in $\widetilde X^*$ such that the cubical part of $E$ factors through $\bigcup_{v \in\sigma}X_v$ -- i.e., the squares and boundaries of cone-cells in $E$ lie in $\bigcup_{v \in\sigma}X_v$. 
We may moreover assume that $E$ has minimal complexity amongst all diagrams satisfying this condition. 
We claim that if $\partial D=\partial E$, then $E$ is a single cone-cell or square. Indeed, assume that $\area(E)\geq 2$; then Greendlinger's Lemma implies, in particular, that $E$ contains a shell or a cornsquare. If $D$ is a square and $E$  has a shell, then this contradicts that $D$ is reduced, since $D$ can be absorbed into the shell. Similarly,  if $D$ is a cone-cell and $E$  has a cornsquare, this contradicts instead that $E$ is reduced. 
If $D$ is a square and $E$  has a cornsquare, consider the diagram $E \cup D$ where $E$ is glued to $D$ along the outerpath of the cornsquare. 
Then $E \cup D$ has a bigon, and thus $E \cup D$  is not reduced -- this implies that the pair of squares $E,D$ can be cancelled, and thus that there is a diagram $E'$ with the same boundary path as $E$ and less complexity, contradicting our assumptions on $E$. 
Finally, if $D$ is a cone-cell and $E$  has a shell, then this contradicts the $C(9)$ condition, since then the outerpath of the shell is a single piece. 

Thus, if $\partial D=\partial E$, then $E$ is a single cell. This implies that $K > X_{v_{k_0}}$ since then  $D=E$, as  otherwise the non-positive curvature of  $\hat X$ (when $D$ is a square) or the $C(9)$ condition (when $D$ is a cone-cell) would be violated. 
We now show that $\partial D=\partial E$. 
Assume this is not the case and consider the unions $D \cup A_\beta$ and $E \cup A_\beta$ where, in each case, $D$ or $E$ is glued to $A_\beta$ along its boundary path. 
Observe that if $A_\beta$ is a square annular diagram, then $A_\beta$ has no essential corners, and is therefore an authentic annuladder. In this case, the outerpaths of both $D$ and $E$ would consist of a single piece,  contradicting the $C(9)$ condition.
 We may thus assume that $A_\beta$ has at least one cone-cell.
Let $C$ be a cone-cell on $A_\beta$, and note that $C$ has to intersect both $D$ and $E$ in non-trivial paths. 
 Then the boundary path of $C$ contradicts the $C(9)$ condition, since it can be written as  the concatenation of at most $8$ pieces: one corresponding to each intersection of $C$ with an essential corner between $C$ and $C'$ or $C''$, and one corresponding to each intersection	$\partial C \cap  \partial D, \partial C \cap \partial E, \partial C \cap \partial C'$, and $\partial C \cap \partial C''$ where $C'$ and $C''$ are the pseudo-grids or cone-cells adjacent to $C$ on $A_\beta$. This proves the base case of the claim in case~\eqref{clmit:1}.

We now handle case~\eqref{clmit:2}. By hypothesis,  there is a vertex   $v_{j_i} \in \{v_{j_1}, \ldots, v_{j_m}\}$ that does not lie in $\sigma$, and a corresponding cone-cell  or hyperplane carrier $C$ of $A_\beta$ arising from the intersection $A_\beta \cap   X_{v_{j_i}}$.  Consider the longest chain of cells of $A_\beta$ coming from elements in $\sigma$ and let  $j$ be the first index not corresponding to such a $X_{v_{j_i}}$.
  If $\partial C > X_{v_{k_0}}$, then since $S$ is adjacent to $X_{v_{j}}$  and $X_{v_{j}}< X_{v_{k_0}}$, then $\sigma$ must contain a vertex  $v_n$ with $X_{v_n} < X_{v_{j}}$. 
    Also by hypothesis, $X_{v_{j}}< \ldots < X_{v_{j_m}} < X_{v_{k_0}}$, so there exists a simplex $\theta$ satisfying that $\theta \cup \{v_\ell\}$ is a simplex for each $\ell \in \{n, j_i, \ldots, j\}$. Note that  such an $\ell$ exists since $\{n, j_i, \ldots, j\}$ contains $n$.  Let $\theta^+:=\theta \cup \{v_n, v_{j_i}, \ldots, v_{j_n}\}$.
  As before, the Helly Property in Lemma~\ref{prop:helly} and its strengthened form in Lemma~\ref{prop:stronghelly} imply together that $\bigcap_{v \in \sigma \cup \theta^+}X_v$ is simply connected, 
  so   an essential path $\gamma \rightarrow A_\beta$  bounds a disc diagram $E$ in $\widetilde X^*$ whose cubical part factors through $\bigcup_{v \in \sigma \cup \theta^+}X_v$, 
  and arguing as in the previous paragraph, either $\partial S=\partial E$, implying that $\partial S < X_{v_{k_0}}$ or the $C(9)$ small-cancellation condition is again violated. Thus, the base case of the induction is complete.

Assuming the claim when $\area (D) < N$, let $D$ be a disc diagram as hypothesised and with $\area (D) =N$. 
By Greendlinger's Lemma, $D$ is either a single cone-cell, a ladder, or has at least $3$ shells, cornsquares, or spurs. 
As usual, we may assume that $D$ has no spurs, since these can be removed without changing the interior of $D$. 
Since the case $N=1$ has been handled in the base case, we may assume $N \geq 2$, and so $D$ is a ladder or has at least 3 shells or cornsquares.
Let $S_1, S_2$ be either shells or cornsquares on $\partial D$. If both $S_1, S_2$ are shells, then the argument can be completed as follows.
Consider the diagram $D_{\beta'}$ obtained from $D_\beta$ by removing  $S_1$, so $D_{\beta'}$ is collared by the annular diagram $A_{\beta'}$ obtained from  $A_\beta$ by removing the cells in the outerpath of $\partial S$ and adding $S_1$.
Every shell of $D_\beta - S_1$ is still a shell of  $D_{\beta'}$, so by the induction hypothesis, for each shell $S_i$ with $1<i\leq n$ there is a $X_{v_{i}}$ with  $X_{v_{i}}< X_{v_{M'}}$ and $\partial S_i \rightarrow X_{v_{i}}$, where $X_{v_{M'}}$ is the maximal element in the ordering corresponding to a cone cell or hyperplane carrier in $A_{\beta'}$. 

If $\partial S_1 \rightarrow X_{v_{M'}}$ this does not lead to a contradiction, but in that case,  
repeating the same construction as above to obtain a disc diagram $D_{\beta''}$  collared by an annular diagram $A_{\beta''}$, but this time removing a shell $S_j\neq S_1$, the induction hypothesis again implies that  $\partial S_i \rightarrow X_{v_{i}}$ with $X_{v_{i}}< X_{v_{M''}}$ 
for each shell $S_i$ with $1\leq i\leq n$ and $i\neq j$ and  where $X_{v_{M''}}$ is defined analogously to $X_{v_{M'}}$, i.e., it is the maximal element in the ordering corresponding  to a cone cell or hyperplane carrier in $A_{\beta''}$.
If $S_j \rightarrow X_{v_{M''}}$, then  $\partial S_1 \rightarrow  X_{v_{1}}$ and $ \partial S_j \rightarrow X_{v_{j}}$ where $X_{v_{1}}$ and $X_{v_{j}}$ satisfy $  X_{v_{j}}< X_{v_{1}}$ and $ X_{v_{j}}> X_{v_1}$, which is impossible. 
Thus, either $ \partial S_1 \nrightarrow X_{v_{M'}}$ or $ \partial S_j \nrightarrow X_{v_{M''}}$, and since $X_{v_M}$ intersects  either  $A_{\beta'}$ or $A_{\beta''}$ non-trivially -- as it can only be excluded when removing one of the shells --, then $\partial S_i \rightarrow X_{v_i}$ where  $X_{v_i}< X_{v_{k_0}}$ for each shell $S_i$ with $1 \leq i\leq n$.

If at least one of $S_1, S_2$, say $S_1$, is a cornsquare instead of a shell, then the argument can be modified as follows. 
After shuffling the diagram if needed, we may assume that the cornsquare is an actual corner. Note that the outerpath $\mathcal{O}$ of $S_1$ has to intersect either $2$ distinct cone-cells, or a cone-cell and a pseudogrid, since if both edges of $\mathcal{O}$ are contained in a single pseudogrid or cone-cell, then the corner can be pushed out of $D_\beta$, contradicting the minimal complexity of $D_\beta$.
Let $C,C'$ denote the cone-cells or pseudogrids of $A_\beta$ intersecting $\mathcal{O}$, and let  $\rho=\partial C \cap \partial C'$.
Consider the diagram $A_{\beta}^{+}$ obtained from $A_{\beta}$ by cutting along $\rho$ to obtain a disc diagram $L_\rho$, and then attaching $S_1$ to $L_\rho$ along its intersections $\rho\cap \partial C$ and $\rho \cap \partial C'$, so that the resulting diagram is again an annular diagram. 
Note that $A_{\beta}^{+}$ is still a thick annuladder, it is still reduced,  and  it collars a disc diagram $D'$ which is a subdiagram of $D$ of smaller area. 
The induction hypothesis thus implies that all shells and cornsquares in $D'$ arise from subcomplexes $X_{v_{\ell}}$ of $\mathcal{U}$ with $X_{v_{\ell}} < X_{v_{M'}}$ where $X_{v_{M'}}$ corresponds to the maximal element intersecting $A_{\beta}^{+}$ in a cone-cell or pseudogrid. 
As before, it may be that $ S_1 \subset X_{v_{M'}}$, which  does not immediately lead to a contradiction. But again we may repeat the construction using $S_2$ instead of $S_1$ -- as in the previous paragraph if $S_2$ is a shell, or as in this paragraph if $S_2$ is a cornsquare. 
We thus obtain a new annular diagram with maximal cone-cell or pseudogrid associated to some $X_{v_{M''}}$, and $ \partial S_2 \nrightarrow X_{v_{M''}}$, as in the previous paragraph.
We conclude that, in any case, $\partial S_i \rightarrow X_{v_{i}}$  where $X_{v_i}< X_{v_{k_0}}$ for each shell $S_i$ with $1 \leq i\leq n$.
\end{proof}

We are ready to prove Lemma~\ref{clm:connected induction}:

\begin{proof}[Proof of Lemma~\ref{clm:connected induction}]
The proof is by induction on $k$. 
For the base case, the result follows from Theorem~\ref{thm:combointersections}.

 Now let $x$ and $y$ be vertices lying in distinct components of  the intersection $\bigcup_{j < k_0} X_{v_j} \cap X_{v_{k_0}}$. 
Let $X_v, X_{v'}$ be the corresponding elements of $\mathcal{U}$ containing $x$ and $y$. 
Note that $x$ and $y$ are connected by a path $\tau$ in $\bigcup_{j \leq k} X_{v_j}$, since by the induction hypothesis, this union is path-connected. 
Let $I_\tau$ be such that $\bigcup_{I_\tau} X_{v_j}$ are the elements of $\mathcal{U}$ traversed by $\tau$, and let $\bigcirc_\tau$ be either a thick annuladder or thick Möbiusladder  in $\bigcup_{I_\tau} X_{v_j}\cup X_{v_{k_0}}$ such that $\tau$ factors through $\bigcirc_\tau$.
So  the boundary path(s) of $\bigcirc_\tau$ are cycles in $\bigcup_{I_\tau} X_{v_j}\cup X_{v_{k_0}}$, and $\bigcirc_\tau$ collars a reduced disc diagram $D_\tau$ in $\widetilde X^*$.  Amongst all possible paths satisfying the above, choose $\tau$ so that $|I_\tau|$ is the least possible, and choose $D_\tau$ to have the least number of cells amongst all disc diagrams collared by $\bigcirc_\tau$.

If $\area (D_\tau)=0$, then $D_\tau$ is  a tree. Note that if $D_\tau$ has branching, then removing an edge (or several edges corresponding to a piece) from $D_\tau$ corresponds to shortening $\tau$ by pushing it away from a pair $X_{v_j}, X_{v_{j'}}$ and towards $X_{v_{k_0}}$; such a reduction contradicts the choices in the previous paragraph, so we may assume that $D_\tau$ is a possibly degenerate subpath of $X_{v_{k_0}}$ with $\partial D_\tau= \tau\tau^{-1}$. Since $\tau \subset \bigcup_{j < k_0} X_{v_j} \cap X_{v_{k_0}}$, this contradicts the hypothesis that $x, y $ lie in  distinct connected components of $\bigcup_{j < k_0} X_{v_j} \cap X_{v_{k_0}}$.

Hence, $\area (D_\tau)\geq 1$. We may assume, by performing the same reductions to $\tau$ as in the $\area (D_\tau)= 0$ case, that $\partial D_\tau$ has no spurs, so by Greendlinger's Lemma $D_\tau$ must have at least one shell or cornsquare. Claim~\ref{clm:final} now implies that $\partial S < X_{v_{k_0}}$  for every shell or cornsquare $S$ in $D_\tau$. 
Let $S$ be a shell or cornsquare in $D_\tau$. If $S$ is a shell, it intersects at least $3$ consecutive cells $C_1,C_2,C_3$ of $\bigcirc_\tau$, and so the path $\tau'$ obtained from $\tau$ by pushing across $C_2$ traverses at most as many cells as $\tau$, but bounds a disc diagram $D_{\tau'}$ with $\area (D_{\tau'})< \area (D_\tau)$, contradicting our initial choices.  If $S$ is a cornsquare, it intersects at least $2$ consecutive cone-cells or a cone-cell and a hyperplane carriers $C_1,C_2$ of $\bigcirc_\tau$. As in the proof of Claim~\ref{clm:final}, let $\rho = C_1 \cap C_2$, and consider the diagram $\bigcirc_{\tau'}$ obtained from $\bigcirc_\tau$ by cutting along $\rho$ and attaching $S$ to $C_1$ and  $C_2$  along the corresponding intersections. The  diagram $\bigcirc_{\tau'}$ collars a disc diagram $D_{\tau'}$ with $\area (D_{\tau'})< \area (D_\tau)$, so we may again apply Claim~\ref{clm:final}.

Thus, we arrive at a contradiction in all cases, so  $\bigcup_{j < k_0} X_{v_j} \cap X_{v_{k_0}}$ must be connected and the induction is complete. 
\end{proof}

We use Lemma~\ref{clm:connected induction} to prove the following:

\begin{lemma}\label{clm:pi1 induction} Let $ X^*$ be  a $C(9)$ cubical presentation, let $V=\{v_j\}_{j \in \naturals}$ be the vertices of its structure graph $\Lambda$, and for each $v_j \in V$, let $X_{v_j}$ be the corresponding element of $\mathcal{U}$. For each $k \in \naturals$, there is an isomorphism 
$$\pi_1(\bigcup_{j \leq k} X_{v_j}) \cong \ast_{j \leq k} \pi_1X_{v_j}.$$
\end{lemma}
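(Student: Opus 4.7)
The plan is to induct on $k$. The base case $k=0$ is immediate, so suppose the statement holds through stage $k$ and set $U = \bigcup_{j \leq k} X_{v_j}$ and $V = X_{v_{k+1}}$. I would apply the Seifert--van Kampen theorem to the cover $\{U,V\}$ of $\bigcup_{j \leq k+1} X_{v_j}$ (passing to open regular neighborhoods, as is standard for subcomplexes of a cube complex). Path-connectedness of $U$ follows from the inductive hypothesis together with Lemma~\ref{lem:well-defined}, which ensures each $v_j$ with $j \geq 1$ is joined in $\Lambda$ to some $v_{j'}$ with $j' < j$; path-connectedness of $V$ is immediate; and path-connectedness of $U \cap V$ is exactly Lemma~\ref{clm:connected induction}. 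To upgrade the van Kampen output from an amalgamated product to a free product, it is enough to establish that $U \cap V$ is simply connected.

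The main technical content of the proof therefore reduces to showing simple connectedness of $U \cap V = \bigcup_{j \leq k}\bigl(X_{v_j} \cap X_{v_{k+1}}\bigr)$. My approach is to mirror the pattern of the proof of Lemma~\ref{clm:connected induction}, replacing the path $\tau$ by a loop. Let $\sigma \to U \cap V$ be a loop that is shortest in its free homotopy class inside $U \cap V$, let $I_\sigma$ index a family of elements $\{X_{v_{j_r}}\}_{r} \subset \{X_{v_j} : j \leq k\}$ traversed by $\sigma$ and each intersecting $X_{v_{k+1}}$, and let $\bigcirc_\sigma \subset U \cap V$ be a thick annuladder or thick M\"obiusladder whose cone-cells and pseudo-grids factor through the $X_{v_{j_r}}$ and whose boundary path is $\sigma$; choose $|I_\sigma|$ minimal, and choose a minimal complexity reduced disc diagram $D_\sigma \to \widetilde{X^*}$ collared by $\bigcirc_\sigma$. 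Applying Greendlinger's Lemma (Theorem~\ref{thm:tric}) to $D_\sigma$, minimality of $\sigma$ rules out spurs; a degenerate $D_\sigma$ gives the desired null-homotopy directly; a single cone-cell is impossible, because its boundary would lie in $X_{v_{k+1}}$ and $\sigma$ would already contract in $V \subset U \cap V$; the remaining case yields at least three shells and/or cornsquares on $\partial D_\sigma$. Claim~\ref{clm:final}, applied with $X_{v_{k+1}}$ playing the role of $X_{v_{k_0}}$, forces every such shell or cornsquare to be supported on some $X_v$ with $v < v_{k+1}$, and therefore the shell-pushing and cornsquare-absorption moves used at the end of the proof of Lemma~\ref{clm:connected induction} can be applied while staying inside $U \cap V$, strictly reducing either $|I_\sigma|$ or the area of $D_\sigma$ and contradicting the minimality of our choices.

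The main obstacle is verifying that the pushing and absorption moves genuinely preserve membership in $U \cap V$, and not merely in the larger union $\bigcup_{j \leq k+1} X_{v_j}$; this is precisely the point where the careful ordering of Definition~\ref{def:order} and the maximality of $X_{v_{k+1}}$ among the cells of $\bigcirc_\sigma$ (as delivered by Claim~\ref{clm:final}) are indispensable. Once simple connectedness of $U \cap V$ is in hand, Seifert--van Kampen gives
\[
\pi_1\Bigl(\bigcup_{j \leq k+1} X_{v_j}\Bigr) \;=\; \pi_1(U) * \pi_1(V) \;=\; \Bigl(\ast_{j \leq k} \pi_1 X_{v_j}\Bigr) * \pi_1 X_{v_{k+1}} \;=\; \ast_{j \leq k+1} \pi_1 X_{v_j},
\]
which closes the induction.
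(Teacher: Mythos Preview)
Your overall induction scheme and use of Seifert--van Kampen are the same as the paper's, but the heart of your argument---proving that $U\cap V$ is simply connected---diverges from the paper and contains a real gap.

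The problem is twofold. First, you assert that the thick annuladder $\bigcirc_\sigma$ can be chosen inside $U\cap V$, but its cone-cells and pseudo-grids come from the $X_{v_{j_r}}$ with $j_r\leq k$; these lie in $U$, yet there is no reason they lie in $V=X_{v_{k+1}}$ (the fact that $X_{v_{j_r}}$ \emph{intersects} $X_{v_{k+1}}$ does not force the relevant cells into that intersection). Second, your application of Claim~\ref{clm:final} ``with $X_{v_{k+1}}$ playing the role of $X_{v_{k_0}}$'' does not match its hypotheses: the claim requires $X_{v_{k_0}}$ to be the maximal element \emph{intersecting $\bigcirc_\sigma$ in a cone-cell or pseudo-grid}, but your $\bigcirc_\sigma$ is built only from pieces with index $\leq k$, so $X_{v_{k+1}}$ is not among them. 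Even applying the claim with the correct maximal element (some $X_{v_{j_r}}$), the resulting shell-pushing moves produce new paths that lie in $U$, with no mechanism ensuring they remain in $V$. You flag this as ``the main obstacle'' but do not resolve it, and it is not clear that $U\cap V$ is simply connected at all.

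The paper sidesteps this entirely: rather than proving $\pi_1(U\cap V)=1$, it shows that both inclusion-induced maps $\pi_1(U\cap V)\to\pi_1(U)$ and $\pi_1(U\cap V)\to\pi_1(V)$ are trivial, which is all that Seifert--van Kampen needs to yield a free product. Triviality into $U$ is obtained by a short argument: an essential $\sigma$ in $U$ would, via an annular diagram, force some intersection $\bigcup_{j<k_0,\,j\neq j_0}X_{v_j}\cap X_{v_{j_0}}$ to be disconnected, contradicting Lemma~\ref{clm:connected induction}. Triviality into $V$ then follows immediately because $\sigma$ is already null in $U\subset\hat X$ and $X_{v_{k+1}}\hookrightarrow\hat X$ is $\pi_1$-injective by local convexity. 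This route avoids any need to control where the pushing moves land.
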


\begin{proof}
The proof is by induction on $k \in \naturals$. The base case follows from Theorem~\ref{thm:combointersections}. For the inductive step,
since  $\bigcup_{j < k_0} X_{v_j} \cap X_{v_{k_0}}$ is connected by the previous lemma, we may apply the Seifert Van-Kampen Theorem to $\bigcup_{j < k_0} X_{v_j} \cup X_{v_{k_0}}$. This yields the desired isomorphism provided that either $\bigcup_{j < k_0} X_{v_j} \cap X_{v_{k_0}}$ is simply-connected or both inclusions $$\bigcup_{j < k_0} X_{v_j} \hookleftarrow\bigcup_{j < k_0} X_{v_j} \cap X_{v_{k_0}}\hookrightarrow  X_{v_{k_0}}$$ have trivial image.

If $\bigcup_{j < k_0} X_{v_j} \cap X_{v_{k_0}}$  is not simply-connected, then there is an essential closed path $\sigma \rightarrow \bigcup_{j < k_0} X_{v_j} \cap X_{v_{k_0}}$, we may moreover assume that $\sigma$ is a shortest path in its homotopy class.  The map $\sigma \rightarrow \bigcup_{j < k_0} X_{v_j} \cap X_{v_{k_0}}$ factors through $\bigcup_{j < k_0} X_{v_j}$ and $X_{v_{k_0}}$ via the respective inclusions. Abusing notation, let $\sigma \rightarrow \bigcup_{j < k_0} X_{v_j}$ and $\sigma \rightarrow  X_{v_{k_0}}$ be the corresponding paths.
If $\sigma \rightarrow \bigcup_{j < k_0} X_{v_j}$ is  essential, then  the image of $\sigma$ in $\bigcup_{j < k_0} X_{v_j}$ is covered by a finite number of $X_{v_j}$'s with $j < k_0$, and there is an annular or Möbius diagram $\bigcirc_\sigma \rightarrow \bigcup_{j < k_0} X_{v_j}$ such that $\sigma \rightarrow \bigcup_{j < k_0} X_{v_j}$ factors through $\bigcirc_\sigma$. 
Amongst all such $\bigcirc_\sigma$'s, we may choose one with minimal complexity. Let $\{X_{v_{j_0}}, \ldots, X_{v_{j_n}}\}$ with $j_i \leq j$ be the elements of $\mathcal{U}$ intersecting $\bigcirc_\sigma$.
Applying  the Seifert Van-Kampen Theorem to $\bigcirc_\sigma$, we see that $\bigcirc_\sigma$ cannot be covered by two contractible sets with connected intersection, so in particular, viewing $\sigma$ as a concatenation of two arcs $\tau'$ and $\tau''$ where $\tau'$ traverses a single $X_{v_{j_0}}$ with $1\leq j_0 < k_0$ and $\tau''$ traverses $\bigcup_{j < k_0, j \neq j_0} X_{v_j}$, it follows that the intersection $\bigcup_{j < k_0, j \neq j_0} X_{v_j} \cap X_{v_{j_0}}$ must be disconnected. This contradicts Lemma~\ref{clm:connected induction}.

Thus, $\sigma \rightarrow \bigcup_{j < k_0} X_{v_j}$ must be nullhomotopic. We now argue that $\sigma \rightarrow X_{v_{k_0}}$ is also nullhomotopic. Indeed, if this isn't the case, then   $\sigma \rightarrow \bigcup_{j < k_0} X_{v_j}\cup X_{v_{k_0}}$ is nullhomotopic by the Seifert Van-Kampen Theorem, but this contradicts the fact that the map $X_{v_{k_0}} \rightarrow \hat X$ is a local isometry (since $X_{v_{k_0}}$ is locally convex), and thus $\pi_1$-injective. 

Since this is the case for each closed essential path $\sigma \rightarrow \bigcup_{j < k_0} X_{v_j} \cap X_{v_{k_0}}$, we conclude that both of the inclusions
$$\bigcup_{j < k_0} X_{v_j} \hookleftarrow\bigcup_{j < k_0} X_{v_j} \cap X_{v_{k_0}}\hookrightarrow  X_{v_{k_0}}$$ have trivial image, and the result follows.
\end{proof}

Using Lemma~\ref{clm:pi1 induction}, we can now prove that the Cohen-Lyndon property holds in our setting.

\begin{theorem}\label{thm:clprop}
Let $X^*=\langle X \mid \{Y_i\}_{i \in I} \rangle$ be a cubical presentation. If $X^*$ satisfies the $C(9)$ condition, then ($\pi_1 X, \{\pi_1 Y_i\}_{i \in I}$) has the Cohen-Lyndon property.
\end{theorem}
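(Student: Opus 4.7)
The plan is to bootstrap from Lemma~\ref{clm:pi1 induction} to the infinite setting and then prune the resulting free product of all trivial factors, leaving precisely the conjugates of the $\pi_1 Y_i$ indexed in the Cohen--Lyndon fashion. Throughout, I identify $\nclose{\{\pi_1 Y_i\}} = \ker(\pi_1 X \to \pi_1 X^*) = \pi_1 \hat X$, so the goal becomes
\[
\pi_1 \hat X \;\cong\; \ast_{i \in I,\, t \in T_i} \langle \pi_1 Y_i\rangle^{t}
\]
for appropriate transversals $T_i$.

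First, I would fix an admissible ordering $\varphi \in \Phi$ (Definition~\ref{def:order}), which exists and is defined on all of $V$ by Lemma~\ref{lem:well-defined}. Since $\mathcal{U}$ is a topological cover of $\hat X$ by Lemma~\ref{lem:iscover} and the structure graph $\Lambda$ has countably many vertices, one has $\hat X = \bigcup_{k \in \naturals} \bigcup_{j \leq k} X_{v_j}$ as a nested union of subcomplexes. The functor $\pi_1$ commutes with directed colimits of combinatorial complexes (any loop or nullhomotopy is compact), and the free product commutes with directed colimits of groups, so applying Lemma~\ref{clm:pi1 induction} at each finite stage gives
\[
\pi_1 \hat X \;\cong\; \underset{k}{\mathrm{colim}}\; \pi_1\!\Bigl(\bigcup_{j \leq k} X_{v_j}\Bigr) \;\cong\; \underset{k}{\mathrm{colim}}\; \ast_{j \leq k} \pi_1 X_{v_j} \;\cong\; \ast_{v \in V} \pi_1 X_v.
\]

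Next, I would discard the trivial factors. By construction $V = V_C \sqcup V_H \sqcup V_{\mathcal{F}_\iota}$: the supporting hyperplane carriers are simply connected by Lemma~\ref{lem:special like behaviours}(1), and the untethered components are simply connected by Proposition~\ref{prop: contractible free hull}. Hence only the cone-vertices $v \in V_C$ contribute, giving
\[
\pi_1 \hat X \;\cong\; \ast_{gY_i \in V_C}\; \pi_1(gY_i).
\]
A fixed base elevation $Y_i \hookrightarrow \hat X$ has $\pi_1 Y_i$ as its image in $\pi_1 X$ (via the local isometry $Y_i \to X$), and tagging other translates by elements of $\pi_1 X$ identifies $\pi_1(gY_i)$ with the conjugate $\langle \pi_1 Y_i \rangle^{g}$ inside $\pi_1 X$.

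Finally, I would pin down the indexing. Translates $gY_i$ in $\hat X$ correspond to left cosets of $Stab_{\pi_1 X}(\widetilde Y_i)\,\nclose{\{\pi_1 Y_i\}}$ in $\pi_1 X$, since $\widetilde X \to \hat X$ is the regular cover associated to $\nclose{\{\pi_1 Y_i\}}$ and elevations to $\hat X$ are embeddings by Theorem~\ref{thm:embeds}. The main point here is that $Stab_{\pi_1 X}(\widetilde Y_i) = \mathcal{N}_{\pi_1 X}(\pi_1 Y_i)$ under the $C(9)$ hypothesis: the inclusion $Stab \subseteq \mathcal{N}$ follows from the assumption that $\pi_1 Y_i \trianglelefteq Stab_{\pi_1 X}(\widetilde Y_i)$, while for the reverse inclusion an element $g \in \mathcal{N}(\pi_1 Y_i)$ would produce a second $\pi_1 Y_i$-invariant elevation $g\widetilde Y_i$, and the two elevations must coincide or else their intersection would yield a single piece supporting a closed essential path, contradicting $C(9)$ via Greendlinger's Lemma (Theorem~\ref{thm:tric}). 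Choosing $T_i$ to be a full left transversal of $\mathcal{N}(\pi_1 Y_i)\,\nclose{\{\pi_1 Y_i\}}$ in $\pi_1 X$ then parametrises the translates of $Y_i$ in $\hat X$, yielding
\[
\nclose{\{\pi_1 Y_i\}} \;=\; \pi_1 \hat X \;\cong\; \ast_{i \in I,\, t \in T_i}\, \langle \pi_1 Y_i\rangle^{t}.
\]

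The main obstacle I foresee is the normaliser--versus--stabiliser identification in the last step; the bookkeeping needed to show that translates are indexed by $\mathcal{N}(\pi_1 Y_i)\nclose{\{\pi_1 Y_i\}}$-cosets (rather than by the a priori coarser $Stab_{\pi_1 X}(\widetilde Y_i)\nclose{\{\pi_1 Y_i\}}$-cosets) is where the small-cancellation hypothesis must be leveraged carefully. The colimit and trivial-factor pruning steps, by contrast, are formal once Lemma~\ref{clm:pi1 induction} is in hand.
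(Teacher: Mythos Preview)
Your overall strategy matches the paper's: invoke Lemma~\ref{clm:pi1 induction} at each finite stage, pass to the limit, discard the simply-connected factors (hyperplane carriers and untethered components), and index what remains by elevations of the $Y_i$ in $\hat X$. The paper phrases the limit step differently---it argues that $\hat X$ and $\bigvee_j X_{v_j}$ are both aspherical with the same fundamental group, hence homotopy equivalent, and then realises the wedge inside $\hat X$ via connecting paths to see that the $\pi_1$-isomorphism is the one induced by inclusion---but your directed-colimit argument is a valid alternative, provided you check that the Seifert--van Kampen isomorphisms of Lemma~\ref{clm:pi1 induction} are compatible with the inclusions $\bigcup_{j\le k}X_{v_j}\hookrightarrow \bigcup_{j\le k+1}X_{v_j}$ and keep track of basepoints.

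The one genuine gap is your argument for $\mathcal{N}_{\pi_1X}(\pi_1Y_i)\subseteq Stab_{\pi_1X}(\widetilde Y_i)$. You assert that for $g\in\mathcal{N}(\pi_1Y_i)\setminus Stab(\widetilde Y_i)$ the intersection $\widetilde Y_i\cap g\widetilde Y_i$ would be a piece supporting a closed essential path, but you have not shown these two elevations intersect at all---common $\pi_1Y_i$-invariance does not force this (a translation of $\reals^2$ stabilises disjoint parallel lines). A cleaner fix, which the paper leaves implicit, is to deduce the coset equality \emph{a posteriori} from the free product decomposition you have already established: if some $g\in\mathcal{N}(\pi_1Y_i)$ gave $gY_i\neq Y_i$ as elevations in $\hat X$, then $\pi_1(gY_i)=(\pi_1Y_i)^g=\pi_1Y_i$ as subgroups of $\pi_1X$, so the same nontrivial subgroup would occur as two distinct free factors of $\pi_1\hat X$, contradicting the normal-form theorem for free products. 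Hence $Stab_{\pi_1X}(\widetilde Y_i)\nclose{\{\pi_1Y_i\}}=\mathcal{N}(\pi_1Y_i)\nclose{\{\pi_1Y_i\}}$, and your $T_i$ are automatically the normaliser transversals the Cohen--Lyndon definition requires.
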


\begin{proof}
 Recall that $\hat X$ is the covering space of $X$ corresponding to the subgroup $ker(\pi_1 X \rightarrow \pi_1 X^*)<\pi_1X$. 

Fix a basepoint $\boldsymbol{\cdot} \in \hat X$. From Lemma~\ref{clm:pi1 induction}, there is, for each $k \in \naturals$, an isomorphism $$\pi_1(\bigcup_{j \leq k} X_{v_j}, \boldsymbol{\cdot}) \cong \ast_{j \leq k} \pi_1(X_{v_j},\boldsymbol{\cdot}).$$

Now $\hat X=\bigcup_{j \leq k, k \rightarrow \infty} X_{v_j}$ by Lemma~\ref{lem:iscover}, and  $\ast_{j \leq k} \pi_1X_{v_j}=\pi_1 \bigvee_{j \leq k} X_{v_j}$ for each $k \in \naturals$. Since each $X_{v_j}$ is aspherical, $\bigvee_{j \leq k} X_{v_j}$ is aspherical, as is $\hat X$, by virtue of being a non-positively curved cube complex. 
Thus $\hat X$ and $\bigvee_{j \leq k, k\rightarrow \infty} X_{v_j}$ are homotopy equivalent, since classifying spaces are unique up to homotopy equivalence.
Moreover, by choosing for each $k \in \naturals$ a path $\rho_k \rightarrow \bigcup_{j \leq k} X_{v_j}$ joining the basepoint  to some $p_k \in \bigcup_{j \leq k} X_{v_j}$ and attaching each $\rho_k$ to the corresponding $X_k$, the wedge  $\bigvee_{j \leq k, k\rightarrow \infty} X_{v_j}$ can be realised as a subcomplex of $\hat X$, and the inclusion $\bigvee_{j \leq k, k\rightarrow \infty} X_{v_j} \hookrightarrow \hat X$ is a homotopy equivalence, so there is a deformation retraction
$$\hat X \rightarrow \bigvee_{j \leq k, k\rightarrow \infty} X_{v_j}.$$
This implies that the isomorphism $\pi_1(\bigvee_{j \leq k, k\rightarrow \infty} X_{v_j}, \boldsymbol{\cdot}) \cong  \pi_1(\hat{X},\boldsymbol{\cdot})$ is the canonical isomorphism induced by the inclusion, and thus that $\pi_1(\bigvee_{j \leq k, k\rightarrow \infty} X_{v_j}, \boldsymbol{\cdot}) =  \pi_1(\hat{X},\boldsymbol{\cdot})$, since $\pi_1(\bigvee_{j \leq k, k\rightarrow \infty} X_{v_j}, \boldsymbol{\cdot})$ is a subgroup of $\pi_1(\hat X,\boldsymbol{\cdot})$ that generates it.
Now, for each $j \in \naturals$ the complex $X_{v_j}$ is either contractible (if it is a supporting hyperplane carrier or an untethered component) or is equal to a translate $gY_i$ for some $i \in I$. So, dropping the basepoint:  
$$\nclose{\{\pi_1 Y_i\}}=\pi_1\hat X = \pi_1\big (\bigvee_{gY_i \in \pi_1X/Stab_{\pi_1X}(\widetilde Y_i)} gY_i\big )= \ast_{i\in I, g \in T_i} \langle \pi_1 Y_i\rangle ^g $$ 
where $T_i$ are left transversals of $Stab_{\pi_1X}(\widetilde Y_i)\nclose{\pi_1 Y_i}$ in $\pi_1X$.
\end{proof}

\section{Asphericity}\label{sec:mains}

\subsection{Asphericity of the reduced space}

We recall the following construction, introduced in~\cite{Arenas2023pi2}.

Let $X^*= \langle X \mid \{Y_i\}_{i \in I} \rangle$ be a cubical presentation and consider the universal cover $\widetilde{X^*}$ of the coned-off space. 
Note that $X$ is a subspace of $X^*$, so the preimage $\hat X$ of $X$ in $\widetilde{X^*}$ is a covering space of $X$, namely the regular cover corresponding to $ker(\pi_1X \rightarrow \pi_1X^*)$. Consider the universal cover $\widetilde X $ of $X$.
For each $i \in I$, and for any fixed elevation $\widetilde Y_i \rightarrow \widetilde X$ of $Y_i$,  we have that $\pi_1 (Y_i,y_{i_0}) < Stab_{\pi_1(X,x_0)}(\widetilde Y_i)$. %If $X^*= \langle X |\{Y_i\}_{i \in I} \rangle$ is minimal, then by definition $\pi_1 (Y_i,y_{i_0}) = Stab_{\pi_1(X,x_0)}(\widetilde Y_i)$, but in general this is not the case.

Let $\{g_\ell\pi_1 Y_i\}$ be coset representatives of  $\pi_1 (Y_i,y_{i_0})$ in $Stab_{\pi_1(X,x_0)}(\widetilde Y_i)$. 
The elevations $\{g_\ell Y_i \rightarrow \hat X\}$ have the same image in $\hat{X}\subset \widetilde{X^*}$, so their cones are all isomorphic in $ \widetilde{X^*}$. 
Thus, there is a quotient $\cup_{\ell}g_\ell Cone(Im_{\hat X} (Y_i)) \rightarrow Cone(Im_{\hat X} (Y_i))$ where all cones over $Im_{\hat X}(Y_i)$ are identified to a single cone. This extends to a quotient $\widetilde{X^*} \rightarrow \bar X^*$, which we call the \emph{reduced space} of $\widetilde{X^*}$.

\begin{definition}
A cubical presentation $X^*=\langle X \mid \{Y_i\}_I \rangle$ is \emph{reduced} if $X^*=\bar X^*$.
\end{definition}

When all the $Y_i$'s in the cubical presentation are compact, reducibility of $X^*$ is equivalent to minimality -- a cubical presentation is \emph{minimal} if $Stab_{\pi_1X}(\widetilde{Y_i})=\pi_1 Y_i$ for each $i \in I$ and for each elevation $\widetilde{Y_i} \rightarrow Y_i \rightarrow X$  of $Y_i \rightarrow X$ (really we have to fix a basepoint $x \in X$ for this equality to make sense, but we suppress it from the notation).

Minimality generalises prohibiting relators that are proper powers in the classical small-cancellation case. In our setting, minimality is used to rule-out the torsion that could be created in a quotient $\pi_1X/\langle \langle \{\pi_1 Y'_i\} \rangle \rangle$ if the  $Y'_i \rightarrow X$  are themselves non-trivial finite covers $Y'_i \rightarrow Y_i \rightarrow X$. The simplest example of a non-minimal cubical presentation is given by  $X^*=\langle X \mid Y \rangle$ where $Y$ is a finite degree covering of $X$. Such an  $X^*$ will have finite fundamental group even if it satisfies the $C(9)$ condition.

The starting point in our strategy is Theorem~\ref{thm:cub2}, which is~\cite[Thm 3.5]{Arenas2023pi2}. 

\begin{theorem}\label{thm:cub2} Let $X^*= \langle X \mid \{Y_i\} \rangle$ be a cubical presentation that satisfies the $C(9)$ condition. Let $\bar X^*$ be the reduced space of $\widetilde{X^*}$. 
Then $\pi_2\bar X^*=0$.
\end{theorem}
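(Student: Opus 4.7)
The plan is to prove $\pi_2 \bar X^* = 0$ by ruling out non-nullhomotopic combinatorial spherical diagrams. I would represent a hypothetical non-trivial class by a combinatorial map $S \to \bar X^*$ with $S$ a combinatorial $2$-sphere, and choose $S$ of minimal complexity $(\#\text{cone-cells}, \#\text{squares})$ -- the natural spherical analogue of Definition~\ref{def:mincomp} -- among such representatives.

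If $S$ has no cone-cells, then $S$ has image in $\hat X$; since $\hat X$ is a covering of the non-positively curved cube complex $X$ and is therefore aspherical, $S$ is nullhomotopic, contradicting our assumption. So $S$ contains at least one cone-cell $C$. Form the disc diagram $D \to \bar X^*$ obtained by removing the open interior of $C$ from $S$, so $\partial D = \partial C$, and note that minimality of $S$ forces $D$ to be reduced in the sense of Definition~\ref{def:reduced}. Apply the Cubical Greendlinger Lemma (Theorem~\ref{thm:tric}) to $D$ and examine the trichotomy.

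If $D$ is a single cone-cell $C'$, then $S = C \cup_{\partial} C'$. When $C$ and $C'$ map to the same cone of $\bar X^*$, the construction of $\bar X^*$ -- which collapses distinct cones in $\widetilde{X^*}$ with coincident images in $\hat X$ to a single cone -- makes the pair combinable, so combining them reduces complexity, contradicting minimality. When they map to distinct cones, $\partial C = \partial C'$ is a single piece and an essential closed path, violating $C(9)$. If $D$ has three or more shells, cornsquares, or spurs, each lies on $\partial D = \partial C$: the outerpath of a shell $S'$ in $D$ lies in $\partial S' \cap \partial C$ and is thus a single piece, so the boundary of $S'$ is a concatenation of at most $5$ pieces, violating $C(9)$; a cornsquare on $\partial C$ can, after shuffling, be realised as a square of $S$ adjacent to $C$ and absorbed into $C$, lowering complexity via item~\eqref{it:r4} of Definition~\ref{def:reduced} applied to $S$; a spur on $\partial D$ is removable. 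If $D$ is a ladder, its end cone-cell $C_1$ has $\partial C_1 = \alpha_1 \nu_1 \beta_1$ where $\alpha_1, \beta_1 \subset \partial C$ (each a piece between $C_1$ and $C$) and $\nu_1$ is a piece against the adjacent ladder element, giving at most $3$ pieces and again violating $C(9)$.

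The most delicate point is justifying combinability in the ``$D = C'$'' case: one must verify that two cone-cells glued along a common boundary and mapping to the same cone of $\bar X^*$ satisfy the same-basepoint hypothesis of the combinability definition preceding Definition~\ref{def:reduced}. This is precisely what the quotient construction defining $\bar X^*$ encodes -- distinct cones in $\widetilde{X^*}$ which share an image in $\hat X$ are identified -- so two such cone-cells lift to cones in $\widetilde{X^*}$ whose $\hat X$-images match point-by-point along $\partial C = \partial C'$, yielding the required combinability. A secondary piece of bookkeeping is formulating a spherical-diagram version of the reducedness conditions of Definition~\ref{def:reduced}, which adapts in a routine way from the disc case.
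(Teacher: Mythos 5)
The paper does not prove Theorem~\ref{thm:cub2}: it is imported from \cite[Thm 3.5]{Arenas2023pi2}, so there is no ``paper's own proof'' to compare against. Evaluating your argument on its own merits: the overall strategy --- take a minimal-complexity spherical diagram, remove a cone-cell $C$ to get a disc diagram $D$ with $\partial D = \partial C$, and run the Cubical Greendlinger trichotomy --- is sound and is almost certainly in the spirit of the cited proof. The point you isolate as ``most delicate,'' namely that the passage to $\bar X^*$ is exactly what makes the combinability reduction available in the $D=C'$ case, is correct and well spotted. However, the case analysis as written has three genuine holes.

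First, the piece-counting steps for shells and for a ladder's end cone-cell silently assume that the cone-cell in question and $C$ map to \emph{distinct} cones of $\bar X^*$. By the convention following Definition~\ref{def:pieces}, an overlap of a cone with itself (or with a translate by a stabiliser element) is \emph{not} a piece, so when a shell $S'$ of $D$ maps to the same cone of $\bar X^*$ as $C$, the outerpath $Q$ is not a piece and ``at most 5 pieces'' never gets off the ground. You invoke combinability only in the $D=C'$ subcase, but the same phenomenon can occur here and in the ladder case, and it has to be dealt with (the fix is the same: $Q$ provides the common vertex, elevations embed in $\hat X$ by Theorem~\ref{thm:embeds} so the same-basepoint condition holds, and combining $S'$ with $C$ in $S$ lowers complexity).

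Second, the ladder case includes square ladders, i.e.\ a single pseudogrid with no cone-cells at all, and your argument addresses only ladders that have an end cone-cell $C_1$. In the square ladder case $\partial C = \partial D$ bounds a square disc diagram in $\hat X$, hence is nullhomotopic in $\hat X$; on the other hand $\partial C$ must be essential in its cone (else replace $C$ by a square diagram and lower complexity), and the elevation $Y_i \to \hat X$ is $\pi_1$-injective, which gives the contradiction. This case should be stated.

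Third, to contradict $C(9)$ from ``$\partial S'$ is a concatenation of $\leq 5$ pieces'' (and likewise ``$\partial C_1$ is $\leq 3$ pieces,'' and ``$\partial C = \partial C'$ is a single piece'') you need the relevant boundary paths to be \emph{essential} in their cones. Reducedness of $D$ in the sense of Definition~\ref{def:reduced} only imposes item~\eqref{it:r5} on \emph{internal} cone-cells of $D$, and shells and end cone-cells are boundary cone-cells of $D$. What you actually need is the spherical analogue of item~\eqref{it:r5}: every cone-cell of $S$ has essential boundary, by minimality of $S$. You allude to formulating a spherical version of Definition~\ref{def:reduced}, but since this essentiality hypothesis is load-bearing at every branch of the trichotomy, it should be stated and used explicitly rather than left as ``routine bookkeeping.''
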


We are ready to prove Theorem~\ref{thm:mainintro} from the introduction.

\begin{theorem}\label{thm:asph}
Let $X^*=\langle X \mid \{Y_i\} \rangle$ be a  cubical presentation. If $X^*$ satisfies the $C(9)$ condition, then $\bar{X}^*$ is aspherical.
In particular, if $X^*$ is reduced, then it is a model for $K(\pi_1X^*,1)$. 
\end{theorem}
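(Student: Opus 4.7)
The plan is to promote the homotopy equivalence $\bigvee_{v \in V} X_v \hookrightarrow \hat X$ produced inside the proof of Theorem~\ref{thm:clprop} to an explicit strong deformation retraction of $\bar X^*$ onto a wedge of contractible CW complexes, and then invoke the standard fact that a wedge of contractible CW complexes is contractible.

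First I would recall that during the proof of Theorem~\ref{thm:clprop} the wedge $W := \bigvee_{v \in V} X_v$ is realised as a subcomplex of $\hat X$ by attaching the arcs $\rho_k$ joining each $X_v$ to the basepoint, so that every $X_v$ sits in $W$ as a wedge summand, and the inclusion $W \hookrightarrow \hat X$ is a homotopy equivalence. Since $\bigsqcup_v X_v \hookrightarrow \hat X$ is a subcomplex inclusion and hence a cofibration, and since $W \hookrightarrow \hat X$ restricts to the identity on $\bigsqcup_v X_v$, the relative homotopy extension property allows us to upgrade this inclusion to a strong deformation retraction $r: \hat X \to W$ that fixes each $X_v$ pointwise.

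Next, the definition of the reduced space gives $\bar X^* = \hat X \cup \bigcup_{v \in V_C} Cone(X_v)$, with exactly one cone attached along each image of a relator in $\hat X$. Because $r$ fixes each $X_v$ pointwise and each cone is attached along such an $X_v$, extending $r$ by the identity on every attached cone produces a strong deformation retraction
\[
\bar X^* \longrightarrow W \cup \bigcup_{v \in V_C} Cone(X_v) \;=\; \bigvee_{v \in V_H \cup V_{\mathcal F_\iota}} X_v \;\vee\; \bigvee_{v \in V_C} Cone(X_v) \;=:\; W'.
\]

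It remains to verify that every wedge summand of $W'$ is contractible. For $v \in V_H$ the subcomplex $X_v$ is a supporting hyperplane carrier, simply connected by Lemma~\ref{lem:special like behaviours}(\ref{it:0}) and locally convex in $\hat X$, hence a simply connected non-positively curved cube complex and therefore contractible; for $v \in V_{\mathcal F_\iota}$ the same conclusion follows from Proposition~\ref{prop: convex free hull} combined with Proposition~\ref{prop: contractible free hull}; and for $v \in V_C$ the space $Cone(X_v)$ is tautologically contractible. A wedge of contractible CW complexes based at $0$-cells is contractible, since each summand deformation retracts to the wedge point and these retractions assemble continuously, so $W'$, and hence $\bar X^*$, is contractible and in particular aspherical. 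For the final clause, if $X^*$ is reduced then $\bar X^* = \widetilde{X^*}$, so the universal cover of $X^*$ is contractible and $X^*$ is a $K(\pi_1 X^*, 1)$. The only delicate point is the upgrade of $W \hookrightarrow \hat X$ to a strong deformation retraction fixing each $X_v$; once this is in hand, the extension across the adjoined cones and the conclusion are formal.
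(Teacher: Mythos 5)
Your proposal contains a genuine gap, and unfortunately it is the step you flag as ``the only delicate point.'' The strong deformation retraction $r\colon \hat X\to W$ that fixes each $X_v$ pointwise cannot exist, for the following concrete reason. The elements $X_v$ of the three-way decomposition $\mathcal{U}$ are a \emph{cover} of $\hat X$ (Lemma~\ref{lem:iscover}), so $\bigcup_v X_v=\hat X$; a map that fixes every $X_v$ pointwise is therefore the identity, and the identity does not land in a proper subcomplex $W$. Equally seriously, the $X_v$ are not pairwise disjoint --- the non-empty intersections $X_v\cap X_{v'}$ are precisely what the structure graph $\Lambda$ records, and they are what makes the inductive connectivity arguments of Sections~\ref{sec:dis}--\ref{sec:order} non-trivial. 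Consequently $\bigsqcup_v X_v$ is not a subcomplex of $\hat X$ (the tautological map $\bigsqcup_v X_v\to\hat X$ is far from injective), and the abstract wedge $\bigvee_v X_v$ cannot be realised inside $\hat X$ in a way that contains each $X_v$ as a wedge summand: a point of $X_v\cap X_{v'}$ other than the basepoint appears twice in the wedge but only once in $\hat X$. So the cofibration/HEP upgrade you invoke has no valid input, and the subsequent extension of $r$ across the adjoined cones is built on sand.

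What you can legitimately extract from Theorem~\ref{thm:clprop} is a homotopy equivalence $\hat X\simeq\bigvee_v X_v$ (both spaces are aspherical with the same $\pi_1$), not a subcomplex retraction. The paper's own proof of Theorem~\ref{thm:asph} uses this equivalence only at the level of homology: it runs a Mayer--Vietoris sequence for $\bar X^*=\hat X\cup\mathbf{Cone(Y)}$ with intersection $\mathbf{Y}$, uses the Cohen--Lyndon equivalence to identify $H_{k-1}(\mathbf{Y})\to H_{k-1}(\hat X)$ as an isomorphism, deduces $H_k(\bar X^*)=0$ inductively, and then bootstraps from $\pi_2\bar X^*=0$ (Theorem~\ref{thm:cub2}) via Hurewicz to get $\pi_k\bar X^*=0$ for all $k$. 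That route never needs a retraction of $\hat X$ onto a wedge, only that the inclusion $\mathbf{Y}\hookrightarrow\hat X$ is a homology isomorphism in the relevant degrees. If you want a geometric argument in the spirit of yours, you would need to replace the retraction onto a wedge by a homotopy colimit or nerve-type argument that keeps track of the overlaps $X_v\cap X_{v'}$; the raw wedge does not model $\hat X$ as a subcomplex.
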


\begin{remark} The ``$dim(X)\leq 2$ plus $cd(\pi_1Y_i)\leq 1$'' version of Theorem~\ref{thm:asph} is proven in~\cite[3.7]{Arenas2023pi2}. We note however that there is an inaccuracy in the statement of the theorem as originally written:  rather than reducedness, which is the property that is actually needed in the proof,  \cite[3.7]{Arenas2023pi2} assumes minimality of $X^*$. 
\end{remark}

\begin{proof}
We will show by induction that $H_i(\bar X^*)=0$ for all $i \in \naturals$. 
By Theorem~\ref{thm:cub2}, we have that $\pi_2\bar X^*=H_2(\bar X^*)=0$; now assume that $H_{k'}(\bar X^*)=0$ for all $k'< k$.
Fix a basepoint $x_0 \in X$; for ease of notation, set 
$$\bigsqcup_i \bigsqcup_{gStab_{\pi_1(X,x_0)}(\widetilde Y_i) \in \pi_1X/Stab_{\pi_1(X,x_0)}(\widetilde Y_i)}  gY_i:= \mathbf{Y} $$
and 
$$ \bigsqcup_i \bigsqcup_{gStab_{\pi_1(X,x_0)}(\widetilde Y_i) \in \pi_1X/Stab_{\pi_1(X,x_0)}(\widetilde Y_i)}  gCone(Y_i):= \mathbf{Cone(Y)}.$$ 
Since $\hat X \cap \mathbf{Cone(Y)}= \mathbf{Y}$, we have the Mayer-Vietoris sequence:

\[
    \begin{tikzcd}[arrows=to]
        \cdots \rar & H_k(\mathbf{Y}) \rar & H_k(\hat X) \oplus H_k (\mathbf{Cone(Y)}) \rar & H_k(\bar X^*) \rar & \hphantom{0}\\
        \hphantom{\cdots} \rar 
        & H_{k-1}(\mathbf{Y}) \rar 
        & \makebox[\widthof{$H_k(A) \oplus H_k(B)$}][c]{$\cdots\hfill \cdots$} \rar
        &  H_0(\hat X^*) \rar & 0
    \end{tikzcd}
\]

 Since $H_{k-1}(\bar X^*)=0=H_{k-1} (\mathbf{Cone(Y)})$, we see that the map $H_{k-1}(\mathbf{Y}) \longrightarrow H_{k-1}(\hat X)$ is surjective. 
 We claim that, in fact, $H_{k-1}(\mathbf{Y}) \longrightarrow H_{k-1}(\hat X)$ is an isomorphism and therefore the map $H_k(\bar X^*)  \longrightarrow H_{k-1}(\mathbf{Y})$ has trivial image, and so $H_k(\bar X^*)=0$. Indeed, Theorem~\ref{thm:clprop} implies that $\pi_1\hat X=ker(\pi_1X \rightarrow \pi_1X^*) \cong \ast_{g \in \pi_1X^*, i \in I} \pi_1g Y_i$, and that $\hat X \sim \bigvee_i Y_i$. Thus $H_{k-1}\hat{X}=H_{k-1} \bigvee_i Y_i=H_{k-1}(\mathbf{Y})$.

Applying Hurewicz's Theorem iteratively to $\hat X^*$, starting with $H_2(\bar X^*)\cong \pi_2\bar X^*=0$, we see that  $H_k(\bar X^*)\cong \pi_k\bar X^*=0$ for each $k \in \naturals$.
\end{proof}

It is perhaps not immediately clear to what extent the ``reduced'' hypothesis in the second part of Theorem~\ref{thm:asph} is natural or reasonably achievable, specially since, in many situations when trying to produce a cubical presentation for a quotient $G/\nclose{\{H_i\}}$, it is easier to show that a cubical presentation $Z^*$ whose relators have large stabilisers satisfies a small-cancellation condition rather than showing that another -- perhaps more natural -- cubical presentation $X^*$ whose relators have small stabilisers does. One way to circumvent this problem is as follows. If the cubical parts $\hat X$ and $\hat Z$ of $X^*=\langle X\mid \{Y_i\}\rangle$ and $Z^*=\langle Z\mid \{W_i\}\rangle$ are cubically isomorphic,  $\bar X^*$ is aspherical, and  $\bar X^*$ can be obtained from  $\widetilde Z^*$ by identifying simplices $s ,s'$ whenever they arise from coning over the same $n$-cube in $\hat X$, then $\widetilde Z^*$ is also aspherical. This gives a way of applying  Theorem~\ref{thm:asph} to $Z^*$ to deduce asphericity of $X^*$.

We can generalise this observation, and use the asphericity of $\bar{X}^*$ to build classifying spaces in much greater generality:

\begin{corollary}\label{cor:asphericalreloaded}
Let $X^*=\langle X \mid \{Y_i\}_{i \in I} \rangle$ be a  cubical presentation that satisfies the $C(9)$ condition, and for each $i \in I$, let $\mathcal{Y}_i$ be a model for $EStab_{\pi_1X^*}(Y_i)$ for which there is an embedding $\phi_i:Y_i \hookrightarrow \mathcal{Y}_i$. Let
$$\mathcal{X}=\hat{X} \bigcup_{\{gY_i\}} g\mathcal{Y}_i$$ 
where each  $g\mathcal{Y}_i$ is glued to $\hat X$ along the corresponding  elevation  $gY_i$ of $Y_i\rightarrow X$.
Then $\mathcal{X}$ is a model for $E\pi_1X^*$.
\end{corollary}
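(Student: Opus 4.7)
The strategy is to verify the two conditions characterizing a model for $E\pi_1X^*$: that $\mathcal{X}$ is contractible, and that $\pi_1X^*$ acts freely on it. I will transfer contractibility from $\bar X^*$ via a pushout comparison, and deduce simple-connectivity of $\bar X^*$ from the Cohen-Lyndon property.

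First I would set up the $\pi_1X^*$-action on $\mathcal{X}$. The group $\pi_1X^*$ acts freely on $\hat X$ as the deck group of the covering $\hat X \to X$, permuting the elevations $\{gY_i\}$ with setwise stabilizers conjugate to $Stab_{\pi_1X^*}(Y_i)$. Attaching each $g\mathcal{Y}_i$ equivariantly---using the prescribed $Stab_{\pi_1X^*}(Y_i)$-action on $\mathcal{Y}_i$, transported by $g$ and made compatible with the embedding $\phi_i$---produces a $\pi_1X^*$-action on $\mathcal{X}$ extending the free action on $\hat X$. This total action is free: points of $\hat X$ have trivial stabilizer by deck-freeness, while a point $p \in g\mathcal{Y}_i \setminus gY_i$ fixed by $h \in \pi_1X^*$ must satisfy $h \in g\,Stab_{\pi_1X^*}(Y_i)\,g^{-1}$ and fix the preimage of $p$ in $\mathcal{Y}_i$, which is impossible since $\mathcal{Y}_i$ is a model for $EStab_{\pi_1X^*}(Y_i)$.

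Next I would compare $\mathcal{X}$ to $\bar X^*$ to transfer contractibility. Both spaces are pushouts attached to $\hat X$ along the same subcomplex $\bigsqcup_{\{gY_i\}} gY_i$:
\[
\bar X^* \;=\; \hat X \,\cup_{\bigsqcup_{\{gY_i\}} gY_i}\, \bigsqcup_{\{gY_i\}} Cone(gY_i), \qquad \mathcal{X} \;=\; \hat X \,\cup_{\bigsqcup_{\{gY_i\}} gY_i}\, \bigsqcup_{\{gY_i\}} g\mathcal{Y}_i.
\]
Each inclusion $gY_i \hookrightarrow Cone(gY_i)$ and $gY_i \hookrightarrow g\mathcal{Y}_i$ is a subcomplex inclusion, hence a cofibration, and both $Cone(gY_i)$ and $g\mathcal{Y}_i$ are contractible. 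For each elevation I can choose a map $Cone(gY_i) \to g\mathcal{Y}_i$ restricting to $\phi_i$ on $gY_i$---this exists because $g\mathcal{Y}_i$ is contractible---and each such map is a homotopy equivalence between contractible CW-complexes. The gluing lemma for homotopy pushouts along cofibrations then yields a homotopy equivalence $\bar X^* \simeq \mathcal{X}$.

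Finally, I would verify that $\bar X^*$ is contractible. Theorem~\ref{thm:asph} gives asphericity, so it suffices to check $\pi_1\bar X^* = 0$. By covering-space theory, $\pi_1\hat X = \nclose{\{\pi_1Y_i\}}$, and by Theorem~\ref{thm:clprop} this equals the free product $\ast_{i, t \in T_i} (\pi_1Y_i)^t$. Each cone in the construction of $\bar X^*$ is attached along an image of $Y_i$ in $\hat X$ and kills the $\pi_1$ of that image, which is a conjugate of $\pi_1Y_i$; since every free factor $(\pi_1Y_i)^t$ in the Cohen-Lyndon decomposition arises as the $\pi_1$ of such an image, all free factors are killed and $\pi_1\bar X^* = 0$. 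Combined with the homotopy equivalence of the previous step, $\mathcal{X}$ is contractible, and together with the first step, $\mathcal{X}$ is a model for $E\pi_1X^*$. The main obstacle is the pushout comparison: while the CW cofibrancy and existence of the compatible maps $Cone(gY_i) \to g\mathcal{Y}_i$ are routine, the Cohen-Lyndon property is genuinely essential, since without it the cones in $\bar X^*$ would not suffice to trivialize $\pi_1\hat X$, and the passage from asphericity to contractibility would break.
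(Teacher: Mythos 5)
Your proposal is correct. It also identifies the same key observation that drives the paper's proof, namely that $\mathcal{X}$ is homotopy equivalent to $\bar{X}^*$ because both are pushouts of $\hat{X}$ along the elevations $gY_i$ with contractible "other factor" ($Cone(gY_i)$ in one case, $g\mathcal{Y}_i$ in the other); in the paper this is phrased via mapping cylinders and collapsing. The organization differs, however, and yours is the cleaner route. The paper uses the homotopy equivalence $\mathcal{X}\simeq\bar{X}^*$ \emph{only} to transfer $\pi_2\mathcal{X}=0$ from Theorem~\ref{thm:cub2}, and then separately runs a full Mayer--Vietoris calculation to establish $H_k(\mathcal{X})=0$ for all $k$ (again relying on Theorem~\ref{thm:clprop} through $\hat X\simeq\bigvee gY_i$), followed by an iterated Hurewicz argument. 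You short-circuit this: since Theorem~\ref{thm:asph} gives asphericity of $\bar X^*$, and the Cohen--Lyndon decomposition shows $\pi_1\bar X^*=1$ (the cones kill precisely the free factors of $\pi_1\hat X$, and the normal closure of all free factors in a free product is everything), $\bar X^*$ is contractible, so $\mathcal{X}$ is too. This avoids redundant homology computations and makes the logical dependence on Theorems~\ref{thm:asph} and~\ref{thm:clprop} more transparent. Two small remarks: the paper's proof of Theorem~\ref{thm:asph} implicitly uses $\pi_1\bar X^*=1$ (via $H_2\cong\pi_2$), so your explicit derivation of that fact is a welcome addition; and your verification that the action is free along the attached copies $g\mathcal{Y}_i$ is carried out with slightly more care than the paper's one-line statement, which is appropriate.
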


\begin{remark}
The space $\mathcal{X}$ in Corollary~\ref{cor:asphericalreloaded} is a kind of ``bordification'' of $\bar X^*$, in the spirit of constructions such as those in~\cite{Siegel59,BS73} and~\cite{BestvinaFeighn00}. 
\end{remark}

\begin{proof}
It follows from the Seifert Van-Kampen Theorem that  $\mathcal{X}$ is simply connected. 
Since $E\pi_1X^*$ acts freely on $\widetilde X^*$, the action on  $\hat X$, which originally permuted the cones, induces an action on $\mathcal{X}$ which permutes the $\mathcal{Y}_i$'s over different $Y_i$'s and extends to the action of $Stab_{\pi_1X^*}(Y_i)$ on the corresponding $\mathcal{Y}_i$. 
Thus  $E\pi_1X^*$ acts freely on  $\mathcal{X}$. % is a $\pi_1X$-CW complex. 
There is a Mayer-Vietoris sequence:
\[
    \begin{tikzcd}[arrows=to]
        \cdots \rar & H_k(\cup_i Y_i) \rar & H_k(\hat X) \oplus H_k (\cup_i \mathcal{Y}_i) \rar & H_k(\mathcal{X}) \rar & \hphantom{0}\\
        \hphantom{\cdots} \rar 
        & H_{k-1}(\cup_i Y_i) \rar 
        & \makebox[\widthof{$H_k(A) \oplus H_k(B)$}][c]{$\cdots\hfill \cdots$} \rar
        &  H_0(\mathcal{X}) \rar & 0
    \end{tikzcd}
\]

Since we have the homotopy equivalence $X\sim\bigvee_{i \in I} Y_i$ by Theorem~\ref{thm:clprop}, then $H_{k}(\hat{X})=H_{k}(\mathbf{Y})$ for each $k \in \naturals$, and  $H_k (\mathcal{Y}_i)=0$ since $\mathcal{Y}_i$ is a model for $EStab(Y_i)$. For each $k \in \naturals$, we get short exact sequences 
\[
    \begin{tikzcd}[arrows=to]
        0\rar & H_k(\mathbf{Y}) \rar & H_k(\hat X) \rar & H_k(\mathcal{X}) \rar & 0
    \end{tikzcd}
\]
 and so $H_k(\mathcal{X})=0$ for every $k$.   We now argue that $\pi_2 \mathcal{X}=0$.  Indeed, the proof of Theorem~\ref{thm:cub2} in~\cite{Arenas2023pi2} uses the $C(9)$ condition and the fact that cones are contractible, but not the cellular structure of the cones. Another way to see this is as follows. 
 The complex $\mathcal{X}$ is homotopy equivalent to $\hat X\cup \{gM_{\phi_i}\}$ where each $M_{\phi_i}$ is the mapping cylinder of  $\phi_i:Y_i \rightarrow \mathcal{Y}_i$. Since each  $\mathcal{Y}_i$ is contractible, collapsing each $\mathcal{Y}_i \subset X\cup \{gM_{\phi_i}\}$ to  a point is a homotopy equivalence, the result of which is $\bar X^*$. Now $\pi_2 \mathcal{X}=\pi_2 \bar X^*=0$ by Theorem~\ref{thm:cub2}. 
 To conclude the proof, we apply  Hurewicz's Theorem  iteratively to deduce that $\pi_n \mathcal{X}=H_n(\mathcal{X})=0$, as in the proof of Theorem~\ref{thm:asph}. 
\end{proof}

Let $n \in \naturals$. A group $G$ is \emph{type} $F_n$ if it admits a model for $K(G,1)$ with compact $n$-skeleton, it is \emph{type} $F_\infty$ if it is type $F_n$ for all $n \in \naturals$, and it is \emph{type} $F$ if it admits a compact $K(G,1)$.
From~\ref{cor:asphericalreloaded} we obtain:

\begin{corollary}\label{cor:finprop}
Let $\Gamma=\pi_1X^*$ where $X^*=\langle X \mid \{Y_i\}_{i \in I} \rangle$  is a  cubical presentation that satisfies the $C(9)$ condition and each $Y_i$ is compact. If $\pi_1X$  and each $Stab_{\pi_1X^*}(Y_i)$ is  type $F_n$ (resp., $F$), then $\Gamma$ is type $F_n$ (resp., $F$).
\end{corollary}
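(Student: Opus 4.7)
The plan is to produce a classifying space for $\Gamma$ with the desired finiteness properties by applying Corollary~\ref{cor:asphericalreloaded} with carefully chosen models $\mathcal{Y}_i$ and then passing to the $\Gamma$-quotient; the key payoff of the setup in Corollary~\ref{cor:asphericalreloaded} is that everything reduces to a cell count on $X$ and on the quotients $\mathcal{Y}_i/G_i$.

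Write $G_i := Stab_{\pi_1 X^*}(Y_i)$. Because $\Gamma$ acts freely and cellularly on $\hat X$, its restriction to the elevation $Y_i \subset \hat X$ is a free $G_i$-action, and the quotient $Y_i/G_i$ is a compact CW complex whose fundamental group is an extension of $G_i$ by $\pi_1 Y_i$. Since $G_i$ is of type $F_n$ (resp.\ $F$), there exists a $K(G_i,1)$, call it $B_i^\prime$, with compact $n$-skeleton (resp.\ compact). Choose a cellular map $Y_i/G_i \to B_i^\prime$ inducing the projection $\pi_1(Y_i/G_i) \twoheadrightarrow G_i$ on fundamental groups, and let $B_i$ be its mapping cylinder: this is still a $K(G_i,1)$ of the required finiteness (since $Y_i/G_i$ is compact), and it contains $Y_i/G_i$ as a subcomplex. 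Set $\mathcal{Y}_i := \widetilde{B_i}$, the universal cover, which is a model for $EG_i$. The intermediate cover of $B_i$ corresponding to $\pi_1 Y_i \subset \pi_1 B_i$ pulls $Y_i/G_i$ back to $Y_i$ equivariantly, producing a $G_i$-equivariant embedding $\phi_i : Y_i \hookrightarrow \mathcal{Y}_i$, as required by Corollary~\ref{cor:asphericalreloaded}.

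Feeding these data into Corollary~\ref{cor:asphericalreloaded}, one obtains $\mathcal{X} = \hat X \cup \bigcup_{\{gY_i\}} g\mathcal{Y}_i$, a model for $E\Gamma$ on which $\Gamma$ acts freely and cellularly, so $Z := \mathcal{X}/\Gamma$ is a $K(\Gamma,1)$. Unwinding the quotient, $\hat X / \Gamma = X$ since $\hat X \to X$ is the $\Gamma$-cover, and for each $i$ the set $\{g\mathcal{Y}_i\}$ is a single $\Gamma$-orbit with stabiliser $G_i$, so $g\mathcal{Y}_i/G_i \cong B_i$. Consequently
\[
Z \;=\; X \;\bigcup_{\sqcup_i Y_i}\; \bigsqcup_{i \in I} B_i,
\]
as a pushout along the $Y_i$'s.

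Under the standing assumption that $X$ is compact, and with the $B_i$ having compact $n$-skeleton (resp.\ being compact) by construction, the $n$-skeleton of $Z$ consists of the cells of $X^{(n)}$ together with, for each $i\in I$, the relative $n$-cells of the pair $(B_i, Y_i)$; finite presentability of $\Gamma$ forces $|I|<\infty$ for any meaningful $F_n$ conclusion with $n\geq 2$, so the cell count is finite and $Z$ has the desired finiteness. I expect the main technical point to be securing the $G_i$-equivariance of $\phi_i$, for which the covering-space description above is essential; beyond this piece of bookkeeping, the conclusion is a direct consequence of Corollary~\ref{cor:asphericalreloaded}.
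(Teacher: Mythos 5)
Your argument follows the same route as the paper's proof of this corollary: produce, for each $i$, a model $\mathcal{Y}_i$ for $EG_i$ (with $G_i=Stab_{\pi_1 X^*}(Y_i)$) having cocompact $n$-skeleton (resp.\ cocompact) and a $G_i$-equivariant embedding $Y_i\hookrightarrow\mathcal{Y}_i$, then feed this into Corollary~\ref{cor:asphericalreloaded} and pass to the $\Gamma$-quotient; you also correctly identify the implicit standing hypotheses ($X$ compact, $|I|<\infty$) that make the cell count go through. The only divergence is the choice of $\mathcal{Y}_i$: the paper takes a cocompact $EG_i$ multiplied by $\mathrm{Cone}(Y_i)$, whereas you take the universal cover of the mapping cylinder $B_i$ of a cellular map $Y_i/G_i\to B_i'$ realising $\pi_1(Y_i/G_i)\twoheadrightarrow G_i$; both constructions do the job.

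One phrasing in the middle of your construction is wrong as written and should be fixed: ``the intermediate cover of $B_i$ corresponding to $\pi_1 Y_i\subset\pi_1 B_i$'' does not parse, since $\pi_1 B_i=G_i$ and $\pi_1 Y_i$ is \emph{not} a subgroup of $G_i$ --- it is the kernel of $\pi_1(Y_i/G_i)\twoheadrightarrow G_i$. The correct statement of what you use is the following: the preimage $p^{-1}(Y_i/G_i)\subset\widetilde{B_i}$ under the universal covering projection $p\colon\widetilde{B_i}\to B_i$ is connected (because $\pi_1(Y_i/G_i)\to\pi_1 B_i$ is surjective), and as a covering of $Y_i/G_i$ it corresponds to $\ker\bigl(\pi_1(Y_i/G_i)\to G_i\bigr)=\pi_1 Y_i$, hence is a copy of $Y_i$ on which the deck group $G_i$ acts compatibly. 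That recovers the $G_i$-equivariant embedding $\phi_i\colon Y_i\hookrightarrow\mathcal{Y}_i$, so this is a repairable slip in exposition rather than a gap in the proof.
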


\begin{proof}
Assume $\pi_1X$  and each $Stab_{\pi_1X^*}(Y_i)$ is  type $F_n$ (resp., $F$). Let  $\mathcal{Y}_i$  be a model for $EStab_{\pi_1X^*}(Y_i)$  having a cocompact $n$-skeleton (resp.,  $\mathcal{Y}_i$ is cocompact) and such that $Y_i$ embeds in  $\mathcal{Y}_i$ (for instance,  $\mathcal{Y}_i \times Cone(Y_i)$ will do). Now apply Corollary~\ref{cor:asphericalreloaded}.
\end{proof}

The construction of $\mathcal{X}$ in Corollary~\ref{cor:asphericalreloaded}, and the proof of the corollary, suggest  a general method for obtaining classifying spaces from  the Cohen-Lyndon property. Since it might be useful for other applications, we state it below:

\begin{theorem}\label{thm:asphericalreloaded}
Let $X$ and $\{Y_i\}_{i \in I}$ be  aspherical CW-complexes, assume each  $Y_i \hookrightarrow X$ is a $\pi_1$-injective immersion, and let $\Gamma=\pi_1X/\nclose{\pi_1Y_i}$. 
For each $i \in I$, let $\mathcal{Y}_i$ be a model for $EStab_\Gamma(Y_i)$, and assume each $Y_i \hookrightarrow \mathcal{Y}_i$ is an embedding. Let $\hat X \rightarrow X$ be the covering corresponding to the subgroup $ker(\pi_1X \rightarrow \Gamma)<\pi_1X$, assume each elevation of $Y_i$ to $\hat X$ is an embedding,  and let $\mathcal{X}$ be the quotient 
$$\mathcal{X}=\hat X \bigcup_{\{gY_i\}} g\mathcal{Y}_i$$ where each  $ \mathcal{Y}_i$ is glued to $\hat X$ along the corresponding elevation of $Y_i$.
If the pair $(\pi_1 X, \{\pi_1 Y_i\}_{i \in I})$ has the Cohen-Lyndon property and $\pi_2 \mathcal{X}=0$, then  $\mathcal{X}$
is a model for $E\Gamma$.
\end{theorem}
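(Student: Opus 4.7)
The plan is to adapt the argument of Corollary~\ref{cor:asphericalreloaded} to this more general setting, with the Cohen-Lyndon hypothesis taking the role of Theorem~\ref{thm:clprop} and the assumption $\pi_2\mathcal{X}=0$ replacing the invocation of Theorem~\ref{thm:cub2}. First I would verify that $\mathcal{X}$ is simply connected. Since $\hat X \to X$ is the covering corresponding to $\ker(\pi_1 X \to \Gamma) = \nclose{\pi_1 Y_i}$, the Cohen-Lyndon property identifies $\pi_1 \hat X$ with $\ast_{i,\,t\in T_i}(\pi_1 Y_i)^t$. The elevations of $Y_i$ to $\hat X$ are in bijection with cosets of $\mathcal{N}(\pi_1 Y_i)\nclose{\pi_1 Y_i}$ in $\pi_1 X$, and hence with $T_i$; attaching the contractible $g\mathcal{Y}_i$ along the corresponding elevation $gY_i$ kills exactly the free factor $(\pi_1 Y_i)^t$. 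Applying Seifert-Van Kampen (extended to the colimit along the attaching cofibrations) kills every factor and yields $\pi_1 \mathcal{X} = 1$.

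Next, the deck group of $\hat X \to X$ is $\Gamma$, which permutes the elevations $gY_i$, while for each elevation the stabiliser $Stab_\Gamma(gY_i)$ acts on $g\mathcal{Y}_i$ via its free $E$-model action. Assembling these compatibly yields a free $\Gamma$-action on $\mathcal{X}$.

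The main step is the homology calculation. Setting $\mathbf{Y} = \bigsqcup_{i,g}gY_i$, I apply Mayer-Vietoris to the decomposition $\mathcal{X} = \hat X \cup \bigsqcup_{i,g}g\mathcal{Y}_i$ with intersection $\mathbf{Y}$. The Cohen-Lyndon decomposition of $\pi_1 \hat X$, combined with the asphericity of $\hat X$ (a cover of aspherical $X$) and of the wedge $\bigvee_{i,t\in T_i}Y_i$, forces these two aspherical spaces to be homotopy equivalent. After choosing paths from a basepoint of $\hat X$ to each elevation, this equivalence is realised by the natural inclusion, inducing isomorphisms $H_k(\mathbf{Y}) \cong H_k(\hat X)$ for $k \geq 1$. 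Since each $\mathcal{Y}_i$ is contractible, the Mayer-Vietoris sequence collapses to give $H_k(\mathcal{X}) = 0$ for $k \geq 1$. Finally, combining $\pi_1 \mathcal{X} = 1$, the hypothesis $\pi_2 \mathcal{X} = 0$, and iterated Hurewicz (each round of which upgrades $(n-1)$-connectivity of $\mathcal{X}$ to $n$-connectivity) yields $\pi_k \mathcal{X} = 0$ for all $k$; Whitehead's theorem gives contractibility, and the free $\Gamma$-action makes $\mathcal{X}$ a model for $E\Gamma$.

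The main obstacle is the middle step: carefully realising the Cohen-Lyndon algebraic decomposition of $\pi_1 \hat X$ as a topological wedge structure on $\hat X$ suitable for computing $H_*(\hat X)$. This boils down to choosing basepoints and connecting paths in $\hat X$ so that the inclusion $\bigvee_{i,t\in T_i} Y_i \hookrightarrow \hat X$ induces the Cohen-Lyndon isomorphism on $\pi_1$, after which Whitehead's theorem (applied in the aspherical setting) upgrades this to a homotopy equivalence. The remainder of the argument is essentially a cleaner version of the one in Corollary~\ref{cor:asphericalreloaded}, since in the general setting here we have the luxury of taking $\pi_2\mathcal{X}=0$ as given rather than having to extract it from cubical small-cancellation.
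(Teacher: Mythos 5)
Your proposal follows essentially the same route as the paper: the paper gives no separate proof of this theorem, instead pointing to the proof of Corollary~\ref{cor:asphericalreloaded} as the template, and your argument reconstructs that template faithfully (Seifert--Van Kampen to get $\pi_1\mathcal{X}=1$, the free $\Gamma$-action assembled from the deck action on $\hat X$ and the $EStab_\Gamma(Y_i)$-actions, Mayer--Vietoris plus the Cohen--Lyndon homotopy equivalence $\hat X \simeq \bigvee_{i,t} Y_i$ to kill $H_k(\mathcal{X})$, and iterated Hurewicz together with the $\pi_2\mathcal{X}=0$ hypothesis to conclude contractibility).

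One point worth being slightly more careful about than you are: your claim that the inclusion induces isomorphisms $H_k(\mathbf{Y}) \cong H_k(\hat X)$ for all $k \geq 1$ tacitly identifies the set of elevations $\{gY_i\}$ (which indexes the summands of $\mathbf{Y}$) with the set of Cohen--Lyndon free factors $\{(\pi_1 Y_i)^t : t \in T_i\}$. As the paper defines it, $T_i$ is a transversal of $\mathcal{N}_{\pi_1 X}(\pi_1 Y_i)\nclose{\pi_1 Y_i}$, whereas the elevations to $\hat X$ are indexed by cosets of $Stab_{\pi_1 X}(\widetilde Y_i)\nclose{\pi_1 Y_i}$; these agree only when $Stab_{\pi_1 X}(\widetilde Y_i)\nclose{\pi_1 Y_i}=\mathcal{N}(\pi_1 Y_i)\nclose{\pi_1 Y_i}$, which holds in the cubical $C(9)$ case but is not automatic in the abstract setting. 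When they disagree, $H_1(\mathbf{Y})\to H_1(\hat X)$ would fail injectivity and Mayer--Vietoris would then force $H_2(\mathcal{X})\neq 0$; so the $\pi_2\mathcal{X}=0$ hypothesis in fact excludes the bad case, but your proof should make explicit that this hypothesis, rather than Cohen--Lyndon alone, underwrites the isomorphism. This is the same gap the paper leaves implicit, so the two arguments are on an equal footing here; with that caveat your proof is correct and matches the paper's intended argument.
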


If $X^*$ is a reduced cubical presentation that satisfies the $C(9)$ condition, then an upper bound on the cohomological dimension of $\pi_1 X^*$ in terms of $\dim(X)$ follows immediately from Theorem~\ref{thm:asph}. Namely, since the cohomological dimension of $\pi_1 X^*$ is bounded above by its geometric dimension, which is in turn bounded above by the dimension of $X^*$, then 
$$cd(X^*)\leq\max\{dim(X),\max_i\{dim(Y_i)\}+1\},$$ 
and a similar inequality can be obtained when $X^*$ is not reduced by using the classifying space $\mathcal{X}/\pi_1X^*$ built in Corollary~\ref{cor:asphericalreloaded}. 
However, a sharper upper bound and a lower bound on $cd(\pi_1X^*)$ also follows from Theorem~\ref{thm:clprop}. As another immediate consequence, we obtain formulas for the homology and cohomology groups of $\pi_1 X^*$; we collect these results as follows:

\begin{corollary}\label{cor:dimensions}
Let $X^*=\langle  X \mid \{Y_i\}\rangle$ be a reduced $C(9)$ cubical presentation. Then
$$cd(\pi_1X)-\max_i\{cd(\pi_1Y_i)\}\leq cd(\pi_1X^*)\leq \max\{cd(\pi_1X),\max_i\{cd(\pi_1Y_i)\}+1\}.$$
Moreover, for every $\pi_1 X^*$-module $A$ and for all $n\geq \max\{cd(\pi_1Y_i)\}+2$: $$H^n(\pi_1X^*, A)=H^n(\pi_1X, A) \text{ and } H_n(\pi_1X^*, A)=H_n(\pi_1X, A).$$ 
\end{corollary}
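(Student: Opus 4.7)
The plan is to combine the Cohen--Lyndon decomposition supplied by Theorem~\ref{thm:clprop} with Proposition~\ref{prop:petsun1} to obtain the cohomology and homology equalities and the upper bound on $cd(\pi_1 X^*)$, and to deduce the lower bound from the Lyndon--Hochschild--Serre spectral sequence of the extension
\begin{equation*}
1 \longrightarrow N \longrightarrow \pi_1 X \longrightarrow \pi_1 X^* \longrightarrow 1,
\end{equation*}
where $N = \nclose{\{\pi_1 Y_i\}}$.

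First I would exploit the reducedness hypothesis: together with the standing assumption that $\pi_1 Y_i$ is normal in $Stab_{\pi_1 X}(\widetilde Y_i)$, it yields the identification $\mathcal{N}_{\pi_1 X}(\pi_1 Y_i) = Stab_{\pi_1 X}(\widetilde Y_i) = \pi_1 Y_i$ (compare the proof of Theorem~\ref{thm:clprop}), so that the normaliser quotient $\mathcal{N}(\pi_1 Y_i)/\pi_1 Y_i$ is trivial. Now fix $n \geq \max_i cd(\pi_1 Y_i)+2$. Then both $H^n(\pi_1 Y_i; A)$ and $H^{n-1}(\pi_1 Y_i; A)$ vanish, since their degrees exceed $cd(\pi_1 Y_i)$; the vanishing in degree $n$ verifies the first hypothesis of Proposition~\ref{prop:petsun1}(2), and the vanishing in degree $n-1$ renders the required surjectivity trivial. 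The conclusion of Proposition~\ref{prop:petsun1}(2) then collapses to $H^n(\pi_1 X^*, A) = H^n(\pi_1 X, A)$, and the parallel argument using Proposition~\ref{prop:petsun1}(1) gives the homology statement.

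The upper bound on $cd(\pi_1 X^*)$ is now immediate: if $n > \max\{cd(\pi_1 X),\max_i cd(\pi_1 Y_i)+1\}$, then $n \geq \max_i cd(\pi_1 Y_i)+2$ and $n > cd(\pi_1 X)$, so the previous paragraph gives $H^n(\pi_1 X^*; A) = H^n(\pi_1 X; A) = 0$ for every $\pi_1 X^*$-module $A$. For the lower bound, Theorem~\ref{thm:clprop} exhibits $N$ as a free product of conjugates of the $\pi_1 Y_i$, so $cd(N) \leq \max_i cd(\pi_1 Y_i)$, since the cohomological dimension of a non-trivial free product equals the maximum of the cohomological dimensions of its factors. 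The standard inequality $cd(\pi_1 X) \leq cd(N) + cd(\pi_1 X^*)$ derived from the Lyndon--Hochschild--Serre spectral sequence of the displayed extension then rearranges to the claimed lower bound $cd(\pi_1 X) - \max_i cd(\pi_1 Y_i) \leq cd(\pi_1 X^*)$.

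The only delicate point is verifying the identification $\mathcal{N}_{\pi_1 X}(\pi_1 Y_i) = \pi_1 Y_i$ under the reducedness hypothesis; this is what makes the $\bigoplus_i H^n(\mathcal{N}(\pi_1 Y_i)/\pi_1 Y_i, A)$ summand in Proposition~\ref{prop:petsun1} disappear, and every other step is a routine application of results already established.
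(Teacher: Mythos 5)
Your proof is correct and follows the same overall strategy as the paper: reducedness gives $\mathcal{N}_{\pi_1X}(\pi_1Y_i)=\pi_1Y_i$, Theorem~\ref{thm:clprop} supplies the Cohen--Lyndon decomposition, the (co)homology formulas come from the Petrosyan--Sun machinery, and the lower bound comes from the exact sequence $1\to\nclose{\{\pi_1Y_i\}}\to\pi_1X\to\pi_1X^*\to1$ together with $cd(\nclose{\{\pi_1Y_i\}})\le\max_i cd(\pi_1Y_i)$ from the free-product structure. The one organizational difference is worth noting: the paper's proof invokes two external results, \cite[Corollary~2.2(i)]{PetSun21} for the (co)homology equalities and \cite[Corollary~2.2(ii)]{PetSun21} for the upper bound on $cd$, neither of which is stated in the paper. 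You instead apply only Proposition~\ref{prop:petsun1}, which \emph{is} stated in the paper, observing that the surjectivity/injectivity hypotheses become vacuous once $n-1>\max_i cd(\pi_1Y_i)$, so the formula collapses to $H^n(\pi_1X^*,A)=H^n(\pi_1X,A)$, and you then read off the upper bound on $cd(\pi_1X^*)$ directly from the vanishing of cohomology in degrees $n>\max\{cd(\pi_1X),\max_i cd(\pi_1Y_i)+1\}$. This makes the argument more self-contained and buys a cleaner dependency chain (only one external citation, and it is one the reader has in front of them) at no cost in length; the content is otherwise identical.
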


\begin{proof}
The upper bound on the cohomological dimension follows from Theorem~\ref{thm:clprop} via~\cite[Corollary 2.2(ii)]{PetSun21}, applied to $(\pi_1X, \{\mathcal{N}(\pi_1Y_i)\}, \{\pi_1 Y_i\})$, since, as $X^*$ is reduced, then the normaliser $\mathcal{N}(\pi_1 Y_i)=\pi_1 Y_i$ for each $i$.
Using Theorem~\ref{thm:clprop}, the lower bound follows from the short exact sequence $$1 \rightarrow \nclose{\{\pi_1 Y_i\}} \rightarrow \pi_1 X \rightarrow \pi_1 X^* \rightarrow 1 $$ 
which implies the inequality $cd(\pi_1 X) \leq cd(\nclose{\{\pi_1 Y_i\}}) + cd(\pi_1 X^*)$ by,  for instance, \cite[VIII 2.4]{BrownBook82and94}.
The formulae for the homology and cohomology of $\pi_1X^*$ follow from Theorem~\ref{thm:clprop} via~\cite[Corollary 2.2(i)]{PetSun21}, again applied to the triple $(\pi_1X, \{\mathcal{N}(\pi_1Y_i)\}, \{\pi_1 Y_i\})$.
\end{proof}

\subsection{Asphericity for proper actions}
Let $G$ be a discrete a group.
 A \emph{classifying space  for proper actions}  is a $G$-CW-complex $\underbar EG$ satisfying: 
\begin{enumerate}
\item every cell stabiliser is finite,
\item  the fixed point space  $(\underbar EG)^H$ is contractible for every finite  $H < \pi_1X^*$.
\end{enumerate}

\begin{comment}
Examples of families include:\begin{enumerate}
\item $F=\mathcal{TR}$,  the family containing only the trivial group, in which case $E_FG$  coincides with the ``usual'' classifying space $EG$.
\item $F=\mathcal{FIN}$, the family of finite subgroups of $G$, in which case $E_FG$ is also known as  the \emph{classifying space for proper actions} of $G$, denoted $\underbar EG$.
\item $F=\mathcal{VCYC}$, the family of virtually cyclic subgroups of $G$, in which case $E_FG$ is also denoted $\uuline{E}G$.
\item $F=\mathcal{VAB}$, the family of virtually abelian subgroups of $G$.
\end{enumerate}
\end{comment}

 Theorem~\ref{thm:asph} provides classifying spaces for  the fundamental groups of reduced $C(9)$ cubical presentations; similar results providing classifying spaces for proper actions can be derived  when the cubical presentation satisfies the $C(9)$ condition, together with  a suitable hypothesis that generalises being reduced.

We will prove the following theorem:

\begin{theorem}\label{thm:asphfinite}
Let $X^*=\langle X \mid \{Y_i\} \rangle$ be a cubical presentation. If $X^*$ satisfies the $C(9)$ condition and $[Stab_{\pi_1X}(\widetilde Y_i):\pi_1Y_i]< \infty$ for each  $Y_i \rightarrow X$, then $\bar X^*$ is a classifying space for proper actions for $\pi_1X^*$. 
\end{theorem}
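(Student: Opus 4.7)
The plan is to verify the two defining properties of the classifying space for proper actions $\underbar E\pi_1X^*$: that all cell stabilisers for the $\pi_1X^*$-action on $\bar X^*$ are finite, and that for every finite subgroup $H\leq \pi_1X^*$, the fixed-point set $(\bar X^*)^H$ is contractible. Contractibility of $\bar X^*$ itself comes for free from Theorem~\ref{thm:asph}: $\bar X^*$ is aspherical, and it is simply connected since it is a quotient of $\widetilde{X^*}$ obtained by collapsing contractible cone subcomplexes together.

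First I would analyse cell stabilisers. The action of $\pi_1X^*$ on $\hat X\subset \bar X^*$ is free, as $\hat X\to X^*$ is the regular cover with deck group $\pi_1X/\nclose{\{\pi_1 Y_i\}}=\pi_1X^*$, so every cube of $\hat X$ has trivial stabiliser. The stabiliser of a cone-cell $Cone(Y_i)$ in $\bar X^*$ is the image in $\pi_1X^*$ of $Stab_{\pi_1X}(\widetilde Y_i)\cdot \nclose{\{\pi_1 Y_i\}}$, which is isomorphic to $Stab_{\pi_1X}(\widetilde Y_i)/(Stab_{\pi_1X}(\widetilde Y_i)\cap \nclose{\{\pi_1 Y_i\}})$. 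The Cohen–Lyndon decomposition of Theorem~\ref{thm:clprop}, together with the standing hypothesis $\pi_1Y_i\triangleleft Stab_{\pi_1X}(\widetilde Y_i)=\mathcal{N}(\pi_1Y_i)$ and the fact that $T_i$ is a transversal of $\mathcal{N}(\pi_1Y_i)\nclose{\{\pi_1Y_i\}}$, yields $Stab_{\pi_1X}(\widetilde Y_i)\cap \nclose{\{\pi_1Y_i\}}=\pi_1Y_i$. Hence the stabiliser is $Stab_{\pi_1X}(\widetilde Y_i)/\pi_1Y_i$, finite by hypothesis, and every face of a cone or pyramid cell inherits a subgroup of this stabiliser.

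Next I would show $(\bar X^*)^H$ is contractible for every non-trivial finite $H\leq \pi_1X^*$. The key observation is that $H$ can only fix cone-vertices of $\bar X^*$. Indeed, freeness of the $\pi_1X^*$-action on $\hat X$ rules out fixed points there, and any interior cone point can be parameterised as $(y,t)$ with $y\in Y_i$ and $t\in(0,1)$; since $h\in H$ acts radially (by preservation of the cellular cone structure), $(h\cdot y,t)=(y,t)$ forces $y$ to be fixed in $\hat X$, a contradiction unless $h=1$. It therefore suffices to prove that each non-trivial finite $H$ fixes exactly one cone-vertex. For existence, I would invoke Theorem~\ref{thm:clprop}: since $\pi_1X$ is torsion-free (as $X$ is non-positively curved) and $\nclose{\{\pi_1Y_i\}}=\ast_{i,t\in T_i}\pi_1Y_i^t$ is a free product of subgroups of $\pi_1X$, applying Bass–Serre theory to the tree of this free product shows that every torsion element of the quotient $\pi_1X^*$ is conjugate into the image of some $\mathcal{N}(\pi_1Y_i)/\pi_1Y_i=Stab_{\pi_1X}(\widetilde Y_i)/\pi_1Y_i$, and hence into some cone-stabiliser. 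For uniqueness, if $H$ lay in stabilisers of two distinct cone-cells, lifting to $\pi_1X$ and using Kurosh's subgroup theorem applied to the free-product factorisation from Theorem~\ref{thm:clprop} would force a non-trivial intersection of two distinct conjugate factors, a contradiction since free-product factors have trivial pairwise intersections.

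The main obstacle is the uniqueness-and-existence step: to translate the algebraic Cohen-Lyndon decomposition into the topological statement that each non-trivial finite subgroup of $\pi_1X^*$ stabilises exactly one cone-cell of $\bar X^*$. Once this is in place, $(\bar X^*)^H$ is a single cone-vertex for $H\neq\{1\}$ and all of $\bar X^*$ for $H=\{1\}$, both contractible, completing the verification that $\bar X^*$ is a model for $\underbar E\pi_1X^*$.
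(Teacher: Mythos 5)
The overall outline — finite cell stabilisers, then contractibility of fixed-point sets via ``every non-trivial finite $H$ fixes a unique cone-vertex'' — is the same as the paper's (Lemmas~\ref{lem:finitestab} and~\ref{lem:contfix}). Your computation of the cone-vertex stabiliser as $Stab_{\pi_1X}(\widetilde Y_i)/\pi_1Y_i$, and the observation that freeness of the deck action on $\hat X$ together with the radial cone structure forces any fixed point of a non-trivial element to be a cone-vertex, are both correct and match the content of the cited result~\cite[3.9]{Arenas2023pi2}.

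The genuine gap is in the existence-and-uniqueness step, where you replace the small-cancellation argument the paper cites (\cite[3.13, 3.14]{Arenas2023pi2}) with Bass--Serre theory and the Kurosh theorem applied to the Cohen--Lyndon free product $\nclose{\{\pi_1Y_i\}} = \ast_{i,t\in T_i}(\pi_1Y_i)^t$. The difficulty is that this free product decomposition is a statement internal to $N := \nclose{\{\pi_1Y_i\}}$: it gives an $N$-action on a Bass--Serre tree $T$, but you would need a $\pi_1X$-action (or $\pi_1X^*$-action) on $T$ to run the standard ``torsion is elliptic'' argument, and the Cohen--Lyndon property alone does not furnish one. Conjugation by $g\in\pi_1X$ permutes the \emph{conjugacy classes} of the free factors $(\pi_1Y_i)^t$ inside $N$, so it acts on the deformation space of such decompositions, but it does not canonically act on a single fixed tree $T$; the homotopy equivalence $\hat X\simeq\bigvee_j X_{v_j}$ constructed in the proof of Theorem~\ref{thm:clprop} is likewise not shown to be $\pi_1X^*$-equivariant. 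Without such an action, the claim that ``every torsion element of $\pi_1X^*$ is conjugate into some cone-stabiliser'' does not follow from Bass--Serre theory, and the Kurosh-based uniqueness argument has the same difficulty: a finite $H\leq\pi_1X^*$ does not lift to a subgroup of $\pi_1X$, only to a preimage $\pi^{-1}(H)$ which contains all of $N$, so there is no honest subgroup of the free product to which Kurosh applies. This is precisely why the paper invokes a direct diagrammatic small-cancellation argument for these points rather than deducing them from the Cohen--Lyndon decomposition; the $C(9)$ condition is used beyond Theorem~\ref{thm:clprop} here. Your proposal would be an interesting strengthening (asphericity-for-proper-actions from Cohen--Lyndon alone) if the equivariance issue could be resolved, but as written the key step is unproven.
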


We first check that the isotropy groups of the action of $\pi_1X^*$ on $\bar X^*$ are finite. 

\begin{lemma}\label{lem:finitestab}
Let $X^*=\langle X \mid \{Y_i\} \rangle$ be a  cubical presentation.   If $X^*$ satisfies the $C(9)$ condition and $[Stab_{\pi_1X}(\widetilde Y_i):\pi_1Y_i]< \infty$ for each  $Y_i \rightarrow X$,
then the action of $\pi_1X^*$ on $\bar X^*$ has finite cell-stabilisers. 
\end{lemma}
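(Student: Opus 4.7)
The plan is to enumerate the cells of $\bar X^*$ and bound their stabilisers directly, case by case. Recall that $X^*$ is built from $X$ by attaching, for each $i$, a cone over $Y_i$ whose cells are the apex together with the pyramids over cubes of $Y_i$; lifting this description, the cells of $\bar X^*$ are of three kinds: cubes of the cubical part $\hat X$, cone-apex vertices, and pyramid cells of the form $\{a\}\ast c$ joining an apex $a$ to a cube $c$ of the corresponding base $Y_i\to\hat X$. I will show that the first two kinds have trivial stabiliser, and that apex stabilisers are quotients of $Stab_{\pi_1X}(\widetilde Y_i)/\pi_1 Y_i$, hence finite by hypothesis.

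For the first kind, set $K:=\ker(\pi_1X\to\pi_1X^*)=\nclose{\{\pi_1Y_i\}}$, so that $\hat X\to X$ is the regular covering with deck group $\pi_1X/K=\pi_1X^*$. This deck action is free on cubes of $\hat X$; since the cone-collapse $\widetilde{X^*}\to\bar X^*$ leaves $\hat X$ untouched, the induced $\pi_1X^*$-action on $\hat X\subset\bar X^*$ is still free, so cubical cells have trivial stabiliser. For a pyramid cell $P=\{a\}\ast c$, the intersection $P\cap\hat X$ is exactly the base cube $c$ (interior points of the cone lie outside $\hat X$), so any $\gamma\in Stab_{\pi_1X^*}(P)$ must stabilise the cube $c\subset\hat X$ and therefore be trivial.

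The only remaining case is an apex $a$. Since each apex determines a unique cone, whose base $Y_i\to\hat X$ is exactly $\mathrm{link}(a)$, we have $Stab_{\pi_1X^*}(a)=Stab_{\pi_1X^*}(Y_i)$ where $Y_i$ is viewed as a subcomplex of $\hat X$. Fixing an elevation $\widetilde Y_i\to\widetilde X$ mapping onto this $Y_i$, the preimage of $Y_i$ in $\widetilde X$ equals $K\cdot\widetilde Y_i$, whose setwise stabiliser in $\pi_1X$ is $K\cdot Stab_{\pi_1X}(\widetilde Y_i)$. Quotienting by $K$ yields
$$Stab_{\pi_1X^*}(Y_i)\;\cong\;Stab_{\pi_1X}(\widetilde Y_i)\big/\bigl(K\cap Stab_{\pi_1X}(\widetilde Y_i)\bigr).$$
Because $\pi_1Y_i\subseteq K\cap Stab_{\pi_1X}(\widetilde Y_i)$, this group is a quotient of $Stab_{\pi_1X}(\widetilde Y_i)/\pi_1Y_i$, which is finite by hypothesis.

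The main subtlety is the bookkeeping in the previous paragraph: although $\pi_1X^*$ acts freely on $\widetilde{X^*}$, passing to the reduced space identifies whole equivalence classes of cones in $\widetilde{X^*}$ to single cones in $\bar X^*$, enlarging the apex stabilisers. The equivalence class containing a given cone has cardinality $[Stab_{\pi_1X}(\widetilde Y_i):K\cap Stab_{\pi_1X}(\widetilde Y_i)]$, and the finite-index assumption $[Stab_{\pi_1X}(\widetilde Y_i):\pi_1Y_i]<\infty$ is precisely what is needed to control this index and conclude finiteness of every cell stabiliser.
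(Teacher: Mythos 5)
Your proof is correct, and it takes a more self-contained route than the paper. The paper's own argument is brief: it notes that stabilisers of cells in $\hat X$ are trivial and then cites an external result for the exact identification $Stab_{\pi_1X^*}(v)=Stab_{\pi_1X}(\widetilde Y_i)/\pi_1Y_i$ at a cone-vertex $v$, leaving the pyramid cells implicitly handled by the fact that their stabilisers embed into apex stabilisers. You instead enumerate all three cell types explicitly and carry out the stabiliser computation yourself: the deck-transformation identification $Stab_{\pi_1X^*}(Y_i)\cong Stab_{\pi_1X}(\widetilde Y_i)/\bigl(K\cap Stab_{\pi_1X}(\widetilde Y_i)\bigr)$ via the second isomorphism theorem, followed by the containment $\pi_1Y_i\subseteq K\cap Stab_{\pi_1X}(\widetilde Y_i)$. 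This yields only that the apex stabiliser is a \emph{quotient} of $Stab_{\pi_1X}(\widetilde Y_i)/\pi_1Y_i$ rather than the equality the paper cites, but a quotient of a finite group is finite, so the conclusion is unaffected. What you lose is the sharper structural statement (used elsewhere in the paper, e.g.\ for identifying the isotropy exactly); what you gain is independence from the external reference and an explicitly spelled-out case for the pyramid cells, which the paper glosses over. Both approaches are sound, and for the present lemma yours is entirely adequate.
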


\begin{proof}
All stabilisers of cells in $\hat X \subset \bar X^*$ are trivial. By hypothesis,   $Stab_{\pi_1X}(\widetilde Y_i)/\pi_1Y_i $ is finite for each $i \in I$. Now  by \cite[3.9]{Arenas2023pi2}, $Stab_{\pi_1X^*}(v)=Stab_{\pi_1X}(\widetilde Y_i)/\pi_1{Y_i}$ where $v$ is a cone-vertex of $\bar X^*$ corresponding to a cone over some $ Y_i$. Thus, the stabilisers of cone-vertices are finite.
\end{proof}

Using results from~\cite{Arenas2023pi2}, we can now establish that $(\bar X^*)^H$ is contractible for each finite  $H <\pi_1X^*$. 

\begin{lemma}\label{lem:contfix}
Let $X^*=\langle X \mid \{Y_i\} \rangle$ be a symmetric cubical presentation that satisfies the $C(9)$ condition. If there is a uniform upper-bound on the size of pieces in any essential path $\sigma \rightarrow gY_i$, then every finite subgroup $H <\pi_1X^*$ fixes a unique point in $\bar X^*$. In particular, the fixed-point space $(\bar X^*)^H$ is contractible.
\end{lemma}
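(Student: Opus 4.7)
The plan is to show that every finite $H<\pi_1X^*$ has a unique fixed point in $\bar X^*$; contractibility of $(\bar X^*)^H$ then follows trivially since a point is contractible. I would split the argument into an existence half (finding at least one fixed point) and a uniqueness half (ruling out a second), where the symmetry hypothesis and the uniform bound on pieces come in.

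For existence, the key input is Theorem~\ref{thm:clprop}. The Cohen-Lyndon decomposition $\nclose{\{\pi_1Y_i\}}=\ast_{i,t\in T_i}\langle\pi_1 Y_i\rangle^t$ exhibits the kernel of $\pi_1X\to\pi_1X^*$ as a free product of fundamental groups of non-positively curved cube complexes, hence as a torsion-free group. Since $\pi_1X$ is itself torsion-free, the standard analysis of torsion in quotients by a free product of normal closures (in the spirit of~\cite[\S2]{PetSun21}) shows that every finite subgroup of $\pi_1X^*$ is conjugate into some $\mathcal{N}(\pi_1Y_i)/\pi_1Y_i$. By~\cite[3.9]{Arenas2023pi2}, this quotient is precisely the stabiliser in $\pi_1X^*$ of the cone-vertex over $Y_i$, so $H$ fixes this vertex.

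For uniqueness, suppose $H$ fixes two distinct points $p_1,p_2\in\bar X^*$. The action of $\pi_1X^*$ on the cubical part $\hat X$ is free (Lemma~\ref{lem:finitestab}), so both $p_\iota$ must be cone-vertices, say $p_\iota=v(g_\iota Y_{i_\iota})$. Then $H$ setwise stabilises each cone $g_\iota Y_{i_\iota}$, and by the symmetry hypothesis acts on each by combinatorial automorphisms over $X$. Picking a nontrivial $h\in H$, I would compare an essential combinatorial path $\sigma\to g_1 Y_{i_1}$ with its image $h\sigma$, and apply the Cubical Greendlinger Lemma (Theorem~\ref{thm:tric}) to a minimal-complexity disc diagram bounded by $\sigma\cdot(h\sigma)^{-1}$. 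The uniform bound on pieces in essential paths in $g_1 Y_{i_1}$, combined with the $C(9)$ condition, should rule out any shells, cornsquares, or ladder structure in this diagram once $\sigma$ is chosen sufficiently long, forcing $h$ to act trivially on $g_1 Y_{i_1}$; freeness of the $\pi_1X^*$-action on $\hat X$ then forces $p_1=p_2$.

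The main obstacle is the uniqueness step: one must organise the disc-diagram bookkeeping so that the uniform piece-size bound and $C(9)$ interact correctly with the $H$-action. The difficulty is choosing $\sigma$ long enough that $\sigma(h\sigma)^{-1}$ cannot be concentrated in few pieces (this is where the piece-size bound is used decisively), while keeping the resulting minimal-complexity diagram simple enough to be analysed by Greendlinger's trichotomy. This is the small-cancellation core of the argument, in the same spirit as Proposition~\ref{prop: conecell ints}, but with the extra bookkeeping supplied by the symmetry hypothesis and the piece-length bound playing the role of a rigidity statement for the $H$-action on cones.
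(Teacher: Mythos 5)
The paper's own ``proof'' of this lemma is just a citation to \cite[3.13, 3.14]{Arenas2023pi2}, so your task is effectively to reconstruct those external arguments. Your existence half takes a genuinely different route (Cohen--Lyndon rather than a direct diagrammatic argument about torsion), which is defensible in outline, though the phrase ``standard analysis of torsion in quotients'' is doing a lot of unadvertised work: torsion-freeness of $\pi_1X$ and of $\nclose{\{\pi_1 Y_i\}}$ alone give no control over torsion in the quotient, so you would need to actually invoke the specific torsion characterisation that the Cohen--Lyndon property buys (the paper gestures at this in the introduction but proves it nowhere).

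The uniqueness half, however, has a genuine gap. The disc diagram you propose does not exist: if $\sigma$ is a combinatorial path inside $g_1 Y_{i_1}\subset\hat X$ and $h$ acts freely on $\hat X$ (which it does, by Lemma~\ref{lem:finitestab}), then $\sigma\cdot(h\sigma)^{-1}$ is not a closed path, since the endpoints of $\sigma$ and $h\sigma$ never coincide. Worse, even granting some version of the diagram, your stated conclusion --- ``forcing $h$ to act trivially on $g_1 Y_{i_1}$'' --- cannot possibly be the right target: combined with freeness of the $\pi_1X^*$-action on $\hat X\supset g_1 Y_{i_1}$, it would force $h=1$, and your argument nowhere uses the second fixed point $p_2$, so if it worked it would prove that no nontrivial finite subgroup fixes any cone-vertex, contradicting your own existence half. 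A correct uniqueness argument must set up a diagram that actually links the two cones --- for instance, a cycle built from a path $\gamma$ in $\hat X$ joining $g_1 Y_{i_1}$ to $g_2 Y_{i_2}$, its $H$-translates, and arcs inside the two cones --- so that Greendlinger and the uniform piece bound can produce a contradiction from $p_1\neq p_2$ specifically, not from the mere existence of a fixed cone-vertex.
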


\begin{remark}
The hypothesis on the size of pieces in Lemma~\ref{lem:contfix} is not particularly restrictive: in particular, it is automatically satisfied for any  cubical presentation $X^*=\langle X \mid \{Y_i\}_{i \in I} \rangle$ having $|I|< \infty$ and satisfying the $C(2)$ condition~\cite[3.12]{Arenas2023pi2}.
\end{remark}

\begin{proof}
This is shown in~\cite[3.13]{Arenas2023pi2} and~\cite[3.14]{Arenas2023pi2}. The statement of~\cite[3.13]{Arenas2023pi2} hypothesises compactness of the $Y_i$'s, but the proof only uses the existence of a uniform upper bound on the size of pieces arising in  essential paths in the $gY_i$'s.
\end{proof}

This finishes the proof of Theorem~\ref{thm:asphfinite}.

\subsection{Virtual torsion-freeness}

This short subsection gives a sufficient condition for $\pi_1X^*$ of a  $C(9)$ cubical presentation to be virtually torsion-free. 
A version of this result, assuming  $dim(X)\leq 2$ and $cd(\pi_1Y_i)\leq 1$ for all $Y_i$, was given in~\cite[3.18]{Arenas2023pi2}. We note that the alluded result assumes that the cubical presentation is symmetric in the sense of~\cite[8.12]{WiseIsraelHierarchy}, but that this hypothesis is not actually used in the proof therein. However, we do use a form of symmetry (that $\pi_1Y_i$ has finite index in $Stab_{\pi_1X}(\widetilde Y_i)$)  to prove that $vcd(\pi_1X^*)$ is finite in Lemma~\ref{lem:vcd}.

\begin{lemma}\label{lem:vcd} Let $X^*=\langle X \mid \{Y_i\}_{i \in I} \rangle$ be a  $C(9)$ cubical presentation. If $[Stab_{\pi_1X}(\widetilde Y_i):\pi_1Y_i]< \infty$ for each  $Y_i \rightarrow X$ and there exists a finite regular cover $\check{X} \rightarrow X$  where  $Y_i \rightarrow X$ lifts to an embedding for each $i \in I$, then $\pi_1X^*$ is virtually torsion-free and $$vcd(\pi_1X^*)\leq \max\{cd(X), \max{cd(\pi_1 Y_i)+1}\}.$$
\end{lemma}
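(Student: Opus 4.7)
The plan is to construct a torsion-free, finite-index subgroup of $\pi_1X^*$ as the fundamental group of a \emph{reduced} $C(9)$ covering cubical presentation, then invoke Corollary~\ref{cor:dimensions}.

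First, I would set up the covering cubical presentation. Write $N:=\pi_1\check{X}\leq\pi_1X$, a finite-index normal subgroup, and set $H_i:=Stab_{\pi_1X}(\widetilde Y_i)$ and $K_i:=\pi_1Y_i$. The hypothesis that each $Y_i$ lifts to an embedding in $\check{X}$ translates, by regularity of the cover, to the containment $K_i\subseteq N$ for every $i\in I$ (up to conjugation). Form
\[
\check{X}^*=\langle\check{X}\mid\{\widehat Y_{ij}\}\rangle,
\]
where the $\widehat Y_{ij}$ range over all elevations of the $Y_i$ to $\check{X}$; these are all embedded by regularity. Since $\widetilde{\check X^*}=\widetilde{X^*}$, pieces in $\check{X}^*$ coincide with pieces in $X^*$, so $\check{X}^*$ inherits the $C(9)$ condition. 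The natural map $\check{X}^*\to X^*$ is then a finite-sheeted covering of degree $[\pi_1X:N]$.

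Second, I would refine the cover further to achieve reducedness. The $C(9)$ presentation $\check{X}^*$ is reduced precisely when $N\cap H_i=K_i$ for every $i$. Since $[H_i:K_i]<\infty$, the group $(N\cap H_i)/K_i$ is finite, represented by finitely many elements $h_{i,1},\dots,h_{i,m_i}\in (N\cap H_i)\setminus K_i$. Iterating the hypothesis (using the finite regular cover to separate each $h_{i,\ell}$ from $K_i$ inside a finite quotient of $\pi_1X$, then intersecting the finitely many resulting kernels with $N$), I would produce a finite-index normal subgroup $N'\leq N$ of $\pi_1X$ with $K_i\subseteq N'$ and $N'\cap H_i=K_i$ for each $i$. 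Let $\check{\check X}\to X$ be the associated finite regular cover, and define $\check{\check X}^*=\langle\check{\check X}\mid\{\widehat{\widehat Y}\}\rangle$ analogously. Then $\check{\check X}^*$ is a reduced $C(9)$ cubical presentation with $[\pi_1X^*:\pi_1\check{\check X}^*]<\infty$.

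Third, I would apply the asphericity and cohomological dimension results. By Theorem~\ref{thm:asph}, $\check{\check X}^*$ is a model for $K(\pi_1\check{\check X}^*,1)$, so $\pi_1\check{\check X}^*$ has finite cohomological dimension and is therefore torsion-free, which proves virtual torsion-freeness of $\pi_1X^*$. For the dimension bound, Corollary~\ref{cor:dimensions} applied to $\check{\check X}^*$ yields
\[
cd(\pi_1\check{\check X}^*)\leq\max\{cd(\pi_1\check{\check X}),\,\max_i cd(\pi_1\widehat{\widehat Y})+1\}.
\]
Since $\pi_1\check{\check X}$ has finite index in $\pi_1X$, $cd(\pi_1\check{\check X})=cd(\pi_1X)\leq cd(X)$; and since $K_i\subseteq N'$, each elevation $\widehat{\widehat Y}$ of $Y_i$ is isomorphic to $Y_i$ with $\pi_1\widehat{\widehat Y}=K_i$, giving $cd(\pi_1\widehat{\widehat Y})=cd(\pi_1Y_i)$. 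The stated bound on $vcd(\pi_1X^*)$ follows. The main obstacle is the construction of $N'$ in the second step: the required separability of the finitely many non-trivial cosets of $K_i$ in $H_i\cap N$ inside finite quotients of $\pi_1X$ that still kill $K_i$. This is a form of relative subgroup separability; it is automatic when $\pi_1X$ is virtually special (as in Lemma~\ref{lem:vcdintro}), and in the present setting it follows from iterating the given embedded-lift hypothesis together with the finiteness of each $H_i/K_i$.
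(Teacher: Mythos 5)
Your overall plan --- pass to the cover $\check X^*\to X^*$, show $\pi_1\check X^*$ is torsion-free, and bound its cohomological dimension via the Cohen--Lyndon machinery --- matches the paper's. The problem is step~2, where you construct a finer normal subgroup $N'\leq N$ with $N'\cap H_i=K_i$: the separability needed to kill the finitely many nontrivial cosets of $K_i$ in $N\cap H_i$ inside a finite quotient of $\pi_1X$ that still contains $K_i$ is not proved, and it does not follow by ``iterating the given embedded-lift hypothesis,'' which supplies only the one fixed cover $\check X$. (Such separability does hold under virtual specialness, which is precisely why Lemma~\ref{lem:vcdintro} has that extra hypothesis.)

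But step~2 is also unnecessary: the embedding hypothesis already forces $N\cap H_i=K_i$, so $\check X^*$ is reduced and you may run your step~3 directly with $N'=N$. Indeed the lifted map $Y_i=\widetilde Y_i/K_i\to\check X$ factors through $\widetilde Y_i/(N\cap H_i)$; since the composite is injective by hypothesis, so is the quotient map $\widetilde Y_i/K_i\to\widetilde Y_i/(N\cap H_i)$, and since $N\cap H_i$ is torsion-free (a subgroup of $\pi_1X$) and acts freely on the CAT(0) complex $\widetilde Y_i$, this is the quotient by a free action of the finite group $(N\cap H_i)/K_i$, which is injective only when that group is trivial. The paper itself sidesteps reducedness altogether: it quotes \cite[3.13]{Arenas2023pi2} to characterize torsion in $\pi_1X^*$ as having powers conjugate into the $\pi_1Y_i$ (so the embedding hypothesis ejects such elements from $\pi_1\check X^*$), and for the dimension bound it applies Theorem~\ref{thm:clprop} together with \cite[Corollary 2.2(ii)]{PetSun21} directly to $\check X^*$, using only $[H_i:K_i]<\infty$ and torsion-freeness of $\pi_1X$ to get $cd(N\cap H_i)=cd(K_i)$. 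Either route works once the gap is repaired; the paper's is shorter.
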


\begin{proof}
The finite-degree covering map $\check{X} \rightarrow X$ induces a finite-degree covering map $\check{X}^* \rightarrow X^*$. By~\cite[3.13]{Arenas2023pi2}, if an element $g\in \pi_1X^*-\{1\}$ satisfies $g^n=1$, then it is represented by a path $\sigma \rightarrow X^*$ such that  $\sigma^n$ is conjugate into $\pi_i Y_i$ for some $i \in I$. Since $Y_i \rightarrow \check X$ is an embedding, then $\sigma \rightarrow \check X \subset \check{X}^*$ cannot be closed, so $g \notin \pi_1\check{X}^*$. Thus $\pi_1\check{X}^*$ is torsion free.

Since $X^*$ satisfies the $C(9)$ condition, then $\check{X}^*$ also satisfies  $C(9)$, and the cones in $\check{X}^*$ are elevations of the cones in $\check{X}^*$. Now Theorem~\ref{thm:clprop} implies together with~\cite[Corollary 2.2(ii)]{PetSun21} and the hypothesis on the $Stab_{\pi_1X}(\widetilde Y_i)$'s that 
$$cd(\pi_1 \check{X}^*)\leq \max\{cd(\check{X}), \max{cd(\pi_1 \check{Y}_i)+1}\}=\max\{cd(X), \max{cd(\pi_1 Y_i)+1}\},$$
as asserted in the lemma.
\end{proof}

The hypothesis on the existence of the finite regular cover $\check{X} \rightarrow X$ in Lemma~\ref{lem:vcd} is satisfied, for instance, when $X$ is virtually special. Virtual specialness is in turn satisfied if, for example, $\pi_1 X$ is hyperbolic~\cite{AgolGrovesManning2012} or hyperbolic relative to compatible  virtually special subgroups~\cite{Reyes23,GrovesManning22}.

\section{Some applications}\label{sec:applications}

The results in Section~\ref{sec:CL prop} and Section~\ref{sec:mains}  can be applied to a large variety of quotients of cubulated groups; below we describe some salient examples.

\subsection{Cubically presenting Artin groups} \label{subsecartin}

\begin{definition}
Let $\Gamma$ be a simplicial graph whose edges $(a_i,a_j)$ are labelled with integers $m_{ij}\geq 2$. Let $(a_i,a_j)^{m_{ij}}$ denote the first half of the word $(a_ia_j)^{m_{ij}}$. The \emph{Artin group} $A_\Gamma$ is given by the presentation:
$$A_\Gamma=\langle a_1, \ldots, a_n \mid (a_i,a_j)^{m_{ij}}=(a_j,a_i)^{m_{ij}} : i\neq j \rangle.$$

Let $A_{\bar \Gamma}$ be the underlying right angled Artin group (RAAG) associated to $A_\Gamma$:
\begin{equation}\label{pres:raag}
A_{\bar \Gamma}=\langle a_1, \ldots, a_n \mid (a_i,a_j)^2=(a_j,a_i)^2 : m_{ij}=2 \text{ and } i \neq j \rangle.
\end{equation} 

Associated to $ A_{\bar \Gamma}$ is a non-positively curved cube complex $R(A_{\bar \Gamma})$ called the \emph{Salvetti complex} of $A_{\bar \Gamma}$. To construct $R(A_{\bar \Gamma})$, one starts with the presentation complex $\mathcal{X}$ associated to~\eqref{pres:raag}. This complex is obtained from a collection of $2$-tori by gluing some of them together along their edges. If $\Gamma$ contains no triangles, then $\mathcal{X}$ is a non-positively curved cube complex, and $R(A_{\bar \Gamma}):=\mathcal{X}$, otherwise, for every $n$-clique in $\Gamma$ glue in an $n$-cube along its boundary in $\mathcal{X}$. The resulting cube complex is $R(A_{\bar \Gamma})$ and is non-positively curved.  It follows that $cd(A_{\bar \Gamma})= dim(R(A_{\bar \Gamma}))$, and that this number depends only on the combinatorial structure of $\Gamma$ -- namely, $dim(R(A_{\bar \Gamma}))$ is the largest size of a clique in $\Gamma$.
\end{definition}

\begin{construction}\label{const:cubicalartin}
Let $R(A_{\bar \Gamma})=:X_\Gamma$, and consider the  cubical presentation
\begin{equation}\label{eq:artin1}
X_{\Gamma}^*=\langle X_\Gamma \mid Y_{ij} \rightarrow X_\Gamma \rangle
\end{equation}

 where,  for each $3\leq m_{ij} < \infty$, the complex $Y_{ij}$ is the Cayley graph of the 2-generator Artin group $A_{ij}=\langle v_i,v_j \mid (v_i,v_j)^{m_{ij}}=(v_j,v_i)^{m_{ij}} \rangle$  with respect to $\{v_i,v_j\}$ and $Y_{ij} \rightarrow X_\Gamma$ is the obvious covering projection on the $1$-skeleton. In the 2-dimensional case, the cubical presentations for Artin groups given in~\eqref{eq:artin1} were discussed by Wise in~\cite{WiseIsraelHierarchy}, and were anticipated in the work of Appel and Schupp~\cite{AppelSchupp83}.

 An intersection $gY_{ij}\cap g'Y_{i'j'}$ is either equal to all of $gY_{ij}$, in which case it does not contribute a piece, as $gY_{ij}$ and $g'Y_{i'j'}$ differ only by an automorphism, or $gY_{ij}\cap g'Y_{i'j'}$ is a bi-infinite geodesic $a_i^\infty$. Wall-pieces are intersections between the $gY_{ij}$'s and rectangles in $\widetilde X^*$. All flat strips come from tori in $X_\Gamma$, so if  no generator of $A_\Gamma$ commutes with  any pair $a_i$ and $a_j$ with $m_{ij}>4$, then a rectangle that intersects  $gY_{ij}$ intersects it on a bi-infinite geodesic, also of the form $a_i^\infty$. This amounts to requiring that no edge $(a_i, a_j)$ in $\Gamma$ forms a clique with any collection of edges in $\bar \Gamma$. 
Under these restrictions, even though cone-pieces and  wall-pieces are unbounded,  the systole  of $gY_{ij}$ is a loop spelling the dihedral Artin relation $(a_i,a_j)^{m_{ij}}((a_j,a_i)^{m_{ij}})^{-1}$, and so pieces arising in  an essential loop in $gY_{ij}$ have length $1$. So $X^*$ will satisfy the $C(9)$ condition provided that $m_{ij} \notin \{3,4\}$. However, without the restriction on the cliques of $\Gamma$, the wall-pieces in essential loops in $gY_{ij}$ can be large, so the argument fails.

To be able to extend the argument above, we utilise higher-dimensional relators. 
Let $\{K_1, \ldots, K_m\}$ be the set of maximal 2-joins in $\Gamma$ containing at least one edge in $\Gamma -\bar \Gamma$. Note that every edge $(a_i,a_j)$ with $m_{ij}\neq 2$ is contained in some $K_\ell$. For each $K_\ell$, let $\mathbf{Y}_{K_\ell}$ be the convex core of $Cay(A_{K_\ell})$ in $X_{\Gamma}$. % -- i.e., $mathbf{Y}_{K_1}$ is obtained from $Cay(A_{K_i})$ by gluing in a cube whenever its 1-skeleton is present
 Consider the following cubical presentation, which originated in joint forthcoming work with Alexandre Martin~\cite{ArenasMartin24}:
\begin{equation}\label{eq:artin2}
\mathbf{X}_{\Gamma}^*=\langle X_\Gamma \mid \mathbf{Y}_{K_\ell} \rightarrow X_\Gamma \rangle
\end{equation}
where $\mathbf{Y}_{K_\ell} \rightarrow X_\Gamma $ is induced by the map $\mathbf{Y}_{K_\ell}^{(1)} \rightarrow X_\Gamma $ given by the labels.

Reasoning as before, we see that while cone-pieces and  wall-pieces can be unbounded, the pieces arising in an essential loop in a $g\mathbf{Y}_{K_\ell}$ have length $1$. Again we deduce that the $C(9)$ condition holds provided that all $m_{ij} \notin \{3,4\}$. 
\end{construction}

We collect these observations as a lemma:

\begin{lemma}\label{lem:artin}
Suppose that $\Gamma$ satisfies the following: for all $i\neq j$, either $m_{ij}=2$ or $m_{ij}> 4$. Then $\mathbf{X}_\Gamma^*$ satisfies the $C(9)$ condition. 
\end{lemma}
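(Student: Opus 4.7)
The plan is to execute the argument sketched at the end of Construction~\ref{const:cubicalartin}. The proof reduces to two estimates: (i) every essential closed path in a relator $\mathbf{Y}_{K_\ell}$ has length at least $2m_{ij} \geq 10$ for some dihedral edge $(a_i, a_j) \in K_\ell$ with $m_{ij} \geq 5$; and (ii) every piece appearing as a subpath of an essential closed path in $\mathbf{Y}_{K_\ell}$ has length at most $1$. Together, these force every essential closed path in $\mathbf{X}_\Gamma^*$ to decompose into at least $10$ pieces, yielding $C(10)$, and therefore $C(9)$.

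Claim (i) follows because $\mathbf{Y}_{K_\ell}$ is a non-positively curved cube complex whose $2$-cells encode only the commuting relations of $A_{K_\ell}$, while dihedral relators are not realised as $2$-cells; hence the shortest essential closed path in $\mathbf{Y}_{K_\ell}$ projects nontrivially to some dihedral factor $A_{ij}$ of $A_{K_\ell}$ and traces out the dihedral relator, which has length $2m_{ij}$. Claim (ii) is where the maximality of the 2-joins enters decisively. I would argue that any cone-piece $\widetilde{\mathbf{Y}}_{K_\ell} \cap \widetilde{\mathbf{Y}}_{K_{\ell'}}$ is contained in a flat coming from the sub-join $K_\ell \cap K_{\ell'}$, and that maximality forces this intersection to be a clique in $\bar\Gamma$: otherwise both $K_\ell$ and $K_{\ell'}$ would contain the join over a common edge of label $\geq 3$, and by maximality they would coincide as subgraphs, contradicting that the two lifts are distinct. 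Consequently the flat extends only in directions distinct from $a_i$ and $a_j$, so any subpath of a dihedral relator inside this flat contains at most one $a_i$- or $a_j$-edge. The wall-piece case is analogous: a hyperplane $H$ disjoint from $\widetilde{\mathbf{Y}}_{K_\ell}$ is dual to an edge labelled by a generator $a_k \notin K_\ell$, and $N(H) \cap \widetilde{\mathbf{Y}}_{K_\ell}$ lies in the flat spanned by generators of $K_\ell$ that commute with $a_k$; this spanning set cannot contain $a_i$ or $a_j$, for otherwise $K_\ell \cup \{a_k\}$ would form a strictly larger 2-join, contradicting maximality.

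The main obstacle is formalising (ii) in the presence of translates $g, g'$ and checking rigorously that the only way a piece can contain a dihedral edge is for the corresponding 2-joins to coincide. A clean treatment requires a careful description of how $\widetilde{\mathbf{Y}}_{K_\ell}$ decomposes into flats and how these interact under the $\pi_1 X_\Gamma$-action on $\widetilde{X_\Gamma}$; the text notes that these technical details are developed in the forthcoming joint work~\cite{ArenasMartin24}. Once the flat-intersection picture is in place, Claims (i) and (ii) combine immediately to give $C(10)$, and in particular $C(9)$.
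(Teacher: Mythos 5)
Your proposal matches the paper's own treatment: Lemma~\ref{lem:artin} is presented as a restatement of the sketch at the end of Construction~\ref{const:cubicalartin} (``we collect these observations as a lemma''), and, like your argument, that sketch defers the technical core to~\cite{ArenasMartin24}. Your elaboration of how maximality of the $2$-joins constrains the cone-piece and wall-piece flats is the right additional ingredient, and the reduction $C(10)\Rightarrow C(9)$ via the systole count $2m_{ij}\geq 10$ is exactly the intended route.

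There is, however, one imprecision worth recording, which is equally present in the paper's own sketch. Your claim~(ii) -- that \emph{every} piece appearing as a subpath of an essential closed path in $\mathbf{Y}_{K_\ell}$ has length $\leq 1$ -- fails as stated, because the $C(p)$ condition is quantified over \emph{all} essential closed paths, not only systolic or cyclically geodesic ones. For example, if $r$ is the dihedral relator on an edge $(a_i,a_j)\in K_\ell$ with $m_{ij}\geq 5$, the closed path $\gamma = a_i^{N}\,r\,a_i^{-N}$ is essential in $\mathbf{Y}_{K_\ell}$ and contains a piece $a_i^{N+1}$ of arbitrary length, since the $a_i$-axis lies in a cone-piece (and wall-piece) flat. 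What one actually needs is a lower bound on the \emph{number} of pieces in any decomposition of an essential loop: your maximality analysis shows that each piece in $\mathbf{Y}_{K_\ell}$ lies inside a commuting flat supporting at most one of the two dihedral directions of any given $(a_i,a_j)$, so the number of pieces in any decomposition of $\gamma$ is bounded below by its ``syllable'' count, and one must then argue that nontrivial elements of $\ker\bigl(A_{\bar K_\ell}\to A_{K_\ell}\bigr)$ have syllable count $\geq 9$. Rephrasing~(ii) as this piece-count bound, rather than as a piece-length bound, is what closes the argument; this is the Appel--Schupp-type estimate~\cite{AppelSchupp83} that~\cite{ArenasMartin24} is meant to supply.
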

 
Let $A_\Gamma$ be an Artin group satisfying that  no generator of $A_\Gamma$ commutes with any pair $a_i,a_j$ with $m_{ij} > 4$, and let $X^*$ be as in Construction~\ref{const:cubicalartin}. Then Theorem~\ref{thm:asph} implies that the reduced space $\bar X^*$ is aspherical. While the action of $A_\Gamma$ on $\bar X^*$ is not free -- indeed, each cone-point  stabiliser is an $A_{ij}$ --, the remark before Corollary~\ref{cor:asphericalreloaded} (or the corollary itself) can be applied to the cubical presentation  
\begin{equation}\label{eq:naiveartin}
W^*=\langle X_\Gamma \mid  \{c_{ij} \rightarrow X_\Gamma\} \rangle
\end{equation}
 where each $c_{ij}$ is a cycle spelling the word $(a_i,a_j)^{m_{ij}}((a_j,a_i)^{m_{ij}})^{-1}$ and $ c_{ij} \rightarrow X_\Gamma$ is the obvious immersion given by the labels. By construction, $A_\Gamma$ acts freely on  $\widetilde W^*$, so  $ W^*$  is a classifying space for  $A_\Gamma$. 
 For an arbitrary Artin group with no labels in $\{3,4\}$, we can also obtain a classifying space starting from the reduced space $\bar{\mathbf{X}}^*$ via Corollary~\ref{cor:asphericalreloaded} as follows.
Replace each cone  $Cone(\mathbf{Y}_{K_\ell})$ in  $\bar{\mathbf{X}}^*$ with a copy of a model for  $E(Stab(\mathbf{Y}_{K_\ell}))$ containing the corresponding $\mathbf{Y}_{K_\ell}$. 
For instance, if  $\mathbf{Y}_{K_\ell}=Cay(A_{ij})$ for some pair $(i,j)$, then $E(Stab(\mathbf{Y}_{K_\ell}))=\chi(A_{ij}, \{a_i,a_j\})$ is the presentation complex, which is aspherical by~\cite{Deligne72}, 
and also because $A_{ij}$ is a torsion-free one-relator group~\cite{Lyndon50,Cockcroft54}. 
In general, $E(Stab(\mathbf{Y}_{K_\ell}))$ is the universal cover of a classifying space for $A_{K_\ell}$, which is cocompact because $A_{K_\ell}$ decomposes as a direct product of 2-dimensional Artin groups and free groups.

 In~\cite{Charney00}, Charney proves the $K(\pi, 1)$ conjecture for all locally reducible Artin groups; this includes in particular the Artin groups in Lemma~\ref{lem:artin}. Our result partially recovers Charney's theorem, albeit via a purely topological viewpoint. Our results also allow us to compute most of the homology and cohomology groups of these Artin groups. Concretely, Corollary~\ref{cor:dimensions} and Proposition~\ref{prop:petsun1} give:

\begin{corollary}\label{cor:artins} Let $A_\Gamma$ be an Artin group with no labels in $\{3,4\}$. If no generator commutes with  any pair $a_i$ and $a_j$ with $m_{ij}\neq 2$, then:
$$H^n(A_\Gamma, M)=H^n(A_{\bar \Gamma},M) \text{ and } H_n(A_\Gamma, M)=H_n(A_{\bar \Gamma},M)$$ for every $A_\Gamma$-module $M$ and for all $n \geq 3$.  
Otherwise, for each maximal 2-join $K_\ell$ in $\Gamma$ containing at least one edge in $\Gamma -\bar \Gamma$, let $\bar K_\ell$ denote the subgraph of $K_\ell$ contained in $\bar\Gamma$ and let $N=\max\{|V(clique(\bar K_\ell))|\}$.  
Then: 
$$H^n(A_\Gamma, M)=H^n(A_{\bar \Gamma},M)\oplus \bigoplus_{\ell} H^n(A_{K_\ell},M)$$ and $$H_n(A_\Gamma, M)=H_n(A_{\bar \Gamma},M)\oplus \bigoplus_{\ell} H_n(A_{K_\ell},M)$$ for every $A_\Gamma$-module $M$ and for all $n \geq N+2$.  
\end{corollary}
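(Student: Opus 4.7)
The plan is to apply Proposition~\ref{prop:petsun1}---via the Cohen--Lyndon property of Theorem~\ref{thm:clprop}---to the cubical presentations $X_\Gamma^*$ and $\mathbf{X}_\Gamma^*$ for Artin groups constructed in Construction~\ref{const:cubicalartin}. In both parts of the corollary, the computation reduces to identifying the normaliser quotients $\mathcal{N}_{A_{\bar\Gamma}}(\pi_1 Y_i)/\pi_1 Y_i$ and invoking the vanishing of the relevant cohomology groups above a certain degree.

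For the first case, I would use $X_\Gamma^*$ from~\eqref{eq:artin1}. The piece analysis of Construction~\ref{const:cubicalartin}, combined with the non-commutation hypothesis, implies that pieces in essential loops of $gY_{ij}$ are single edges, so (since the shortest essential loop in $Y_{ij}$ has length $2m_{ij}\geq 10$) the $C(9)$ condition holds. The map $Y_{ij}\to X_\Gamma$ factors as a covering $Y_{ij}\to X_{ij}$ of the $2$-generator subcomplex, corresponding to the quotient $F_2\twoheadrightarrow A_{ij}$; one checks using the non-commutation hypothesis and the description of centralisers in RAAGs that $\mathcal{N}_{A_{\bar\Gamma}}(\pi_1 Y_{ij}) = F_2$, since any extra normalising element would have to commute with both $a_i$ and $a_j$. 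Thus $\mathcal{N}/\pi_1 Y_{ij} = A_{ij}$. Theorem~\ref{thm:clprop} supplies the Cohen--Lyndon property, and Proposition~\ref{prop:petsun1} yields
\[H^p(A_\Gamma,M)=H^p(A_{\bar\Gamma},M)\oplus\bigoplus_{ij}H^p(A_{ij},M)\]
for $p\geq 3$ (where $cd(F_2)=1$ makes both the vanishing and surjectivity hypotheses of Proposition~\ref{prop:petsun1} automatic). Since $cd(A_{ij})=2$, the summands on the right vanish for $p\geq 3$, yielding the asserted $H^p(A_\Gamma,M)=H^p(A_{\bar\Gamma},M)$; the homology statement follows from Proposition~\ref{prop:petsun1}(1) by the same argument.

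For the general case I would use $\mathbf{X}_\Gamma^*$ from~\eqref{eq:artin2}, which is $C(9)$ by Lemma~\ref{lem:artin}. The key structural inputs are the identifications $\mathcal{N}_{A_{\bar\Gamma}}(\pi_1\mathbf{Y}_{K_\ell}) = A_{\bar K_\ell}$---the right-angled parabolic on $\bar K_\ell$, which is a RAAG of cohomological dimension $N_\ell=|V(\textup{clique}(\bar K_\ell))|$---and $\mathcal{N}/\pi_1\mathbf{Y}_{K_\ell}=A_{K_\ell}$. Both follow geometrically from the fact that $\mathbf{Y}_{K_\ell}$ is the convex core of the $A_{K_\ell}$-orbit, so that coning over $\mathbf{Y}_{K_\ell}$ promotes $A_{\bar K_\ell}$ to $A_{K_\ell}$ inside $A_\Gamma$. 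Theorem~\ref{thm:clprop} then supplies the Cohen--Lyndon property for $(A_{\bar\Gamma},\{\pi_1\mathbf{Y}_{K_\ell}\})$, and Proposition~\ref{prop:petsun1} delivers the direct sum decomposition; its hypotheses are automatic whenever $p-1\geq N_\ell$, so the formulas hold for $n\geq N+2$ with $N=\max_\ell N_\ell$.

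The main obstacle is the identification $\mathcal{N}(\pi_1\mathbf{Y}_{K_\ell})/\pi_1\mathbf{Y}_{K_\ell}\cong A_{K_\ell}$, together with the explicit description of $\pi_1\mathbf{Y}_{K_\ell}$ inside $A_{\bar K_\ell}$. While geometrically natural---$\mathbf{Y}_{K_\ell}$ admits a cocompact $A_{\bar K_\ell}$-action whose quotient by $\pi_1\mathbf{Y}_{K_\ell}$ realises the dihedral Artin relations defining $A_{K_\ell}$---making it precise requires careful bookkeeping of the combinatorial structure of the convex core and of centralisers of parabolics in the ambient RAAG $A_{\bar\Gamma}$. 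A simpler version of the same computation underlies the stabiliser/normaliser identification in the first case.
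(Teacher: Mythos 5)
The proposal is correct and takes essentially the same route as the paper: both apply Theorem~\ref{thm:clprop} followed by Proposition~\ref{prop:petsun1}, using the identifications $Stab_{\pi_1X_\Gamma}(\widetilde{\mathbf{Y}_{K_\ell}})/\pi_1\mathbf{Y}_{K_\ell}=A_{K_\ell}$ and $cd(Stab_{\pi_1X_\Gamma}(\widetilde{\mathbf{Y}_{K_\ell}}))=|V(\mathrm{clique}(\bar K_\ell))|$. You give more detail than the paper's two-line proof, and in the first (``no commuting generator'') case your derivation is arguably cleaner: rather than invoking Corollary~\ref{cor:dimensions} (which requires the cubical presentation to be \emph{reduced} --- a hypothesis that $X^*_\Gamma$ from~\eqref{eq:artin1} does not obviously satisfy, since the relators $Y_{ij}$ are non-compact with stabiliser $F_2=\langle a_i,a_j\rangle\supsetneq\pi_1 Y_{ij}$), you go through the full direct-sum formula and observe that the summands $H^p(A_{ij},M)$ vanish for $p\geq 3$ because $cd(A_{ij})=2$. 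One small imprecision: your parenthetical claim that the hypotheses of Proposition~\ref{prop:petsun1} become automatic ``whenever $p-1\geq N_\ell$'' is off by one, since surjectivity of $H^{p-1}(A_{\bar\Gamma};M)\to H^{p-1}(\mathcal{N};M)$ is automatic only when $p-1>N_\ell$; your final bound $n\geq N+2$ is nevertheless the correct one.
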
 

\begin{proof}
The first part of the corollary follows immediately from Corollary~\ref{cor:dimensions}. 
For the second part, note that $cd(Stab_{\pi_1X}(\widetilde{\mathbf{Y}_{K_\ell}} ))=\max\{|V(clique(\bar K_\ell))|\}$ where $K_\ell$ is as in Construction~\ref{const:cubicalartin} 
and that $Stab_{\pi_1X}(\widetilde{\mathbf{Y}_{K_\ell}} )/\pi_1\mathbf{Y}_{K_\ell}=A_{K_\ell}$. Apply Proposition~\ref{prop:petsun1}.
\end{proof}

\begin{remark}\label{rmk:artin acyl}
In the paper~\cite{ArHag2021}, Arzhantseva and Hagen give a sufficient condition for the fundamental group of a cubical small-cancellation  presentation to be acylindrically hyperbolic (Theorem~B therein). A list of examples satisfying this condition is also given. Artin groups $A_\Gamma$ with all $m_{ij}\in \{2\}\cup \naturals_{> 72}$, and  satisfying the condition that no edge $(a_i, a_j)$ in $\Gamma$ forms a clique with any collection of edges in $\bar \Gamma$ are included in this list via the cubical presentation $W^*$ in~\eqref{eq:naiveartin}. See Example~(3) in~\cite{ArHag2021}.
However, as explained in the previous paragraphs, this presentation does not even satisfy the $C(2)$ condition, so the acylindrical hyperbolicity cannot be deduced from this presentation. 
However,  Arzhantseva and Hagen's result does apply to the cubical presentation $X^*$ in~\eqref{eq:artin1} of these groups, and therefore, the acylindrical hyperbolicity can indeed be deduced: Theorem~A and Theorem~5.3 in~\cite{ArHag2021} hold without additional hypotheses, and even though $X^*$ does not directly satisfy all the hypotheses of  Theorem~B (indeed, $X^*$ is neither minimal nor has compact relators), it does satisfy a weak form of the \emph{uniform} $C''(\frac{1}{144})$ condition when  $m_{ij}\in \{2\}\cup \naturals_{> 72}$ for all $i<j$, and it appears\footnote{personal communication with Mark Hagen.} that in this setting, the remaining ingredient, a loxodromic element satisfying the conclusion of Lemma~6.7   in~\cite{ArHag2021} can be produced directly, using properties of RAAGs to prove Claim~9 without needing the extra hypotheses from Theorem~B.
\end{remark}

\subsection{Dyer and Shephard groups}
As in the previous subsection, let $\Gamma$ be a simplicial graph. We now label the edges \textbf{and} vertices of $\Gamma$ as follows: 
 edges $(a_i,a_j)$ are labelled with integers $m_{ij}\geq 2$ and  vertices $a_i$ are labelled with integers $m_i\geq 2$.

We impose an additional restriction on the labels:  if  $m_i \geq 3$ for some $i$, then  $m_{ij}=2$ for $i\neq j$. The \emph{Dyer group}  $D_\Gamma$ is given by the presentation: 
$$D_\Gamma=\langle a_1, \ldots, a_n \mid a_i^{m_i}=1: \text{ for all } a_i \in V(\Gamma) \text{ and }(a_i,a_j)^{m_{ij}}=(a_j,a_i)^{m_{ij}} : i\neq j \rangle.$$

When all $m_i=2$, then $D_\Gamma$ presents a \emph{Coxeter group}; when all $m_i=\infty$, then $D_\Gamma$ coincides with $A_\Gamma$.
In other words, Dyer groups simultaneously  generalise Coxeter groups and right angled Artin groups. When all $m_{ij}=2$ in the presentation given above, we say that $D_\Gamma$ is a \emph{right angled Dyer group}. A right-angled Dyer group admits a cubical presentation:
\begin{equation}\label{eq:dyer1}
X^*=\langle X_\Gamma \mid \{Y_{m_i} \rightarrow X_\Gamma\} \rangle
\end{equation}
where $Y_{m_i}$ is the cycle reading the word $a_i^{m_i}$ and the map $Y_{m_i}  \rightarrow X_\Gamma$ is the obvious immersion. This cubical presentation will satisfy the $C(9)$ condition if  $m_i\geq 9$ for every $a_i \in V(\Gamma)$ and $X_\Gamma$ is a graph, i.e., when $\Gamma$ has no edges -- as soon as $\Gamma$ has an edge, $X_\Gamma^*$ will have essential wall-pieces, and thus won't satisfy any cubical small-cancellation condition. To be able to cover more examples, we substitute the $Y_{m_i}$'s with non-compact relators.  

\begin{construction}\label{cons:dyer} Let $Z_{m_i}  \rightarrow X_\Gamma$ be the covering map corresponding to the subgroup  $\langle a_i^{m_i}\rangle$. Namely, $Z_{m_i}$ is a cylinder of width $m_i$. Now
consider  the cubical presentation:
\begin{equation}\label{eq:dyer2}
X^*=\langle X_\Gamma \mid \{Z_{m_i} \rightarrow X_\Gamma\} \rangle.
\end{equation}
$X^*$ satisfies the $C(9)$ condition provided that $m_i\geq 9$ for every $a_i \in V(\Gamma)$ and every edge has at most one vertex with label $<\infty$.
\end{construction}

The Davis-Moussong complex and the construction in~\cite{Soergel24} imply that Dyer groups are CAT(0), and in fact from~\cite[2.9]{Soergel24} one deduces that every Dyer groups is a finite index subgroup of a Coxeter group. In particular, the homology and cohomology  of Dyer groups can be understood in terms of that of Coxeter groups. Nevertheless, our approach provides an alternative viewpoint that utilises a rather different set of tools for understanding some of these groups. We can deduce for instance that

\begin{corollary}
Let $D_\Gamma$ be a Dyer group with $m_i\geq 9$ for every $a_i \in V(\Gamma)$ and where every edge has at most one vertex with label $<\infty$. Then
$$H^n(D_\Gamma, M)=H^n(A_{\bar \Gamma},M)\oplus \bigoplus_{v \in V_{\leq \infty}} H^n(\integers,M)$$ and $$H_n(D_\Gamma, M)=H_n(A_{\bar \Gamma},M)\oplus \bigoplus_{v \in V_{\leq \infty}} H_n(\integers,M)$$ for every $A_\Gamma$-module $M$ and for all $n \geq 4$.  
\end{corollary}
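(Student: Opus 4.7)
The plan is to combine the Cohen-Lyndon property of Theorem~\ref{thm:clprop} with the cohomological splitting of Proposition~\ref{prop:petsun1}, applied to the cubical presentation $X^* = \langle X_\Gamma \mid \{Z_{m_i} \to X_\Gamma\}\rangle$ from Construction~\ref{cons:dyer}. Under the hypotheses ($m_i \geq 9$ and the edge condition), Construction~\ref{cons:dyer} produces a $C(9)$ cubical presentation whose fundamental group is $D_\Gamma$, so the two theorems may be applied in concert exactly as in the Artin-group Corollary~\ref{cor:artins}.

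First I would make the algebraic identifications. Since every $m_i \geq 9$ forces $m_{ij} = 2$ on every edge of $\Gamma$ (by the Dyer relation constraint), the base complex $X_\Gamma$ is the Salvetti complex of a RAAG and $\pi_1 X_\Gamma = A_{\bar\Gamma}$. For each $a_i \in V_{\leq \infty}$, $\pi_1 Z_{m_i} = \langle a_i^{m_i}\rangle \cong \integers$; the hypothesis that every edge has at most one vertex in $V_{\leq\infty}$ implies every neighbour of $a_i$ commutes with $a_i$ in $A_{\bar\Gamma}$, identifying the normaliser $\mathcal{N}_{A_{\bar\Gamma}}(\langle a_i^{m_i}\rangle)$ with $\langle a_i\rangle \times A_{link(a_i)}$ and the quotient $\mathcal{N}/\pi_1 Z_{m_i}$ with $(\integers/m_i\integers) \times A_{link(a_i)}$. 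Theorem~\ref{thm:clprop} then supplies the Cohen-Lyndon decomposition for the pair $(A_{\bar\Gamma}, \{\langle a_i^{m_i}\rangle\}_{a_i \in V_{\leq\infty}})$, placing us in the setting of Proposition~\ref{prop:petsun1}. Applying both parts of that proposition in each degree $n$ where its cohomology-vanishing and surjectivity hypotheses hold (dually for homology) yields the claimed decompositions of $H^n(D_\Gamma, M)$ and $H_n(D_\Gamma, M)$.

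The main technical point is pinning down the correct degree range. Since each $\mathcal{N}(\pi_1 Z_{m_i}) = \langle a_i\rangle \times A_{link(a_i)}$ has cohomological dimension $1 + cd(A_{link(a_i)})$, the Petrosyan--Sun hypotheses become automatic once $n$ exceeds this dimension by at least $1$; the stated bound $n \geq 4$ is sharp precisely when each $link(a_i)$ is discrete, and more generally lifts to $n \geq N+2$ with $N := \max_{a_i \in V_{\leq\infty}}(1 + cd(A_{link(a_i)}))$, in direct parallel with the analogous bound $n \geq N+2$ appearing in Corollary~\ref{cor:artins}.
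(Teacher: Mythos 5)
Your overall strategy --- apply Theorem~\ref{thm:clprop} to the $C(9)$ cubical presentation of Construction~\ref{cons:dyer} and then feed the resulting Cohen--Lyndon decomposition into Proposition~\ref{prop:petsun1}, exactly in parallel with Corollary~\ref{cor:artins} --- is the right one, and it is what the paper implicitly intends (no separate proof is given). Your algebraic identifications are also correct as far as they go: under the hypotheses, all $m_{ij}=2$, $\pi_1 X_\Gamma=A_{\bar\Gamma}$, $\pi_1 Z_{m_i}=\langle a_i^{m_i}\rangle$, and by Servatius' centraliser theorem for RAAGs (together with the edge condition) the normaliser is $\mathcal{N}_{A_{\bar\Gamma}}(\langle a_i^{m_i}\rangle)=\langle a_i\rangle\times A_{\mathrm{link}(a_i)}$, hence $\mathcal{N}/\pi_1 Z_{m_i}\cong(\integers/m_i\integers)\times A_{\mathrm{link}(a_i)}$.

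However, there is a genuine gap at the last step. Having computed $\mathcal{N}(H_i)/H_i\cong(\integers/m_i\integers)\times A_{\mathrm{link}(a_i)}$, Proposition~\ref{prop:petsun1} delivers the summand $\bigoplus_i H^n\bigl((\integers/m_i\integers)\times A_{\mathrm{link}(a_i)},M\bigr)$, and you simply assert that this ``yields the claimed decompositions'', whose extra summand is $\bigoplus_v H^n(\integers,M)$. These two expressions do not agree: $H^n(\integers,M)=0$ for all $n\geq 2$ and all $M$, whereas $H^n(\integers/m_i\integers,M)$ is in general nonzero in arbitrarily high even degree (and $A_{\mathrm{link}(a_i)}$ contributes further). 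So the argument as written proves a formula of the shape
\[
H^n(D_\Gamma,M)=H^n(A_{\bar\Gamma},M)\oplus\bigoplus_{v_i\in V_{<\infty}} H^n\bigl((\integers/m_i\integers)\times A_{\mathrm{link}(a_i)},M\bigr),
\]
not the one in the statement, and the passage from one to the other is not justified (nor, it seems, justifiable in general). You need either to explain why the stated formula is a consequence --- which would require $H^n\bigl((\integers/m_i\integers)\times A_{\mathrm{link}(a_i)},M\bigr)\cong H^n(\integers,M)$, false for generic $M$ --- or to flag that the corollary as printed does not match what the machinery produces. Your remark that the degree bound $n\geq 4$ is only correct when each $\mathrm{link}(a_i)$ is discrete, and should in general read $n\geq N+2$ with $N=\max_i(1+\mathrm{cd}(A_{\mathrm{link}(a_i)}))$, is accurate and is a second respect in which the stated corollary appears to rely on hypotheses that are not spelled out.
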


Dyer groups can be further generalised to \emph{Shephard groups} by dropping the defining restrictions on the labels.
For other (non-Dyer) Shephard groups, $C(9)$ cubical  presentations can sometimes also be produced by mixing Construction~\ref{const:cubicalartin} and Construction~\ref{cons:dyer}.

\subsubsection{Other quotients of RAAGs}
In general, let $X_\Gamma$ be the Salvetti complex of a RAAG $A_\Gamma$. For any $n \in \naturals$, one can produce a collection of ``small-cancellation words'' $g_1, \ldots, g_n \in A_\Gamma$ by picking a collection of independent, rank 1 isometries of  $\widetilde X_\Gamma$ and  raising these to large-enough powers, as in Example~\ref{const:covers}. Then, for each $i \in \{1, \ldots, n\}$ there exists a local isometry $Y_i \rightarrow X_\Gamma$ with $Y_i$ compact and  $\pi_1 Y_i=\langle g_i\rangle$. For each $i \in \{1, \ldots, n\}$, there exist finite-index covers $\hat Y_i \rightarrow Y_i$ such that the cubical presentation $X^*=\langle X_\Gamma \mid \{\hat Y_i\} \rangle$ satisfies the $C(9)$ condition. This construction is also explained in~\cite[Example (3)]{ArHag2021}.

\subsection{Examples in the hyperbolic setting} 

In the setting of ``classical'' group presentations, one way to produce small-cancellation is by raising the relators $r \in R$ of a presentation $\mathcal{P}=\langle S \mid R\rangle$ to large powers. In the cubical setting there is an analogous process:

\begin{example}\label{const:covers}(Forcing small-cancellation by taking covers)
If $X$ is a compact non-positively curved cube complex with hyperbolic fundamental group, and $H_1,...,H_k$ are quasiconvex subgroups of $\pi_1X$ that form a malnormal collection, then for each $n>0$ there are finite index subgroups $H'_i \subset H_i$ and local isometries $Y_i \rightarrow X$ with $Y_i$ compact and $\pi_1Y_i=H'_i$ such
that $X^*= \langle X \mid \{Y_i\} \rangle$ satisfies the cubical $C(n)$ condition. See~\cite[3.51]{WiseIsraelHierarchy}.  
\end{example}

When $n\geq9$ in Example~\ref{const:covers}, we conclude that $\bar X^*$ is a classifying space for proper actions for $\pi_1X^*$, and thus that $\pi_1X^*$ has rational cohomological dimension $cd_\rationals (\pi_1X^*) \leq cd(\pi_1 X)$. Being hyperbolic, $\pi_1 X$ is virtually special by Agol's Theorem (\cite[1.1]{AgolGrovesManning2012}), so Theorem~\ref{thm:embeds} actually implies that $cd_\rationals (\pi_1X^*)=vcd(\pi_1X^*) $ and that $\pi_1X^*$ is virtually torsion-free.

Again drawing on the analogy with classical small-cancellation theory, one may produce small-cancellation presentations by starting with a presentation $\mathcal{P}$ as before,  and multiplying each of the relators by long small-cancellation words that add ``noise'' to the presentation. This idea generalises as follows:

\begin{theorem}\label{const:noise}\cite[3.2]{Arenas2023}
Let $X$ be a compact special non-positively curved cube complex with $G =\pi_1X$ non-elementary hyperbolic. For all $k \geq 1$ there exist (infinitely many choices of) free non-abelian subgroups $\{H_1, \ldots, H_k\}$, and cyclic subgroups $\langle z_i \rangle <H_i$  such that the quotient $\Gamma=G/ \nclose{z_i}$ is hyperbolic and cocompactly cubulated. 
\end{theorem}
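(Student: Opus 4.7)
The plan is to produce the malnormal free subgroups $H_i$ by a ping-pong argument on loxodromic elements of $G$ acting on $\widetilde X$, then to pick each $z_i \in H_i$ to be a sufficiently ``generic'' primitive element. The resulting cubical presentation $X^* = \langle X \mid \{Y_i\} \rangle$, where each $Y_i$ is a compact local isometry in $X$ carrying $\langle z_i \rangle$, should then satisfy the $C(9)$ condition, from which both hyperbolicity and cocompact cubulation of $\Gamma = \pi_1 X^*$ will follow from Wise's small-cancellation machinery.

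First, since $G$ is non-elementary hyperbolic, its Gromov boundary $\partial G$ contains infinitely many pairs of loxodromic fixed points. Standard ping-pong on $\partial G$ yields, for any prescribed $k$, a choice of pairs $(a_i, b_i)$ of loxodromics with pairwise disjoint attracting/repelling endpoint sets, such that each $H_i := \langle a_i, b_i \rangle$ is quasiconvex and free of rank $2$, and the collection $\{H_1, \ldots, H_k\}$ is malnormal in $G$. Since $G$ is virtually special by Agol's theorem, each $H_i$ is separable and is realised as $\pi_1 W_i$ for a compact local isometry $W_i \to X$ of non-positively curved cube complexes.

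Next, within each $H_i$ I would choose $z_i$ to be a long, root-free, generic element in the generators $a_i, b_i$: for example a cyclically reduced word whose cyclic conjugates overlap only in short sub-words. This yields a compact local isometry $Y_i \to W_i \to X$ with $\pi_1 Y_i = \langle z_i \rangle$. The main obstacle is verifying that $X^* = \langle X \mid \{Y_i\} \rangle$ satisfies the cubical $C(9)$ condition. Cone-pieces between translates $g\widetilde Y_i$ and $g'\widetilde Y_j$ with $i \neq j$ are bounded using the malnormality and quasiconvexity of $\{H_i\}$; cone-pieces between distinct translates of a single $\widetilde Y_i$ are bounded by the genericity of $z_i$ inside the free group $H_i$; and wall-pieces are controlled using specialness of $X$ together with the bounded-packing properties of quasiconvex subgroups of virtually special hyperbolic groups. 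Taking $z_i$ sufficiently long forces every piece to be a small fraction of the length of $\partial Y_i$, yielding $C(9)$ (in fact $C(n)$ for any prescribed $n$).

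Finally, with $C(9)$ in hand and each relator compact and virtually cyclic, Wise's small-cancellation machinery provides a wall structure on $\widetilde X^*$ extending the hyperplane structure of $\widetilde X$; the dual CAT(0) cube complex admits a proper cocompact action of $\Gamma$, so $\Gamma$ is cocompactly cubulated. Hyperbolicity follows from the cubical Greendlinger Lemma (Theorem~\ref{thm:tric}) via a linear isoperimetric inequality on minimal-complexity disc diagrams in $X^*$. The ``infinitely many choices'' assertion is immediate from the freedom in selecting the loxodromic pairs $(a_i,b_i)$ and the generic elements $z_i$.
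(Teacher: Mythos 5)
The paper does not prove this statement; it is cited wholesale from~\cite[3.2]{Arenas2023}, and the surrounding commentary indicates that the cited construction is a ``noise'' construction in the spirit of the Rips construction: one starts with prescribed elements and multiplies them by long small-cancellation words until the resulting cubical presentation satisfies the required conditions. Your proposal is instead a bare-hands existence argument, which is a legitimate alternative route to the statement as written, but it leaves two genuine gaps.

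The first gap is the jump from $C(9)$ to cocompact cubulation. The cubical $C(9)$ condition, combined with Theorem~\ref{thm:tric} and a linear isoperimetric inequality, does give hyperbolicity of $\Gamma$ (since the cones have cyclic, hence hyperbolic, fundamental groups). But Wise's wallspace construction for small-cancellation quotients is \emph{not} a corollary of $C(9)$: it requires the strictly stronger metric conditions $B(6)$/$B(8)$ (which involve equipping each cone $Y_i$ with a compatible wall structure and bounding pieces against that structure), together with linear separation to ensure the action on the dual cube complex is proper and cocompact. These conditions are in the same spirit as $C(9)$ and are plausibly achievable with the same ``take $z_i$ very long'' strategy, but you have not verified them, and they do not follow formally from $C(9)$. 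You should either verify the $B(8)$-type hypotheses directly, or reorganise the argument around the Malnormal Special Quotient Theorem --- but note that the MSQT route requires passing to sufficiently deep subgroups of $\langle z_i\rangle$, so you would then be quotienting by high powers $z_i^m$ rather than by the primitive $z_i$ you constructed; the paper's own commentary says the relators are not proper powers, so the MSQT route alone does not reproduce the precise statement.

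The second gap is the malnormality claim. Ping-pong on $\partial G$ produces, for suitable powers, a quasiconvex free subgroup $H_i$, but it does not automatically make the \emph{collection} $\{H_1,\dots,H_k\}$ malnormal in $G$, nor --- more to the point --- does it make $\{\langle z_1\rangle,\dots,\langle z_k\rangle\}$ almost malnormal, which is what actually controls cone-pieces. Malnormality of $\{\langle z_i\rangle\}$ is what you need, and it follows from $G$ being torsion-free (it is special) together with each $z_i$ being primitive with the $z_i$'s having pairwise distinct axis pairs; this is a different, more elementary argument than the one you sketch via ``malnormality and quasiconvexity of $\{H_i\}$'', and attributing the bound on cone-pieces between translates of a single $\widetilde Y_i$ to ``genericity of $z_i$ inside $H_i$'' misplaces the reason. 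I would also note, as a minor point, that the boundedness of abstract wall-pieces uses hyperbolicity of $\widetilde X$ (absence of flat strips) essentially; specialness and bounded packing alone are not the relevant properties, as the paper's Artin-group examples show (there, specialness holds but wall-pieces can be unbounded because of flats).
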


In a similar vein,  Futer and Wise have shown  that under suitable hypotheses, cubical small-cancellation quotients of cubulated hyperbolic groups are generically hyperbolic and cubulated:

\begin{theorem}\cite[1.1]{FuterWise16}\label{thm:futerwise} [Random cubical quotients]
Let $G = \pi_1X$, where $X$ is a compact non-positively curved cube complex, and $G$ is hyperbolic. 
Let $b$ be the growth exponent of $G$ with respect to the $\widetilde X$. Let $a$ be the maximal growth exponent of a stabiliser of an essential hyperplane of
$\widetilde X$.
Let $c< \min\{\frac{b-a}{20}, \frac{b}{41}\}$ and $k \leq e^{c\ell}$.
Then with overwhelming probability as $\ell \rightarrow \infty$, for any set of conjugacy classes $[z_1], \ldots , [z_k]$
with translation length $|z_i| \leq \ell$, the group $G/\nclose{z_1, \ldots , z_k}$ is hyperbolic and is the fundamental
group of a compact, non-positively curved cube complex.
\end{theorem}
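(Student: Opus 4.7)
The plan is to produce, for a random tuple $(z_1,\dots,z_k)$ of translation length $\leq \ell$, a cubical presentation $X^*=\langle X\mid\{Y_i\}\rangle$ satisfying a sufficiently strong metric cubical small-cancellation condition (say $C'(\tfrac{1}{14})$), and then invoke the established cubulation and hyperbolicity consequences of such conditions in the hyperbolic setting. First I would realise each conjugacy class $[z_i]$ by a local isometry $Y_i\to X$ with $Y_i$ compact and $\pi_1 Y_i=\langle z_i\rangle$ (for instance, the convex hull of the axis of $z_i$ in $\widetilde X$ quotiented by $\langle z_i\rangle$, possibly thickened slightly to make pieces easy to bound). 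Cone-pieces in $X^*$ then correspond to long combinatorial overlaps between two distinct translates $g\widetilde Y_i$ and $g'\widetilde Y_j$, while wall-pieces correspond to long segments of some $g\widetilde Y_i$ lying in a hyperplane carrier $N(H)$.

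Next I would run the standard random-group counting argument. The number of conjugacy classes of translation length in $[\ell/2,\ell]$ grows like $e^{b\ell}$, so the sample space of ordered $k$-tuples has size roughly $e^{kb\ell}\leq e^{(b+c)k\ell}$. Fix a threshold $\lambda$ (eventually $\tfrac{1}{14}$ or smaller). The event that some pair $(g\widetilde Y_i,g'\widetilde Y_j)$ has an overlap of length $>\lambda\ell$ is counted by choosing a starting position on one translate and a relative displacement; each such configuration determines a suffix of $z_j$ of length roughly $(1-\lambda)\ell$ from the data of $z_i$ and the overlap, contributing at most $e^{b(1-\lambda)\ell}$ possibilities per choice of $z_i$. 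A union bound over the $O(k^2)=O(e^{2c\ell})$ pairs and over $O(\ell)$ overlap lengths and positions shows this event has probability $\to 0$ provided $c<\tfrac{b\lambda}{3}$ up to lower order terms, which is where the denominator $41$ appears after choosing $\lambda=\tfrac{1}{14}$ and carrying through the bookkeeping. The wall-piece estimate is the same, except one extra factor of $e^{a\ell}$ appears because a long segment inside a hyperplane carrier is parametrised by an element of a hyperplane stabiliser of length $\lambda\ell$; this forces $c<\tfrac{b-a}{20}$.

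Having established $C'(\tfrac{1}{14})$ with overwhelming probability, the conclusions follow from the cubical small-cancellation machinery. Hyperbolicity of $\pi_1X^*$ is proven by showing a linear isoperimetric inequality for reduced disc diagrams $D\to X^*$: Greendlinger's lemma gives a shell or cornsquare on $\partial D$, and $C'(\tfrac{1}{14})$ lets us charge a definite fraction of $\operatorname{Area}(D)$ to $|\partial D|$, exactly as in Wise's hyperbolicity theorem for $C'(\tfrac{1}{14})$ cubical presentations. For cocompact cubulation, I would produce a finite family of quasiconvex codimension-$1$ subgroups of $\pi_1X^*$ descended from hyperplane stabilisers of $\widetilde X$: cubical small-cancellation control on how hyperplanes of $\widetilde X^*$ travel through cones ensures that these descended walls are two-sided, separated, and have quasiconvex stabilisers, so the Bergeron-Wise/Sageev construction cocompactly cubulates $\pi_1X^*$.

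The main obstacle, by some distance, is the probabilistic step: squeezing both the cone-piece and the wall-piece counts through a single union bound requires careful tracking of which length parameters are "free" and which are determined by the overlap data, and the split into the two thresholds $\tfrac{b-a}{20}$ and $\tfrac{b}{41}$ reflects exactly this accounting. Once this is done, the cubulation and hyperbolicity steps are essentially a direct invocation of the $C'(\tfrac{1}{14})$ theory already available in \cite{WiseIsraelHierarchy}, together with the quasiconvexity of the $\langle z_i\rangle$ coming from their being maximal cyclic subgroups of the hyperbolic group $G$ acting loxodromically on $\widetilde X$.
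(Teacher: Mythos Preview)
The paper does not prove this theorem at all: Theorem~\ref{thm:futerwise} is simply quoted from \cite[1.1]{FuterWise16} as a black box, in order to feed into the corollary that follows it. There is therefore no ``paper's own proof'' to compare your proposal against. Your outline is a reasonable high-level sketch of the Futer--Wise strategy (realise each $z_i$ by a compact convex $Y_i\to X$, run a first-moment count to bound the probability of long cone- and wall-pieces, then invoke the $C'(\tfrac{1}{14})$ hyperbolicity and cubulation machinery from \cite{WiseIsraelHierarchy}), but the present paper makes no attempt to reproduce or re-derive any of this; it only \emph{uses} the output of the theorem, namely that the resulting cubical presentation satisfies $C(9)$ with cyclic, non-proper-power relators, so that Theorem~\ref{thm:asph} and Corollary~\ref{cor:dimensions} apply.
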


While both Theorem~\ref{const:noise} and Theorem~\ref{thm:futerwise} produce a compact non-positively curved cube complex $C$ with $\pi_1 C=\Gamma$, and thus a finite classifying space, the dimension of $C$ is generically much larger than the cohomological dimension of $\Gamma$. In fact, even when $\Gamma$ is a classical small-cancellation group (and thus $vcd(\Gamma)\leq 2$), the dimension of \emph{any} non-positively curved cube complex with $\pi_1X=\Gamma$ can be arbitrarily large~\cite{Jank20}. Thus, a priori, we do not obtain any cohomological information (other than finite cohomological dimension) just from knowing that a group is cocompactly cubulated.

The proofs of Theorem~\ref{const:noise} and Theorem~\ref{thm:futerwise} use cubical small-cancellation to produce a  cubical presentation $X^*=\langle X \mid \{Y_i\} \rangle$ that satisfies in particular the $C(9)$ condition, and where the $Y_i$'s are compact, homotopy equivalent to cycles, and do not represent proper powers in $\pi_1 X$, so the coned-off space $X^*$  obtained from either construction is reduced. Thus, we deduce from Theorem~\ref{thm:asph} and Corollary~\ref{cor:dimensions}:

\begin{corollary}
Let $G = \pi_1X$, where $X$ is a compact non-positively curved cube complex, and $G$ is non-elementary hyperbolic. Let $\langle z_1 \rangle, \ldots, \langle z_n \rangle$ be a collection of cyclic subgroups of $G$ as in Theorem~\ref{const:noise} or  Theorem~\ref{thm:futerwise} and let $\Gamma=G/\nclose{z_1, \ldots, z_n}$. Then for each $i \in I$ there exist local isometries $Y_i \rightarrow X$ with $Y_i$ compact and  $\pi_1Y_i=\langle z_i \rangle$ such that the coned-off space of the cubical presentation  $X^*=\langle X \mid \{Y_i\}\rangle$ is a compact model for $K(\Gamma,1)$ of dimension 
$$dim(X^*)\leq \max\{\dim X, \max\{\dim(Y_i)+1\}\}.$$ 
Moreover, $ cd(G)-1 \leq cd(\Gamma)\leq \max\{cd(G),2\}$, $H^n(\Gamma, M)=H^n(G, M)$, and $H_n(\Gamma, M)=H_n(G, M)$ for every  $\Gamma$-module $M$ and for all $n \geq 3$.
\end{corollary}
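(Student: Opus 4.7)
The plan is to verify that both Theorem~\ref{const:noise} and Theorem~\ref{thm:futerwise} produce, by construction, a cubical presentation $X^*=\langle X\mid \{Y_i\}\rangle$ to which the results of Section~\ref{sec:mains} (and Corollary~\ref{cor:dimensions} in particular) directly apply, and then to read off the cohomological conclusions. I would not redo any of the small-cancellation work already done in those two theorems; instead I would isolate the features of the $Y_i$'s needed as input.

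First I would recall that in both Theorem~\ref{const:noise} and Theorem~\ref{thm:futerwise}, the subgroups $\langle z_i\rangle$ are obtained in such a way that one can choose compact local isometries $Y_i\hookrightarrow X$ with $\pi_1 Y_i=\langle z_i\rangle$, such that $X^*=\langle X\mid \{Y_i\}\rangle$ satisfies at least the $C(9)$ small-cancellation condition (this is the point of both constructions). Since the $z_i$ are not proper powers in $\pi_1 X$, each $\pi_1Y_i=\langle z_i\rangle$ equals its own stabiliser $Stab_{\pi_1X}(\widetilde Y_i)$ (using that the $\langle z_i\rangle$ are malnormal/independent in the respective setups), so $X^*$ is already reduced: $X^*=\bar X^*$. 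Theorem~\ref{thm:asph} then immediately gives that $X^*$ is a $K(\Gamma,1)$, and compactness of $X$ together with compactness and finiteness of the $Y_i$ makes $X^*$ compact.

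The dimension bound is straightforward: $X^*$ is obtained from $X$ by attaching, for each $i$, the cone $Cone(Y_i)$ along $Y_i$, and this cone has dimension $\dim(Y_i)+1$, yielding
\[
\dim(X^*)\leq \max\bigl\{\dim(X),\ \max_i\{\dim(Y_i)+1\}\bigr\}.
\]
For the cohomological dimension estimates I would invoke Corollary~\ref{cor:dimensions} directly. Since each $Y_i$ is homotopy equivalent to a circle, $cd(\pi_1Y_i)=1$, so the corollary gives
\[
cd(G)-1\ \leq\ cd(\Gamma)\ \leq\ \max\{cd(G),\, 2\}.
\]
The homology/cohomology identifications follow from the same corollary (and, underlying it, Theorem~\ref{thm:clprop} combined with Proposition~\ref{prop:petsun1}): for every $\Gamma$-module $M$ and every $n\geq \max_i\{cd(\pi_1Y_i)\}+2=3$, one has $H^n(\Gamma,M)=H^n(G,M)$ and $H_n(\Gamma,M)=H_n(G,M)$.

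Since everything except the reducedness of $X^*$ is an immediate translation of earlier results, the only step that needs genuine justification is reducedness, i.e.\ that $\pi_1Y_i=Stab_{\pi_1X}(\widetilde Y_i)$. In Theorem~\ref{thm:futerwise}'s setup the $z_i$ are chosen from conjugacy classes with no proper-power issues and with small enough density that their cyclic subgroups are malnormal in $G$; in Theorem~\ref{const:noise}'s setup the $z_i$ live inside the free groups $H_i$ in a way that guarantees the same. I would spend one short paragraph recording this and then let Theorem~\ref{thm:asph} and Corollary~\ref{cor:dimensions} do the rest; this is the only point at which I expect to need to quote the construction rather than the output.
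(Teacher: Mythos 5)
Your proposal is correct and follows essentially the same route as the paper: the paragraph in the paper immediately preceding this corollary makes precisely your observations (the constructions produce $C(9)$ presentations with compact $Y_i$ homotopy equivalent to circles and not representing proper powers, hence $X^*$ is reduced), and then invokes Theorem~\ref{thm:asph} and Corollary~\ref{cor:dimensions} exactly as you do, with $cd(\pi_1 Y_i)=1$ plugged in to yield the stated bounds and the range $n\geq 3$.
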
 

\section{Final comments}\label{sec:comments}

\subsection{Deducing asphericity from a CAT(0) structure}
Asphericity, and many other stronger properties, would follow  immediately if  $\bar X^*$ could be endowed with a CAT(0) metric. The following problem is thus of interest:

\begin{prob}\label{prob1}
For large enough $N$, does the reduced space associated to a  $C'(\frac{1}{N})$ cubical presentation admit a CAT(0) structure? If $\widetilde X$ is CAT(-1), does $\bar X^*$ admit a CAT(-1) structure? 
\end{prob}

Proving a result of this nature and in this generality is likely quite difficult. In the classical small-cancellation setting, results of this kind have been proven under the more restrictive \emph{uniform} $C''(\frac{1}{6})$ condition (see~\cite{brown2016cat-1,Gromov01, Martin17}). In the cubical setting, one may analogously define a \emph{uniform cubical} $C''(\frac{1}{N})$ condition by requiring that both the size of cone-pieces and wall-pieces are uniformly bounded. The following variation of Problem~\ref{prob1} might be more approachable.

\begin{prob}\label{prob2}
For large enough $N$, does the reduced space associated to a $C''(\frac{1}{N})$ cubical presentation admit a CAT(0) structure? If $\widetilde X$ is CAT(-1), does $\bar X^*$ admit a CAT(-1) structure? 
\end{prob}

\subsection{Below C(9)?}

In contrast with the classical version of the theory, where the $C(6)$ condition is already sufficient to prove a variety of results,  all the results in the   cubical setting assume at least the $C(9)$ condition. This is because Greendlinger's Lemma,  the main building block in the theory, holds under this hypothesis -- we shall now illustrate that the $C(9)$ hypothesis is in fact sharp, at least when the underlying cube complex $X$ for the cubical presentation $X^*$ has dimension greater than $2$; if $X$ is only assumed to have dimension at least $2$, a related example, described by Wise in~\cite[3.n]{WiseIsraelHierarchy}, exemplifies the failure of Greendlinger's Lemma in the cubical $C(6)$ case.

\begin{example} Let $X$ be the non-positively curved cube complex obtained from the 1-skeleton of the truncated cuboctahedron $P$ by gluing a square to every embedded 4-cycle in $P^{(1)}$, and gluing a 3-cube $c$ to every embedded 6-cycle in $P^{(1)}$ along an embedded  cycle in $c$ separating  $c^{(2)}$ into two ``halves''. Let $X^*$ be the cubical presentation obtained by coning-off each embedded 8-cycle in $P^{(1)}\subset X$, as in the  left of Figure~\ref{fig:notsmall} (note that only the ``front'' of $X$ and $X^*$ are drawn). Then $X^*$ satisfies the $C(8)$ condition.

Consider a disc diagram $D\rightarrow X^*$ obtained by choosing the 3 faces in the ``front'' or ``back'' of each 3-cube and removing a square that does not lie in a 3-cube,  as in the  right of Figure~\ref{fig:notsmall}. Then $D$ has minimal complexity, but does not satisfy any of the conclusion of Greendlinger's Lemma. Not only that, but it can be seen directly that the three-way decomposition of the cubical part of $X^*$ does not have the connectedness properties proven for the $C(9)$ case in Subsection~\ref{sec:inter}: indeed, there are supporting hyperplane carriers in $\hat X$ that have disconnected intersection with the cones. Finally, $X^*$ is clearly reduced, and yet not aspherical.
\end{example}

\begin{figure}[h!]
\centerline{\includegraphics[scale=0.5]{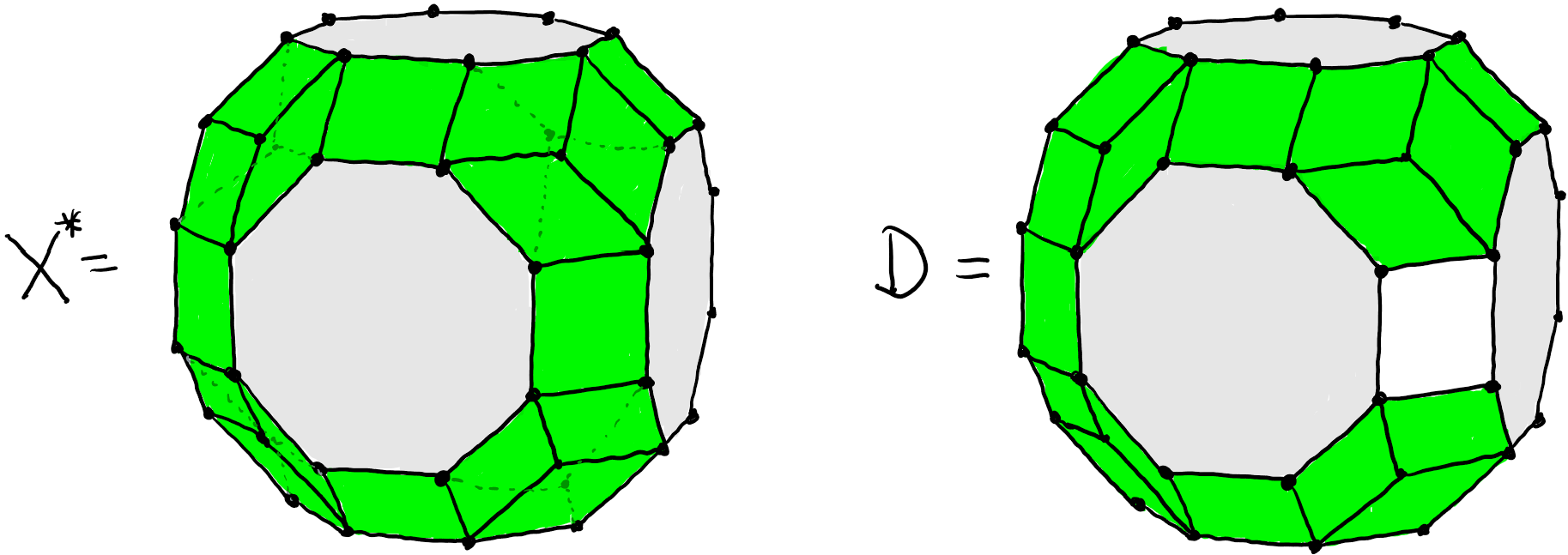}}
\caption{The cubical presentation $X^*$ is obtained by gluing squares and 3-cubes to the 1-skeleton of a truncated cuboctahedron. $X^*$ satisfies the $C(8)$ condition but not Greendlinger's Lemma, as can be seen by considering the minimal complexity disc diagram $D$ in the right of the figure.}
\label{fig:notsmall}
\end{figure}

\bibliographystyle{alpha}
%\bibliographystyle{plain}
%\bibliography{../wise}
%\bibliography{../../wise}

%\renewcommand{\refname}{}
%\bibliographystyle{amsalpha}
\bibliography{bib9.bib}
\end{document}